\title{A projective resolution for the Fomin-Kirillov algebra $\FK(4)$}
\author{Estanislao Herscovich and Ziling Li}
\date{}
\definecolor{ultramarine}{RGB}{0,32,96}
\colorlet{mygreen}{green!20!gray}
\colorlet{myultramarine}{ultramarine!50!gray}
\titlespacing\section{0pt}{9.5pt plus 4pt minus 2pt}{6pt plus 2pt minus 2pt}
\titlespacing\subsection{0pt}{7.5pt plus 4pt minus 2pt}{4pt plus 2pt minus 2pt}
\titlespacing\subsubsection{0pt}{6pt plus 4pt minus 2pt}{2pt plus 2pt minus 2pt}
\numberwithin{equation}{section}
\numberwithin{table}{section}
\DeclareFontFamily{U}{BOONDOX-calo}{\skewchar\font=45 }
\DeclareFontShape{U}{BOONDOX-calo}{m}{n}{
   <-> s*[1.05] BOONDOX-r-calo}{}
\DeclareFontShape{U}{BOONDOX-calo}{b}{n}{
   <-> s*[1.05] BOONDOX-b-calo}{}
\DeclareMathAlphabet{\mathcalboondox}{U}{BOONDOX-calo}{m}{n}
\SetMathAlphabet{\mathcalboondox}{bold}{U}{BOONDOX-calo}{b}{n}
\DeclareMathAlphabet{\mathbcalboondox}{U}{BOONDOX-calo}{b}{n}
\newtheorem{thm}{Theorem}[section]
\newtheorem{rk}[thm]{Remark}
\newtheorem{lem}[thm]{Lemma}
\newtheorem{eg}[thm]{Example}
\newtheorem{prop}[thm]{Proposition}
\newtheorem{cor}[thm]{Corollary}
\newtheorem{fact}[thm]{Fact}
\newtheorem*{prop*}{Proposition}
\newcommand\FK{{\operatorname{FK}}}
\newcommand\id{{\operatorname{id}}}
\newcommand\Img{{\operatorname{Im}}}
\newcommand\Ker{{\operatorname{Ker}}}
\newcommand\K{{\operatorname{K}}}
\newcommand\T{{\mathbb{T}}}
\newcommand\NN{{\mathbb{N}}}
\newcommand\RR{{\mathbb{R}}}
\newcommand\ZZ{{\mathbb{Z}}}
\newcommand\I{{\mathcalboondox{I}}}
\newcommand\II{{\mathcalboondox{I}_1}}
\newcommand\J{{\mathcalboondox{J}}}
\newcommand\B{{\mathcalboondox{B}}}
\newcommand\U{{\mathcalboondox{U}}}
\newcommand\C{{\mathcalboondox{C}}}
\newcommand\Q{{\mathcalboondox{Q}}}
\newcommand\RQ{{\mathcalboondox{RQ}}}
\newcommand\Pa{{\mathcalboondox{Pa}}}
\newcommand\MM{{\mathcalboondox{M}}}
\newcommand\Ar{{\mathcalboondox{Ar}}}
\newcommand\hh{{\mathcalboondox{h}}}
\newcommand\bideg{{\operatorname{bideg}}}
\newcommand\ddeg{{\operatorname{dfdeg}}}
\newcommand{\oset}[3][0ex]{%
  \mathrel{\mathop{#3}\limits^{
    \vbox to#1{\kern-2\ex@
    \hbox{$\scriptstyle#2$}\vss}}}}
\newcommand\coplus{{\oset[-.3ex]{\text{\begin{tiny}$\hskip -0.25mm \curvearrowleft$\end{tiny}}}{\oplus}}}
\begin{document}

\maketitle

\hrulefill
\begin{abstract} 
In this article we show that, given a quadratic algebra satisfying some assumptions, which we call having a \textbf{\textcolor{myultramarine}{resolving datum}}, one can construct a projective resolution of the trivial module which is obtained as iterated cones of Koszul complexes, and  
this projective resolution is minimal under some further assumptions. 
We observe that many examples of quadratic algebras studied so far have a resolving datum, and that the (minimal) projective resolutions constructed for all of them in the literature are an example of our construction. 
The second main result of the article is that the Fomin-Kirillov algebra $\FK(4)$ of index $4$ has a resolving datum. 
\end{abstract}

\textbf{Mathematics subject classification 2020:} 16S37, 16T05, 18G15

\textbf{Keywords:} quadratic algebras, homology, Fomin-Kirillov algebras

\hrulefill


\section{Introduction}
\label{sec:introduction}

In their study of the Schubert calculus of flag manifolds, S. Fomin and A. Kirillov introduced a family of quadratic algebras over a field $\Bbbk$ of characteristic zero, now called the {\color{ultramarine}{\textbf{Fomin-Kirillov algebras}}} $\operatorname{FK}(n)$, indexed by the positive integers $n \in \NN$ (see \cites{MR1667680, MR1772287, MR3439199}). 
In case the index $n$ takes the values $3$, $4$ or $5$, the Fomin-Kirillov algebras are also Nichols algebras (see \cites{MR1800714,GV16}), which appear in the classification of finite dimensional pointed Hopf algebras (see \cite{MR2630042}). 
Their (co)homological properties have gained some importance, in particular in relation to the conjecture by P. Etingof and V. Ostrik that claims that the Yoneda algebra of every finite dimensional 
Hopf algebra is finitely generated. 

The explicit homological computations of a graded connected $\Bbbk$-algebra $A$ typically involve the construction of a ``small'' projective resolution of the trivial module $\Bbbk$. 
The methods for constructing such projective resolution $P_{\bullet}$ are typically of ``local nature'', \textit{i.e.} for $n \in \NN$ the projective module $P_{n}$ is obtained from the previous steps. 
This is the case for instance of the usual construction of the minimal projective resolution, but also up to some extent of the Anick resolution and variations thereof. 
The obvious drawback of these procedures is that it is in general rather hard to describe the generic module $P_{n}$ of $P_{\bullet}$ or its differential $d_{n} : P_{n} \rightarrow P_{n-1}$. 
We also note that for a quadratic algebra $A$, if there is a choice of monomial order for the generators of $A$ such that the corresponding Anick resolution is minimal then $A$ is Koszul (see \cite{MR2177131}, Chapter 4, Thm. 3.1), which 
gives another motivation for a different method of constructing projective resolutions. 

In this article, given a quadratic algebra $A$ over $\Bbbk$ satisfying some assumptions, we give a method to construct a projective resolution $P_{\bullet}$ of the trivial module $\Bbbk$ that is of ``global nature'' (at least partially). 
By this we mean that the description of every module $P_{n}$ of $P_{\bullet}$ as well as some components of the differential $P_{n} \rightarrow P_{n-1}$ are described in a straightforward manner by means of a quiver associated with $A$, which we call the \textbf{\textcolor{myultramarine}{resolving quiver}} (see Theorem \ref{thm:rd}). 
The method is however not completely satisfactory, since the description of the remaining pieces of the differentials is of local nature, and the projective resolution we get is not minimal in general, even though it is minimal in many interesting examples --for which the Anick resolution cannot be minimal, as observed above--. 
Despite these drawbacks, we remark that many examples of quadratic algebras studied in the literature so far have such a resolving quiver, and that the minimal projective resolutions constructed in the literature for all of them are a specific instance of our construction (see Propositions \ref{proposition:koszul} and \ref{proposition:almost-koszul}, Examples \ref{example:cassidy}-\ref{example:herscovich} and Remark \ref{remark:proj-ex}), which makes us wonder why the method we present here was not noticed previously. 
In any case, the main motivation for introducing this machinery is due to the second main result of the article, which states that the Fomin-Kirillov algebra $\FK(4)$ of index $4$ has a resolving quiver (see Theorem \ref{thm:rd fk4}), so we can construct an explicit projective resolution of its trivial module.

The structure of the article is as follows. 
In the first half of Section \ref{sec:quadratic}, Subsection \ref{subsection:intro-quad},  we recall the basic definitions and properties of quadratic algebras and quadratic modules, whereas in the second half, Subsection \ref{subsection:resolving}, we introduce the novel notion of resolving datum. 
After providing some examples of quadratic algebras having a resolving datum, we prove the first main result of this article, Theorem \ref{thm:rd}. 
In the first subsection of Section \ref{subsec:rd FK4} we recall the definition and basic homological properties of the Fomin-Kirillov algebra $\FK(4)$ of index $4$. 
We then present the second main result of the article, Theorem 
\ref{thm:rd fk4}, which states that $\FK(4)$ has a resolving datum, consisting of quadratic modules $\{ \Bbbk = M^{0}, M^{1}, M^{2}, M^{3} \}$. 
The proof of the theorem is based on several intermediate results that appear in Subsections \ref{subsec:cpx of H4} and \ref{subsec:cpx of H4bis}, 
namely Lemmas \ref{lemma:homology-m0}, \ref{lemma:homology-m1}, 
\ref{lemma:homology-m2} and \ref{lemma:homology-m3}. 
We also present in the appendix several auxiliary computations, most of which are obtained using GAP. 
Namely, in Appendix \ref{subsec:W} we give the GAP code for finding a basis of the Fomin-Kirillov algebra $\FK(4)$ whereas in Appendix \ref{sec:products FK4} we provide several products in the quadratic dual algebra $\FK(4)^{!}$, that are used in the sequel. 
In Appendix \ref{sec:Basis of M1} we provide the explicit GAP code to obtain a basis of the quadratic module $M^{1}$, whereas in Appendix \ref{sec:cpx} we provide the GAP code to compute the homology of the Koszul complex of all the quadratic modules $M^{i}$ over $\FK(4)$ for $i \in \llbracket 0,3 \rrbracket$. 
Finally in Appendix \ref{Appendix:basis m2 d} we provide the GAP code to obtain a basis of $(M^{2})^{!}$, and in Appendix \ref{sec:products M2} some products describing the action of $\FK(4)^{!}$ on $(M^{2})^{!}$. 
We believe these snippets of code could be useful for the reader interested in dealing with other quadratic algebras. 

We will denote by $\NN$ (resp., $\NN_{0}$, $\ZZ$) the set of positive (resp., nonnegative, all) integers. 
Given $i\in\ZZ$, we will denote by $\ZZ_{\leqslant i}$ the set $\{m\in\ZZ|m\leqslant i \}$.
Given $i,j\in \mathbb{Z}$ with $i\leqslant j$, we will denote by $\llbracket i,j \rrbracket =\{m\in \mathbb{Z}|i\leqslant m\leqslant j \}$ the integer interval, and we define $\chi_n=0$ if $n$ is an odd integer and $\chi_n=1$ if $n$ is an even integer. 
Moreover, given $r \in \RR$, we set $\lfloor r\rfloor =\sup \{n\in \ZZ|n\leqslant r \}$. 
Moreover, in the following we will intensively use routines in GAP for many computations, using the GBNP package, so we refer the reader to \cite{cohenknopper}. 

The first author would like to thank Nicolás Andruskiewitsch, for the motivation to study the Fomin-Kirillov algebras and the many references and information on the topic, and Chelsea Walton for several discussions 
concerning the quadratic dual of the Fomin-Kirillov algebra of index $4$ as well as sharing some computations in GAP. 
We are indebted to Jan W. Knopper for assistance with the code involving the computation of Gröbner bases of quadratic modules. 

\section{Quadratic algebras and modules}
\label{sec:quadratic}

In the first subsection we recall the basic definitions of quadratic algebras and modules, as well as their Koszul complexes. 
In the second subsection we introduce a new property of quadratic algebras that allows to construct projective resolutions.

\subsection{Basic definitions}
\label{subsection:intro-quad}

All of the definitions and results in this subsection are classical and can be found in \cite{MR2177131}.

From now on, let $\Bbbk$ be a field. 
All vector spaces will be over $\Bbbk$, and all maps between vector spaces will be linear unless otherwise stated. 
Moreover, we will denote the usual tensor product of vector spaces simply by $\otimes$. 

Recall that a unitary associative $\Bbbk$-algebra $A$ is said to be \textcolor{myultramarine}{\textbf{nonnegatively graded}} if $A = \oplus_{n \in \NN_{0}} A_{n}$ 
is a direct sum decomposition of vector spaces such that $A_{n} \cdot A_{m} \subseteq A_{n+m}$, for all $n, m \in \NN_{0}$, and $1_{A} \in A_{0}$. 
The grading will be called \textcolor{myultramarine}{\textbf{internal}} or \textcolor{myultramarine}{\textbf{Adams}} to emphasize that it does not intervene in the Koszul sign rule. 
$A$ is said to be \textcolor{myultramarine}{\textbf{connected}} if we have furthermore $A_{0} = \Bbbk$, which we assume from now on. 
Let $A_{>0} = \oplus_{n \in \NN} A_{n}$ and let $V$ be a graded vector subspace of $A_{>0}$ such that the restriction of the canonical projection $A_{>0} \rightarrow A_{>0}/(A_{>0} \cdot A_{>0})$ to $V$ is a bijection. 
We will assume for the rest of this section that $V$ is finite-dimensional. 
We say in this case that $A$ is a \textcolor{myultramarine}{\textbf{finitely generated}} algebra. 
Then, the canonical map $\T (V) \rightarrow A$ induced by the inclusion $V \subseteq A$ is surjective, where $\T (V) = \oplus_{n \in \NN_{0}} V^{\otimes n}$ denotes the tensor algebra. 
We will usually write the product of $\T (V)$ by juxtaposition.  
A nonnegatively graded connected algebra $A$ is said to be \textcolor{myultramarine}{\textbf{quadratic}} if $V=A_1$ and there is a subspace $R \subseteq V^{\otimes 2}$ such that the kernel of $\T (V) \rightarrow A$ is the two-sided ideal generated by $R$. 
By abuse of terminology, we will identify the quadratic algebra $A$ with its presentation $(V,R)$, where $A = \T(V)/(R)$. 

Let $V^{*}$ be the dual vector space of $V$ and for every integer $n \geqslant 2$ define the pairing $\gamma_{n} : (V^{*})^{\otimes n} \otimes V^{\otimes n} \rightarrow \Bbbk$ 
by $\gamma_{n}(f_{1} \otimes \dots \otimes f_{n} , v_{1} \otimes \dots \otimes v_{n}) = f_{1}(v_{1}) \dots f_{n}(v_{n})$, for all $v_{1}, \dots, v_{n} \in V$ and $f_{1}, \dots, f_{n} \in V^{*}$. 
Set $R^{\perp} \subseteq V^{*} \otimes V^{*}$ to be the vector subspace orthogonal to $R$ for $\gamma_{2}$, \textit{i.e.} 
\[     R^{\perp} = \Big\{ \alpha \in (V^{*})^{\otimes 2} \mid \gamma_{2}(\alpha,r) = 0, \text{for all } r \in R \Big\}.     \]
The \textcolor{myultramarine}{\textbf{quadratic dual}} $A^{!}$ of a quadratic algebra $A = \T(V)/(R)$ is the algebra given by $\T(V^{*})/( R^{\perp} )$.
The induced internal grading is denoted by $A^{!} = \oplus_{n \in \NN_{0}} A_{-n}^{!}$. 
Note that $A_{0}^{!} = \Bbbk$ and $A_{-1}^{!} = V^{*}$. 
Moreover, for any integer $n \geqslant 2$, the composition of the isomorphism $(V^{*})^{\otimes n} \overset{\sim}{\rightarrow} (V^{\otimes n})^{*}$ induced by the pairing $\gamma_{n}$ and the dual of the canonical inclusion $\cap_{i=0}^{n-2} V^{\otimes i} \otimes R \otimes V^{\otimes (n-i-2)} \rightarrow V^{\otimes n}$ induces a canonical isomorphism of vector spaces
\begin{equation}
\label{eq:an}
   A_{-n}^{!} \overset{\sim}{\longrightarrow} \bigg(\bigcap_{i=0}^{n-2} V^{\otimes i} \otimes R \otimes V^{\otimes (n-i-2)} \bigg)^{*}. 
\end{equation}          

Recall that the graded dual $(A^{!})^{\#} = \oplus_{n \in \NN_{0}} (A_{-n}^{!})^{*}$ is a graded bimodule over $A^{!}$ via 
$(a \cdot f \cdot b)(c) = f(b c a)$, for all $a, b, c \in A^{!}$ and $f \in (A^{!})^{\#}$. 
Note in particular that $v \cdot f \in (A_{-n}^{!})^{*}$, for all $f \in (A_{-n-1}^{!})^{*}$, $v \in V^{*}$ and $n \in \NN_{0}$.
Since $V^{*} \otimes V \simeq \operatorname{End}_{\Bbbk}(V)$, there is a unique element $\iota \in V^{*} \otimes V$ whose image under the previous isomorphism is the identity of $V$. 
It is easy to prove that, if $\{ v_{i} \}_{i \in I}$ is a basis of $V$ and $\{ f_{i} \}_{i \in I}$ is the dual basis of $V^{*}$, 
then $\iota= \sum_{i \in I} f_{i} \otimes v_{i}$. 
For $n \in \NN_{0}$ set $K_{n}(A) = (A_{-n}^{!})^{*} \otimes A$, provided with the regular (right) $A$-module structure, 
and $d_{n+1} : K_{n+1}(A) \rightarrow K_{n}(A)$ as the multiplication by $\iota$ on the left. 
Furthermore, let $\epsilon : K_{0}(A) \rightarrow \Bbbk$ be the canonical projection from $A$ onto $A_{0} = \Bbbk$.
To reduce space, we will typically denote the composition $f\circ g$ of maps $f$ and $g$ simply by their juxtaposition $f g$.  
It is easy to see that $d_{n} d_{n+1} = 0$, for all $n \in \NN$, and $\epsilon d_{1} = 0$. 
The complex $(K_{\bullet}(A),d_{\bullet})$ 
is called the \textcolor{myultramarine}{\textbf{(right) Koszul complex of $A$}}. 
As usual, we can consider the Koszul complex as a complex indexed by $\ZZ$, with $K_{n}(A) = 0$ for all $n \in \ZZ_{\leqslant -1}$, and $d_{n} = 0$ for all $n \in \ZZ_{\leqslant 0}$. 
Equivalently, if we use the composition of the canonical isomorphism $V^{\otimes n} \overset{\sim}{\rightarrow} (V^{\otimes n})^{**}$ and the dual of \eqref{eq:an} for $A_{-n}^{!}$, then 
$d_{n+1} : K_{n+1}(A) \rightarrow K_{n}(A)$ identifies with the restriction of the map $\tilde{d}_{n+1} : V^{\otimes (n+1)} \otimes A \rightarrow V^{\otimes n} \otimes A$ determined by 
\begin{equation}
\label{eq:anbis}
    (v_{1} \otimes \dots \otimes v_{n+1}) \otimes a \mapsto (v_{1} \otimes \dots \otimes v_{n}) \otimes v_{n+1} a,     
\end{equation}          
for all $v_{1}, \dots, v_{n+1} \in V$, $a \in A$ and $n \in \NN_{0}$. 

The following result is immediate. 
\begin{fact}
\label{fact:k-alg}
Let $A$ be a quadratic algebra. Then, $\Ker(\epsilon) = \Img(d_{1})$ and 
$\Ker(d_{1}) = \Img(d_{2})$, and in fact  $(K_{\bullet}(A),d_{\bullet})$ coincides with the minimal projective resolution of the trivial right $A$-module $\Bbbk$ in the category 
of bounded below graded right $A$-modules, up to homological degree $2$. 
\end{fact}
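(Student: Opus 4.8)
The plan is to make the first three terms and two differentials of $K_{\bullet}(A)$ fully explicit and then verify the three assertions directly. By \eqref{eq:an} one has $(A^{!}_{0})^{*} = \Bbbk$, $(A^{!}_{-1})^{*} = V$ and $(A^{!}_{-2})^{*} \cong R$, so the relevant segment is $R \otimes A \xrightarrow{d_{2}} V \otimes A \xrightarrow{d_{1}} A \xrightarrow{\epsilon} \Bbbk$, where by \eqref{eq:anbis} the map $d_{1}$ is the multiplication $v \otimes a \mapsto va$ and $d_{2}$ is the restriction to $R \otimes A$ of $(v_{1} \otimes v_{2}) \otimes a \mapsto v_{1} \otimes v_{2} a$. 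The identity $\Ker(\epsilon) = \Img(d_{1})$ is then immediate: $\Ker(\epsilon) = A_{>0}$, while $\Img(d_{1}) = V \cdot A$, and since $V = A_{1}$ generates $A$ as an algebra, $V \cdot A = A_{>0}$.

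For the equality $\Ker(d_{1}) = \Img(d_{2})$ I would lift everything to the tensor algebra and argue separately in each Adams degree $n$ (the cases $n \leq 1$ being trivial). Writing $I = (R) \subseteq \T(V)$ for the defining ideal, so that $A_{n} = V^{\otimes n}/I_{n}$ with $I_{n} = \sum_{i=0}^{n-2} V^{\otimes i} \otimes R \otimes V^{\otimes (n-2-i)}$, exactness of $V \otimes -$ over the field $\Bbbk$ identifies $(V \otimes A)_{n} = V \otimes A_{n-1}$ with $V^{\otimes n}/(V \otimes I_{n-1})$, under which $d_{1}$ becomes the canonical projection onto $A_{n} = V^{\otimes n}/I_{n}$ (well defined because $V \cdot I_{n-1} \subseteq I_{n}$). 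Hence $\Ker(d_{1})_{n} = I_{n}/(V \otimes I_{n-1})$, whereas $\Img(d_{2})_{n}$ is the image of the inclusion $R \otimes V^{\otimes(n-2)} \hookrightarrow V^{\otimes n}$ in $V^{\otimes n}/(V \otimes I_{n-1})$, that is, $(R \otimes V^{\otimes(n-2)} + V \otimes I_{n-1})/(V \otimes I_{n-1})$. These two subquotients coincide precisely because of the elementary identity $I_{n} = R \otimes V^{\otimes(n-2)} + V \otimes I_{n-1}$, obtained by separating the summand $i = 0$ in the definition of $I_{n}$ from the remaining summands, which together make up $V \otimes I_{n-1}$.

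The two exactness statements, together with the surjectivity of $\epsilon$, say exactly that $K_{2} \to K_{1} \to K_{0} \to \Bbbk \to 0$ is exact, so $K_{\bullet}(A)$ is the start of a graded free resolution of $\Bbbk$; it is minimal in these degrees because $\Img(d_{1}) = A_{>0}$ equals the radical $K_{0} \cdot A_{>0}$ and $\Img(d_{2}) \subseteq V \otimes A_{>0} = K_{1} \cdot A_{>0}$ (the map $d_{2}$ multiplies the tail of each element by some $v_{2} \in V = A_{1} \subseteq A_{>0}$), so that $d_{1} \otimes_{A} \id_{\Bbbk}$ and $d_{2} \otimes_{A} \id_{\Bbbk}$ vanish. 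Since the minimal graded projective resolution of $\Bbbk$ is unique up to isomorphism of complexes of bounded below graded modules, $K_{\bullet}(A)$ agrees with it up to homological degree $2$. I expect the only step with genuine content to be the middle exactness $\Ker(d_{1}) = \Img(d_{2})$: once the ideal identity $I_{n} = R \otimes V^{\otimes(n-2)} + V \cdot I_{n-1}$ is isolated, both sides are manifestly the same subquotient of $V^{\otimes n}$, and the remaining ingredients — identifying the terms and differentials, the exactness at $K_{0}$, and minimality — are formal.
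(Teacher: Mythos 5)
Your low-degree identification of the complex and both exactness statements are correct: using \eqref{eq:an} and \eqref{eq:anbis} to realize the relevant segment as $R \otimes A \xrightarrow{d_{2}} V \otimes A \xrightarrow{d_{1}} A \xrightarrow{\epsilon} \Bbbk$, the equality $\Ker(\epsilon) = A_{>0} = V \cdot A = \Img(d_{1})$, and the degree-wise identity $I_{n} = R \otimes V^{\otimes(n-2)} + V \otimes I_{n-1}$ (which yields $\Ker(d_{1}) = \Img(d_{2})$ as the same subquotient of $V^{\otimes n}$) are all sound. The paper itself offers no argument for this Fact, calling it immediate, so on the exactness part there is nothing to compare against; your middle-exactness computation is the standard one.

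The final step, however, has a genuine gap. You deduce ``coincides with the minimal projective resolution up to homological degree $2$'' from exactness, the vanishing of $d_{1} \otimes_{A} \id_{\Bbbk}$ and $d_{2} \otimes_{A} \id_{\Bbbk}$, and uniqueness of minimal resolutions. Those conditions do identify $K_{0}$, $K_{1}$ and $d_{1}$ with the minimal resolution (they say exactly that $\epsilon$ and $d_{1} \colon K_{1} \rightarrow \Ker(\epsilon)$ are projective covers), but they do not identify $K_{2}$: for that one must also show that $d_{2} \colon K_{2} \rightarrow \Ker(d_{1})$ is a projective cover, i.e. that $\Ker(d_{2}) \subseteq K_{2} \cdot A_{>0}$. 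This extra condition does not follow formally from what you checked: if $P_{\bullet}$ is the minimal resolution of $\Bbbk$ and one replaces $P_{2}$ by $P_{2} \oplus A(-j)$, sending the new generator to any element of $\Img(d_{2})$ of internal degree $j$, the resulting complex is still exact at homological degrees $0$ and $1$ and still satisfies $d_{i} \otimes_{A} \id_{\Bbbk} = 0$ for $i = 1, 2$, yet its second term is not minimal. So your appeal to uniqueness cannot, by itself, pin down $K_{2}$. Fortunately the missing verification is one line in the present situation: an element of $\Ker(d_{2})$ of internal degree $2$ lies in $R \otimes \Bbbk$, where $d_{2}$ restricts to the inclusion $R \hookrightarrow V \otimes V = (V \otimes A)_{2}$ and is therefore injective; hence $\Ker(d_{2}) \subseteq R \otimes A_{>0} = K_{2} \cdot A_{>0}$, and with this added your argument is complete.
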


Recall that a quadratic algebra $A$ is said to be \textcolor{myultramarine}{\textbf{Koszul}} if its Koszul complex is exact in positive homological degrees. 

Let $M=\oplus_{n \in \ZZ} M_{n}$ 
be a graded right module over a quadratic algebra $A$ such that $\dim(M_{n})$ is finite for all $n \in \ZZ$. 
Given $j\in \mathbb{Z}$, we denote by $M(j)$ the same underlying module with shifted (internal) grading given by $M(j)_i=M_{j+i}$ for $i\in\ZZ$. 
We remark that a \textcolor{ultramarine}{\textbf{morphism of graded right $A$-modules}} $f : M \rightarrow N$ is a homogeneous $A$-linear map of degree zero. 
Moreover, for a nonzero graded module $M$ over $A$, if there exist integers $s \leqslant t$ such that $\dim(M_{n}) = 0$ for all $n \in \ZZ \setminus \llbracket s,t \rrbracket$ 
and $\dim(M_{s}) \cdot \dim(M_{t}) \neq 0$, 
then we say that the \textcolor{ultramarine}{\textbf{dimension vector}} of $M$ is $(\dim(M_{s}),\dots,\dim(M_{t}))$.

Let $M = \oplus_{n \in \NN_0} M_{n}$ be a graded right module over the quadratic algebra $A = \T(V)/(R)$.
We say that $M$ is \textcolor{myultramarine}{\textbf{quadratic}} if the canonical map 
$\rho|_{M_{0} \otimes A} : M_{0} \otimes A \rightarrow M$ given by the restriction of the action $\rho : M \otimes A \rightarrow A$ is surjective and the kernel of $\rho|_{M_{0} \otimes A}$ is the $A$-submodule generated by a vector subspace $R_{M} \subseteq  M_{0} \otimes V$, called the \textcolor{myultramarine}{\textbf{space of relations}} of M. 
We will typically denote $M_{0}$ by $V_{M}$, and call it the \textcolor{myultramarine}{\textbf{space of generators}} of $M$. 
We will assume for the rest of this section that $V_M$ is finite-dimensional. 
Note that, by definition, a quadratic module is generated by a space of generators concentrated in degree zero. 
By abuse of notation we will usually denote a quadratic module by its presentation $(V_{M},R_{M})_{A}$, which we will also be denoted simply by $(V_{M},R_{M})$ if $A$ is clear from the context. 
Note that, if $A$ is a quadratic algebra, then the trivial module $\Bbbk$ is quadratic with $V_{\Bbbk} = \Bbbk$ and $R_{\Bbbk} = V_{\Bbbk} \otimes V$. 

Given a quadratic right module $M$ with presentation $(V_{M},R_{M})_{A}$, we define its \textcolor{myultramarine}{\textbf{quadratic dual}} $M^{!_{m}}$ as the quotient of the graded right $A^{!}$-module $V_{M}^{*} \otimes A^{!}$ by the 
graded submodule generated by $R_{M}^{\perp} \subseteq V_{M}^{*} \otimes V^{*}$, where 
\[     R_{M}^{\perp} = \big\{ \beta \in V_{M}^{*} \otimes V^{*} \mid \gamma_{2,M}(\beta,r) = 0 \text{ for all } r \in R_{M} \big\},      \]
where 
$\gamma_{2, M} : V_{M}^{*} \otimes V^{*} \otimes V_{M} \otimes V \rightarrow \Bbbk$ is the map 
given by $\gamma_{2, M}(f \otimes g \otimes u \otimes v) = f(u) g(v)$ for $f \in V_{M}^{*}$, $g \in V^{*}$, $u \in V_{M}$ and $v \in V$.
Note that $M^{!_{m}} = \oplus_{n \in \NN_{0}} M^{!_{m}}_{-n}$ is a graded right module over $A^{!}$, where $M^{!_{m}}_{-n}$ sits in degree $-n$. 
As a consequence, the graded dual $(M^{!_{m}})^{\#} = \oplus_{n \in \NN_{0}} (M^{!_{m}}_{-n})^{*}$ is a graded left module over $A^{!}$. 
On the other hand, it is easy to see that $\Bbbk^{!_{m}} = A^{!}$. 

Let $M$ be a quadratic right module $M$ over the quadratic algebra $A$. 
For $n \in \NN_{0}$, set $\K_{n}^{\operatorname{mod}}(M) = (M_{-n}^{!_{m}})^{*} \otimes A$, provided with the regular right $A$-module structure, 
and $d_{n+1} : \K_{n+1}^{\operatorname{mod}}(M) \rightarrow \K_{n}^{\operatorname{mod}}(M)$ as the multiplication by $\iota$ on the left. 
Furthermore, let $\epsilon : \K_{0}^{\operatorname{mod}}(M) \rightarrow M$ be the canonical morphism $\rho_{V_{M} \otimes A} : V_{M} \otimes A \rightarrow M$ onto $M$.
It is easy to see that $d_{n} d_{n+1} = 0$, for all $n \in \NN$, and $\rho_{V_{M} \otimes A}  d_{1} = 0$. 
The complex $(\K_{\bullet}^{\operatorname{mod}}(M),d_{\bullet})$ 
is called the \textcolor{myultramarine}{\textbf{(right) Koszul complex of $M$}}. 
As usual, we can consider the Koszul complex as a complex indexed by $\ZZ$, with $\K_{n}^{\operatorname{mod}}(M) = 0$ for all $n \in \ZZ_{\leqslant -1}$, and $d_{n} = 0$ for all $n \in \ZZ_{\leqslant 0}$. 

The following immediate result is the analogous of Fact \ref{fact:k-alg} for right modules. 
\begin{fact}
\label{fact:k-mod}
Let $M$ be a quadratic right module over a quadratic algebra $A$. Then, $\Ker(\rho_{V_{M} \otimes A}) = \Img(d_{1})$, and in fact  $(\K_{\bullet}^{\operatorname{mod}}(M),d_{\bullet})$ coincides with the minimal projective resolution of $M$ in the category of bounded below graded right $A$-modules, up to homological degree $1$. 
\end{fact}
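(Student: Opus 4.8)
The plan is to mirror the proof of Fact \ref{fact:k-alg}, replacing the trivial module by $M$ and the quadratic dual $A^{!}$ by the module quadratic dual $M^{!_{m}}$. The first task is to compute the two lowest terms of the Koszul complex explicitly. By construction $M^{!_{m}}$ is a quotient of $V_{M}^{*} \otimes A^{!}$, so in internal degree $0$ we get $M_{0}^{!_{m}} = V_{M}^{*}$ and hence $\K_{0}^{\operatorname{mod}}(M) = (M_{0}^{!_{m}})^{*} \otimes A = V_{M} \otimes A$, with $\epsilon = \rho_{V_{M} \otimes A}$. In internal degree $-1$, the relations imposed on $V_{M}^{*} \otimes A^{!}$ are exactly $R_{M}^{\perp} \subseteq V_{M}^{*} \otimes V^{*}$, so $M_{-1}^{!_{m}} = (V_{M}^{*} \otimes V^{*})/R_{M}^{\perp}$; dualizing and using $R_{M}^{\perp\perp} = R_{M}$ (finite-dimensionality of $V_{M}$ and $V$) gives the canonical identification $(M_{-1}^{!_{m}})^{*} = R_{M} \subseteq V_{M} \otimes V$, whence $\K_{1}^{\operatorname{mod}}(M) = R_{M} \otimes A$. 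This is the exact analogue of \eqref{eq:an} at the module level.

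Next I would identify the differential $d_{1} : R_{M} \otimes A \to V_{M} \otimes A$. Using the module version of the description \eqref{eq:anbis} of multiplication by $\iota$, the map $d_{1}$ is the restriction to $R_{M} \otimes A$ of the assignment $(u \otimes v) \otimes a \mapsto u \otimes v a$ for $u \in V_{M}$, $v \in V$, $a \in A$; in other words $d_{1}$ sends $r \otimes a$ to $r \cdot a$, where $r \in R_{M} \subseteq (V_{M} \otimes A)_{1}$ acts by right multiplication in the free module $V_{M} \otimes A$. Consequently $\Img(d_{1})$ is precisely the $A$-submodule $R_{M} \cdot A$ of $V_{M} \otimes A$ generated by $R_{M}$. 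By the very definition of a quadratic module, $\Ker(\rho_{V_{M} \otimes A})$ is this same submodule, giving the first assertion $\Ker(\rho_{V_{M} \otimes A}) = \Img(d_{1})$.

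For the minimality statement I would check that $\epsilon$ and $d_{1}$ are projective covers. Since $M$ is generated in internal degree $0$, the map $\epsilon : V_{M} \otimes A \to M$ is an isomorphism in degree $0$, so $\Ker(\epsilon)$ lies in degrees $\geqslant 1$, i.e.\ $\Ker(\epsilon) \subseteq V_{M} \otimes A_{>0} = \operatorname{rad}(\K_{0}^{\operatorname{mod}}(M))$; hence $\epsilon$ is a projective cover of $M$. For $d_{1}$, every $r \in R_{M}$ lies in $V_{M} \otimes V$ with $V = A_{1}$, so $d_{1}(r \otimes a) = r \cdot a \in V_{M} \otimes A_{>0}$ and thus $\Img(d_{1}) \subseteq \operatorname{rad}(\K_{0}^{\operatorname{mod}}(M))$. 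Writing $\Omega = \Ker(\epsilon) = R_{M} \cdot A$, its degree-$1$ component is $R_{M}$, while $\Omega \cdot A_{>0}$ is concentrated in degrees $\geqslant 2$; since $A$ is generated in degree $1$ one gets $\Omega_{n} = \Omega_{n-1} A_{1}$ for $n \geqslant 2$, so $\Omega/\Omega A_{>0}$ is concentrated in degree $1$ and equals $R_{M}$. As $d_{1}$ restricts to the identity $R_{M} \otimes A_{0} = R_{M} \to \Omega_{1} = R_{M}$, it is a projective cover of $\Omega$. Therefore the truncation $\K_{1}^{\operatorname{mod}}(M) \xrightarrow{d_{1}} \K_{0}^{\operatorname{mod}}(M) \xrightarrow{\epsilon} M$ coincides with the beginning of the minimal projective resolution of $M$, which is the second assertion.

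The argument is entirely formal and parallels Fact \ref{fact:k-alg}; the only point requiring a little care is the identification $(M_{-1}^{!_{m}})^{*} \cong R_{M}$ together with the compatible description of $d_{1}$ as right multiplication, since this is what makes $\Img(d_{1})$ equal to the submodule of relations rather than merely contained in it. No deeper obstacle is expected, as everything beyond homological degree $1$ (where exactness and minimality can genuinely fail unless $M$ is Koszul) lies outside the scope of the statement.
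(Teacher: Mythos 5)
Your proof is correct. The paper gives no argument for this statement at all — it is declared immediate, as the analogue of Fact \ref{fact:k-alg} — and what you wrote is precisely the standard verification being left implicit: identifying $\K_{0}^{\operatorname{mod}}(M) = V_{M} \otimes A$ and $\K_{1}^{\operatorname{mod}}(M) = R_{M} \otimes A$ via $(R_{M}^{\perp})^{\perp} = R_{M}$ (legitimate under the paper's standing finite-dimensionality assumptions on $V$ and $V_{M}$), recognizing $d_{1}$ as right multiplication so that $\Img(d_{1})$ is the submodule generated by $R_{M}$, which equals $\Ker(\rho_{V_{M}\otimes A})$ by definition of a quadratic module, and then checking minimality degreewise through the projective-cover criterion.
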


Let $M$ and $N$ be two quadratic right modules over the quadratic algebra $A = \T(V)/(R)$, with presentations $(V_{M},R_{M})$ and $(V_{N},R_{N})$, respectively. 
Let us denote by $\operatorname{hom}_{A}(M,N)$ the 
vector space formed by all homogeneous morphisms $f : M \rightarrow N$ of right $A$-modules of degree zero, and by 
$\operatorname{Hom}((V_{M},R_{M}),(V_{N},R_{N}))$ 
the vector space formed by all linear morphisms $g : V_{M} \rightarrow V_{N}$ satisfying that $(g \otimes \operatorname{id}_{V})(R_{M}) \subseteq R_{N}$. 
Then, it is clear that the map 
\begin{equation}
\label{eq:rest-qd}
\operatorname{hom}_{A}(M,N) \rightarrow \operatorname{Hom}\big((V_{M},R_{M}),(V_{N},R_{N})\big) 
\end{equation}
sending $f$ to its restriction $f|_{V_{M}} : V_{M} \rightarrow V_{N}$
is an isomorphism. 
This tells us that $f : M \rightarrow N$ is a monomorphism (resp., epimorphism) in the category of quadratic right $A$-modules with homogeneous morphisms of $A$-modules of degree zero if and only if $f|_{M_{0}}:M_0\to N_0$ is injective (resp., surjective). 
In particular, a morphism of the category of quadratic right $A$-modules with homogeneous morphisms of $A$-modules of degree zero is an epimorphism if and only if it is a surjection. 

\begin{rk}
Assume the space of generators of the quadratic algebra $A$ has nonzero dimension.
Then, the category of quadratic right $A$-modules with homogeneous morphisms of $A$-modules of degree zero is not abelian, since the canonical projection $A \rightarrow \Bbbk$ is a monomorphism and an epimorphism but it is not an isomorphism. 
In particular, the example shows that monomorphisms of the category of quadratic right modules are not necessarily injective. 
For a less trivial example, consider $\Bbbk$ of characteristic different from $2$, $A = \Bbbk\langle x, y \rangle/(xy-yx) = \Bbbk[x,y]$, $M = e.A$, $M' = (e_{1}.A \oplus e_{2}.A)/(e_{1}.x + e_{2}.x, e_{1}.y - e_{2}.y)$ and the morphism $f : M \rightarrow M'$ of right $A$-modules sending $e$ to $e_{1}$. 
Then $f$ is a non-injective monomorphism of quadratic right modules, since 
\[     f(e.xy + e.yx) = (e_{1}.x + e_{2}.x).y + (e_{1}.y - e_{2}.y).x     \] 
vanishes, but $f|_{M_{0}}$ and 
$f|_{M_{1}}$ are injective.
\end{rk}

Given $f \in \operatorname{hom}_{A}(M,N)$, define the homogeneous morphism $f^{!_{m}} : N^{!_{m}} \rightarrow M^{!_{m}}$ of right $A^{!}$-modules of degree zero whose restriction to $V_{N}^{*}$ is precisely the dual $(f|_{V_{M}})^{*}$ of $f|_{V_{M}} : V_{M} \rightarrow V_{N}$. 
Since $((f|_{V_{M}})^{*} \otimes \operatorname{id}_{V^{*}})(R_{N}^{\perp}) \subseteq R_{M}^{\perp}$, the map $f^{!_{m}}$ is well defined. 
By taking the graded dual $(f^{!_{m}})^{\#} : (M^{!_{m}})^{\#} \rightarrow (N^{!_{m}})^{\#}$ we obtain a homogeneous morphism of left $A^{!}$-modules
of degree zero. 
We finally define the morphism 
\begin{equation}
\label{eq:koszul-fun}
\K_{\bullet}^{\operatorname{mod}}(f) : \K_{\bullet}^{\operatorname{mod}}(M) \rightarrow \K_{\bullet}^{\operatorname{mod}}(N)
\end{equation}
of complexes of right $A$-modules by 
$\K_{\bullet}^{\operatorname{mod}}(f) = (f^{!_{m}})^{\#} \otimes \operatorname{id}_{A}$. 
It is clear that $\K_{\bullet}^{\operatorname{mod}}(f  g) = \K_{\bullet}^{\operatorname{mod}}(f) \circ \K_{\bullet}^{\operatorname{mod}}(g)$ and $\K_{\bullet}^{\operatorname{mod}}(\operatorname{id}_{M}) = \operatorname{id}_{\K_{\bullet}^{\operatorname{mod}}(M)}$, for $f \in \operatorname{hom}_{A}(M,N)$, $g \in \operatorname{hom}_{A}(N',M)$ and $N'$ 
a quadratic right $A$-module. 

\begin{rk}
If $f$ is injective, then $f|_{V_{M}}$ is also injective, which implies that its dual 
$(f|_{V_{M}})^{*}$ is surjective, so $f^{!_{m}}$ 
is surjective as well, which in turn implies that 
$(f^{!_{m}})^{\#}$ and $\K_{\bullet}^{\operatorname{mod}}(f)$ are injective. 
\end{rk}

From now on, by (resp., graded, quadratic) module over a (resp., graded, quadratic) algebra $A$ we will refer to (resp., graded, quadratic) right $A$-module, unless otherwise stated. 

\subsection{Resolving data on quadratic algebras}
\label{subsection:resolving}

We introduce the following definition. 
A \textcolor{myultramarine}{\textbf{resolving datum}} on a quadratic algebra $A$ is a finite set $\MM = \{ M^{0}, \dots, M^{N} \}$ of pairwise non-isomorphic quadratic (right) $A$-modules with $N \in \NN_{0}$ such that $M^{0} = \Bbbk$ is the trivial module and a map 
\[     \hh : \llbracket 0 , N \rrbracket^{2} \times \NN^{2} \rightarrow \NN_{0}^{2}      \]
such that 
\begin{enumerate}[label=(A.\arabic*)]
    \item\label{item:a} $\hh$ has finite support (\textit{i.e.} there exists a finite set $S \subseteq \llbracket 0 , N \rrbracket^{2} \times \NN^{2}$ such that $\hh(i,j,k,\ell) = (0,0)$ for all $(i,j,k,\ell) \in (\llbracket 0 , N \rrbracket^{2} \times \NN^{2}) \setminus S$), 
    \item\label{item:b} there are short exact sequences of right $A$-modules
    \begin{small}
    \begin{equation}
    \label{eq:ses}
   \hskip -1cm 
    \begin{tikzcd}[column sep=small]
    0 \ar[r] 
    &
    \bigoplus\limits_{j=0}^{N}\bigoplus\limits_{\ell \in \NN} \Big(M^{j}(-\ell)\Big)^{\pi_{1}(\hh(i,j,k,\ell))}
    \ar[r] 
    &
    \operatorname{H}_{k}(\K_{\bullet}^{\operatorname{mod}}(M^{i}))
    \ar[r] 
    &
    \bigoplus\limits_{j=0}^{N}\bigoplus\limits_{\ell \in \NN} \Big(M^{j}(-\ell)\Big)^{\pi_{2}(\hh(i,j,k,\ell))}
    \ar[r] 
    &
    0
    \end{tikzcd}
    \end{equation}
   \end{small}
    with homogeneous morphisms of degree zero for all $(i,k) \in \llbracket 0 ,  N \rrbracket \times \NN$, where $\pi_{i} : \NN_{0}^{2} \rightarrow \NN_{0}$ is the canonical projection on the $i$-th component for $i \in \{ 1 , 2 \}$, 
    \item If \eqref{eq:ses} splits for some $i_{0} \in \llbracket 0 , N \rrbracket$ and $k_{0} \in \NN$, then $\pi_{1}(\hh(i_{0},j,k_{0},\ell)) = 0$ for all $j \in \llbracket 0 , N \rrbracket$ and $\ell \in \NN$. 
\end{enumerate}

Recall that a \textcolor{myultramarine}{\textbf{quiver}} is the datum of 
a set $Q_{0}$, called \textcolor{myultramarine}{\textbf{set of vertices}}, and a set $Q_{1}$, called \textcolor{myultramarine}{\textbf{set of arrows}}, together with maps $s, t : Q_{1} \rightarrow Q_{0}$ called the \textcolor{myultramarine}{\textbf{source}} and \textcolor{myultramarine}{\textbf{target}} maps of the quiver. 
We say the quiver is \textcolor{myultramarine}{\textbf{bigraded}} if we further have a map $\bideg : Q_{1} \rightarrow \ZZ^{2}$. 
We will denote the bidegree of an arrow $\alpha$ of $Q$ by $\bideg(\alpha)=(\bideg_{1}(\alpha),\bideg_{2}(\alpha)) \in \ZZ^{2}$. 
The \textcolor{myultramarine}{\textbf{difference degree}} of an arrow $\alpha$ is defined as $\ddeg(\alpha)=\bideg_{2}(\alpha)-\bideg_{1}(\alpha) \in \ZZ$. 

We also recall that, given a quiver with set of vertices $Q_{0}$ and set of arrows $Q_{1}$, a \textcolor{myultramarine}{\textbf{path}} of length $n \in \NN_{0}$ is a vertex if $n=0$, and a tuple $\bar{\alpha} =(\alpha_{1}, \dots, \alpha_{n})$ in $Q_{1}^{n}$ for $n \in \NN$ such that $t(\alpha_{i}) = s(\alpha_{i+1})$ for all $i \in \llbracket 1 , n-1 \rrbracket$. 
As usual, we define $s(e) = t(e) = e$ for any vertex $e$, $s(\alpha_{1}, \dots, \alpha_{n}) = s(\alpha_{1})$ and $t(\alpha_{1}, \dots, \alpha_{n}) = t(\alpha_{n})$ for every path $\bar{\alpha} = (\alpha_{1}, \dots, \alpha_{n})$ of length $n \in \NN$. 
Furthermore, if the quiver is bigraded, given a path $\bar{\alpha} = (\alpha_{1}, \dots, \alpha_{n})$ of length $n \in \NN$, we define its bidegree $\bideg(\bar{\alpha}) = (\bideg_{1}(\bar{\alpha}),\bideg_{2}(\bar{\alpha})) \in \ZZ^{2}$ by $(\sum_{i=1}^{n} \bideg_1(\alpha_{i}) , \sum_{i=1}^{n} \bideg_2(\alpha_{i}))$. 
The bidegree of a path of length zero given by a vertex $e$ is defined as
$\bideg(e)=(\bideg_{1}(e),\bideg_{2}(e))=(0,0)$. 
The \textcolor{myultramarine}{\textbf{difference degree}} of a path $\bar{\alpha}$ is defined as $\ddeg(\bar{\alpha})=\bideg_{2}(\bar{\alpha})-\bideg_{1}(\bar{\alpha}) \in \ZZ$. 

Given a quadratic algebra together with a resolving datum as in the first paragraph of this subsection, we define the associated \textcolor{myultramarine}{\textbf{resolving quiver}} $\RQ_{A}$ as the unique bigraded quiver with set of vertices $\{ M^{0}, \dots, M^{N} \}$, and whose set of arrows of degree $(d',d'')$ from $M^{i}$ to $M^{j}$ has cardinality $\pi_{1}(\hh(i,j,d'-1,d'')) + \pi_{2}(\hh(i,j,d'-1,d''))$.
To be able to manipulate these arrows, assume we have chosen a fixed set $\Ar_{i,j,d',d''}'$ of arrows of degree $(d',d'')$ from $M^{i}$ to $M^{j}$ of cardinality $\pi_{1}(\hh(i,j,d'-1,d''))$ and another fixed set $\Ar_{i,j,d',d''}''$ of arrows of degree $(d',d'')$ from $M^{i}$ to $M^{j}$ of cardinality $\pi_{2}(\hh(i,j,d'-1,d''))$, such that $\Ar_{i,j,d',d''}'$ and $\Ar_{i,j,d',d''}''$
are disjoint. 
For every $i \in \llbracket 0,N\rrbracket$ and $d' \in \NN$, we also set a strict partial order on the set of all arrows $\alpha$ of $\RQ_{A}$ such that $s(\alpha)=M^{i}$ and $\bideg_{1}(\alpha) = d'$ by setting precisely that every arrow of $\Ar_{i,j,d',d''}''$ is strictly less than every arrow of 
$\Ar_{i,j',d',d'''}'$ 
for all $j,j' \in \llbracket 0,N\rrbracket$ and $d'',d''' \in \NN$. 
Note that this quiver is finite by \ref{item:a}. 
We will say that the resolving datum is 
\textcolor{myultramarine}{\textbf{connected}} if the associated resolving quiver is connected. 

As we will see in Theorem \ref{thm:rd}, the resolving quiver $\RQ_{A}$ contains some homological information of the algebra $A$. 
The first clues in this direction are given by the following results, the first of which is trivial. 

\begin{prop}
\label{proposition:koszul}
A quadratic algebra $A$ is Koszul if and only if the resolving quiver associated to a (equivalently, to every) connected resolving datum on $A$ has no arrows.
\end{prop}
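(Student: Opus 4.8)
The plan is to reduce everything to the vertex $M^{0}=\Bbbk$ and to the instances $i=0$ of the short exact sequences \eqref{eq:ses}. The starting point is that $\Bbbk^{!_{m}}=A^{!}$, so $\K_{\bullet}^{\operatorname{mod}}(M^{0})$ is exactly the Koszul complex $(K_{\bullet}(A),d_{\bullet})$ of the algebra; thus $A$ is Koszul precisely when $\operatorname{H}_{k}(\K_{\bullet}^{\operatorname{mod}}(M^{0}))=0$ for every $k\in\NN$.

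I would then read off the arrows leaving $M^{0}$ from these homology modules. Fix $k\in\NN$ and look at \eqref{eq:ses} for $i=0$. As each $M^{j}$ is a nonzero graded module, a finite direct sum $\bigoplus_{j,\ell}\big(M^{j}(-\ell)\big)^{a_{j,\ell}}$ is zero if and only if all exponents $a_{j,\ell}$ vanish. Hence the middle term $\operatorname{H}_{k}(\K_{\bullet}^{\operatorname{mod}}(M^{0}))$ is zero if and only if both outer terms are, that is, if and only if $\pi_{1}(\hh(0,j,k,\ell))=\pi_{2}(\hh(0,j,k,\ell))=0$ for all $j\in\llbracket 0,N\rrbracket$ and $\ell\in\NN$. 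By the definition of the number of arrows of $\RQ_{A}$ with source $M^{0}$, this says exactly that $\RQ_{A}$ has no arrow starting at $M^{0}$. Together with the first paragraph this gives the clean equivalence that $A$ is Koszul if and only if no arrow of $\RQ_{A}$ starts at $M^{0}$.

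It then remains to pass from ``no arrow out of $M^{0}$'' to ``no arrow at all'', which is where I would use connectedness. The implication ``no arrows $\Rightarrow A$ Koszul'' is immediate, since no arrows in particular means none leaving $M^{0}$. For the converse, assume $A$ is Koszul; by the above $M^{0}$ has no outgoing arrows, and I would show that it has no incoming ones either, so that $M^{0}$ is an isolated vertex of the connected quiver $\RQ_{A}$ and therefore its only vertex, forcing $N=0$ and hence the absence of all arrows. An incoming arrow $M^{j}\to M^{0}$ with $j\neq 0$ would force $\Bbbk(-\ell)=M^{0}(-\ell)$ to appear as a sub or a quotient in \eqref{eq:ses} for $M^{j}$, hence as a subquotient of $\operatorname{H}_{k}(\K_{\bullet}^{\operatorname{mod}}(M^{j}))$ for some $k\in\NN$.

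The step I expect to be the main obstacle is exactly this last one: showing that, when $A$ is Koszul, the trivial module cannot occur as a subquotient of the positive Koszul homology $\operatorname{H}_{k}(\K_{\bullet}^{\operatorname{mod}}(M^{j}))$, $k\geqslant 1$, of the remaining modules, so that $M^{0}$ receives no arrow. I would attack this through the internal grading: using Fact \ref{fact:k-mod} together with the fact that $M^{!_{m}}$ is generated in degree zero, the diagonal map $(M^{!_{m}}_{-k})^{*}\to (M^{!_{m}}_{-k+1})^{*}\otimes V$ is injective, whence $\operatorname{H}_{k}(\K_{\bullet}^{\operatorname{mod}}(M^{j}))$ is concentrated in internal degrees $\geqslant k+1$ and every arrow of $\RQ_{A}$ satisfies $\bideg_{1}\geqslant 2$ and $\ddeg\geqslant 0$. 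Combined with connectedness of $\RQ_{A}$ and with the fact that $M^{0}$ emits no arrow, this grading information should pin down that the connected component of $M^{0}$ is trivial. Everything else is a routine unwinding of the definitions of $\RQ_{A}$ and of the sequences \eqref{eq:ses}.
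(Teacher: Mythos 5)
The paper contains no proof of this proposition to compare against: it is introduced with the words ``the first of which is trivial''. Your first two paragraphs are certainly the intended trivial content, and they are correct: since $\Bbbk^{!_{m}}=A^{!}$, the complex $\K_{\bullet}^{\operatorname{mod}}(M^{0})$ is $K_{\bullet}(A)$, and because the outer terms of \eqref{eq:ses} are finite sums of nonzero graded modules, $\operatorname{H}_{k}(\K_{\bullet}^{\operatorname{mod}}(M^{0}))=0$ for all $k\in\NN$ if and only if $\hh(0,j,k,\ell)=(0,0)$ for all $j,k,\ell$, i.e.\ if and only if no arrow of $\RQ_{A}$ starts at $M^{0}$ (loops at $M^{0}$ included). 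This settles ``no arrows $\Rightarrow$ Koszul'' completely.

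The gap is exactly where you flag it, and the repair you sketch does not close it. The grading facts you invoke --- that $\operatorname{H}_{k}(\K_{\bullet}^{\operatorname{mod}}(M^{j}))$ is concentrated in internal degrees $\geqslant k+1$ because $M^{!_{m}}$ is generated in degree zero, hence that every arrow has $\bideg_{1}\geqslant 2$ and $\ddeg\geqslant 0$ --- are true, but they constrain only the bidegree $(d',d'')$ of an arrow, never its target: an arrow $M^{j}\rightarrow M^{0}$ of bidegree $(k+1,\ell)$ with $\ell\geqslant k+1$ is compatible with all of them, so they cannot show that $M^{0}$ receives no arrow. Moreover the claim you need (for $A$ Koszul, $\Bbbk(-\ell)$ never occurs among the outer terms of \eqref{eq:ses} for $\operatorname{H}_{k}(\K_{\bullet}^{\operatorname{mod}}(M^{j}))$, $j\neq 0$) is not a formal consequence of Koszulity: nothing in the definition of a resolving datum forbids a non-Koszul quadratic module $M^{1}$ over a Koszul algebra whose Koszul homology is an extension of sums of shifted trivial modules, and then $\{\Bbbk,M^{1}\}$ with the corresponding $\hh$ would be a connected datum, in the undirected sense, with an arrow $M^{1}\rightarrow M^{0}$. (Over $A=\Bbbk[x,y]$ one can rule this out by hand --- the relevant homologies are free, since kernels of maps of graded free modules are free and the images of the Koszul differentials are generated in a single degree --- but no such argument is available in general, and you offer none.) The statement does become genuinely immediate if ``connected'' is read as ``every vertex is reachable from $M^{0}$ by a directed path'', a reading consistent with the role of the path sets $\Pa_{M^{0}}$ in Theorem \ref{thm:rd} and with Figure \ref{figure:fk4}: Koszulity kills every arrow out of $M^{0}$, so no other vertex is reachable, forcing $N=0$ and hence the absence of all arrows. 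Under the standard undirected notion of connectivity, however, your proof is incomplete at the step you yourself single out, and the paper's ``trivial'' offers no way to close it.
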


\begin{prop}
\label{proposition:almost-koszul}
Let $p, q \geqslant 2$ be integers. 
A quadratic algebra $A$ is $(p,q)$-Koszul (in the sense introduced by S. Brenner, M. Butler and A. King in \cite{MR1930968}) if and only if it is finite dimensional with $\dim(A_{p}) \neq 0$ and $\dim(A_{p+1}) =0$, the Koszul complex of $A$ has finite length $q$ and the resolving quiver associated to a (equivalently, to every) connected resolving datum on $A$ has only one vertex and $\dim(A_{p})\cdot \dim(A_{q}^{!})$ arrows of bidegree $(q+1,q+p)$.
\end{prop}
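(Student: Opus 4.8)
The plan is to recast the $(p,q)$-Koszul condition as a statement about the homology of the one-sided Koszul complex $K_{\bullet}(A)$, and then to match it with the data of a resolving quiver having a single vertex. The main external input I would use is the reformulation of the definition of S. Brenner, M. Butler and A. King from \cite{MR1930968}: under the hypotheses that $A$ is finite dimensional and connected with $\dim(A_{p}) \neq 0$ and $\dim(A_{p+1}) = 0$, and that $K_{\bullet}(A)$ has finite length $q$ (equivalently $A_{-q}^{!} \neq 0$ and $A_{-n}^{!} = 0$ for all $n > q$), the algebra $A$ is $(p,q)$-Koszul if and only if $\operatorname{H}_{k}(K_{\bullet}(A)) = 0$ for all $k \in \llbracket 1, q-1 \rrbracket$ and $\operatorname{H}_{q}(K_{\bullet}(A)) \cong \Bbbk(-(q+p))^{\dim(A_{p}) \cdot \dim(A_{q}^{!})}$. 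Concretely, this rests on the fact that $K_{\bullet}(A)$ coincides with the minimal projective resolution of $\Bbbk$ through homological degree $q$ (cf. Fact \ref{fact:k-alg}), so that vanishing in degrees $1, \dots, q-1$ expresses the linearity of the resolution up to that point, while $\operatorname{H}_{q}(K_{\bullet}(A))$ is its $(q+1)$-th syzygy.

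The computation I would isolate is the identification of this top homology. Since $K_{n}(A) = (A_{-n}^{!})^{*} \otimes A$ lives in internal degrees $\llbracket n, n+p \rrbracket$ and the differential preserves the internal degree, the top component $(A_{-q}^{!})^{*} \otimes A_{p}$ of $K_{q}(A)$, which sits in internal degree $q+p$, is automatically annihilated by $d_{q}$, because $K_{q-1}(A)$ has no component in degree $q+p$ (its top degree being $q-1+p$). As $A_{>0}$ acts by zero on $(A_{-q}^{!})^{*} \otimes A_{p}$, this is a trivial submodule isomorphic to $\Bbbk(-(q+p))^{\dim(A_{p}) \cdot \dim(A_{q}^{!})}$ contained in $\operatorname{H}_{q}(K_{\bullet}(A)) = \Ker(d_{q})$ (note $K_{q+1}(A) = 0$). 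The substance of the reformulation above is precisely that this trivial module exhausts $\operatorname{H}_{q}(K_{\bullet}(A))$ and that the intermediate homologies vanish.

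With this in hand the two implications reduce to bookkeeping against the definition of a resolving datum, using that $\Bbbk^{!_{m}} = A^{!}$ yields $\K_{\bullet}^{\operatorname{mod}}(\Bbbk) = K_{\bullet}(A)$. For the direct implication I would exhibit the resolving datum with the single module $\MM = \{ \Bbbk \}$ and $\hh(0,0,q,q+p) = (0, \dim(A_{p}) \cdot \dim(A_{q}^{!}))$, all other values being $(0,0)$: condition \ref{item:a} is the finite support, the sequences \eqref{eq:ses} reduce to the homology just computed so \ref{item:b} holds, and the last axiom holds since in the only nontrivial sequence we have set the first multiplicity to zero (consistently with its semisimple middle term, which makes it split). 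The associated quiver has the single vertex $\Bbbk$ with exactly $\dim(A_{p}) \cdot \dim(A_{q}^{!})$ loops of bidegree $(q+1, q+p)$, and is connected. Conversely, given the dimension and length hypotheses together with a connected resolving datum whose quiver has one vertex and $\dim(A_{p}) \cdot \dim(A_{q}^{!})$ arrows of bidegree $(q+1, q+p)$, the vertex is $\Bbbk$ and reading \eqref{eq:ses} for $i = 0$ gives $\operatorname{H}_{k}(K_{\bullet}(A)) = 0$ for $k \neq q$; for $k = q$ both outer terms are concentrated in internal degree $q+p$ of total dimension $\dim(A_{p}) \cdot \dim(A_{q}^{!})$, and since a graded module concentrated in a single internal degree is a direct sum of shifts of $\Bbbk$, we obtain $\operatorname{H}_{q}(K_{\bullet}(A)) \cong \Bbbk(-(q+p))^{\dim(A_{p}) \cdot \dim(A_{q}^{!})}$, whence $A$ is $(p,q)$-Koszul by the reformulation.

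It remains to justify the parenthetical ``equivalently, to every''. Here I would use that the homologies $\operatorname{H}_{k}(\K_{\bullet}^{\operatorname{mod}}(\Bbbk)) = \operatorname{H}_{k}(K_{\bullet}(A))$ are intrinsic to $A$: once $A$ is $(p,q)$-Koszul they are direct sums of shifts of the trivial module, so in any connected resolving datum the only summand $M^{j}(-\ell)$ that can occur in \eqref{eq:ses} for $i = 0$ is $\Bbbk(-(q+p))$, and hence every arrow out of $\Bbbk$ is a loop at $\Bbbk$ of bidegree $(q+1, q+p)$, exactly $\dim(A_{p}) \cdot \dim(A_{q}^{!})$ of them by the last axiom. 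The step I expect to be the main obstacle is upgrading this to the assertion that such a datum has only one vertex: since no arrow leaves $\Bbbk$ towards a different module, the existence of some $M^{j} \neq \Bbbk$ in a connected datum would force an arrow pointing into $\Bbbk$, that is, the trivial module would have to appear in the Koszul homology of $M^{j}$, and one must show that connectedness is nonetheless incompatible with the presence of any module other than $\Bbbk$. I would attack this by combining the finiteness in \ref{item:a}, the strict partial order on the arrows and the pairwise non-isomorphy of the $M^{j}$ to argue that the one-vertex datum cannot be properly enlarged to a connected one; this is the delicate point of the argument, and it is what ultimately identifies every connected resolving datum with the canonical one.
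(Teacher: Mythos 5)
Your first three paragraphs are, in substance, exactly the paper's proof unwound: the paper disposes of the proposition in one line by citing Prop.\ 3.9 of \cite{MR1930968}, and that citation is precisely the reformulation you take as your main input, namely that under the stated finiteness hypotheses $A$ is $(p,q)$-Koszul if and only if $\operatorname{H}_{k}(K_{\bullet}(A)) = 0$ for $k \in \llbracket 1, q-1 \rrbracket$ and $\operatorname{H}_{q}(K_{\bullet}(A)) \cong \Bbbk(-(q+p))^{\dim(A_{p})\cdot\dim(A_{-q}^{!})}$. Your dictionary between this and the quiver statement is correct as far as it goes: the identification of the top corner $(A_{-q}^{!})^{*}\otimes A_{p}$ as a trivial submodule, the construction of the datum $\MM = \{\Bbbk\}$ with $\hh(0,0,q,q+p) = (0,\dim(A_{p})\cdot\dim(A_{-q}^{!}))$, the observation that a graded module concentrated in a single internal degree is a sum of shifted trivial modules, and the compatibility with the splitting axiom are all fine.

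The genuine gap is the one you flag yourself, and your proposed repair cannot close it. Finiteness \ref{item:a}, the strict partial order on arrows, and pairwise non-isomorphy of the $M^{j}$ never interact with the actual obstruction: connectedness of the quiver can be achieved by arrows pointing \emph{into} $\Bbbk$, since nothing in the axioms forbids a datum containing a module $M^{1}$ with $\Bbbk(-\ell)$ occurring in $\operatorname{H}_{k}(\K_{\bullet}^{\operatorname{mod}}(M^{1}))$, giving a connected two-vertex quiver even though all arrows out of $\Bbbk$ are loops. In fact the literal ``every'' assertion fails without some normalization of the data: the semisimple quadratic module $\Bbbk^{2}$ is not isomorphic to $\Bbbk$, its Koszul complex is $K_{\bullet}(A)^{2}$, and whenever $m = \dim(A_{p})\cdot\dim(A_{-q}^{!})$ is even one may decompose $\operatorname{H}_{q}(K_{\bullet}(A)) \cong (\Bbbk^{2})(-(q+p))^{m/2}$ and $\operatorname{H}_{q}(\K_{\bullet}^{\operatorname{mod}}(\Bbbk^{2})) \cong (\Bbbk^{2})(-(q+p))^{m}$, producing a connected resolving datum $\{\Bbbk, \Bbbk^{2}\}$ whose quiver has two vertices and the wrong arrow count; the same example shows your claim that non-isomorphy alone forces $\Bbbk(-(q+p))$ to be the only possible summand in \eqref{eq:ses} for $i = 0$ is not correct. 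The paper sidesteps all of this because its proof is purely the citation: the substantive equivalence is carried by the fact that the sequences \eqref{eq:ses} for $i = 0$ concern the intrinsic homology $\operatorname{H}_{\bullet}(K_{\bullet}(A))$, so a single connected datum with one-vertex quiver and the prescribed arrows already pins down $(p,q)$-Koszulity, which is the ``a'' form of the statement; the uniform statement over all connected data should be read with an implicit normalization (e.g., indecomposability of the $M^{j}$) rather than as something your sketched tools could establish.
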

\begin{proof}
This is precisely Prop. 3.9 of \cite{MR1930968}.
\end{proof}

We also have the following three (families of) examples of resolving quivers. 
They show that the notion of resolving quiver pervades many of the examples of quadratic algebras considered so far (see also Remark \ref{remark:proj-ex}). 

\begin{eg}
\label{example:cassidy}
Let $m \geqslant 5$ be an integer. 
Set $V(m)$ as the vector space of dimension $3m$ generated by the set $\cup_{i=1}^{m+1} S_{i}$, 
where $S_{1} = \{ n \}$, 
$S_{2} = \{ p,q,r \}$,
$S_{3} = \{ s, t ,u \}$, 
$S_{4} = \{ v, w, x_{1}, y_{1}, z_{1} \}$, 
$S_{i} = \{ x_{i-3}, y_{i-3}, z_{i-3} \}$ for $i \in \llbracket 5 , m-1 \rrbracket$, 
$S_{m} =\{ x_{m-3}, y_{m-3} \}$ and $S_{m+1} =\{ x_{m-2} \}$. 
Let $R(m)$ be the vector subspace of dimension $3m +4$ of $V(m)^{\otimes 2}$ generated by 
\begin{align*}     
\big\{ &np-nq , np - nr, ps-pt, qt-qu, rs-ru, sv-sw, tw-tx_{1}, uv - u x_{1}, v x_{2}, w x_{2}, sv -s y_{1} ,  
\\
&t w - t y_{1}, u x_{1} - u y_{1}, s z_{1}, t z_{1}, u z_{1} \big\} \cup 
\big\{ x_{i} x_{i+1} , y_{i-1} x_{i} + z_{i-1} y_{i} \mid i \in \llbracket 1 , m-3 \rrbracket \big\} 
\\
&\cup 
\big\{ z_{i} z_{i+1} \mid i \in \llbracket 1 , m-5 \rrbracket \big\},    
\end{align*}
where we denote the tensor product $\otimes$ by simple juxtaposition. 
Then, let $C_{m}$ be the quadratic algebra defined as $\T V(m)/(R(m))$. 
This algebra was defined in Section 2 of \cite{MR2586982}. 
Let $\MM = \{ \Bbbk , M^{1} \}$ where $M^{1}$ is the standard right module $C_{m}$, and let $\hh : \{ 0,1 \}^{2} \times \NN^{2} \rightarrow \NN_{0}^{2}$ be the map given by $\hh(0,1,m-1,m+1) = (0,1)$
and $\hh(i,j,k,\ell) = (0,0)$ if $(i,j,k,\ell) \neq (0,1,m-1,m+1)$. 
Then, \cite{MR2586982}, Thm. 2.7, tells us that this gives us a connected resolving datum on $C_{m}$, whose associated resolving quiver is 
\[
\tikzcdset{arrow style=tikz, diagrams={>=stealth}}
	\begin{tikzcd}[ampersand replacement=\&, column sep=large, row sep = large]
		\Bbbk 
		\arrow[r] 
		\&
		M^{1}
	\end{tikzcd}
 \]
such that its arrow has bidegree $(m,m+1)$. 
\end{eg}

\begin{eg}
\label{example:conner-goetz} 
Given $g \in \Bbbk$, let $V$ be the vector space generated by the set $\{ x , y, z \}$ and let $R(g)$ be the vector subspace of $V^{\otimes 2}$ 
generated by the set 
\[     \{ xy - yx, z^{2}, xz-zx - y^{2} - g x^{2} \},     \]
where we denote the tensor product $\otimes$ by simple juxtaposition. 
Define $A(g)$ as the quadratic algebra given by $\T V/(R(g))$. 
This is precisely the algebra
$T(g,0)$ of Lemma 5.2 of \cite{MR4248207}, where we interchanged the role of $x$ and $y$, and we wrote $z$ instead of $w$. 
Let $\MM = \{ \Bbbk , M^{1} \}$ where $M^{1}$ is the quadratic right module $v.A(g)/(v.z)$, and let $\hh : \{ 0,1 \}^{2} \times \NN^{2} \rightarrow \NN_{0}^{2}$ be the map given by $\hh(0,1,3,4) = (0,1)$
and $\hh(i,j,k,\ell) = (0,0)$ if $(i,j,k,\ell) \neq (0,1,3,4)$. 
Then, \cite{MR4248207}, Thm. 5.6, tells us that this gives us a connected resolving datum on $A(g)$, whose associated resolving quiver is 
\[
\tikzcdset{arrow style=tikz, diagrams={>=stealth}}
	\begin{tikzcd}[ampersand replacement=\&, column sep=large, row sep = large]
		\Bbbk 
		\arrow[r] 
		\&
		M^{1}
	\end{tikzcd}
 \]
such that its arrow has bidegree $(3,4)$. 
\end{eg}

\begin{eg}
\label{example:herscovich} 
Consider the Fomin-Kirillov algebra $\FK(3)$ on three generators (see \cite{MR4102553}, Section 2.3). 
Let $\MM = \{ \Bbbk \}$ and let $\hh : \{ 0 \}^{2} \times \NN^{2} \rightarrow \NN_{0}^{2}$ be the map given by $\hh(0,0,3,6) = (0,1)$
and $\hh(i,j,k,\ell) = (0,0)$ if $(i,j,k,\ell) \neq (0,0,3,6)$. 
Then, \cite{MR4102553}, Prop. 3.1, 
tells us that this gives a resolving datum on $\FK(3)$ whose associated resolving quiver is 
\[
\tikzcdset{arrow style=tikz, diagrams={>=stealth}}
	\begin{tikzcd}[ampersand replacement=\&, column sep=large, row sep = large, tail]
		\Bbbk 
		\arrow[out = -30, in = 30, loop] 
	\end{tikzcd}
 \]
such that its arrow has bidegree $(4,6)$. 
\end{eg}

From the resolving quiver associated to a resolving datum of the form given in the first paragraph of this subsection we can define the \textcolor{myultramarine}{\textbf{set of paths}} $\Pa_{M^{i}}$ given as the set formed by all paths $\bar{\alpha}$ of the quiver $\RQ_{A}$ such that $s(\bar{\alpha}) = M^{i}$. 
Moreover, we will define the following strict partial order on $\Pa_{M^{i}}$ for every $i \in \llbracket 1,n \rrbracket$ as follows. 
First, we set the vertex at $M^{i}$ to be strictly greater than any other path of $\Pa_{M^{i}}$. 
Given $\bar{\alpha} = (\alpha_{1}, \dots, \alpha_{n})$ and $\bar{\beta} = (\beta_{1}, \dots, \beta_{m})$ in $\Pa_{M^{i}}$ with $n, m \in \NN$, we say that $\bar{\alpha} < \bar{\beta}$ if $\alpha_{j}
= \beta_{j}$ for all $j \in \llbracket 1,j_{0}\rrbracket$ for some $j_{0} \in \llbracket 0,\min(n,m)\rrbracket$, and one of the following possibilities holds:
\begin{enumerate}[label=(O.\arabic*)]
    \item $n,m > j_{0}$, $\bideg_{1}(\alpha_{j_{0}+1})
= \bideg_{1}(\beta_{j_{0}+1})$ and $\alpha_{j_{0}+1} < \beta_{j_{0}+1}$;
    \item $n,m > j_{0}$, $\bideg_{1}(\alpha_{j_{0}+1})
<\bideg_{1}(\beta_{j_{0}+1})$;
\item $j_{0} = m < n$.
\end{enumerate} 
It is clear that this defines a strict partial order on $\Pa_{M^{i}}$. 

We now give the first main result of this article, 
which gives a description of a projective resolution of every quadratic module $M^{i}$ in a connected resolving datum $\{ M^{0} , \dots, M^{N} \}$ of a quadratic algebra $A$. 
\begin{thm}
\label{thm:rd}
Assume we have a connected resolving datum on a quadratic algebra $A$ with set of quadratic modules $\mathcalboondox{M} = \{ M^{0}, \dots, M^{N} \}$ and whose resolving quiver is denoted by $\RQ_{A}$. 
Then, there exists a projective resolution $P_{\bullet}^{M^{i}}$ of $M^{i}$ in the category of bounded below graded right $A$-modules such that 
\begin{equation}
\label{eq:resolution}
P_{n}^{M^{i}} = \bigoplus_{\text{\begin{tiny}$\begin{matrix}\bar{\alpha} \in \Pa_{M^{i}},\\\bideg_{1}(\bar{\alpha}) \leqslant n \end{matrix}$\end{tiny}}} \bar{\alpha}.\K^{\operatorname{mod}}_{n-\bideg_{1}(\bar{\alpha})}\big(t(\bar{\alpha})\big)\big(-\bideg_{2}(\bar{\alpha})\big)     
\end{equation}
for all $n \in \NN_{0}$ and $i \in \llbracket 0,N\rrbracket$, where the symbol $\bar{\alpha}$ 
multiplying the Koszul complex on the left is only a formal symbol used as a simple bookkeeping device. 
Moreover, if 
\begin{equation}
\label{eq:minimality}  \ddeg(\bar{\alpha}) \neq \ddeg(\bar{\beta}) - 1
\end{equation}
for all $\bar{\alpha}, \bar{\beta} \in \Pa_{M^{i}}$ such that $\bar{\alpha} < \bar{\beta}$ (\textit{e.g.} if $\ddeg(\alpha)$ is even for all arrows $\alpha $ of $\RQ_{A}$), 
then the previous projective resolution is minimal.
\end{thm}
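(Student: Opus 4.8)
The plan is to realise $P_\bullet^{M^i}$ as an iterated mapping cone of shifted Koszul complexes, the combinatorics of the cones being dictated by $\RQ_A$, and then to read off \eqref{eq:resolution} by unwinding these cones. The computation that drives everything is a simple degree count: since the free generators of $\K_m^{\operatorname{mod}}(M)=(M_{-m}^{!_m})^{*}\otimes A$ are concentrated in internal degree $m$, the generators of the summand $\bar\alpha.\K_{n-\bideg_1(\bar\alpha)}^{\operatorname{mod}}(t(\bar\alpha))(-\bideg_2(\bar\alpha))$ that sit in homological degree $n$ live in internal degree $(n-\bideg_1(\bar\alpha))+\bideg_2(\bar\alpha)=n+\ddeg(\bar\alpha)$. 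I would run the construction by induction on $n$, the base being the trivial path $M^i$, whose summand is $\K_\bullet^{\operatorname{mod}}(M^i)$; by Fact \ref{fact:k-mod} this already resolves $M^i$ through homological degree $1$.

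For the inductive step I would assume that the subcomplex built from all paths $\bar\alpha$ with $\bideg_1(\bar\alpha)\leqslant n-1$ resolves $M^i$ through degree $n-2$ and has homology in degree $n-1$ equal to $\bigoplus \bar\alpha.\operatorname{H}_{n-1-\bideg_1(\bar\alpha)}(\K_\bullet^{\operatorname{mod}}(t(\bar\alpha)))(-\bideg_2(\bar\alpha))$. By \ref{item:b} each summand sits in the short exact sequence \eqref{eq:ses} and is hence assembled from shifted copies of the modules $M^j$, and every such $M^j(-\ell)$ corresponds to an arrow $\gamma$ of $\RQ_A$ out of $t(\bar\alpha)$, i.e. to a one-arrow extension $\bar\alpha\gamma\in\Pa_{M^i}$ with $\bideg_1(\bar\alpha\gamma)=n$. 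I would then adjoin, for each such $\gamma$, the shifted Koszul complex $\bar\alpha\gamma.\K_\bullet^{\operatorname{mod}}(t(\bar\alpha\gamma))(-\bideg_2(\bar\alpha\gamma))$ together with a connecting chain map into $\bar\alpha.\K_\bullet^{\operatorname{mod}}(t(\bar\alpha))(-\bideg_2(\bar\alpha))$ whose degree-zero part covers the corresponding generators of the homology; since $\K_0^{\operatorname{mod}}$ is free such covers exist freely, and they extend to chain maps by projectivity of the Koszul complexes. This is precisely where the resolution becomes of ``local nature'': the connecting maps, and the higher homotopy components needed to arrange $d^2=0$, are not canonically determined.

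To prove exactness I would filter $P_\bullet^{M^i}$ by path length. The differential preserves it: the Koszul components fix the path, while every cross-path component strictly lowers the length and lands in an initial subpath, which by (O.3) is strictly larger in the partial order on $\Pa_{M^i}$. Consequently the associated graded carries only the Koszul differentials, so the $E_1$-page of the attached spectral sequence is $\bigoplus_{\bar\alpha}\bar\alpha.\operatorname{H}_{\bullet}(\K_\bullet^{\operatorname{mod}}(t(\bar\alpha)))(-\bideg_2(\bar\alpha))$ and its $d_1$ is induced by the connecting maps. Arranging these maps so that $d_1$ surjects onto each higher Koszul homology (using the $\pi_1$-arrows for the submodule and the $\pi_2$-arrows for the quotient in \eqref{eq:ses}) makes the spectral sequence converge to $M^i$ concentrated in homological degree $0$, which is the asserted acyclicity. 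I expect the main obstacle to be the non-split extensions in \eqref{eq:ses}: the submodule and the quotient must be resolved coherently --which is exactly why the partial order places the $\Ar''$-arrows below the $\Ar'$-arrows-- and one must check that the homology appearing at the next stage is again of the shape prescribed by \ref{item:b}, so the induction can be iterated. Here condition (A.3) is what keeps the bookkeeping consistent, by forbidding spurious submodule ($\pi_1$) contributions whenever \eqref{eq:ses} splits.

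Finally, minimality is immediate from the degree count. The Koszul components of the differential are left multiplication by $\iota\in V^{*}\otimes V$ and hence land in $P_{n-1}\cdot A_{>0}$. A cross-path component runs from a generator attached to some $\bar\alpha$ in homological degree $n$ to a generator attached to some $\bar\beta$ in degree $n-1$, where necessarily $\bar\alpha<\bar\beta$; it can leave $P_{n-1}\cdot A_{>0}$ only by carrying a generator to a nonzero scalar multiple of a generator, which forces the two internal degrees to agree, i.e. $n+\ddeg(\bar\alpha)=(n-1)+\ddeg(\bar\beta)$, that is $\ddeg(\bar\alpha)=\ddeg(\bar\beta)-1$. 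Hypothesis \eqref{eq:minimality} excludes this for all $\bar\alpha<\bar\beta$, so $d\otimes_A\Bbbk=0$ and the resolution is minimal. The stated sufficient condition follows at once, for if every arrow of $\RQ_A$ has even difference degree then so does every path, whence $\ddeg(\bar\alpha)-\ddeg(\bar\beta)$ is even and cannot equal $-1$.
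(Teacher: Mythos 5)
Your construction and your minimality argument essentially coincide with the paper's: the resolution is assembled as iterated cones of shifted Koszul complexes indexed by $\Pa_{M^{i}}$, glued by Horseshoe-lemma data for \eqref{eq:ses}, and minimality is proved by the same degree count (generators of the $\bar{\alpha}$-summand in homological degree $n$ sit in internal degree $n+\ddeg(\bar{\alpha})$) combined with the fact that nonzero cross components only go from $\bar{\alpha}$ to $\bar{\beta}$ with $\bar{\alpha}<\bar{\beta}$; the paper, however, proves exactness not by a spectral sequence but by a double induction (on a truncation degree $m$ and on the position $k$ along the Koszul complex), splicing Horseshoe resolutions of the images $\Img(d^{i}_{m_{i}-k+1})$.

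Your exactness verification via the path-length filtration has a genuine gap. Whenever $\pi_{1}(\hh(i,j,k,\ell))\neq 0$ the sequence \eqref{eq:ses} is non-split (this is forced by condition (A.3), and it occurs in the motivating example $\FK(4)$, e.g. $\hh(0,2,3,6)=(1,0)$ and $\hh(1,2,1,4)=(1,0)$). In that case the Horseshoe differential on the resolution of $\operatorname{H}_{k}(\K_{\bullet}^{\operatorname{mod}}(t(\bar{\alpha})))$ \emph{must} contain components $f_{\bullet}$ from the $\pi_{2}$-indexed summands into the $\pi_{1}$-indexed summands: without them the complex would resolve the split direct sum rather than the extension. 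These sibling components connect paths of the \emph{same} length, e.g. they go from $\bar{\alpha}\gamma''$ to $\bar{\alpha}\gamma'$ with $\gamma''\in\Ar''$ and $\gamma'\in\Ar'$, so your claim that "every cross-path component strictly lowers the length and lands in an initial subpath" is false; consequently the associated graded of the length filtration does not carry only the Koszul differentials, and your $E_{1}$-page is misidentified. For the same reason, your inductive hypothesis that the homology of the partial complex in degree $n-1$ equals the direct sum $\bigoplus \bar{\alpha}.\operatorname{H}_{n-1-\bideg_{1}(\bar{\alpha})}(\K_{\bullet}^{\operatorname{mod}}(t(\bar{\alpha})))(-\bideg_{2}(\bar{\alpha}))$ is wrong as a statement about $A$-modules: in general it is only an iterated extension of such terms (a genuine section $Q\to\operatorname{H}_{k}$ lifting the quotient map does not exist when \eqref{eq:ses} is non-split, so the covering maps cannot be chosen summandwise). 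The repair is available inside your own setup: the full differential does respect the strict partial order on $\Pa_{M^{i}}$ -- sibling components go from the smaller $\Ar''$-paths to the larger $\Ar'$-paths by (O.1), since the two arrows share the same $\bideg_{1}$, while parent components land in initial subpaths, which are larger by (O.3) -- so filtering by a linear extension of this order, or simply running the inductive Horseshoe splicing truncated in homological degree as the paper does, restores the argument. This is precisely why the order was set up to place the $\Ar''$-arrows below the $\Ar'$-arrows, a point you correctly sensed but did not exploit in the filtration. Your minimality paragraph is unaffected, since it uses only the order-compatibility of the cross components and the internal-degree count, both of which are correct.
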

\begin{proof}
We are going to use the following notation. 
Let 
\[
\begin{tikzcd}
0 
\arrow[r]
&
M'
\arrow[r]
&
M
\arrow[r]
&
M''
\arrow[r]
&
0
\end{tikzcd}
\]
be a short exact sequence of right $A$-modules and let $P'_{\bullet} \rightarrow M'$ and $P''_{\bullet} \rightarrow M''$ be two projective resolutions (resp., up to homological degree $m \in \NN$) with differentials $d'_{\bullet}$ and $d''_{\bullet}$, respectively. 
Then, we will note by 
$P_{\bullet} = P'_{\bullet} \coplus P''_{\bullet} \rightarrow M$ a fixed projective resolution (resp., up to homological degree $m \in \NN$) given by the Horseshoe lemma (see \cite{MR1269324}, Lemma 2.2.8). 
We recall that $P_{n} = P'_{n} \oplus P''_{n}$ for all $n \in \NN_{0}$ (resp., for all $n \in \llbracket 0,m \rrbracket$) with differential $d_{\bullet}$ satisfying that
$d_{\bullet}|_{P'_{\bullet}} = d'_{\bullet}$ and $d_{\bullet}|_{P''_{\bullet}} = d''_{\bullet} + f_{\bullet}$ for some family $\{ f_{n} : P''_{n} \rightarrow P'_{n-1} \mid n \in \NN \}$ (resp., $\{ f_{n} : P''_{n} \rightarrow P'_{n-1} \mid n \in \llbracket 1,m \rrbracket\}$)
of morphisms of $A$-modules. 

Given $i \in \llbracket 0 , N \rrbracket$, let $m_{i} \in \NN$ be the largest positive integer such that $\operatorname{H}_{m_{i}}(\K_{\bullet}^{\operatorname{mod}}(M^{i})) \neq 0$ and $\operatorname{H}_{k}(\K_{\bullet}^{\operatorname{mod}}(M^{i})) = 0$ for all integers $k > m_{i}$. 
If $\operatorname{H}_{k}(\K_{\bullet}^{\operatorname{mod}}(M^{i})) = 0$ for all $k\in \NN$, then we set $m_{i} = 0$ in this case. 

We will denote by $d_{k+1}^{i} : \K_{k+1}^{\operatorname{mod}}(M^{i}) \rightarrow \K_{k}^{\operatorname{mod}}(M^{i})$ the differential of the Koszul complex of $M^{i}$ for $k \in \NN_{0}$ and $i \in \llbracket 0 , N \rrbracket$. 
For every $i \in \llbracket 0 , N \rrbracket$, we will construct a projective resolution $P^{i}_{\bullet}$ of $M^{i}$.
By Fact \ref{fact:k-mod} we will assume that $P^{i}_{n} = \K_{n}^{\operatorname{mod}}(M^{i})$ for $i \in \llbracket 0 , N \rrbracket$ and $n \in \{ 0 , 1 \}$. 
In particular, $P^{i}_{n}$ coincides with \eqref{eq:resolution} 
for all $i \in \llbracket 0 , N \rrbracket$ and $n \in \{ 0 , 1 \}$. 
We will in fact prove that $P^{i}_{n}$ coincides with \eqref{eq:resolution} 
for all $i \in \llbracket 0 , N \rrbracket$ and $n \in \NN_{0}$
by induction on the homological degree $n$. 
If $m_{i} = 0$, we set $P^{i}_{\bullet} = \K_{\bullet}^{\operatorname{mod}}(M^{i})$ for all $\bullet \in \NN_{0}$. 
It is straightforward to see that the resolutions $P^{i}_{\bullet}$ and \eqref{eq:resolution} coincide. 

We will now construct $P^{i}_{\bullet}$ for all $\bullet \in \NN_{0}$ for $i \in \llbracket 0 , N \rrbracket$ such that $m_{i} > 0$. 
Let $m\in\NN$. 
Assume that we have defined $P^{i}_{n}$ 
for all $i \in \llbracket 0 , N \rrbracket$ such that $m_{i} > 0$ and $n \in \llbracket 0 , m \rrbracket$ such that $P^{i}_{n}$ coincides with \eqref{eq:resolution} for all $n \in \llbracket 0 , m \rrbracket$. 
Using the Horseshoe lemma for \eqref{eq:ses}, we get a projective resolution of $\operatorname{H}_{k}(\K_{\bullet}^{\operatorname{mod}}(M^{i}))$ of the form
\[
{}^{m}Q^{i,k}_{\bullet} = 
\bigg(\bigoplus\limits_{j=0}^{N}\bigoplus\limits_{\ell \in \NN} \Big(P^{j}_{\bullet}(-\ell)\Big)^{\pi_{1}(\hh(i,j,k,\ell))}\bigg) \coplus \bigg(\bigoplus\limits_{j=0}^{N}\bigoplus\limits_{\ell \in \NN} \Big(P^{j}_{\bullet}(-\ell)\Big)^{\pi_{2}(\hh(i,j,k,\ell))}\bigg) 
\]
defined for homological degrees $\bullet \in \llbracket 0 , m \rrbracket$, $i\in \llbracket 1,N\rrbracket$ and $k\in \llbracket 1,m_i\rrbracket$. 
We will construct by induction on the index $k \in \llbracket 0 , m_{i}   \rrbracket$ a family of complexes of right $A$-modules ${}^{m}R^{i,k}_{\bullet}$ 
for $\bullet \in \llbracket 0 , m + 1  \rrbracket$ such that ${}^{m}R^{i,k}_{\bullet}$ is a projective resolution of $\Img(d_{m_{i}-k+1}^{i})$ 
up to homological degree $m + 1$. 
For $k = 0$, we set ${}^{m}R^{i,0}_{\bullet}$ as the complex of right $A$-modules given by $(\K_{\bullet+m_{i}+1}^{\operatorname{mod}}(M^{i}),d_{\bullet+m_{i}+1}^{i})_{\bullet \in \NN_{0}}$. 
Note that ${}^{m}R^{i,0}_{\bullet}$ is a projective resolution of $\Img(d_{m_{i}+1}^{i})$ for $i \in \llbracket 0 , N \rrbracket$ such that $m_{i} > 0$, and it is independent of $m$. 
Assume now we have defined a complex of right $A$-modules ${}^{m}R^{i,k-1}_{\bullet}$ 
for some $k \in \llbracket 1 , m_{i}  \rrbracket$ and $\bullet \in \llbracket 0 , m + 1  \rrbracket$ such that ${}^{m}R^{i,k-1}_{\bullet}$ is a projective resolution of $\Img(d_{m_{i}-k+2}^{i})$ 
up to homological degree $m + 1 $.
Then, we define the complex of right $A$-modules ${}^{m}R^{i,k}_{\bullet}$ 
by 
\begin{equation}
\label{eq:cpx-rec}
{}^{m}R^{i,k}_{0} = \K_{m_{i}-k+1}^{\operatorname{mod}}(M^{i}) 
\text{ and } 
{}^{m}R^{i,k}_{\bullet} = {}^{m}R^{i,k-1}_{\bullet-1} \coplus {}^{m}Q^{i,m_{i}-k+1}_{\bullet-1}     
\end{equation} 
for $\bullet \in \llbracket 1, m +1\rrbracket$, the differential $d_{\bullet}^{i,k}$ for $\bullet \geqslant 2$ is induced by that of ${}^{m}R^{i,k-1}_{\bullet-1} \coplus {}^{m}Q^{i,m_{i}-k+1}_{\bullet-1}$ and $d_{1}^{i,k} : {}^{m}R^{i,k}_{1} \rightarrow {}^{m}R^{i,k}_{0}$ is given as the composition of the augmentation ${}^{m}R^{i,k-1}_{\bullet} \coplus {}^{m}Q^{i,m_{i}-k+1}_{\bullet} \rightarrow \Ker(d_{m_{i}-k+1}^{i})$ and the inclusion 
$\Ker(d_{m_{i}-k+1}) \hookrightarrow \K_{m_{i}-k+1}^{\operatorname{mod}}(M^{i})$. 
Using the Horseshoe lemma for 
\[
\begin{tikzcd}
0 
\arrow[r]
&
\Img(d_{m_{i}-k+2}^{i})
\arrow[r]
&
\Ker(d_{m_{i}-k+1}^{i})
\arrow[r]
&
\operatorname{H}_{m_{i}-k+1}\big(\K_{\bullet}^{\operatorname{mod}}(M^{i})\big)
\arrow[r]
&
0
\end{tikzcd}
\]
together with the projective resolutions 
${}^{m}R^{i,k-1}_{\bullet}$ and ${}^{m}Q^{i,m_{i}-k+1}_{\bullet}$ for $\bullet \in \llbracket 0 , m   \rrbracket$, we obtain that the complex
${}^{m}R^{i,k-1}_{\bullet} \coplus {}^{m}Q^{i,m_{i}-k+1}_{\bullet}$ for $\bullet \in \llbracket 0 , m   \rrbracket$ is a projective resolution of $\Ker(d_{m_{i}-k+1}^{i})$ up to homological degree $m$, 
and thus ${}^{m}R^{i,k}_{\bullet}$ for for $\bullet \in \llbracket 0 , m + 1  \rrbracket$ is a projective resolution of $\Img(d_{m_{i}-k+1}^{i})$ 
up to homological degree $m + 1$, as was to be shown. 
In particular, ${}^{m}R^{i,m_i}_{\bullet}$ for $\bullet \in \llbracket 0 , m + 1  \rrbracket$
is a projective resolution of $\Img(d^i_1)$ up to homological degree $m + 1$.  
Let ${}^{m}R^{i}_{\bullet}=\K_0^{\operatorname{mod}}(M^i) $ and $ {}^{m}R^{i}_{\bullet}={}^{m}R^{i,m_{i}}_{\bullet-1}$ for $\bullet \in \llbracket 1,m+2\rrbracket$. 
Then ${}^{m}R^{i}_{\bullet}$ for $\bullet \in \llbracket 0 , m + 2  \rrbracket$
is a projective resolution of $M^i$ up to homological degree $m + 2$.  
A long but straightforward computation shows that ${}^{m}R^{i}_{\bullet}$ 
coincides with \eqref{eq:resolution}
for $\bullet \in \llbracket 0 , m + 2  \rrbracket$, and 
that we can take the complexes ${}^{m}R^{i}_{\bullet}$ and ${}^{m-1}R^{i}_{\bullet}$ to coincide up to homological degree $m+1$. 
Hence, if $i \in \llbracket 0 , N \rrbracket$ such that $m_{i} > 0$, we define the complex $P^{i}_{\bullet}$ to be equal to ${}^{m}R^{i}_{\bullet}$ up to homological degree $m+2$. 
Since this holds for every $m \in \NN$,  the first part of the theorem is proved. 

To prove the last one, let us denote by 
$P_{n,\bar{\alpha}}^{i}$ the direct summand in \eqref{eq:resolution} indexed by $\bar{\alpha} \in \Pa_{M^{i}}$. 
The construction of the projective resolution $P^{i}_{\bullet}$ given in the first part of the proof tells us that, given $\bar{\alpha}, \bar{\beta} \in \Pa_{M^{i}}$, if the component 
\[
d_{n+1}^{\bar{\alpha},\bar{\beta}} \otimes_{A} \operatorname{id}_{\Bbbk}  : P_{n+1,\bar{\alpha}}^{i} \otimes_{A} \Bbbk \rightarrow P_{n,\bar{\beta}}^{i} \otimes_{A} \Bbbk
\]
of the differential of $P^{i}_{\bullet} \otimes_{A} \Bbbk$
is nonzero, then $\bar{\alpha} < \bar{\beta}$ and $\ddeg(\bar{\alpha}) = \ddeg(\bar{\beta}) - 1$. 
The minimality result then follows. 
\end{proof}

\begin{rk} 
\label{remark:proj-ex}
It is easy to check that conditions \eqref{eq:minimality} are verified in the case of Proposition \ref{proposition:almost-koszul}, as well as in Examples \ref{example:cassidy}, \ref{example:conner-goetz} and \ref{example:herscovich}, so the corresponding projective resolution \eqref{eq:resolution} is minimal, coinciding with the resolutions constructed in those references. 
We also remark that the Anick resolution for the three previous examples cannot be minimal, regardless of the choice of the order on the generators of the algebras, 
as it is the case for any quadratic algebra that is not Koszul (see \cite{MR2177131}, Chapter 4, Thm. 3.1). 
This in particular implies that our resolution can be minimal even when Anick's resolution is not. 
\end{rk}

\section{\texorpdfstring{Resolving datum on $\FK(4)$}{Resolving datum on FK(4)}}
\label{subsec:rd FK4}

We will prove in this section the second main result of this article, namely that the Fomin-Kirillov algebra $\FK(4)$ of index $4$ has a connected resolving datum 
(see Theorem \ref{thm:rd fk4}). 
In consequence, combining this with Theorem \ref{thm:rd} we obtain immediately a projective resolution of the trivial module in the category of bounded-below graded right $\FK(4)$-modules.

\subsection{\texorpdfstring{Generalities on the Fomin-Kirillov algebra $\FK(4)$ and its quadratic dual}{Generalities on the Fomin-Kirillov algebra FK(4) and its quadratic dual}}
\label{subsec:g FK4} 

From now on we assume that the field $\Bbbk$ has characteristic different from $2$ and $3$. 
For a set $S$, we denote by $\Bbbk S$ the $\Bbbk$-vector space spanned by all elements of $S$. 

Let $\I$ be the set $\{(i,j)\in \llbracket 1,4 \rrbracket ^2 \mid i<j \}$, 
$\II$ the set $\{ (1,2), (1,3), (2,3) \}$ 
and $\J$ the set $ \{ (i,j)\in \llbracket 1,4 \rrbracket^{2} \mid i\neq j \} $.
We recall that the \textbf{\textcolor{myultramarine}{Fomin-Kirillov algebra $\FK(4)$ of index $4$}} is the quadratic $\Bbbk$-algebra generated by the $\Bbbk$-vector space $V$ spanned by $X=\{x_{i,j}\mid (i,j)\in\I \}$, 
modulo the ideal generated by the vector space $R \subseteq V^{\otimes 2}$ spanned by the following $17$ elements
\begin{align*}
   & 
   x_{1,2}^2, 
   x_{1,3}^2,  
   x_{2,3}^2,  
   x_{1,4}^2,   
   x_{2,4}^2,   
   x_{3,4}^2,   
   x_{1,2}x_{2,3}-x_{2,3}x_{1,3}-x_{1,3}x_{1,2},  
   x_{2,3}x_{1,2}-x_{1,2}x_{1,3}-x_{1,3}x_{2,3}, 
   \\
   &
   x_{1,2}x_{2,4}-x_{2,4}x_{1,4}-x_{1,4}x_{1,2},  
   x_{2,4}x_{1,2}-x_{1,2}x_{1,4}-x_{1,4}x_{2,4},  
   x_{1,3}x_{3,4}-x_{3,4}x_{1,4}-x_{1,4}x_{1,3}, 
   \\
   & 
   x_{3,4}x_{1,3}-x_{1,3}x_{1,4}-x_{1,4}x_{3,4}, 
   x_{2,3}x_{3,4}-x_{3,4}x_{2,4}-x_{2,4}x_{2,3},  
   x_{3,4}x_{2,3}-x_{2,3}x_{2,4}-x_{2,4}x_{3,4},
   \\
   &
   x_{1,2}x_{3,4}-x_{3,4}x_{1,2},   
   x_{1,3}x_{2,4}-x_{2,4}x_{1,3},   
   x_{1,4}x_{2,3}-x_{2,3}x_{1,4}.
\end{align*} 
To simplify, we will denote the Fomin-Kirillov algebra $\FK(4)$ of index $4$ simply by $A$. 
Recall that the dimension of $A$ is $576$ and the Hilbert series of $A$ is
\begin{small}
\begin{align*}  
   [2]^2[3]^2[4]^2&=1+6t+19t^2+42t^3+71t^4+96t^5+106t^6+96t^7+71t^8+42t^9+19t^{10}+6t^{11}
   +t^{12},
\end{align*}
\end{small} 
where $[n] = \sum_{i=0}^{n-1} t^{i}$, for $n \in \NN$. 
Note that $A =\oplus_{m\in \llbracket 0,12\rrbracket}A_m$, where $A_m$ is the subspace of $A$ concentrated
in internal degree $m$. 
We refer the reader to \cites{MR1667680, MR1800714} for more information on Fomin-Kirillov algebras.

The previous Hilbert series can be reobtained using GAP code in Appendix \ref{subsec:W}. 
If the free monoid generated by
$X$ is equipped with the homogeneous lexicographic order induced by the well order $x_{1,2} \prec x_{1,3} \prec x_{2,3} \prec x_{1,4} \prec x_{2,4} \prec x_{3,4}$ on $X$, 
then a Gröbner basis $G_A$ of the ideal $(R)$ in the algebra $\T (V)$ is given by the following $30$ elements
\begin{align*}
& x_{1,2}^2,
x_{1,3}^2,
x_{2,3}x_{1,2}-x_{1,3}x_{2,3}-x_{1,2}x_{1,3},
x_{2,3}x_{1,3}+x_{1,3}x_{1,2}-x_{1,2}x_{2,3},
x_{2,3}^2,
x_{1,4}x_{2,3}-x_{2,3}x_{1,4},
\\
&
x_{1,4}^2,
x_{2,4}x_{1,2}-x_{1,4}x_{2,4}-x_{1,2}x_{1,4},
x_{2,4}x_{1,3}-x_{1,3}x_{2,4},
x_{2,4}x_{1,4}+x_{1,4}x_{1,2}-x_{1,2}x_{2,4},
x_{2,4}^2,
\\
&
x_{3,4}x_{1,2}-x_{1,2}x_{3,4},
x_{3,4}x_{1,3}-x_{1,4}x_{3,4}-x_{1,3}x_{1,4},
x_{3,4}x_{2,3}-x_{2,4}x_{3,4}-x_{2,3}x_{2,4},
\\
&
x_{3,4}x_{1,4}+x_{1,4}x_{1,3}-x_{1,3}x_{3,4},
x_{3,4}x_{2,4}+x_{2,4}x_{2,3}-x_{2,3}x_{3,4},
x_{3,4}^2,
x_{1,3}x_{1,2}x_{1,3}+x_{1,2}x_{1,3}x_{1,2},
\\
&
x_{1,4}x_{1,2}x_{1,4}+x_{1,2}x_{1,4}x_{1,2},
x_{1,4}x_{1,3}x_{1,2}-x_{1,4}x_{1,2}x_{2,3}+x_{2,3}x_{1,4}x_{1,3},
\\
&
x_{1,4}x_{1,3}x_{2,3}+x_{1,4}x_{1,2}x_{1,3}-x_{2,3}x_{1,4}x_{1,2},
x_{1,4}x_{1,3}x_{1,4}+x_{1,3}x_{1,4}x_{1,3},
\\
&
x_{2,4}x_{2,3}x_{1,4}+x_{1,4}x_{1,2}x_{2,3}-x_{1,2}x_{2,4}x_{2,3},
x_{2,4}x_{2,3}x_{2,4}+x_{2,3}x_{2,4}x_{2,3},
\\
&
x_{1,4}x_{1,2}x_{1,3}x_{2,3}-x_{2,3}x_{1,4}x_{1,2}x_{2,3},
x_{1,4}x_{1,2}x_{1,3}x_{1,4}+x_{1,3}x_{1,4}x_{1,2}x_{1,3}+x_{1,2}x_{1,3}x_{1,4}x_{1,2},
\\
&
x_{1,4}x_{1,2}x_{2,3}x_{1,4}+x_{1,2}x_{1,4}x_{1,2}x_{2,3},
x_{1,4}x_{1,2}x_{1,3}x_{1,2}x_{2,3}+x_{2,3}x_{1,4}x_{1,2}x_{1,3}x_{1,2},
\\
&
x_{1,4}x_{1,2}x_{1,3}x_{1,2}x_{1,4}x_{1,2}-x_{1,3}x_{1,4}x_{1,2}x_{1,3}x_{1,2}x_{1,4},
\\
&
x_{1,4}x_{1,2}x_{1,3}x_{1,2}x_{1,4}x_{1,3}-x_{1,2}x_{1,4}x_{1,2}x_{1,3}x_{1,2}x_{1,4},
\end{align*}
which are obtained using the GAP code in Appendix \ref{sec:cpx}.
The classes in $A$ of the standard words of $\T (V)$ with respect to $G_A$ thus form a homogeneous $\Bbbk$-basis $\mathcalboondox{B}$ of $A$. 
We set $\mathcalboondox{B}_{m} = \mathcalboondox{B} \cap A_{m}$ for $m \in \llbracket 0 , 12 \rrbracket$. 

We denote by $\{y_{i,j} = x_{i,j}^* \mid (i,j)\in \I  \}$ the basis of $V^*$ dual to the basis $X = \{ x_{i,j} \mid (i,j)\in \I  \}$ of $V$. 
Then, the space of relations $R^{\bot}\subseteq (V^*)^{\otimes 2}$ of the quadratic dual algebra $A^!=\T(V^{*})/(R^{\bot})=\oplus_{n\in\NN_0}A^!_{-n}$ of $A$ is spanned by the following $19$ elements 
\begin{align*}
      & y_{1,2}y_{2,3}+y_{2,3}y_{1,3}, 
      y_{1,3}y_{2,3}+y_{2,3}y_{1,2},
      y_{1,2}y_{2,3}+y_{1,3}y_{1,2}, 
      y_{1,2}y_{1,3}+y_{2,3}y_{1,2}, 
      \\
      & y_{1,2}y_{2,4}+y_{2,4}y_{1,4},  
      y_{1,4}y_{2,4}+y_{2,4}y_{1,2}, 
      y_{1,2}y_{2,4}+y_{1,4}y_{1,2}, 
      y_{1,2}y_{1,4}+y_{2,4}y_{1,2}, 
      \\
      & y_{1,3}y_{3,4}+y_{3,4}y_{1,4}, 
      y_{1,4}y_{3,4}+y_{3,4}y_{1,3},  
      y_{1,3}y_{3,4}+y_{1,4}y_{1,3}, 
      y_{1,3}y_{1,4}+y_{3,4}y_{1,3}, 
      \\
      & y_{2,3}y_{3,4}+y_{3,4}y_{2,4},  
      y_{2,4}y_{3,4}+y_{3,4}y_{2,3},
      y_{2,3}y_{3,4}+y_{2,4}y_{2,3},  
      y_{2,3}y_{2,4}+y_{3,4}y_{2,3}, 
      \\
      & y_{1,2}y_{3,4}+y_{3,4}y_{1,2},  
      y_{1,3}y_{2,4}+y_{2,4}y_{1,3},  
      y_{2,3}y_{1,4}+y_{1,4}y_{2,3}.  
\end{align*}
Using the GAP code in Appendix \ref{sec:cpx}, 
we get a Gröbner basis $G_B$ of the ideal $(R^{\bot})$ in $\T (V^{*})$ given by the following $31$ elements
\begin{equation}
\label{eq:grobner-basis-a-dual}
\begin{split}
 & 
 y_{1,3}y_{1,2} + y_{1,2}y_{2,3}, \hskip 1mm
 y_{1,3}y_{2,3}-y_{1,2}y_{1,3}, \hskip 1mm
 y_{2,3}y_{1,2} + y_{1,3}y_{2,3}, \hskip 1mm
 y_{2,3}y_{1,3} + y_{1,2}y_{2,3}, \hskip 1mm
 y_{1,4}y_{1,2} + y_{1,2}y_{2,4}, 
 \\
 & 
 y_{1,4}y_{1,3} + y_{1,3}y_{3,4}, \hskip 1mm
 y_{1,4}y_{2,3} + y_{2,3}y_{1,4}, \hskip 1mm
 y_{1,4}y_{2,4} - y_{1,2}y_{1,4}, \hskip 1mm
 y_{1,4}y_{3,4} - y_{1,3}y_{1,4}, \hskip 1mm
 y_{2,4}y_{1,2} + y_{1,4}y_{2,4},
 \\
 & 
 y_{2,4}y_{1,3} + y_{1,3}y_{2,4}, \hskip 1mm
 y_{2,4}y_{2,3} + y_{2,3}y_{3,4}, \hskip 1mm
 y_{2,4}y_{1,4} + y_{1,2}y_{2,4}, \hskip 1mm 
 y_{2,4}y_{3,4} - y_{2,3}y_{2,4}, \hskip 1mm 
 y_{3,4}y_{1,2} + y_{1,2}y_{3,4},
 \\
 & 
 y_{3,4}y_{1,3} + y_{1,4}y_{3,4}, \hskip 1mm
 y_{3,4}y_{2,3} + y_{2,4}y_{3,4}, \hskip 1mm 
 y_{3,4}y_{1,4} + y_{1,3}y_{3,4}, \hskip 1mm 
 y_{3,4}y_{2,4} + y_{2,3}y_{3,4}, \hskip 1mm 
 y_{1,2}y_{2,3}^2 - y_{1,2}y_{1,3}^2,
 \\
 & 
 y_{1,2}y_{2,4}^2 - y_{1,2}y_{1,4}^2, \hskip 1mm 
 y_{1,3}y_{3,4}^2 - y_{1,3}y_{1,4}^2, \hskip 1mm 
 y_{2,3}y_{3,4}^2 - y_{2,3}y_{2,4}^2, \hskip 1mm 
 y_{1,2}y_{1,3}^3 - y_{1,2}^3y_{1,3}, \hskip 1mm 
 \\
 &
 y_{1,2}y_{1,3}y_{2,4}^2 - y_{1,2}y_{1,3}y_{1,4}^2, \hskip 1mm 
 y_{1,2}y_{2,3}y_{2,4}^2 - y_{1,2}y_{2,3}y_{1,4}^2, \hskip 1mm 
 y_{1,2}y_{1,4}^3 - y_{1,2}^3y_{1,4}, \hskip 1mm 
 y_{1,3}y_{1,4}^3 - y_{1,3}^3y_{1,4},
 \\
 & 
 y_{2,3}y_{2,4}^3 - y_{2,3}^3y_{2,4}, \hskip 1mm 
 y_{1,2}y_{1,3}^2y_{2,4}^2 - y_{1,2}y_{1,3}^2y_{1,4}^2, \hskip 1mm 
 y_{1,2}y_{2,3}y_{1,4}^3 - y_{1,2}^3y_{2,3}y_{1,4}.
\end{split}    
\end{equation}

Let $\B_0^!=\{1\} \subseteq \Bbbk$, let
$\B_1^!=\{ y_{i,j}\mid (i,j)\in\I \} \subseteq V^{*}$, let $\B_2^! \subseteq A_{-2}^{!}$ be the set formed by the following $17$ elements 
\begin{align*}
 &
 y_{1,2}^2, \hskip 1mm
 y_{1,2}y_{1,3}, \hskip 1mm
 y_{1,2}y_{2,3}, \hskip 1mm
 y_{1,2}y_{1,4}, \hskip 1mm 
 y_{1,2}y_{2,4}, \hskip 1mm
 y_{1,2}y_{3,4}, \hskip 1mm
 y_{1,3}^2, \hskip 1mm
 y_{1,3}y_{1,4}, \hskip 1mm
 y_{1,3}y_{2,4}, \hskip 1mm
 y_{1,3}y_{3,4}, \hskip 1mm
 y_{2,3}^2, \hskip 1mm
 y_{2,3}y_{1,4}, \hskip 1mm
 \\
 &
 y_{2,3}y_{2,4}, \hskip 1mm
 y_{2,3}y_{3,4}, \hskip 1mm
 y_{1,4}^2, \hskip 1mm
 y_{2,4}^2, \hskip 1mm
 y_{3,4}^2,
\end{align*}
let $\B_3^! \subseteq A_{-3}^{!}$ be the set formed by the following $30$ elements 
\begin{align*}
 &
 y_{1,2}^3, \hskip 1mm
 y_{1,2}^2y_{1,3}, \hskip 1mm
 y_{1,2}^2y_{2,3},\hskip 1mm
 y_{1,2}^2y_{1,4},\hskip 1mm
 y_{1,2}^2y_{2,4},\hskip 1mm
 y_{1,2}^2y_{3,4},\hskip 1mm
 y_{1,2}y_{1,3}^2, \hskip 1mm
 y_{1,2}y_{1,3}y_{1,4},\hskip 1mm
 y_{1,2}y_{1,3}y_{2,4},\hskip 1mm
 \\
 &
 y_{1,2}y_{1,3}y_{3,4},\hskip 1mm
 y_{1,2}y_{2,3}y_{1,4},\hskip 1mm
 y_{1,2}y_{2,3}y_{2,4},\hskip 1mm
 y_{1,2}y_{2,3}y_{3,4},\hskip 1mm
 y_{1,2}y_{1,4}^2,\hskip 1mm 
 y_{1,2}y_{3,4}^2,\hskip 1mm
 y_{1,3}^3, \hskip 1mm
 y_{1,3}^2y_{1,4},\hskip 1mm
 y_{1,3}^2y_{2,4},\hskip 1mm
 \\
 &
 y_{1,3}^2y_{3,4},\hskip 1mm
 y_{1,3}y_{1,4}^2,\hskip 1mm
 y_{1,3}y_{2,4}^2,\hskip 1mm
 y_{2,3}^3,\hskip 1mm
 y_{2,3}^2y_{1,4},\hskip 1mm
 y_{2,3}^2y_{2,4},\hskip 1mm
 y_{2,3}^2y_{3,4},\hskip 1mm
 y_{2,3}y_{1,4}^2,\hskip 1mm
 y_{2,3}y_{2,4}^2,\hskip 1mm
 y_{1,4}^3,\hskip 1mm
 y_{2,4}^3,\hskip 1mm
 y_{3,4}^3,
\end{align*}
and let $\B_4^! \subseteq A_{-4}^{!}$ be the set formed by the following $38$ elements 
\begin{align*}
 &
 y_{1,2}^4,\hskip 1mm
 y_{1,2}^3y_{1,3},\hskip 1mm
 y_{1,2}^3y_{2,3},\hskip 1mm 
 y_{1,2}^3y_{1,4},\hskip 1mm
 y_{1,2}^3y_{2,4},\hskip 1mm
 y_{1,2}^3y_{3,4},\hskip 1mm
 y_{1,2}^2y_{1,3}^2,\hskip 1mm
 y_{1,2}^2y_{1,3}y_{1,4},\hskip 1mm 
 y_{1,2}^2y_{1,3}y_{2,4},
 \\
 &
 y_{1,2}^2y_{1,3}y_{3,4},\hskip 1mm
 y_{1,2}^2y_{2,3}y_{1,4},\hskip 1mm 
 y_{1,2}^2y_{2,3}y_{2,4},\hskip 1mm
 y_{1,2}^2y_{2,3}y_{3,4},\hskip 1mm
 y_{1,2}^2y_{1,4}^2,\hskip 1mm
 y_{1,2}^2y_{3,4}^2,\hskip 1mm
 y_{1,2}y_{1,3}^2y_{1,4},\hskip 1mm
 y_{1,2}y_{1,3}^2y_{2,4},
 \\
 &
 y_{1,2}y_{1,3}^2y_{3,4},\hskip 1mm
 y_{1,2}y_{1,3}y_{1,4}^2,\hskip 1mm 
 y_{1,2}y_{2,3}y_{1,4}^2,\hskip 1mm
 y_{1,2}y_{3,4}^3,\hskip 1mm
 y_{1,3}^4,\hskip 1mm
 y_{1,3}^3y_{1,4},\hskip 1mm 
 y_{1,3}^3y_{2,4},\hskip 1mm
 y_{1,3}^3y_{3,4},\hskip 1mm
 y_{1,3}^2y_{1,4}^2,
 \\
 &
 y_{1,3}^2y_{2,4}^2,\hskip 1mm
 y_{1,3}y_{2,4}^3,\hskip 1mm
 y_{2,3}^4,\hskip 1mm
 y_{2,3}^3y_{1,4},\hskip 1mm
 y_{2,3}^3y_{2,4},\hskip 1mm 
 y_{2,3}^3y_{3,4},\hskip 1mm
 y_{2,3}^2y_{1,4}^2,\hskip 1mm
 y_{2,3}^2y_{2,4}^2,\hskip 1mm
 y_{2,3}y_{1,4}^3,\hskip 1mm
 y_{1,4}^4,\hskip 1mm
 y_{2,4}^4,\hskip 1mm
 y_{3,4}^4.
\end{align*}
Moreover, for every integer $n \geqslant 5$, define $\B_n^!=\U^!_n \cup \C^!_n$, where the set $\U^!_n \subseteq A_{-n}^{!}$ consists of the following $24$ elements 
\begin{align*}
   &
   y_{1,2}^{n-1}y_{1,3}, 
   y_{1,2}^{n-1}y_{2,3}, 
   y_{1,2}^{n-1}y_{1,4}, 
   y_{1,2}^{n-1}y_{2,4},  
   y_{1,2}^{n-2}y_{1,3}^2, 
   y_{1,2}^{n-2}y_{1,3}y_{1,4},
   y_{1,2}^{n-2}y_{1,3}y_{2,4}, 
   y_{1,2}^{n-2}y_{1,3}y_{3,4}, 
   \\
   &
   y_{1,2}^{n-2}y_{2,3}y_{1,4}, 
   y_{1,2}^{n-2}y_{2,3}y_{2,4}, 
   y_{1,2}^{n-2}y_{2,3}y_{3,4},  
   y_{1,2}^{n-2}y_{1,4}^2,  
   y_{1,2}^{n-3}y_{1,3}^2y_{1,4}, 
   y_{1,2}^{n-3}y_{1,3}^2y_{2,4}, 
   y_{1,2}^{n-3}y_{1,3}^2y_{3,4}, 
   \\
   &
   y_{1,2}^{n-3}y_{1,3}y_{1,4}^2, 
   y_{1,2}^{n-3}y_{2,3}y_{1,4}^2, 
   y_{1,2}^{n-4}y_{1,3}^2y_{1,4}^2, 
   y_{1,3}^{n-1}y_{1,4}, 
   y_{1,3}^{n-1}y_{3,4},  
   y_{1,3}^{n-2}y_{1,4}^2,  
   y_{2,3}^{n-1}y_{2,4}, 
   y_{2,3}^{n-1}y_{3,4}, 
   y_{2,3}^{n-2}y_{2,4}^2,  
   \end{align*}
and $\C^!_n \subseteq A_{-n}^{!}$ is the set of $3(n+1)$ elements given by
\begin{equation}
\label{eq:basis c}
\begin{split}
\C^!_n=\big\{   
   y_{1,2}^{n-r}y_{3,4}^{r}, \hskip 1mm 
   y_{1,3}^{n-r}y_{2,4}^{r},  \hskip 1mm 
   y_{2,3}^{n-r}y_{1,4}^{r} \mid r\in\llbracket 0,n \rrbracket
\big\}.
\end{split}
\end{equation} 

The following result is proved directly from the explicit description of the Gröbner basis 
$G_B$ given in \eqref{eq:grobner-basis-a-dual} for the ideal $(R^{\bot}) \subseteq \T (V^{*})$ . 
\begin{fact}
\label{fact:basis-quadratic-dual-fk4}
The set $\B_n^!$ is a basis of $A^!_{-n}$ for $n\in \NN_{0}$, consisting of standard words with respect to the Gröbner basis $G_B$. 
In consequence, $\# (\B^!_n) =3n+27$ for $n\geqslant 5$, and the Hilbert series $h(t)$ of $A^!$ is given by 
\begin{equation}
   \label{eq:Hilbert series of qd of FK4}
   \begin{split}
h(t)=1+6t+17t^2+30t^3+38t^4+\sum_{n=5}^{\infty} (3n+27)t^n = \frac{1+4 t + 6 t^{2} + 2 t^{3} - 5 t^{4} - 4 t^{5} - t^{6}}{(t-1)^2}.
   \end{split}
\end{equation}
\end{fact}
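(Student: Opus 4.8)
The plan is to verify Fact~\ref{fact:basis-quadratic-dual-fk4} by a direct bookkeeping argument grounded in the Gröbner basis $G_B$ of $(R^{\bot})$ displayed in~\eqref{eq:grobner-basis-a-dual}. Recall that once a Gröbner basis is fixed, the quotient $A^! = \T(V^*)/(R^{\bot})$ has a $\Bbbk$-basis given by the classes of the \emph{standard words}, i.e.\ those words in the free monoid on $\{y_{i,j}\}$ that do not contain any leading monomial of an element of $G_B$ as a factor. So the whole statement reduces to the purely combinatorial claim that, for each $n$, the prescribed set $\B_n^!$ is \emph{exactly} the set of standard words of length $n$. First I would read off from~\eqref{eq:grobner-basis-a-dual} the list of leading monomials (under the fixed lexicographic order), namely the quadratic obstructions $y_{1,3}y_{1,2}, y_{1,3}y_{2,3}, \dots, y_{3,4}y_{2,4}$ together with the higher-degree leading terms such as $y_{1,2}y_{2,3}^2, y_{1,2}y_{1,3}^3$, and so on, and then characterize the words avoiding all of them.

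The key steps, in order, are as follows. First I would confirm that every element listed in each $\B_n^!$ is genuinely standard, i.e.\ contains no leading monomial of $G_B$ as a subword; this is a finite check for $n \in \llbracket 0,4\rrbracket$ and a uniform check for the two families $\U_n^!$ and $\C_n^!$ when $n \geqslant 5$. Second, and conversely, I would show that no other standard words exist in each degree: given an arbitrary word avoiding all leading monomials, the quadratic obstructions already force strong normal-form constraints (each generator $y_{i,j}$ can be followed only by a restricted set of generators), and iterating these constraints together with the higher obstructions pins the admissible words down to exactly the listed families. For the stable range $n\geqslant 5$ the counting $\#(\U_n^!) = 24$ and $\#(\C_n^!) = 3(n+1)$ yields $\#(\B_n^!) = 3n+27$, and I would double-check the small-degree cardinalities $1,6,17,30,38$ against the general pattern to confirm the series has no exceptional corrections beyond degree $4$.

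Finally, for the Hilbert series I would assemble the generating function
\begin{equation*}
h(t) = 1 + 6t + 17t^2 + 30 t^3 + 38 t^4 + \sum_{n=5}^{\infty} (3n+27) t^n,
\end{equation*}
and evaluate the tail using the standard identities $\sum_{n\geqslant 5} t^n = t^5/(1-t)$ and $\sum_{n\geqslant 5} n t^n = t^5(5-4t)/(1-t)^2$, then clear denominators and collect terms to recover the closed form $\bigl(1 + 4t + 6t^2 + 2t^3 - 5t^4 - 4t^5 - t^6\bigr)/(t-1)^2$ stated in~\eqref{eq:Hilbert series of qd of FK4}. This last computation is routine rational-function arithmetic.

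The main obstacle I anticipate is the converse direction in the combinatorial step: proving that the listed $\B_n^!$ exhausts \emph{all} standard words rather than merely being contained in them. The quadratic leading monomials severely restrict adjacencies, but the higher-degree leading terms (the cubic and quartic obstructions, and especially the degree-five and degree-six leading monomials $y_{1,2}y_{1,3}y_{1,2}y_{1,4}y_{1,2}$-type words implicit in the long generators of $G_B$) interact in a way that must be tracked carefully to be sure that the stabilization to the families $\U_n^! \cup \C_n^!$ genuinely begins at $n=5$ and that no admissible word is overlooked. I would handle this by organizing the standard words according to their prefix and showing that the admissible continuations are forced; the GAP verification referenced in Appendix~\ref{sec:cpx} provides an independent confirmation of the cardinalities, which makes the hand argument primarily a matter of exhibiting the bijection cleanly rather than discovering it.
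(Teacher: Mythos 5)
Your proposal is correct and follows essentially the same route as the paper, which simply asserts that the fact ``is proved directly from the explicit description of the Gröbner basis $G_B$'' — that is, the elements of $\B_n^!$ are precisely the standard words avoiding the leading monomials of $G_B$, whence the counting and the rational-function computation for $h(t)$. Your write-up merely makes explicit the adjacency/prefix analysis and the tail summation that the paper leaves as a direct verification, so there is no substantive difference in method.
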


The following result describes several identities expressing products of the generators of the quadratic dual algebra $A^{!}$ in terms of the basis $\B^!=\cup_{n\in\NN_0}\B^!_{n}$. 
The proof is a straightforward but rather lengthy verification, which we leave to the reader. 

\begin{fact}
\label{fact:products-quadratic-dual-fk4}
We have the following identities 
\begin{equation}
   \label{eq:product ijkl}
   \begin{split}
      y_{i,j}^{n-r}y_{k,l}^ry_{i,j}=(-1)^r y_{i,j}^{n-r+1}y_{k,l}^r ,
      \hskip 3mm
      y_{i,j}^{n-r}y_{k,l}^ry_{k,l}= y_{i,j}^{n-r}y_{k,l}^{r+1} 
   \end{split}
\end{equation}
and
\begin{equation}
   \label{eq:product left ijkl}
   \begin{split}
      y_{i,j} y_{i,j}^{n-r}y_{k,l}^r=y_{i,j}^{n-r+1}y_{k,l}^r ,
      \hskip 3mm
      y_{k,l}y_{i,j}^{n-r}y_{k,l}^r=(-1)^{n-r} y_{i,j}^{n-r}y_{k,l}^{r+1} 
   \end{split}
\end{equation}
in $A^{!}$, for all integers $n\geqslant 2$, $r\in \llbracket 1,n-1\rrbracket $, $(i,j)\in \II$, $(k,l)\in \I$ with $\# \{i,j,k,l \}=4$. 
Moreover, we also have the identities 
\begin{align*}
   y_{1,2}^{n-r}y_{3,4}^ry_{1,3} & =\chi_r y_{1,2}^{n-2}y_{1,3}y_{1,4}^2-\chi_{r+1}y_{1,2}^{n-1}y_{1,3}y_{1,4},
   \\
   y_{1,2}^{n-r}y_{3,4}^ry_{2,3} & =\chi_ry_{1,2}^{n-2}y_{2,3}y_{1,4}^2-\chi_{r+1}y_{1,2}^{n-1}y_{2,3}y_{2,4}, 
   \\
   y_{1,2}^{n-r}y_{3,4}^ry_{1,4} & =\chi_r y_{1,2}^{n-2}y_{1,3}^2y_{1,4}-\chi_{r+1}y_{1,2}^{n-1}y_{1,3}y_{3,4},
   \\
   y_{1,2}^{n-r}y_{3,4}^ry_{2,4} & =\chi_r y_{1,2}^{n-2}y_{1,3}^2y_{2,4}-\chi_{r+1}y_{1,2}^{n-1}y_{2,3}y_{3,4},
   \\
   y_{1,3}^{n-r}y_{2,4}^ry_{1,2} & =\chi_n \chi_r y_{1,2}^{n-3}y_{1,3}^2y_{1,4}^2-\chi_{n+1}\chi_r y_{1,2}^{n-2}y_{2,3}y_{1,4}^2+\chi_n\chi_{r+1}y_{1,2}^{n-1}y_{2,3}y_{1,4}
   \\
   & \phantom{= \; }
   -\chi_{n+1}\chi_{r+1}y_{1,2}^{n-2}y_{1,3}^2y_{1,4},
   \\
   y_{1,3}^{n-r}y_{2,4}^ry_{2,3} & =\chi_n \chi_r y_{1,2}^{n-2}y_{2,3}y_{1,4}^2+\chi_{n+1}\chi_r y_{1,2}^{n-2}y_{1,3}y_{1,4}^2-\chi_n\chi_{r+1}y_{1,2}^{n-1}y_{1,3}y_{3,4}
   \\
   & \phantom{= \; }
   -\chi_{n+1}\chi_{r+1}y_{1,2}^{n-1}y_{2,3}y_{3,4}, 
   \\
   y_{1,3}^{n-r}y_{2,4}^ry_{1,4} & =\chi_n \chi_r y_{1,2}^{n-2}y_{1,3}^2y_{1,4}+\chi_{n+1}\chi_r y_{1,2}^{n-1}y_{1,3}y_{1,4}+\chi_n\chi_{r+1}y_{1,2}^{n-1}y_{2,3}y_{2,4}
   \\
   & \phantom{= \; }
   -\chi_{n+1}\chi_{r+1}y_{1,2}^{n-2}y_{1,3}^2y_{2,4}, 
   \stepcounter{equation}\tag{\theequation}\label{eq:y12y34y13}
   \\
   y_{1,3}^{n-r}y_{2,4}^ry_{3,4} & =\chi_n \chi_r y_{1,2}^{n-2}y_{1,3}^2y_{3,4}+\chi_{n+1}\chi_ry_{1,2}^{n-1}y_{1,3}y_{3,4}+\chi_n\chi_{r+1}y_{1,2}^{n-1}y_{1,3}y_{2,4}
   \\
   & \phantom{= \; }
   +\chi_{n+1}\chi_{r+1}y_{1,2}^{n-1}y_{2,3}y_{2,4},
   \\ 
   y_{2,3}^{n-r}y_{1,4}^r y_{1,2} & = \chi_n \chi_r y_{1,2}^{n-3}y_{1,3}^2y_{1,4}^2-\chi_{n+1}\chi_r y_{1,2}^{n-2}y_{1,3}y_{1,4}^2+\chi_n\chi_{r+1}y_{1,2}^{n-1}y_{1,3}y_{2,4}
   \\
   & \phantom{= \; }
   -\chi_{n+1}\chi_{r+1}y_{1,2}^{n-2}y_{1,3}^2y_{2,4}, 
   \\
   y_{2,3}^{n-r}y_{1,4}^r y_{1,3} & = \chi_n \chi_r y_{1,2}^{n-2}y_{1,3}y_{1,4}^2-\chi_{n+1}\chi_r y_{1,2}^{n-2}y_{2,3}y_{1,4}^2+\chi_n \chi_{r+1}y_{1,2}^{n-1}y_{2,3}y_{3,4}
   \\
   & \phantom{= \; }
   -\chi_{n+1}\chi_{r+1}y_{1,2}^{n-1}y_{1,3}y_{3,4}, 
   \\
   y_{2,3}^{n-r}y_{1,4}^r y_{2,4} & = \chi_n \chi_r y_{1,2}^{n-2}y_{1,3}^2y_{2,4}+\chi_{n+1}\chi_r y_{1,2}^{n-1}y_{2,3}y_{2,4}-\chi_n \chi_{r+1}y_{1,2}^{n-1}y_{1,3}y_{1,4}
   \\
   & \phantom{= \; }
   +\chi_{n+1}\chi_{r+1}y_{1,2}^{n-2}y_{1,3}^2y_{1,4}, 
   \\
   y_{2,3}^{n-r}y_{1,4}^r y_{3,4} & = \chi_n \chi_r y_{1,2}^{n-2}y_{1,3}^2y_{3,4}+\chi_{n+1}\chi_r y_{1,2}^{n-1}y_{2,3}y_{3,4}-\chi_n \chi_{r+1}y_{1,2}^{n-1}y_{2,3}y_{1,4}
   \\
   & \phantom{= \; }
   +\chi_{n+1}\chi_{r+1}y_{1,2}^{n-1}y_{1,3}y_{1,4}, 
\end{align*}
and 
\begin{align*}
y_{1,3}y_{1,2}^{n-r}y_{3,4}^r & = \chi_n \chi_r y_{1,2}^{n-2}y_{1,3}y_{1,4}^2
-\chi_n\chi_{r+1}y_{1,2}^{n-1}y_{2,3}y_{3,4}
-\chi_{n+1}\chi_r y_{1,2}^{n-2}y_{2,3}y_{1,4}^2
\\
& \phantom{= \; }
+\chi_{n+1}\chi_{r+1}y_{1,2}^{n-1}y_{1,3}y_{3,4},
\\
y_{2,4}y_{1,2}^{n-r}y_{3,4}^r & = \chi_n \chi_r y_{1,2}^{n-2}y_{1,3}^2y_{2,4} 
-\chi_n\chi_{r+1}y_{1,2}^{n-1}y_{1,3}y_{1,4} 
-\chi_{n+1}\chi_r y_{1,2}^{n-2}y_{1,3}^2y_{1,4}
\\
& \phantom{= \; }
+\chi_{n+1}\chi_{r+1} y_{1,2}^{n-1}y_{2,3}y_{2,4},
\\
y_{2,3}y_{1,2}^{n-r}y_{3,4}^r & = \chi_n \chi_r y_{1,2}^{n-2}y_{2,3}y_{1,4}^2 
-\chi_n \chi_{r+1}y_{1,2}^{n-1}y_{1,3}y_{3,4}
-\chi_{n+1}\chi_r y_{1,2}^{n-2}y_{1,3}y_{1,4}^2
\\
& \phantom{= \; }
+\chi_{n+1}\chi_{r+1} y_{1,2}^{n-1}y_{2,3}y_{3,4},
\\
y_{1,4}y_{1,2}^{n-r}y_{3,4}^r & = \chi_n \chi_r y_{1,2}^{n-2}y_{1,3}^2y_{1,4}
-\chi_n\chi_{r+1} y_{1,2}^{n-1}y_{2,3}y_{2,4}
-\chi_{n+1}\chi_r y_{1,2}^{n-2}y_{1,3}^2y_{2,4}
\\
& \phantom{= \; }
+\chi_{n+1}\chi_{r+1} y_{1,2}^{n-1}y_{1,3}y_{1,4},
\\ 
y_{1,2}y_{1,3}^{n-r}y_{2,4}^r & = \chi_n \chi_r y_{1,2}^{n-3}y_{1,3}^2y_{1,4}^2 
+\chi_n\chi_{r+1}y_{1,2}^{n-1}y_{1,3}y_{2,4} 
+\chi_{n+1}\chi_r y_{1,2}^{n-2}y_{1,3}y_{1,4}^2 
\\
& \phantom{= \; }
+\chi_{n+1}\chi_{r+1} y_{1,2}^{n-2}y_{1,3}^2y_{2,4}, 
\stepcounter{equation}\tag{\theequation}\label{eq:product y13y12ny34r}
\\
y_{3,4}y_{1,3}^{n-r}y_{2,4}^r & = \chi_n \chi_r y_{1,2}^{n-2}y_{1,3}^2y_{3,4} 
+\chi_n \chi_{r+1} y_{1,2}^{n-1}y_{2,3}y_{1,4} 
-\chi_{n+1}\chi_r y_{1,2}^{n-1}y_{1,3}y_{1,4}
\\
& \phantom{= \; }
-\chi_{n+1}\chi_{r+1} y_{1,2}^{n-1}y_{2,3}y_{3,4}, 
\\
y_{2,3}y_{1,3}^{n-r}y_{2,4}^r & = (-1)^n \chi_r y_{1,2}^{n-2} y_{2,3}y_{1,4}^2 
+(-1)^{n+1} \chi_{r+1} y_{1,2}^{n-1}y_{2,3}y_{2,4} ,
\\
y_{1,4}y_{1,3}^{n-r}y_{2,4}^r & = \chi_{n-r}  y_{1,2}^{n-2}y_{1,3}^2y_{1,4}
+(-1)^n\chi_{n-r+1}  y_{1,2}^{n-1}y_{1,3}y_{3,4}, 
\\
y_{1,2} y_{2,3}^{n-r}y_{1,4}^r & = \chi_n \chi_r y_{1,2}^{n-3}y_{1,3}^2y_{1,4}^2 
+\chi_n \chi_{r+1} y_{1,2}^{n-1}y_{2,3}y_{1,4} 
+\chi_{n+1}\chi_r y_{1,2}^{n-2}y_{2,3}y_{1,4}^2 
\\
& \phantom{= \; }
+\chi_{n+1} \chi_{r+1} y_{1,2}^{n-2}y_{1,3}^2y_{1,4},  
\\
y_{3,4} y_{2,3}^{n-r}y_{1,4}^r & = \chi_n \chi_r y_{1,2}^{n-2}y_{1,3}^2 y_{3,4} 
-\chi_n\chi_{r+1} y_{1,2}^{n-1}y_{1,3}y_{2,4} 
-\chi_{n+1}\chi_r y_{1,2}^{n-1}y_{2,3}y_{2,4} 
\\
& \phantom{= \; } 
-\chi_{n+1}\chi_{r+1} y_{1,2}^{n-1}y_{1,3}y_{3,4}, 
\\
y_{1,3} y_{2,3}^{n-r}y_{1,4}^r & = \chi_r y_{1,2}^{n-2}y_{1,3}y_{1,4}^2 
+\chi_{r+1}y_{1,2}^{n-1}y_{1,3}y_{1,4}, 
\\
y_{2,4} y_{2,3}^{n-r}y_{1,4}^r & = (-1)^{n}\chi_{n-r} y_{1,2}^{n-2}y_{1,3}^2y_{2,4} -\chi_{n-r+1} y_{1,2}^{n-1}y_{2,3}y_{3,4}, 
\end{align*}
together with 
\begin{align*}
y_{1,3}y_{1,2}^n & =\chi_n y_{1,2}^ny_{1,3}-\chi_{n+1}y_{1,2}^ny_{2,3},
\quad &
y_{1,3}y_{3,4}^n & = \chi_n y_{1,3}^{n-1}y_{1,4}^2+\chi_{n+1}y_{1,3}^n y_{3,4}, 
\\
y_{2,4}y_{1,2}^n & = \chi_n y_{1,2}^ny_{2,4}-\chi_{n+1} y_{1,2}^ny_{1,4}, 
\quad & 
y_{2,4}y_{3,4}^n & =  y_{2,3}^n y_{2,4}  , 
\\
y_{2,3}y_{1,2}^n & = \chi_n y_{1,2}^n y_{2,3}-\chi_{n+1} y_{1,2}^n y_{1,3},
\quad & 
y_{2,3}y_{3,4}^n & = \chi_n y_{2,3}^{n-1}y_{2,4}^2 +\chi_{n+1} y_{2,3}^n y_{3,4} , 
\\
y_{1,4}y_{1,2}^n & = \chi_n y_{1,2}^ny_{1,4} -\chi_{n+1} y_{1,2}^n y_{2,4} ,
\quad & 
y_{1,4}y_{3,4}^n & =  y_{1,3}^n y_{1,4}  , 
\\
y_{1,2}y_{1,3}^n & = \chi_n y_{1,2}^{n-1}y_{1,3}^2 +\chi_{n+1}y_{1,2}^ny_{1,3} ,
\quad & 
y_{1,2}y_{2,4}^n & = \chi_n y_{1,2}^{n-1}y_{1,4}^2 +\chi_{n+1} y_{1,2}^ny_{2,4}, 
\\
y_{3,4}y_{1,3}^n & = \chi_n y_{1,3}^n y_{3,4} -\chi_{n+1}y_{1,3}^n y_{1,4} , 
\quad & 
y_{3,4}y_{2,4}^n & = (-1)^n y_{2,3}^n y_{3,4}  , 
\stepcounter{equation}\tag{\theequation}\label{eq:product y13y12n} 
\\
y_{2,3}y_{1,3}^n & = (-1)^n y_{1,2}^n y_{2,3} , 
\quad & 
y_{2,3}y_{2,4}^n & = \chi_n y_{2,3}^{n-1}y_{2,4}^2 +\chi_{n+1} y_{2,3}^n y_{2,4}, 
\\
y_{1,4}y_{1,3}^n & = \chi_n y_{1,3}^n y_{1,4}-\chi_{n+1} y_{1,3}^ny_{3,4}, 
\quad & 
y_{1,4}y_{2,4}^n & = y_{1,2}^n y_{1,4}, 
\\
y_{1,2}y_{2,3}^n & =\chi_n y_{1,2}^{n-1}y_{1,3}^2+\chi_{n+1}y_{1,2}^n y_{2,3}, 
\quad & 
y_{1,2}y_{1,4}^n & = \chi_n y_{1,2}^{n-1}y_{1,4}^2+\chi_{n+1}y_{1,2}^n y_{1,4}, 
\\
y_{3,4}y_{2,3}^n & = \chi_n y_{2,3}^n y_{3,4} -\chi_{n+1} y_{2,3}^n y_{2,4}, 
\quad & 
y_{3,4} y_{1,4}^n & = (-1)^n  y_{1,3}^n y_{3,4},  
\\
y_{1,3}y_{2,3}^n & = y_{1,2}^n y_{1,3}, 
\quad & 
y_{1,3}y_{1,4}^n & = \chi_n y_{1,3}^{n-1}y_{1,4}^2+\chi_{n+1} y_{1,3}^n y_{1,4}, 
\\
y_{2,4}y_{2,3}^n & = \chi_n y_{2,3}^n y_{2,4}-\chi_{n+1} y_{2,3}^n y_{3,4}, 
\quad & 
y_{2,4} y_{1,4}^n & = (-1)^n y_{1,2}^n y_{2,4}, 
\end{align*}
in $A^{!}$, for all integers $n\geqslant 2$ and $r\in \llbracket 1,n-1\rrbracket $. 
\end{fact}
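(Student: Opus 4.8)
The plan is to verify every identity by rewriting its left-hand side into the canonical normal form determined by the Gröbner basis $G_B$ of \eqref{eq:grobner-basis-a-dual}, and checking that it agrees with the stated right-hand side, which is already a $\Bbbk$-linear combination of standard words by Fact \ref{fact:basis-quadratic-dual-fk4}. The essential tool is the collection of degree-two rewriting rules read off from the first nineteen elements of $G_B$: each such element of the form $y_a y_b + (\text{remaining terms})$ rewrites the product $y_a y_b$ as the negative of its remaining terms. In particular, for each of the three complementary pairs $\{(1,2),(3,4)\}$, $\{(1,3),(2,4)\}$ and $\{(2,3),(1,4)\}$ one reads off the anticommutation rule $y_{i,j} y_{k,l} = - y_{k,l} y_{i,j}$ when $\#\{i,j,k,l\} = 4$.

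First I would dispatch the families \eqref{eq:product ijkl} and \eqref{eq:product left ijkl}. Here $(i,j) \in \II$ and $(k,l)$ is its complementary pair, so $y_{i,j}$ and $y_{k,l}$ anticommute. Multiplying $y_{i,j}^{n-r} y_{k,l}^{r}$ by $y_{i,j}$ on the right amounts to commuting that factor leftward past the $r$ copies of $y_{k,l}$, producing the sign $(-1)^{r}$ and the standard word $y_{i,j}^{n-r+1} y_{k,l}^{r}$, which already has the form appearing in $\C_{n+1}^{!}$; the remaining three cases are identical sign counts. No induction beyond counting anticommutations is needed for these families.

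The remaining families \eqref{eq:product y13y12n}, \eqref{eq:y12y34y13} and \eqref{eq:product y13y12ny34r} I would handle by induction on $n$. I would first establish \eqref{eq:product y13y12n}, the products of a single generator with a pure power, since these serve as lemmas for the two-variable families. For instance, using $y_{1,3} y_{1,2} = -y_{1,2} y_{2,3}$, $y_{2,3} y_{1,2} = -y_{1,3} y_{2,3}$ and $y_{1,3} y_{2,3} = y_{1,2} y_{1,3}$ one checks directly that $y_{1,3}$ commutes with $y_{1,2}^{2}$, whence $y_{1,3} y_{1,2}^{n}$ is governed by the parity of $n$, matching the coefficients $\chi_n$ and $\chi_{n+1}$; the other entries of \eqref{eq:product y13y12n} are analogous. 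For the two-variable families one peels off the outermost generator, rewrites the innermost degree-two product via the quadratic rules, and invokes the induction hypothesis at internal degree $n-1$ on the resulting shorter words. The parity-dependent coefficients $\chi_n \chi_r$, $\chi_{n+1}\chi_r$, and so on, arise precisely from the alternating signs accumulated while commuting a generator past the blocks $y_{i,j}^{n-r}$ and $y_{k,l}^{r}$, so each identity splits into the four cases according to the parities of $n$ and $r$.

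The main obstacle is not conceptual but organizational: the two-variable families require moving a generator through a product of two distinct non-commuting generators, during which intermediate monomials leave the pure $\C^{!}$-family and must be re-expressed through the sporadic basis $\U^{!}$ together with the already-established identities. One must therefore arrange the simultaneous induction so that every inductive step appeals only to instances at strictly smaller internal degree, or to identities already proven at the same degree, while keeping track of the interlocking parity case-splits. Since the rewriting system induced by $G_B$ is confluent, each individual reduction is mechanical, but the sheer number of identities and cases makes the full verification lengthy; this is exactly the bookkeeping that is best carried out with GAP and that we leave in detail to the reader.
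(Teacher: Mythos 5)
Your proposal is correct and takes essentially the same approach as the paper: the paper gives no written argument for this Fact, calling it a straightforward but rather lengthy verification left to the reader, and for the analogous identities in Appendix \ref{sec:products FK4} it prescribes precisely your method (checking fixed low degrees directly from the quadratic relations, or with GAP, and inducting on $n$ for the general families). Your anticommutation sign-count for \eqref{eq:product ijkl} and \eqref{eq:product left ijkl}, and the layered induction that establishes \eqref{eq:product y13y12n} before the two-variable families, are a faithful elaboration of exactly that verification.
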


We also provided further identities in $A^!$ given in Appendix \ref{sec:products FK4}, which are also straightforward to verify.

Recall that the graded dual $(A^!)^{\#}=\oplus_{n\in\NN_0}(A_{-n}^!)^*$ is a graded bimodule over $A^!$ via the identity $(ufv)(w)=f(vwu)$ for $u,v,w \in A^!$ and $f\in (A^!)^{\#}$. 
Let $\B_{n}^{!*}$ be the dual basis to the basis $\B_{n}^{!}$ for $n\in\NN_0$. 
We write $\B_{0}^{!*}=\{ \epsilon^!\} $ 
and 
$z^{i_1,j_1}_{n_1}\dots z^{i_r,j_r}_{n_r}=(y_{i_1,j_1}^{n_1}\dots y_{i_r,j_r}^{n_r})^{*}\in \B_{n}^{!*}$ for $y_{i_1,j_1}^{n_1}\dots y_{i_r,j_r}^{n_r}\in \B_{n}^{!}$, 
where 
$n = n_1+\dots+n_r$, 
$n,r, n_1,\dots,n_r\in\NN$
and 
$(i_1,j_1),\dots, (i_r,j_r)\in\I$.
We will omit the index $n_j$ for $j\in\llbracket 1,r\rrbracket $ if $n_j=1$ in the element $z^{i_1,j_1}_{n_1}\dots z^{i_r,j_r}_{n_r}$ or $y_{i_1,j_1}^{n_1}\dots y_{i_r,j_r}^{n_r}$.
Obviously, $y_{i,j}z^{i,j}=\epsilon^!$ for $(i,j)\in \I$ and the other actions of $\B^!_1$ on $\B^{!*}_1$ vanish.

Recall that $(K_{\bullet}(A),d_{\bullet})$ denotes the Koszul complex of $A$ in the category of bounded below graded (right) $A$-modules and $\epsilon:K_0(A) \to \Bbbk$ is the canonical projection. 
The differential $d_n:K_n(A) \to K_{n-1}(A)$ for $n\in\NN$ is given by the multiplication of $\sum_{(i,j)\in\I}y_{i,j}\otimes x_{i,j}$ on the left.
To reduce space, we will simply write $K_{n}$ instead of $K_{n}(A)$ for $n \in \NN_{0}$ and we will typically use vertical bars instead of the tensor
product symbols $\otimes$.

The differential $d_\bullet$ of the Koszul complex of $A$ can be explicitly described in the following result. 
Its proof is a straightforward but lengthy verification, using the identities listed in Fact \ref{fact:products-quadratic-dual-fk4} and in Appendix \ref{sec:products FK4}. 

\begin{fact}
Let $d_n : K_{n} \rightarrow K_{n-1}$ be the differential of the Koszul complex of $A$ for $n \in \NN$. 
It can be explicitly described as follows. 
First, $d_1(z^{i,j}|1)=\epsilon^!|x_{i,j}$ for $(i,j)\in \I$, 
and 
 \begin{equation}
   \label{eq:differential 4}
   \begin{split}
    d_n(z^{i,j}_{n-r}z^{k,l}_r|1)= (-1)^r z^{i,j}_{n-r-1}z^{k,l}_{r}|x_{i,j}+z^{i,j}_{n-r}z^{k,l}_{r-1}|x_{k,l},
   \end{split}
\end{equation}
for $n\geqslant 2 $, $r\in \llbracket 0,n\rrbracket $, $(i,j)\in \II$, $(k,l)\in \I$ with $\# \{i,j,k,l \}=4$,
where we follow the convention that $z^{i,j}_nz^{k,l}_0=z^{i,j}_n$, $z^{i,j}_0z^{k,l}_n=z^{k,l}_n$, $z^{i,j}_nz^{k,l}_{-1}=0$ and $z^{i,j}_{-1}z^{k,l}_n=0$ for $n\in\NN$. 
Moreover, 
for $n\geqslant 5$, the differential $d_{n+1}$ is given by \eqref{eq:differential 4} and 
\begin{small}
\begin{align*}
   z^{1,2}_nz^{1,3}|1 & \mapsto -(z^{1,2}_{n-1}z^{2,3}+\chi_{n+1}z^{2,3}_n)|x_{1,2}+ (z^{1,2}_n +z^{1,2}_{n-2}z^{1,3}_2+\chi_n z^{2,3}_n )|x_{1,3}
   \\
   & \phantom{ \mapsto \; }
   + (z^{1,2}_{n-1}z^{1,3} +\chi_{n+1}z^{1,3}_n )|x_{2,3},
   \\
   z^{1,2}_nz^{2,3}|1 & \mapsto - \{ z^{1,2}_{n-1}z^{1,3} + \chi_{n+1} z^{1,3}_n \} |x_{1,2}
   - \{ z^{1,2}_{n-1}z^{2,3}+\chi_{n+1}z^{2,3}_n \} |x_{1,3}
   \\
   & \phantom{ \mapsto \; }
   + \{ z^{1,2}_n +z^{1,2}_{n-2}z^{1,3}_2 +\chi_n z^{1,3}_n \} |x_{2,3} ,
   \\
   z^{1,2}_nz^{1,4}|1 & \mapsto - \{ z^{1,2}_{n-1}z^{2,4}+\chi_{n+1}z^{2,4}_n \}|x_{1,2}
   + \{ z^{1,2}_n + z^{1,2}_{n-2}z^{1,4}_2 +\chi_n z^{2,4}_n \} |x_{1,4} 
   \\
   & \phantom{ \mapsto \; }
   + \{ z^{1,2}_{n-1}z^{1,4} + \chi_{n+1} z^{1,4}_n \} |x_{2,4} 
   , 
   \\
   z^{1,2}_nz^{2,4}|1 & \mapsto - \{ z^{1,2}_{n-1}z^{1,4}+\chi_{n+1}z^{1,4}_n \} |x_{1,2}
   - \{ z^{1,2}_{n-1}z^{2,4}+\chi_{n+1} z^{2,4}_n \} |x_{1,4} 
   \\
   & \phantom{ \mapsto \; }
   + \{ z^{1,2}_n +z^{1,2}_{n-2}z^{1,4}_2 + \chi_n z^{1,4}_n \} |x_{2,4} 
   , 
   \\
   z^{1,2}_{n-1}z^{1,3}_2 |1 & \mapsto \big\{ z^{1,2}_{n-2}z^{1,3}_2 +\chi_n (z^{1,3}_n +z^{2,3}_n ) \big\}|x_{1,2} + z^{1,2}_{n-1}z^{1,3}|x_{1,3} 
   +z^{1,2}_{n-1}z^{2,3}|x_{2,3} 
   ,
   \\
   z^{1,2}_{n-1}z^{1,3}z^{1,4}|1 & \mapsto \{ z^{1,2}_{n-2}z^{2,3}z^{2,4} + \chi_n z^{2,3}_{n-1}z^{2,4} \} |x_{1,2} 
   \\
   & \phantom{ \mapsto \; } 
   - \bigg\{ z^{1,2}_{n-3}z^{1,3}_2z^{3,4} +\chi_{n+1} z^{2,3}_{n-1}z^{3,4} 
   + \sum_{s=1}^{\lfloor \frac{n}{2}\rfloor  } z^{1,2}_{n-2s+1}z^{3,4}_{2s-1} 
   \bigg\} |x_{1,3} 
   \\
   & \phantom{ \mapsto \; }
   - \{ z^{1,2}_{n-2}z^{1,3}z^{1,4} + \chi_n z^{1,3}_{n-1}z^{1,4} \} |x_{2,3} 
   \\
   & \phantom{ \mapsto \; } 
   + \bigg\{ 
   z^{1,2}_{n-1}z^{1,3} +z^{1,2}_{n-3}z^{1,3}z^{1,4}_2 
   + \chi_{n+1} \sum_{s=1}^{\frac{n-1}{2}}z^{1,3}_{n-2s}z^{2,4}_{2s} 
   \bigg\}|x_{1,4}
   \\
   & \phantom{ \mapsto \; }
   - \bigg\{ z^{1,2}_{n-2}z^{2,3}z^{1,4}
   +\chi_n \sum_{s=1}^{\frac{n}{2}}z^{2,3}_{n-2s+1}z^{1,4}_{2s-1} 
   \bigg\}|x_{2,4} 
   \\
   & \phantom{ \mapsto \; }
   + \bigg\{ z^{1,2}_{n-1}z^{1,4} + z^{1,2}_{n-3}z^{1,3}_2z^{1,4} 
   +\chi_{n+1} \sum_{s=1}^{\frac{n-1}{2}}z^{2,3}_{n-2s+1}z^{1,4}_{2s-1} 
   \bigg\} |x_{3,4} 
   , 
   \\
   z^{1,2}_{n-1}z^{1,3}z^{2,4}|1 & \mapsto 
   \bigg\{ z^{1,2}_{n-2}z^{2,3}z^{1,4}
   +\chi_n \sum_{s=1}^{\frac{n}{2}} z^{2,3}_{n-2s+1}z^{1,4}_{2s-1}
   \bigg\}|x_{1,2} 
   \\
   & \phantom{ \mapsto \; }
   - \{ z^{1,2}_{n-1}z^{2,4} +z^{1,2}_{n-3}z^{1,3}_2z^{2,4} +\chi_{n+1}z^{2,3}_{n-1}z^{2,4} \}|x_{1,3}
   - \{ z^{1,2}_{n-2}z^{1,3}z^{3,4}+ \chi_n z^{1,3}_{n-1}z^{3,4} \} |x_{2,3} 
   \\
   & \phantom{ \mapsto \; }
   + \{ z^{1,2}_{n-2}z^{2,3}z^{2,4} +\chi_n z^{2,3}_{n-1}z^{2,4} \} |x_{1,4} 
   + \{ z^{1,2}_{n-1}z^{1,3} +z^{1,2}_{n-3}z^{1,3}z^{1,4}_2 +\chi_{n+1} z^{1,3}_{n-2}z^{1,4}_2 \} |x_{2,4} 
   \\
   & \phantom{ \mapsto \; }
   + \bigg\{ z^{1,2}_{n-2}z^{1,3}z^{2,4} 
   +\chi_n  \sum_{s=1}^{\frac{n}{2}}z^{1,3}_{n-2s+1}z^{2,4}_{2s-1}
   \bigg\}|x_{3,4} 
   , 
   \\
   z^{1,2}_{n-1}z^{1,3}z^{3,4}|1 & \mapsto 
   \{ z^{1,2}_{n-2}z^{2,3}z^{3,4}+\chi_n z^{2,3}_{n-1}z^{3,4} \} |x_{1,2} 
   \\
   & \phantom{ \mapsto \; }
   - \bigg\{ z^{1,2}_{n-1}z^{1,4}+z^{1,2}_{n-3}z^{1,3}_2z^{1,4} 
   +\chi_{n+1} \sum_{s=1}^{\frac{n-1}{2}}z^{2,3}_{n-2s+1}z^{1,4}_{2s-1}
   \bigg\} |x_{1,3} 
   \\
   & \phantom{ \mapsto \; }
   - \bigg\{ 
      z^{1,2}_{n-2}z^{1,3}z^{2,4}
   + \chi_n \sum_{s=1}^{\frac{n}{2}}z^{1,3}_{n-2s+1}z^{2,4}_{2s-1}
   \bigg\} |x_{2,3} 
   \\
   & \phantom{ \mapsto \; }
   - \bigg\{ z^{1,2}_{n-3}z^{1,3}_2z^{3,4} +\chi_{n+1}z^{2,3}_{n-1}z^{3,4} 
   + \sum_{s=1}^{\lfloor \frac{n}{2}\rfloor  } z^{1,2}_{n-2s+1}z^{3,4}_{2s-1} 
   \bigg\} |x_{1,4} 
   \\
   & \phantom{ \mapsto \; }
   - \{ z^{1,2}_{n-2}z^{1,3}z^{3,4} +\chi_n z^{1,3}_{n-1}z^{3,4} \}|x_{2,4} 
   \\
   & \phantom{ \mapsto \; }
   + \bigg\{ z^{1,2}_{n-1}z^{1,3} + z^{1,2}_{n-3}z^{1,3}z^{1,4}_2 
   +\chi_{n+1} \sum_{s=1}^{\frac{n-1}{2}} z^{1,3}_{n-2s}z^{2,4}_{2s} 
   \bigg\} |x_{3,4} 
   , 
   \\
   z^{1,2}_{n-1}z^{2,3}z^{1,4}|1 & \mapsto 
   \bigg\{ 
   z^{1,2}_{n-2}z^{1,3}z^{2,4}
   +\chi_n \sum_{s=1}^{\frac{n}{2}}z^{1,3}_{n-2s+1}z^{2,4}_{2s-1}
   \bigg\} |x_{1,2} 
   + \{ z^{1,2}_{n-2}z^{2,3}z^{3,4} +\chi_n z^{2,3}_{n-1}z^{3,4} \} |x_{1,3} 
   \\
   & \phantom{ \mapsto \; }
   - \{ z^{1,2}_{n-1}z^{1,4} +z^{1,2}_{n-3}z^{1,3}_2z^{1,4} +\chi_{n+1}z^{1,3}_{n-1}z^{1,4} \} |x_{2,3} 
   \\
   & \phantom{ \mapsto \; }
   + \{ z^{1,2}_{n-1}z^{2,3} +z^{1,2}_{n-3}z^{2,3}z^{1,4}_2 +\chi_{n+1} z^{2,3}_{n-2}z^{2,4}_2 \} |x_{1,4} 
   - \{ z^{1,2}_{n-2}z^{1,3}z^{1,4}+\chi_n z^{1,3}_{n-1}z^{1,4} \}|x_{2,4} 
   \\
   & \phantom{ \mapsto \; } 
   - \bigg\{ z^{1,2}_{n-2}z^{2,3}z^{1,4}
   +\chi_n \sum_{s=1}^{\frac{n}{2}}z^{2,3}_{n-2s+1}z^{1,4}_{2s-1}
   \bigg\}|x_{3,4} 
   , 
   \\
   z^{1,2}_{n-1}z^{2,3}z^{2,4}|1 & \mapsto 
   \{ z^{1,2}_{n-2}z^{1,3}z^{1,4}+\chi_n z^{1,3}_{n-1}z^{1,4} \} |x_{1,2} 
   + \{ z^{1,2}_{n-2}z^{2,3}z^{2,4} +\chi_n z^{2,3}_{n-1}z^{2,4} \} |x_{1,3} 
   \\
   & \phantom{ \mapsto \; }
   - \bigg\{ z^{1,2}_{n-3}z^{1,3}_2z^{3,4} +\chi_{n+1}z^{1,3}_{n-1}z^{3,4} 
   + \sum_{s=1}^{\lfloor \frac{n}{2}\rfloor  } z^{1,2}_{n-2s+1}z^{3,4}_{2s-1} 
   \bigg\} |x_{2,3} 
   \\
   & \phantom{ \mapsto \; }
   + \bigg\{ z^{1,2}_{n-2}z^{1,3}z^{2,4}
   +\chi_n  \sum_{s=1}^{\frac{n}{2}} z^{1,3}_{n-2s+1}z^{2,4}_{2s-1} 
   \bigg\} |x_{1,4} 
   \\
   & \phantom{ \mapsto \; }
   + \bigg\{ z^{1,2}_{n-1}z^{2,3} +z^{1,2}_{n-3}z^{2,3}z^{1,4}_2 
   +\chi_{n+1} \sum_{s=1}^{\frac{n-1}{2}}z^{2,3}_{n-2s}z^{1,4}_{2s}
   \bigg\} |x_{2,4} 
   \\
   & \phantom{ \mapsto \; }
   +\bigg\{ z^{1,2}_{n-1}z^{2,4} + z^{1,2}_{n-3}z^{1,3}_2z^{2,4} 
   +\chi_{n+1} \sum_{s=1}^{\frac{n-1}{2}} z^{1,3}_{n-2s+1}z^{2,4}_{2s-1} 
   \bigg\} |x_{3,4} 
   , 
   \\
   z^{1,2}_{n-1}z^{2,3}z^{3,4}|1 & \mapsto \{ z^{1,2}_{n-2}z^{1,3}z^{3,4}+\chi_n z^{1,3}_{n-1}z^{3,4} \} |x_{1,2} 
   + \bigg\{ z^{1,2}_{n-2}z^{2,3}z^{1,4} 
   +\chi_n \sum_{s=1}^{\frac{n}{2}}z^{2,3}_{n-2s+1}z^{1,4}_{2s-1}
   \bigg\}|x_{1,3} 
   \\
   & \phantom{\mapsto \;}
   - \bigg\{ z^{1,2}_{n-1}z^{2,4} +z^{1,2}_{n-3}z^{1,3}_2z^{2,4}  
   +\chi_{n+1} \sum_{s=1}^{\frac{n-1}{2}}z^{1,3}_{n-2s+1}z^{2,4}_{2s-1} 
   \bigg\} |x_{2,3} 
   \\
   & \phantom{\mapsto \;}
   + \{ z^{1,2}_{n-2}z^{2,3}z^{3,4} +\chi_n z^{2,3}_{n-1}z^{3,4} \} |x_{1,4} 
   \\
   & \phantom{\mapsto \;}
   - \bigg\{ z^{1,2}_{n-3}z^{1,3}_2z^{3,4} + \chi_{n+1} z^{1,3}_{n-1}z^{3,4} 
   + \sum_{s=1}^{\lfloor \frac{n}{2}\rfloor  } z^{1,2}_{n-2s+1}z^{3,4}_{2s-1} 
   \bigg\} |x_{2,4} 
   \\
   & \phantom{\mapsto \;}
   + \bigg\{ z^{1,2}_{n-1}z^{2,3} + z^{1,2}_{n-3}z^{2,3}z^{1,4}_2 
   +\chi_{n+1} \sum_{s=1}^{\frac{n-1}{2}}z^{2,3}_{n-2s}z^{1,4}_{2s}
   \bigg\} |x_{3,4} 
   , 
   \\
   z^{1,2}_{n-1}z^{1,4}_{2}|1 & \mapsto
   \big\{ z^{1,2}_{n-2}z^{1,4}_{2} +\chi_n ( z^{1,4}_n+z^{2,4}_n ) \big\} |x_{1,2} 
   +z^{1,2}_{n-1}z^{1,4}|x_{1,4} 
   +z^{1,2}_{n-1}z^{2,4}|x_{2,4} 
   ,
   \stepcounter{equation}\tag{\theequation}\label{eq:dn}
   \\
   z^{1,2}_{n-2}z^{1,3}_2z^{1,4}|1 & \mapsto 
   - \bigg\{ z^{1,2}_{n-3}z^{1,3}_2z^{2,4} + \chi_{n+1}\bigg( z^{2,3}_{n-1}z^{2,4} 
   + \sum_{s=1}^{\frac{n-1}{2}}z^{1,3}_{n-2s+1}z^{2,4}_{2s-1}
   \bigg)
   \bigg\} |x_{1,2} 
   -z^{1,2}_{n-2}z^{1,3}z^{3,4}|x_{1,3} 
   \\
   & \phantom{ \mapsto \; }
   -z^{1,2}_{n-2}z^{2,3}z^{1,4}|x_{2,3} 
   \\
   & \phantom{ \mapsto \; }
   + \bigg\{ z^{1,2}_{n-2}z^{1,3}_2 +z^{1,2}_{n-4}z^{1,3}_2z^{1,4}_2
   +\chi_n 
   \bigg(
   z^{2,3}_{n-2}z^{2,4}_2 
   +\sum_{s=1}^{\frac{n-2}{2}}z^{1,3}_{n-2s}z^{2,4}_{2s} 
   \bigg)
   + \sum_{s=1}^{\lfloor \frac{n-1}{2}\rfloor  } z^{1,2}_{n-2s}z^{3,4}_{2s} 
   \bigg\} |x_{1,4} 
   \\
   & \phantom{ \mapsto \; }
   + \bigg\{ z^{1,2}_{n-3}z^{1,3}_2z^{1,4} +\chi_{n+1} \bigg( z^{1,3}_{n-1}z^{1,4} 
   + \sum_{s=1}^{\frac{n-1}{2}}z^{2,3}_{n-2s+1}z^{1,4}_{2s-1}
   \bigg) \bigg\} |x_{2,4} 
   +z^{1,2}_{n-2}z^{1,3}z^{1,4}|x_{3,4} 
   , 
   \\
   z^{1,2}_{n-2}z^{1,3}_2z^{2,4}|1 & \mapsto 
   - \bigg\{ z^{1,2}_{n-3}z^{1,3}_2z^{1,4}+ \chi_{n+1} \bigg( z^{1,3}_{n-1}z^{1,4} 
   + \sum_{s=1}^{\frac{n-1}{2}} z^{2,3}_{n-2s+1}z^{1,4}_{2s-1} 
   \bigg)\bigg\} |x_{1,2} 
   -z^{1,2}_{n-2}z^{1,3}z^{2,4}|x_{1,3} 
   \\
   & \phantom{ \mapsto \; }
   -z^{1,2}_{n-2}z^{2,3}z^{3,4}|x_{2,3}
   - \bigg\{ z^{1,2}_{n-3}z^{1,3}_2z^{2,4} +\chi_{n+1} \bigg( z^{2,3}_{n-1}z^{2,4} 
   +\sum_{s=1}^{\frac{n-1}{2}} z^{1,3}_{n-2s+1}z^{2,4}_{2s-1} 
   \bigg)\bigg\} |x_{1,4}
   \\
   & \phantom{ \mapsto \; } 
   + \bigg\{ z^{1,2}_{n-2}z^{1,3}_2 +z^{1,2}_{n-4}z^{1,3}_2z^{1,4}_2 
   + \chi_n \bigg( z^{1,3}_{n-2}z^{1,4}_2 
   +\sum_{s=1}^{\frac{n-2}{2}}z^{2,3}_{n-2s}z^{1,4}_{2s}
   \bigg)
   + \sum_{s=1}^{\lfloor \frac{n-1}{2}\rfloor  } z^{1,2}_{n-2s}z^{3,4}_{2s} 
   \bigg\} |x_{2,4} 
   \\
   & \phantom{ \mapsto \; } 
   + z^{1,2}_{n-2}z^{2,3}z^{2,4}|x_{3,4} 
   , 
   \\
   z^{1,2}_{n-2}z^{1,3}_2z^{3,4}|1 & \mapsto 
   - \big\{ z^{1,2}_{n-3}z^{1,3}_2z^{3,4}+\chi_{n+1}(z^{1,3}_{n-1}z^{3,4} + z^{2,3}_{n-1}z^{3,4} ) \big\} |x_{1,2}
   -z^{1,2}_{n-2}z^{1,3}z^{1,4}|x_{1,3} 
   \\
   & \phantom{ \mapsto \; }
   -z^{1,2}_{n-2}z^{2,3}z^{2,4}|x_{2,3} 
   -z^{1,2}_{n-2}z^{1,3}z^{3,4}|x_{1,4} 
   -z^{1,2}_{n-2}z^{2,3}z^{3,4}|x_{2,4}
   \\
   & \phantom{ \mapsto \; }
   + \bigg\{ z^{1,2}_{n-2}z^{1,3}_2 + z^{1,2}_{n-2}z^{1,4}_2 + z^{1,2}_{n-4}z^{1,3}_2z^{1,4}_2
   + \chi_n  \sum_{s=1}^{\frac{n-2}{2}} ( z^{1,3}_{n-2s}z^{2,4}_{2s} 
   + z^{2,3}_{n-2s}z^{1,4}_{2s} )
   \bigg\} |x_{3,4} 
   , 
   \\
   z^{1,2}_{n-2}z^{1,3}z^{1,4}_2|1 & \mapsto -\bigg\{z^{1,2}_{n-3}z^{2,3}z^{1,4}_2+\chi_{n+1} \bigg( z^{2,3}_{n-2}z^{2,4}_2 +\sum_{s=1}^{\frac{n-1}{2}}z^{2,3}_{n-2s}z^{1,4}_{2s} \bigg) \bigg\} |x_{1,2}
   \\
   & \phantom{ \mapsto \; }
   + \bigg\{ z^{1,2}_{n-2}z^{1,4}_2+z^{1,2}_{n-4}z^{1,3}_2z^{1,4}_2+\chi_n \bigg( z^{2,3}_{n-2}z^{2,4}_2 +\sum_{s=1}^{\frac{n-2}{2}}z^{2,3}_{n-2s}z^{1,4}_{2s} \bigg) + \sum_{s=1}^{\lfloor \frac{n-1}{2}\rfloor  } z^{1,2}_{n-2s}z^{3,4}_{2s} \bigg\}|x_{1,3} 
   \\
   & \phantom{ \mapsto \; }
   + \bigg\{ z^{1,2}_{n-3}z^{1,3}z^{1,4}_2+\chi_{n+1}\bigg( z^{1,3}_{n-2}z^{1,4}_2 +\sum_{s=1}^{\frac{n-1}{2}}z^{1,3}_{n-2s}z^{2,4}_{2s} \bigg)  \bigg\}|x_{2,3}
   \\
   & \phantom{ \mapsto \; }
   + z^{1,2}_{n-2}z^{1,3}z^{1,4}|x_{1,4} + z^{1,2}_{n-2}z^{1,3}z^{2,4}|x_{2,4}
   + z^{1,2}_{n-2}z^{1,3}z^{3,4}|x_{3,4} , 
   \\
   z^{1,2}_{n-2}z^{2,3}z^{1,4}_2|1 & \mapsto 
   - \bigg\{ z^{1,2}_{n-3}z^{1,3}z^{1,4}_2 +\chi_{n+1} \bigg( 
   z^{1,3}_{n-2}z^{1,4}_2 
   + \sum_{s=1}^{\frac{n-1}{2}}z^{1,3}_{n-2s}z^{2,4}_{2s}
   \bigg)
   \bigg\} |x_{1,2} 
   \\
   & \phantom{ \mapsto \; } 
   - \bigg\{ z^{1,2}_{n-3}z^{2,3}z^{1,4}_2 +\chi_{n+1} \bigg( z^{2,3}_{n-2}z^{2,4}_2 
   +\sum_{s=1}^{\frac{n-1}{2}}z^{2,3}_{n-2s}z^{1,4}_{2s}
   \bigg)\bigg\} |x_{1,3} 
   \\
   & \phantom{ \mapsto \; }
   + \bigg\{ z^{1,2}_{n-2}z^{1,4}_2 +z^{1,2}_{n-4}z^{1,3}_2z^{1,4}_2 +
   \chi_n 
   \bigg( z^{1,3}_{n-2}z^{1,4}_2 
   +  \sum_{s=1}^{\frac{n-2}{2}}z^{1,3}_{n-2s}z^{2,4}_{2s}
   \bigg)
   +  \sum_{s=1}^{\lfloor \frac{n-1}{2}\rfloor  } z^{1,2}_{n-2s}z^{3,4}_{2s} 
   \bigg\} |x_{2,3}
   \\
   & \phantom{ \mapsto \; } 
   +z^{1,2}_{n-2}z^{2,3}z^{1,4}|x_{1,4}  
   +z^{1,2}_{n-2}z^{2,3}z^{2,4}|x_{2,4}
   + z^{1,2}_{n-2}z^{2,3}z^{3,4}|x_{3,4}  
   , 
   \\
   z^{1,2}_{n-3}z^{1,3}_{2}z^{1,4}_2|1 & \mapsto \bigg\{ z^{1,2}_{n-4}z^{1,3}_2z^{1,4}_2+\chi_n \bigg(z^{1,3}_{n-2}z^{1,4}_2+z^{2,3}_{n-2}z^{2,4}_2+\sum_{s=1}^{\frac{n-2}{2}}(z^{1,3}_{n-2s}z^{2,4}_{2s}+z^{2,3}_{n-2s}z^{1,4}_{2s}) \bigg) \bigg\}|x_{1,2}
   \\
   & \phantom{ \mapsto \; }
   +z^{1,2}_{n-3}z^{1,3}z^{1,4}_2|x_{1,3}+z^{1,2}_{n-3}z^{2,3}z^{1,4}_2|x_{2,3}+z^{1,2}_{n-3}z^{1,3}_2z^{1,4}|x_{1,4}+z^{1,2}_{n-3}z^{1,3}_2z^{2,4}|x_{2,4}
   \\
   & \phantom{ \mapsto \; }
   +z^{1,2}_{n-3}z^{1,3}_2z^{3,4}|x_{3,4},
   \\
   z^{1,3}_{n}z^{1,4}|1 & \mapsto 
   - \{ z^{1,3}_{n-1}z^{3,4} +\chi_{n+1} z^{3,4}_n \}|x_{1,3} 
   + \{ z^{1,3}_n +z^{1,3}_{n-2}z^{1,4}_2 +\chi_n z^{3,4}_n \} |x_{1,4} 
   \\
   & \phantom{ \mapsto \; } 
   + \{ z^{1,3}_{n-1}z^{1,4}+\chi_{n+1}z^{1,4}_n \}|x_{3,4} 
   , 
   \\
   z^{1,3}_{n}z^{3,4}|1 & \mapsto 
   - \{ z^{1,3}_{n-1}z^{1,4}+\chi_{n+1} z^{1,4}_n \} |x_{1,3} 
   - \{ z^{1,3}_{n-1}z^{3,4}+\chi_{n+1} z^{3,4}_n \}|x_{1,4} 
   \\
   & \phantom{ \mapsto \; }
   + \{ z^{1,3}_n + z^{1,3}_{n-2}z^{1,4}_2 + \chi_n z^{1,4}_n \}|x_{3,4} 
   , 
   \\
   z^{1,3}_{n-1}z^{1,4}_2|1 & \mapsto 
   \big\{ z^{1,3}_{n-2}z^{1,4}_2 +\chi_n ( z^{1,4}_n + z^{3,4}_n ) \big\} |x_{1,3} 
   +z^{1,3}_{n-1}z^{1,4}|x_{1,4} 
   +z^{1,3}_{n-1}z^{3,4}|x_{3,4} 
   , 
   \\
   z^{2,3}_{n}z^{2,4}|1 & \mapsto
   - \{ z^{2,3}_{n-1}z^{3,4} +\chi_{n+1}z^{3,4}_n\}|x_{2,3} 
   + \{ z^{2,3}_n + z^{2,3}_{n-2}z^{2,4}_2 + \chi_n z^{3,4}_n \} |x_{2,4} 
   \\
   & \phantom{ \mapsto \; } 
   + \{ z^{2,3}_{n-1}z^{2,4} + \chi_{n+1}z^{2,4}_n \} |x_{3,4}
   , 
   \\
   z^{2,3}_{n}z^{3,4}|1 & \mapsto 
   - \{ z^{2,3}_{n-1}z^{2,4} +\chi_{n+1}z^{2,4}_n \} |x_{2,3} 
   - \{ z^{2,3}_{n-1}z^{3,4} + \chi_{n+1} z^{3,4}_n \} |x_{2,4} 
   \\
   & \phantom{ \mapsto \; }
   + \{ z^{2,3}_n + z^{2,3}_{n-2}z^{2,4}_2 + \chi_n z^{2,4}_n \} |x_{3,4} 
   , 
   \\
   z^{2,3}_{n-1}z^{2,4}_2|1 & \mapsto
   \big\{ z^{2,3}_{n-2}z^{2,4}_2 +\chi_n ( z^{2,4}_n+z^{3,4}_n ) \big\}|x_{2,3}
   +z^{2,3}_{n-1}z^{2,4}|x_{2,4} 
   + z^{2,3}_{n-1}z^{3,4}|x_{3,4} 
   .
\end{align*}
\end{small}
\end{fact}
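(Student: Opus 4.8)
The plan is to compute the differential directly from its defining description as left multiplication by $\iota = \sum_{(i,j) \in \I} y_{i,j} \otimes x_{i,j}$ on the Koszul complex $K_{\bullet} = (A^{!}_{-\bullet})^{*} \otimes A$, thereby reducing every assertion to the product identities in $A^{!}$ recorded in Fact \ref{fact:products-quadratic-dual-fk4} and Appendix \ref{sec:products FK4}. Concretely, for a basis element $w \in \B_{n+1}^{!}$ with dual $w^{*} \in \B_{n+1}^{!*}$ we have $d_{n+1}(w^{*}|1) = \sum_{(i,j)\in\I} (y_{i,j}\cdot w^{*})|x_{i,j}$, where $y_{i,j}\cdot w^{*} \in (A^{!}_{-n})^{*}$ denotes the left action of $y_{i,j} \in V^{*} = A^{!}_{-1}$ on the graded dual. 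The bimodule identity gives $(y_{i,j}\cdot w^{*})(u) = w^{*}(u\, y_{i,j})$ for $u \in A^{!}_{-n}$, so the value of $y_{i,j}\cdot w^{*}$ on a basis element $u \in \B_{n}^{!}$ is exactly the coefficient of $w$ in the expansion of the right product $u\, y_{i,j}$ in the basis $\B_{n+1}^{!}$. Writing $\lambda^{i,j}_{u,w}$ for this coefficient, we get $y_{i,j}\cdot w^{*} = \sum_{u \in \B_{n}^{!}} \lambda^{i,j}_{u,w}\, u^{*}$; equivalently, the matrix of $y_{i,j}\cdot(-)$ in the dual bases is the transpose of the matrix of right multiplication by $y_{i,j}$ from $A^{!}_{-n}$ to $A^{!}_{-(n+1)}$.

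First I would dispose of the low-degree and generic cases. The identity $d_1(z^{i,j}|1) = \epsilon^{!}|x_{i,j}$ is immediate, since $y_{i,j}\cdot (y_{k,l})^{*} = \delta_{(i,j),(k,l)}\,\epsilon^{!}$. For \eqref{eq:differential 4} (the differential $d_n$), the relevant right products are the pure-power identities \eqref{eq:product ijkl}: from $y_{i,j}^{n-r-1}y_{k,l}^{r}\, y_{i,j} = (-1)^{r} y_{i,j}^{n-r}y_{k,l}^{r}$ and $y_{i,j}^{n-r}y_{k,l}^{r-1}\, y_{k,l} = y_{i,j}^{n-r}y_{k,l}^{r}$ one reads off that only $u = y_{i,j}^{n-r-1}y_{k,l}^{r}$ and $u = y_{i,j}^{n-r}y_{k,l}^{r-1}$ right-multiply into the target $w = y_{i,j}^{n-r}y_{k,l}^{r}$, contributing $(-1)^{r}$ (paired with $x_{i,j}$) and $1$ (paired with $x_{k,l}$) respectively; one must also check, using the remaining identities of Fact \ref{fact:products-quadratic-dual-fk4}, that no other basis element maps into $w$, which forces the remaining components to vanish.

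For the generic formulas \eqref{eq:dn}, valid for $n \geqslant 5$, I would proceed target by target: for each of the listed elements $w$ (which exhaust $\U^{!}_{n+1}$) and each generator $y_{i,j}$, scan the right-multiplication identities of Fact \ref{fact:products-quadratic-dual-fk4} (namely \eqref{eq:product ijkl}, the block \eqref{eq:y12y34y13}, and their analogues) together with those of Appendix \ref{sec:products FK4}, collecting every $u \in \B_{n}^{!}$ whose product $u\, y_{i,j}$ contains $w$, and assembling $y_{i,j}\cdot w^{*} = \sum_{u} \lambda^{i,j}_{u,w}\, u^{*}$. The even/odd factors $\chi_{n}$ and $\chi_{n+1}$ appearing throughout \eqref{eq:dn} are inherited directly from the parity-dependent coefficients in those product identities. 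The characteristic feature, the finite sums such as $\sum_{s=1}^{\lfloor n/2\rfloor} z^{1,2}_{n-2s+1}z^{3,4}_{2s-1}$, arises because an entire periodic family $\{ y_{1,2}^{n-r}y_{3,4}^{r} \}_{r} \subseteq \C_{n}^{!}$ (and its analogues $y_{1,3}^{n-r}y_{2,4}^{r}$ and $y_{2,3}^{n-r}y_{1,4}^{r}$) right-multiplies into a single target $w$, each admissible member contributing one summand; collecting the transpose over all such $r$ reproduces the displayed sum, whose index range and parity constraints record exactly which members of the family carry a nonzero coefficient on $w$.

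The main obstacle is not any individual computation but completeness and bookkeeping. For each target $w$ one must be certain that every contributing basis element $u$ has been located, since a single missing term would corrupt a coefficient, and one must verify that the $\C_{n}^{!}$ families contribute with precisely the index ranges and parities displayed in the sums of \eqref{eq:dn}. This verification is purely mechanical but voluminous, which is exactly why all the required products are pre-tabulated in Fact \ref{fact:products-quadratic-dual-fk4} and Appendix \ref{sec:products FK4}; with those identities in hand, each component of each formula follows by reading off a transpose, and the whole proof reduces to a careful case analysis over the parities of $n$ and $r$.
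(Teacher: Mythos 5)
Your proposal is correct and follows the same route as the paper: the paper's proof is exactly the "straightforward but lengthy verification" via the product identities of Fact \ref{fact:products-quadratic-dual-fk4} and Appendix \ref{sec:products FK4}, and your observation that left multiplication by $y_{i,j}$ on the graded dual is (under the convention $(a\cdot f)(c)=f(ca)$) the transpose of right multiplication $A^{!}_{-n}\to A^{!}_{-n-1}$ is precisely the mechanism that turns those tabulated products into the displayed components of $d_{n+1}$, including the sums over the families $y_{1,2}^{n-r}y_{3,4}^{r}$, $y_{1,3}^{n-r}y_{2,4}^{r}$, $y_{2,3}^{n-r}y_{1,4}^{r}$ in $\C^{!}_{n}$. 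Your identification of the listed targets with $\U^{!}_{n+1}$ and of \eqref{eq:differential 4} with the $\C^{!}_{n+1}$ part, together with the completeness caveat, matches the paper's (unwritten) verification exactly.
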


\subsection{\texorpdfstring{The main result about $\FK(4)$}{The main result about FK(4)}}
\label{subsec:Resolving datum fk4}

We will now define some quadratic $A$-modules $M^i$ for $i\in \llbracket 1,3\rrbracket$.  
Let $M^1$ be the $A$-module generated by two homogeneous elements $a_1,a_2$ of degree zero, subject to the following $6$ relations
\begin{equation}
   \label{eq: relations of H4}
   \begin{split}
    a_1x_{1,2}+a_2x_{1,2},
   a_1x_{1,3},
   a_2x_{2,3}, 
   a_2x_{1,4},
   a_1x_{2,4}, 
   a_1x_{3,4}+a_2x_{3,4} . 
   \end{split}
   \end{equation}
Let $M^2$ be the $A$-module  generated by the set $\{ h_i \mid i\in \llbracket 1,7 \rrbracket \}$ of seven homogeneous elements of degree zero, subject to the following $24$ relations
\begin{equation}
\label{eq:rels M2}
\begin{split}
& 
h_1x_{1,2}, 
h_1 x_{1,3},
h_1 x_{2,3}, 
h_2x_{1,2}, 
h_2 x_{1,4}, 
h_2 x_{2,4}, 
h_3x_{1,3}, 
h_3x_{1,4},
h_3x_{3,4},
h_4x_{2,3}, 
h_4x_{2,4},
h_4x_{3,4}, 
\\
&
h_1x_{2,4}-h_3x_{2,4}-h_5x_{1,3}, 
h_2x_{1,3}-h_4x_{1,3}+h_5x_{2,4}, 
h_5x_{3,4}-h_6x_{1,2},
h_1x_{1,4}-h_4x_{1,4}+h_6x_{2,3},
\\
&
h_2x_{2,3}-h_3x_{2,3}-h_6x_{1,4},
h_5x_{1,2}+h_6x_{3,4}, 
h_1x_{3,4}+h_2x_{3,4}+h_7x_{1,2},
h_6x_{2,4}+h_7x_{1,3},
\\
&
h_5x_{1,4}+h_7x_{2,3},
h_5x_{2,3}-h_7x_{1,4},
h_6x_{1,3}-h_7x_{2,4},
h_3x_{1,2}+h_4x_{1,2}+h_7x_{3,4}. 
\end{split}
\end{equation}
Finally, let $M^3$ be the $A$-module generated by the set $\{ e_i \mid i\in \llbracket 1,8\rrbracket \}$ of eight homogeneous elements of degree zero, subject to the following $24$ relations 
\begin{equation}
\label{eq:relation in M3}
\begin{split}
     & e_1x_{1,2}+e_2x_{3,4}, 
      e_1x_{3,4}-e_2x_{1,2}, 
      e_3x_{1,2}-e_4x_{3,4},
      e_3x_{3,4}+e_4x_{1,2},
     e_4x_{1,3}+e_2x_{2,4},
      e_4x_{2,4}-e_2x_{1,3}, 
      \\
     &
      e_3x_{1,3}+e_1x_{2,4}, 
      e_3x_{2,4}-e_1x_{1,3},
      e_1x_{2,3}-e_4x_{1,4},
      e_1x_{1,4}+e_4x_{2,3}, 
      e_3x_{2,3}-e_2x_{1,4}, 
      e_3x_{1,4}+e_2x_{2,3},
      \\
      & 
      e_5x_{1,2}, e_5x_{1,3}, e_5x_{2,3},
      e_6x_{1,2}, e_6x_{1,4}, e_6x_{2,4}, 
      e_7x_{1,3}, e_7x_{1,4}, e_7x_{3,4},
      e_8x_{2,3}, e_8x_{2,4}, e_8x_{3,4}.
\end{split}
\end{equation}
Since the previous modules are finite dimensional, we use GAP to obtain a homogeneous $\Bbbk$-basis of $M^i$, and in particular, the Hilbert series of $M^i$, for $i\in \llbracket 1,3 \rrbracket$. 
See Appendix \ref{sec:Basis of M1} for a basis of $M^1$. 

\begin{fact}
\label{fact:hilb-ser-mi}
Given $i\in \llbracket 1,3 \rrbracket$, the Hilbert series $h_{M^i}(t)$ of the quadratic $A$-module $M^i$ introduced in the previous paragraph is given by 
\begin{equation}
\begin{split}
h_{M^1}(t) & = 2+6t+11t^2+12t^3+11t^4+6t^5+2t^6,
\\
h_{M^2}(t) & = 7+18t+32t^2+42t^3+40t^4+30t^5+16t^6+6t^7+1t^8,
\\
h_{M^3}(t) & = 8+24t+48t^2+72t^3+80t^4+72t^5+48t^6+24t^7+8t^8.
\end{split}
\end{equation}
\end{fact}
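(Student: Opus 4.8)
This statement is a finite computation, so the plan is to exhibit, for each $i\in\llbracket 1,3\rrbracket$, an explicit homogeneous $\Bbbk$-basis of $M^{i}$ and then read off $h_{M^{i}}(t)$ by counting basis elements in each internal degree. Write $r_{1}=2$, $r_{2}=7$, $r_{3}=8$ for the number of generators, so that by definition $M^{i}=(V_{M^{i}}\otimes A)/\langle R_{M^{i}}\rangle$, where $V_{M^{i}}$ is spanned by the generators placed in degree zero and $R_{M^{i}}\subseteq V_{M^{i}}\otimes V$ is the span of the listed relations. Since $A$ is finite dimensional and each $M^{i}$ is finitely generated, each $M^{i}$ is finite dimensional, which guarantees that the Gröbner-type computation below terminates.

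Concretely, I would work in the free right $A$-module $\bigoplus_{k=1}^{r_{i}} g_{k}\cdot A$, whose monomials $g_{k}\cdot b$ (for $b\in\B$ a standard word of $A$ with respect to $G_{A}$) form a $\Bbbk$-basis, and consider the right $A$-submodule $N^{i}$ generated by $R_{M^{i}}$, so that $M^{i}=\big(\bigoplus_{k} g_{k}A\big)/N^{i}$. Ordering the module monomials by the homogeneous lexicographic order induced by $x_{1,2}\prec x_{1,3}\prec x_{2,3}\prec x_{1,4}\prec x_{2,4}\prec x_{3,4}$ used for $G_{A}$, refined by a fixed order on the positions $g_{1},\dots,g_{r_{i}}$, I would compute a Gröbner basis of $N^{i}$ with the GBNP package (lifting to the free algebra $\T(V)$ and adjoining the elements $g_{k}\cdot r$ for $r$ ranging over the defining relations $R$ of $\FK(4)$, or equivalently reducing modulo the already known $G_{A}$). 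The module monomials $g_{k}\cdot w$ that are not divisible by any leading term of this Gröbner basis are the standard monomials, and they form a homogeneous $\Bbbk$-basis of $M^{i}$; collecting them by internal degree $|w|$ produces the three series in the statement.

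As a check on the output I would confirm the total dimensions $h_{M^{1}}(1)=50$, $h_{M^{2}}(1)=192$ and $h_{M^{3}}(1)=384$ by an independent enumeration, and note the palindromic symmetries $h_{M^{1}}(t)=t^{6}h_{M^{1}}(t^{-1})$ and $h_{M^{3}}(t)=t^{8}h_{M^{3}}(t^{-1})$, whereas the series for $M^{2}$ is not palindromic. The genuinely nontrivial part is the Gröbner basis computation itself: the first twelve relations of $M^{2}$ and the twelve relations of $M^{3}$ involving $e_{5},\dots,e_{8}$ are monomial, but the remaining relations of $M^{2}$ and $M^{3}$, as well as two of the relations of $M^{1}$, mix the module generators, so resolving the overlaps between them and the elements of $G_{A}$ acting on each generator creates new standard relations in higher degree, and one must ensure the algorithm has been run to completion and that the resulting leading-term set is correct. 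This bookkeeping is exactly what is delegated to GAP. A purely linear-algebraic alternative gives the same numbers: in each internal degree $n$ one computes the rank of the multiplication map $R_{M^{i}}\otimes A_{n-1}\to V_{M^{i}}\otimes A_{n}$ from the product rules encoded by $G_{A}$, whence
\[
\dim_{\Bbbk} M^{i}_{n}=r_{i}\dim_{\Bbbk} A_{n}-\operatorname{rank}\big(R_{M^{i}}\otimes A_{n-1}\to V_{M^{i}}\otimes A_{n}\big),
\]
but this is equally computational.
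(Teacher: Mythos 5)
Your proposal is correct and is essentially the paper's own proof: the paper also delegates this finite computation to GAP/GBNP, combining the Gröbner basis $G_{A}$ of $\FK(4)$ with the module relations, splitting off the prefix (module) relations, and reading off the homogeneous basis of standard module monomials via \texttt{BaseQM} (this is exactly the code in Appendix \ref{sec:Basis of M1}, shown there for $M^{1}$). Your sanity checks (total dimensions $50$, $192$, $384$ and the palindromic symmetries for $M^{1}$ and $M^{3}$) are consistent with the stated series.
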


We now present the second main result of this article, which states that the Fomin-Kirillov algebra of index $4$ has a connected resolving datum. 
The proof of this theorem will follow from several intermediate
results that we will provide in the following two subsections (see Subsection \ref{proof:rd fk4}).

\begin{thm}
\label{thm:rd fk4}
Let $\MM = \{ M^{0}=\Bbbk, M^1,M^2, M^{3} \}$ be the family of quadratic $A$-modules introduced in the first paragraph of this subsection, and let 
$ \hh : \llbracket 0 , N \rrbracket^{2} \times \NN^{2} \rightarrow \NN_{0}^{2} $ 
be the map given by 
\begin{align*}
\hh(0,2,3,6) & = \hh(0,0,3,6)=\hh(1,2,1,4)=(1,0),
\\
\hh(0,0,3,8) & = \hh(0,1,4,8)=\hh(0,0,5,16)=\hh(1,0,1,6)=\hh(1,0,1,8)=\hh(2,0,1,4)
\\
& =\hh(2,0,1,6)=\hh(2,1,2,6)=\hh(2,3,3,6)=\hh(3,3,3,6)=(0,1), 
\end{align*}
and $\hh(i,j,k,\ell)$ vanishes on other $(i,j,k,\ell)$. 
Then this gives a connected resolving datum on $A$, whose associated resolving quiver is given in Figure \ref{figure:fk4}, where we denote by ${}_{j}\alpha_{i}^{d',d''}$ the unique arrow from $M^{i}$ to $M^{j}$ having bidegree $(d',d'')$. 
In this case, the strict partial order on the arrows is given by ${}_{0}\alpha_{0}^{4,8}<{}_{0}\alpha_{0}^{4,6}, {}_{2}\alpha_{0}^{4,6}$, and ${}_{0}\alpha_{1}^{2,6}, {}_{0}\alpha_{1}^{2,8} <{}_{2}\alpha_{1}^{2,4}$. 
The arrows ${}_{1}\alpha_{0}^{5,8}$ and ${}_{1}\alpha_{2}^{3,6}$ of odd difference degrees appear in red.
\begin{center}
   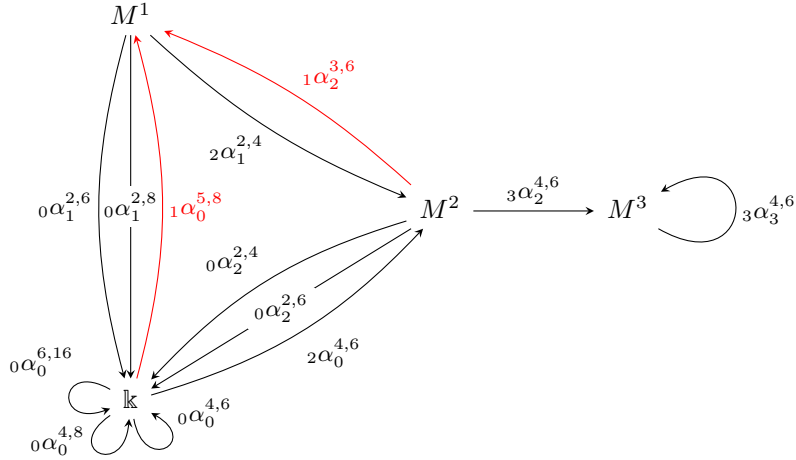
\begin{figure}[htbp] 
   \tikzcdset{arrow style=tikz, diagrams={>=stealth}}
   \tikzcdset{every label/.append style = {font = \small}}
      \begin{tikzcd}[ampersand replacement=\&, column sep=1.6cm, row sep = 2 cm, tail]
           \&
           \&
         M^{1}
         \arrow[dd,bend left = 0, "{}_{0}\alpha_{1}^{2,8}" description]
         \arrow[dd,bend right = 15, "{}_{0}\alpha_{1}^{2,6}"']
           \arrow[drr, bend right = 10, "{}_{2}\alpha_{1}^{2,4}"']
           \&
         \&
         \&
         \\
           \&
           \&
            \&
           \&
           M^{2}
           \arrow[r,bend left = 0, "{}_{3}\alpha_{2}^{4,6}"]
           \arrow[ull,bend right = 10, "{}_{1}\alpha_{2}^{3,6}"', red]
           \arrow[dll,bend left = 0, "{}_{0}\alpha_{2}^{2,6}" description]
           \arrow[dll,bend right = 15, "{}_{0}\alpha_{2}^{2,4}"']
         \&
           M^{3}
           \arrow[out = -30, in = 30, loop, "{}_{3}\alpha_{3}^{4,6}"']
           \\
           \&
           \&
         \Bbbk 
           \arrow[uu, bend right = 15, "{}_{1}\alpha_{0}^{5,8}"', red]
           \arrow[out = 150, in = 200, loop, "{}_{0}\alpha_{0}^{6,16}"', near start]
         \arrow[out = 215, in = 265, loop, "{}_{0}\alpha_{0}^{4,8}"', pos = 0.28] 
           \arrow[out = 280, in = 330, loop, "{}_{0}\alpha_{0}^{4,6}"', near end]
         \arrow[rru, bend right = 15, "{}_{2}\alpha_{0}^{4,6}"']
         \&
         \&
           \&
      \end{tikzcd}
    \caption{Resolving quiver of $\FK(4)$.}
   \label{figure:fk4}
      \end{figure}
    \end{center}
    \vspace{-0.8cm}
\end{thm}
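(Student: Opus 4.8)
The plan is to check the three defining conditions (A.1)--(A.3) of a resolving datum, together with connectedness of the associated quiver. Connectedness is immediate from Figure \ref{figure:fk4}, and (A.1) holds because $\hh$ is supported on the finitely many tuples displayed in the statement. The four modules are quadratic by construction, and they are pairwise non-isomorphic: by Fact \ref{fact:hilb-ser-mi} their spaces of generators have dimensions $1,2,7,8$, and since a morphism of quadratic modules is determined by its restriction to degree zero via the isomorphism \eqref{eq:rest-qd}, modules with different numbers of generators cannot be isomorphic.

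The substance of the statement is condition (A.2): for every $i\in\llbracket 0,3\rrbracket$ and every $k\in\NN$ one must identify $\operatorname{H}_{k}(\K_{\bullet}^{\operatorname{mod}}(M^{i}))$ as a graded right $A$-module and exhibit the short exact sequence \eqref{eq:ses} dictated by $\hh$. I would package these computations into one lemma per module, namely Lemmas \ref{lemma:homology-m0}, \ref{lemma:homology-m1}, \ref{lemma:homology-m2} and \ref{lemma:homology-m3}. For each $M^{i}$ the method is uniform: first compute the quadratic dual $(M^{i})^{!_{m}}$ as a right $A^{!}$-module together with a homogeneous basis (via Gröbner bases of quadratic modules in GAP, as in the appendices), which fixes the graded dimension of every Koszul term $\K_{n}^{\operatorname{mod}}(M^{i})=((M^{i})^{!_{m}}_{-n})^{*}\otimes A$; then write the differential as left multiplication by $\iota=\sum_{(i,j)\in\I} y_{i,j}\otimes x_{i,j}$, reducing products back to the basis $\B^{!}$ by means of Fact \ref{fact:products-quadratic-dual-fk4} and the further identities of the appendix, exactly as in the explicit description of the Koszul differential of $A$; and finally compute homology. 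Crucially, the differential of $A$ admits a formula that is uniform in $n$ for $n\geqslant 5$ (formula \eqref{eq:dn}), and the analogous uniformity for each $M^{i}$ is what will let one control the homology in \emph{all} high homological degrees at once, rather than only on a bounded range. Read against $\hh$, the expected output is that $\operatorname{H}_{\bullet}(\K_{\bullet}^{\operatorname{mod}}(\Bbbk))$ is concentrated in degrees $3,4,5$, equal to an extension of $\Bbbk(-8)$ by $\Bbbk(-6)\oplus M^{2}(-6)$, to $M^{1}(-8)$, and to $\Bbbk(-16)$; that $\operatorname{H}_{1}(\K_{\bullet}^{\operatorname{mod}}(M^{1}))$ is an extension of $\Bbbk(-6)\oplus\Bbbk(-8)$ by $M^{2}(-4)$ with all higher homology zero; that $\K_{\bullet}^{\operatorname{mod}}(M^{2})$ has homology $\Bbbk(-4)\oplus\Bbbk(-6)$, $M^{1}(-6)$, $M^{3}(-6)$ in degrees $1,2,3$; and that $\K_{\bullet}^{\operatorname{mod}}(M^{3})$ has homology $M^{3}(-6)$ in degree $3$ only.

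The genuinely hard part is not the dimension count of the homology --- that is a finite linear-algebra computation in each internal degree, constrained moreover by the graded Euler characteristic once the Hilbert series of $A$, of $A^{!}$ (Fact \ref{fact:basis-quadratic-dual-fk4}) and of each $(M^{i})^{!_{m}}$ are known --- but the identification of the homology \emph{as a module}. Concretely, for each nonzero $\operatorname{H}_{k}$ one must produce explicit cycles that generate it, compute the relations they satisfy modulo boundaries, and match these against the defining relations \eqref{eq: relations of H4}, \eqref{eq:rels M2} and \eqref{eq:relation in M3} of the $M^{j}$; the decomposition into the submodule part (recorded by $\pi_{1}$) and the quotient part (recorded by $\pi_{2}$) is then read off by locating which generators lie in the image of an inclusion of a sub-quadratic-module. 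I expect this module-recognition step, carried out with the GAP routines of the appendices, to absorb essentially all of the effort.

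Finally, condition (A.3) must be verified only where $\pi_{1}\neq 0$, that is, for $(i,k)=(0,3)$, with submodule $\Bbbk(-6)\oplus M^{2}(-6)$, and $(i,k)=(1,1)$, with submodule $M^{2}(-4)$; for every other $(i,k)$ the conclusion of (A.3) holds vacuously. For these two sequences I would prove non-splitting directly from the module structure of $\operatorname{H}_{3}(\K_{\bullet}^{\operatorname{mod}}(\Bbbk))$ and $\operatorname{H}_{1}(\K_{\bullet}^{\operatorname{mod}}(M^{1}))$ found above --- e.g.\ by exhibiting a generator of the quotient each of whose lifts is carried nontrivially into the submodule by the $A$-action, so that no $A$-linear splitting can exist. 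Combining (A.1), (A.2), (A.3) and connectedness then shows that $(\MM,\hh)$ is a connected resolving datum on $A$.
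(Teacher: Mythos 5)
Your proposal is correct and follows essentially the same route as the paper: the theorem is reduced to the four homology computations (Lemmas \ref{lemma:homology-m0}--\ref{lemma:homology-m3}), each carried out by describing the quadratic dual $(M^{i})^{!_{m}}$ and the Koszul differential explicitly, exploiting uniformity of the differential in high homological degree, identifying the homology as a module by matching generators against the relations \eqref{eq: relations of H4}, \eqref{eq:rels M2}, \eqref{eq:relation in M3}, and proving non-splitness of the two sequences with $\pi_{1}\neq 0$ by showing that no lift of a quotient generator is annihilated by the $A$-action. Your reading of $\hh$, of condition (A.3), and of the expected homology modules agrees with the paper's Lemmas in every case.
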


\subsection{\texorpdfstring{The homology of the Koszul complex of $\FK(4)$}{The homology of the Koszul complex of FK(4)}}
\label{subsec:cpx of H4}

\subsubsection{The dimensions of the homology groups}
\label{subsubsec:Homology of Koszul complex of the trivial module}

Recall that $(K_{\bullet},d_{\bullet})$ is the Koszul complex of the trivial module $\Bbbk$ in the category of bounded below graded (right) $A$-modules. 
Let $K_{n,m}=(A^{!}_{-n})^*\otimes A_m$, 
$d_{n,m}=d_n|_{K_{n,m}}:K_{n,m}\to K_{n-1,m+1}$, 
$B_{n,m}=\Img(d_{n+1,m-1})$, $D_{n,m}=\Ker(d_{n,m})$, 
$H_{n,m}=D_{n,m}/B_{n,m}$ 
for $n\in\NN_0$ and $m\in\llbracket 0,12\rrbracket$. 
Let $H_n=\oplus_{m\in \llbracket 0,12 \rrbracket} H_{n,m}$ for $n\in \NN_0$.
We can compute the dimension of $B_{n,m}$ using GAP for $n$ less than some arbitrary positive integer and $m\in\llbracket 1,12\rrbracket$ by using the code in Appendix \ref{sec:cpx} together with the following simple routine.   

\parskip 1ex
\parindent 0in
\begin{tcolorbox}[breakable,colback=white,width=\textwidth ,center,arc=0mm,size=fbox]
\begin{footnotesize}
\begin{Verbatim}[samepage=false]
for j in [0..11] do
    for i in [1..12] do
        Print(j, " ", i, " ", RankMat(FF(0,j,i)), "\n");
    od;
od;
\end{Verbatim}
\end{footnotesize}
\end{tcolorbox}

For the rest of the section, we will only indicate the extra code added to the one in Appendix \ref{sec:cpx} for every computation, and, for the reader's convenience, we will often indicate the output of many of the intermediate commands in the corresponding successive line and preceded by a pound sign $\#$. 

The dimension of $B_{n,m}$ for $n\in\llbracket 0,11\rrbracket $ and $m\in \llbracket 0,12\rrbracket$ is displayed in Table \ref{table:Bnm}. 

\begin{table}[H]
      \begin{center}
         \begin{tabular}{|c|ccccccccccccc|}
         \hline
          \diagbox[width=14mm,height=5mm]{$n$}{$m$}  & $0$ & $1$  & $2$ & $3$ & $4$ & $5$ & $6$ & $7$ & $8$ & $9$ & $10$ & $11$ & $12$ 
        \\
        \hline
         $0$ & $0$ & $6$  & $19$ & $42$ & $71$ & $96$ & $106$ & $96$ & $71$ & $42$ & $19$ & $6$ & $1$ 
        \\
        $1$ & $0$ & $17$  & $72$ & $181$ & $330$ & $470$ & $540$ & $505$ & $384$ & $233$ & $108$ & $35$ & $6$ 
        \\
        $2$ & $0$ & $30$ & $142$ & $384$  & $737$ & $1092$ & $1297$ & $1248$ & $974$ & $606$ & $288$ & $96$ & $17$ 
        \\
        $3$ & $0$ & $38$ & $186$ & $515$ & $1020$ & $1550$ & $1890$ & $1866$ & $1494$ & $956$ & $468$ & $162$ & $30$   
        \\
        $4$ & $0$ & $42$ & $207$ & $576$ & $1146$ & $1752$ & $2151$ & $2142$ & $1731$ & $1122$ & $558$ & $198$ & $38$
        \\
        $5$ & $0$ & $45$ & $222$ & $618$ & $1230$ & $1881$ & $2310$ & $2301$ & $1860$ & $1206$ & $600$ & $213$ & $42$   
        \\
        $6$ & $0$ & $48$ & $237$ & $660$ & $1314$ & $2010$ & $2469$ & $2460$ & $1989$ & $1290$ & $642$ & $228$ & $45$  
        \\
        $7$ & $0$ & $51$ & $252$ & $702$ & $1398$ & $2139$ & $2628$ & $2619$ & $2118$ & $1374$ & $684$ & $243$ & $48$ 
        \\
        $8$ & $0$ & $54$ & $267$ & $744$ & $1482$ & $2268$ & $2787$ & $2778$ & $2247$ & $1458$ & $726$ & $258$ & $51$ 
        \\
        $9$ & $0$ & $57$ & $282$ & $786$ & $1566$ & $2397$ & $2946$ & $2937$ & $2376$ & $1542$ & $768$ & $273$ & $54$
        \\
        $10$ & $0$ & $60$ & $297$ & $828$ & $1650$ & $2526$ & $3105$ & $3096$ & $2505$ & $1626$ & $810$ & $288$ & $57$ 
        \\
        $11$ & $0$ & $63$ & $312$ & $870$ & $1734$ & $2655$ & $3264$ & $3255$ & $2634$ & $1710$ & $852$ & $303$ & $60$
        \\
         \hline
      \end{tabular}
    \vspace{1mm}
      \caption{Dimension of $B_{n,m}$.}	
      \label{table:Bnm}
   \end{center}
   \end{table}
   \vspace{-0.8cm}

Since $\dim D_{n,m}=\dim K_{n,m}-\dim B_{n-1,m+1}$, using Table \ref{table:Bnm} we get the dimension of $D_{n,m}$ for $n\in\llbracket 0,5\rrbracket$ and $m\in \llbracket 0,12\rrbracket$, which is displayed in Table \ref{table:Dnm}.
   \begin{table}[H]
      \begin{center}
         \begin{tabular}{|c|ccccccccccccc|}
         \hline
          \diagbox[width=14mm,height=5mm]{$n$}{$m$}  & $0$ & $1$  & $2$ & $3$ & $4$ & $5$ & $6$ & $7$ & $8$ & $9$ & $10$ & $11$ & $12$ 
        \\
        \hline
        $0$ & $1$ & $6$ & $19$ & $42$ & $71$ & $96$ & $106$ & $96$ & $71$ & $42$ & $19$ & $6$ & $1$ 
        \\
        $1$ & $0$ & $17$  & $72$ & $181$ & $330$ & $470$ & $540$ & $505$ & $384$ & $233$ & $108$ & $35$ & $6$   
        \\
        $2$ & $0$ & $30$ & $142$ & $384$  & $737$ & $1092$ & $1297$ & $1248$ & $974$ & $606$ & $288$ & $96$ & $17$  
        \\
        $3$ & $0$ & $38$ & $186$ & $523$ & $1038$ & $1583$ & $1932$ & $1906$ & $1524$ & $972$ & $474$ & $163$ & $30$ 
        \\
        $4$ & $0$ & $42$ & $207$ & $576$ & $1148$ & $1758$ & $2162$ & $2154$ & $1742$ & $1128$ & $560$ & $198$ & $38$  
        \\
        $5$ & $0$ & $45$ & $222$ & $618$ & $1230$ & $1881$ & $2310$ & $2301$ & $1860$ & $1206$ & $600$ & $214$ & $42$ 
        \\
         \hline
      \end{tabular}
    \vspace{1mm}
      \caption{Dimension of $D_{n,m}$.}	
      \label{table:Dnm}
\end{center}
\end{table}
\vspace{-0.8cm}
Finally, since $\dim H_{n,m}=\dim D_{n,m}-\dim B_{n,m}$, using Tables \ref{table:Bnm} and \ref{table:Dnm} we get the dimension of $H_{n,m}$ for $n\in\llbracket 0,5\rrbracket$ and $m\in \llbracket 0,12\rrbracket$, which appears in Table \ref{table:Hnm0345}. 
\begin{table}[H]
   \begin{center}
      \begin{tabular}{|c|ccccccccccccc|}
      \hline
       \diagbox[width=14mm,height=5mm]{$n$}{$m$}  & $0$ & $1$  & $2$ & $3$ & $4$ & $5$ & $6$ & $7$ & $8$ & $9$ & $10$ & $11$ & $12$ 
     \\
     \hline
     $0$ & $1$ & $0$ & $0$ & $0$ & $0$ & $0$ & $0$ & $0$ & $0$ & $0$ & $0$ & $0$ & $0$ 
     \\
     $3$ & $0$ & $0$ & $0$ & $8$ & $18$ & $33$ & $42$ & $40$ & $30$ & $16$ & $6$ & $1$ & $0$ 
     \\
     $4$ & $0$ & $0$ & $0$ & $0$ & $2$ & $6$ & $11$ & $12$ & $11$ & $6$ & $2$ & $0$ & $0$  
     \\
     $5$ & $0$ & $0$ & $0$ & $0$ & $0$ & $0$ & $0$ & $0$ & $0$ & $0$ & $0$ & $1$ & $0$ 
     \\
      \hline
   \end{tabular}
 \vspace{1mm}
   \caption{Dimension of $H_{n,m}$.}	
   \label{table:Hnm0345}
\end{center}
\end{table}

More generally, we have the following result. 

\begin{prop}
   \label{prop:dim Bn5}
For $n\geqslant 5$, the dimension of $B_{n,m}$ is given by 
\begin{equation}
      \label{eq:dim Bn5}
          \dim B_{n,m}= \begin{cases} 
          0, &\text{if $m=0$,}
          \\
          3n+30, &\text{if $m = 1$,}
          \\
          15n+147, &\text{if $m =2$,}
          \\
          42n+408, &\text{if $m =3$,}
          \\
          84n+810, &\text{if $m =4$,}
          \\
          129n+1236, &\text{if $m =5$,}
           \\
          159n+1515 , &\text{if $m =6$,}
           \\
          159n+1506, &\text{if $m =7$,}
           \\
           129n+1215, &\text{if $m =8$,}
           \\
          84n+786, &\text{if $m =9$,}
           \\
          42n+390, &\text{if $m =10$,}
           \\
          15n+138, &\text{if $m =11$,}
           \\
          3n+27, &\text{if $m =12$.}
      \end{cases}
      \end{equation}
\end{prop}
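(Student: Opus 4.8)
The plan is to turn the statement into a first-order recursion in the homological degree $n$ and to solve it, the decisive input being that the Koszul complex of $\Bbbk$ is exact in homological degrees at least $6$.

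First I would set up the elementary dimension bookkeeping. By Fact \ref{fact:basis-quadratic-dual-fk4} together with the Hilbert series of $A$, one has $\dim K_{n,m} = \dim(A^!_{-n})\cdot\dim(A_m) = (3n+27)\dim(A_m)$ for $n\geqslant 5$. Since $\Img(d_{n,m}) = B_{n-1,m+1}$ and $\Ker(d_{n,m}) = D_{n,m}$, the rank--nullity theorem gives $\dim D_{n,m} = (3n+27)\dim(A_m) - \dim B_{n-1,m+1}$, and the definition of homology gives $\dim H_{n,m} = \dim D_{n,m} - \dim B_{n,m}$. Combining these,
\[
\dim B_{n,m} = (3n+27)\dim(A_m) - \dim B_{n-1,m+1} - \dim H_{n,m},
\]
so everything is reduced to controlling the homology dimensions $\dim H_{n,m}$.

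The crucial and hardest step is to prove that $H_{n,m}=0$ for all $n\geqslant 6$ and all $m$, equivalently that the Koszul complex of $\Bbbk$ is exact in homological degrees at least $6$; this is where I expect the main difficulty. I would extract it from the stable form of the differential. For $n\geqslant 5$ the basis $\B_n^!$ splits as $\U_n^!\sqcup\C_n^!$, where the $24$ elements of $\U_n^!$ have a shape independent of $n$ and the $3(n+1)$ cyclic elements of $\C_n^!$ are the pure powers from \eqref{eq:basis c}. On the elements dual to $\C_n^!$ the differential is the two-term map \eqref{eq:differential 4}, which is formally the complete-intersection Koszul differential attached to the three commuting pairs $\{x_{1,2},x_{3,4}\}$, $\{x_{1,3},x_{2,4}\}$ and $\{x_{2,3},x_{1,4}\}$; its rank is exactly computable and grows linearly in $n$. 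The coupling of these cyclic elements to the fixed family dual to $\U_n^!$ is recorded by \eqref{eq:dn} and, for $n\geqslant 5$, does not change shape with $n$, so it contributes only a bounded correction. Making this precise --- for instance by producing an explicit, $n$-independent unitriangular reduction of the matrix of $d_{n+1,m-1}$, or an explicit linearly independent spanning set of $B_{n,m}$ of the stated cardinality --- is the technical heart; it delivers the rank and the vanishing $H_{n,m}=0$ for $n\geqslant 6$ at once.

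Granting the vanishing, the displayed identity becomes the recursion $\dim B_{n,m} = (3n+27)\dim(A_m) - \dim B_{n-1,m+1}$ for $n\geqslant 6$, and I would conclude by induction on $n$. The base case $n=5$ is read off the last row of Table \ref{table:Bnm} and agrees with the claimed formula, while the inductive step is a direct substitution. Writing $\beta_n(t)=\sum_m \dim(B_{n,m})\,t^m$, the recursion becomes $t\,\beta_n(t)+\beta_{n-1}(t)=(3n+27)\,t\,h_A(t)$, where $h_A(t)$ denotes the Hilbert series of $A$; its solution is a polynomial of degree $12$ whose coefficients are affine in $n$, and the claimed values $\dim B_{n,m}=a_m n + b_m$ are exactly this solution, as one verifies from the relations $a_m = 3\dim(A_m)-a_{m+1}$ and $b_m = 27\dim(A_m)+a_{m+1}-b_{m+1}$ with the boundary convention $a_{13}=b_{13}=0$. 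The agreement of this closed form with rows $n=6,\dots,11$ of Table \ref{table:Bnm} serves as a consistency check.
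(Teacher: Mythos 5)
Your dimension bookkeeping is sound as far as it goes: the identity $\dim B_{n,m} = (3n+27)\dim(A_m) - \dim B_{n-1,m+1} - \dim H_{n,m}$ is correct, the claimed closed form does solve the resulting first-order recursion with the boundary convention $a_{13}=b_{13}=0$, and the base case $n=5$ is available from Table \ref{table:Bnm}. The genuine gap is that your entire argument is conditional on the vanishing $H_{n,m}=0$ for all $n\geqslant 6$, which you do not prove: you explicitly defer it as ``the technical heart'' and only list candidate strategies. Note that this vanishing is precisely Corollary \ref{cor:H} of the paper, which there is \emph{deduced from} Proposition \ref{prop:dim Bn5}; so it cannot be quoted here without circularity, and an independent proof is required. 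In effect you have inverted the paper's logical order and left the new hard step unproved.

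Moreover, the specific strategy you sketch --- an ``$n$-independent unitriangular reduction'' of the matrix of $d_{n+1,m-1}$, or equivalently a coupling of the cyclic part to the fixed family dual to $\U_n^!$ that ``does not change shape with $n$'' --- cannot work as stated, because in the stable range the differential is \emph{not} independent of $n$: the formulas \eqref{eq:dn} carry the parity factors $\chi_n$, $\chi_{n+1}$, and correspondingly $\dim U_{n,m}$ genuinely differs between even and odd $n$ (see Table \ref{table:Unm}). The structure is only $2$-periodic in $n$. This is exactly what the paper's proof exploits, while avoiding homology altogether: Lemma \ref{lem:dim changing in image} computes $\dim C_{n,m}$ exactly (affine in $n$), Lemma \ref{lem:dim non changing in image} constructs an explicit isomorphism showing that $\dim U_{n,m}$ and $\dim(U_{n,m}\cap C_{n,m})$ are $2$-periodic for $n\geqslant 5$, and since $B_{n,m}=C_{n,m}+U_{n,m}$ this yields the step-two recursion $\dim B_{n+2,m}-\dim B_{n,m}=\dim C_{n+2,m}-\dim C_{n,m}$, anchored at the two GAP base cases $n\in\{5,6\}$; the homology vanishing then falls out as a corollary rather than serving as an input. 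To salvage your route you would need a parity-aware version of your rank argument proving exactness for $n\geqslant 6$ directly, which in practice amounts to re-proving those two lemmas and checking base cases at both parities --- i.e., at least as much work as the paper's own proof.
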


Before giving the proof of the previous proposition, let us first give a direct consequence. 

\begin{cor}
   \label{cor:H}
We have $\dim H_n =0$ for $n\in \NN \setminus \{3,4,5 \}$. 
Moreover, the dimension of $H_{n,m}$ for $n=0,3,4,5$ and $m\in \llbracket 0,12\rrbracket$ is the one given in Table \ref{table:Hnm0345}.   
\end{cor}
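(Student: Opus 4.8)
The plan is to reduce the whole statement to the single numerical identity
\[
\dim H_{n,m} = \dim K_{n,m} - \dim B_{n-1,m+1} - \dim B_{n,m},
\]
which is immediate from the two facts recorded just before the statement: on the one hand $\dim D_{n,m}=\dim K_{n,m}-\dim B_{n-1,m+1}$, by rank--nullity applied to $d_{n,m}\colon K_{n,m}\to K_{n-1,m+1}$ (whose image is $B_{n-1,m+1}$), and on the other hand $\dim H_{n,m}=\dim D_{n,m}-\dim B_{n,m}$. Together with $\dim K_{n,m}=\dim A^!_{-n}\cdot\dim A_m$, this turns the corollary into an arithmetic check.

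First I would dispatch the low homological degrees. By Fact \ref{fact:k-alg} the Koszul complex agrees with the minimal projective resolution of the trivial module up to homological degree $2$, so $H_1=H_2=0$; moreover $H_0\cong\Bbbk$ is concentrated in internal degree $0$, which matches the first row of Table \ref{table:Hnm0345}. The cases $n=3,4,5$ are precisely the values already obtained by the GAP computation and recorded in Table \ref{table:Hnm0345}, so no additional argument is needed there.

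The substantive case is $n\geqslant 6$, where $n-1\geqslant 5$ as well, so Proposition \ref{prop:dim Bn5} provides closed-form expressions for both $\dim B_{n,m}$ and $\dim B_{n-1,m+1}$. Using Fact \ref{fact:basis-quadratic-dual-fk4} to get $\dim A^!_{-n}=3n+27$, and reading the values $\dim A_m$ off the Hilbert series of $A$, one has $\dim K_{n,m}=(3n+27)\dim A_m$, and I would substitute all three quantities into the displayed identity and verify that it vanishes for each $m\in\llbracket 0,12\rrbracket$. For example, at $m=6$ one has $\dim K_{n,6}=106(3n+27)$, $\dim B_{n,6}=159n+1515$ and $\dim B_{n-1,7}=159(n-1)+1506=159n+1347$, whose combination $318n+2862-(159n+1515)-(159n+1347)$ is indeed $0$; the remaining columns behave identically. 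The boundary column $m=12$ is the only one needing a separate remark, since there $\dim B_{n-1,13}=0$ because its ambient space $K_{n-1,13}$ vanishes as $A_{13}=0$.

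There is no genuine obstacle here: the argument is linear bookkeeping, and the only point demanding care is tracking the index shifts $m\mapsto m+1$ and $n\mapsto n-1$ correctly across the thirteen internal degrees, so that in each column both the coefficient of $n$ and the constant term cancel between $\dim K_{n,m}$ and the two boundary contributions $\dim B_{n,m}$ and $\dim B_{n-1,m+1}$.
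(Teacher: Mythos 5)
Your argument for $n\geqslant 6$ is correct and is in substance the same as the paper's: both reduce to the identity $\dim H_{n,m}=\dim K_{n,m}-\dim B_{n-1,m+1}-\dim B_{n,m}$ and then cancel the closed formulas of Proposition \ref{prop:dim Bn5} (valid for both $n$ and $n-1$ once $n\geqslant 6$) against $\dim K_{n,m}=(3n+27)\dim A_m$; indeed both the coefficient of $n$ and the constant term cancel in every column $m\in\llbracket 0,12\rrbracket$, with $\dim B_{n-1,13}=0$ at the boundary. The cases $n=0$ and $n=3,4,5$ are also handled correctly. The problem is your treatment of $n=2$.

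You claim that Fact \ref{fact:k-alg} gives $H_1=H_2=0$. It does give $H_1=0$, since $\Ker(d_1)=\Img(d_2)$ is precisely exactness in homological degree $1$; but it does not give $H_2=0$. The fact only asserts that the modules and differentials of $K_\bullet$ agree with those of the minimal projective resolution $P_\bullet$ up to homological degree $2$; exactness at $K_2$, that is $\Ker(d_2)=\Img(d_3)$, involves $d_3$ and lies outside that range. Concretely, one always has $\Ker(d_2)=\Img(d_3^{P})$, but $\Img(d_3)\subseteq\Img(d_3^{P})$ may be proper: the vanishing of $H_2$ is equivalent to $\Tor_3^A(\Bbbk,\Bbbk)$ being concentrated in internal degree $3$, which fails for general non-Koszul quadratic algebras, and there is no a priori reason for it to hold for $\FK(4)$ — whose Koszul complex does fail to be exact already in homological degree $3$. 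For $\FK(4)$ the vanishing $H_2=0$ (and likewise $H_1=0$, if one prefers a uniform argument) is a computational fact, obtained as in the paper by observing that the rows $n=1,2$ of Tables \ref{table:Bnm} and \ref{table:Dnm} coincide, i.e. $\dim B_{n,m}=\dim D_{n,m}$ there. Replacing your appeal to Fact \ref{fact:k-alg} for $n=2$ by this comparison of tables repairs the proof; with that change your argument coincides with the paper's.
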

\begin{proof}
By $\dim D_{n,m}=\dim K_{n,m}-\dim B_{n-1,m+1}$, Proposition \ref{prop:dim Bn5}, together with Tables \ref{table:Bnm} and \ref{table:Dnm}, 
we have $\dim D_{n,m}=\dim B_{n,m}$ for $n\in \NN \setminus\{3,4,5 \}$ and $m\in \llbracket 0,12 \rrbracket $. 
Then the corollary holds. 
\end{proof}

In order to prove Proposition \ref{prop:dim Bn5}, we need some preparatory results. 
Let $\C_n= \cup _{(i,j)\in \II}\C_n^{i,j}$, where  
\[     \C_n^{i,j}= \big\{ z^{i,j}_{n-r}z^{k,l}_{r} \mid (k,l)\in \I \text{ such that } \# \{i,j,k,l \}=4, r\in \llbracket 0,n \rrbracket \big\} \subseteq \B_n^{!*}     \]  
for $(i,j)\in \II$ and $n\in\NN$, and let $\U_n=\B_n^{!*} \setminus \C_n$ for $n\in\NN$. 
Note that the pair $(k,l) \in \I$ is uniquely determined in the definition of $\C_n^{i,j}$. 
Given $m, n \in \NN$, let $C_{n,m}$ be the subspace of $\Bbbk \C_n \otimes A_m$ spanned by $\{ d_{n+1}(z|x)\mid z\in \C_{n+1}, x\in A_{m-1} \}$, 
$C_{n,m}^{i,j}$ be the subspace of $C_{n,m}$ spanned by $\{ d_{n+1}(z|x)\mid z\in \C_{n+1}^{i,j}, x\in A_{m-1} \}$ for $(i,j)\in \II$, 
and $U_{n,m}$ be the subspace of $B_{n,m}$ spanned by $\{ d_{n+1}(z|x)\mid z\in \U_{n+1}, x\in A_{m-1} \}$.

Fixing the order $x_{1,2} \prec x_{3,4} \prec x_{1,3} \prec x_{2,3} \prec x_{1,4} \prec x_{2,4}$ (resp., $x_{1,3} \prec x_{2,4} \prec x_{1,2} \prec x_{2,3} \prec x_{1,4} \prec x_{3,4}$, $x_{2,3} \prec x_{1,4} \prec x_{1,2} \prec x_{1,3} \prec x_{2,4} \prec x_{3,4}$), 
the corresponding basis of $A$  consisting of standard words will be denoted by $W^{1,2}$ (resp., $W^{1,3}$, $W^{2,3}$). 
It can be explicitly computed using GAP (see Appendix \ref{subsec:W} for $W^{1,2}$). 
For $(i,j)\in \II$, let $(k,l)\in \I$ such that $\# \{i,j,k,l \}=4$, 
set $W^{i,j}_m=W^{i,j}\cap A_m$. 
Set $E^{i,j}_m$ as the subset of $W^{i,j}_m$ containing elements whose first element is not $x_{i,j}$, and set $\tilde{E}^{i,j}_m$ as the subset of $W^{i,j}_m$ containing elements whose first element is neither $x_{i,j}$ nor $x_{k,l}$. 
Let $\mathfrak{a}^{i,j}_m=\# E^{i,j}_m$ and $\mathfrak{b}^{i,j}_m=\# \tilde{E}^{i,j}_m$ for $m\in \llbracket 0,11\rrbracket$.  
The integers $\mathfrak{a}^{i,j}_m$ and $\mathfrak{b}^{i,j}_m$ are easily computed from the explicit description of the bases $W_{m}^{i,j}$, they are independent of $(i,j)$, so they will be denoted simply by $\mathfrak{a}_{m}$ and $\mathfrak{b}_{m}$, respectively, and are given in Table \ref{table:a b}. 
\begin{table}[H]
   \begin{center}
      \begin{tabular}{|c|cccccccccccc|}
      \hline
       \diagbox[width=14mm,height=5mm]{ }{$m$}  & $0$ & $1$  & $2$ & $3$ & $4$ & $5$ & $6$ & $7$ & $8$ & $9$ & $10$ & $11$  
     \\
     \hline
     $\mathfrak{a}_m$ & $1$ & $5$ & $14$ & $28$ & $43$ & $53$ & $53$ & $43$ & $28$ & $14$ & $5$ & $1$  
     \\
     $\mathfrak{b}_m$ & $1$ & $4$ & $10$ & $18$ & $25$ & $28$ & $25$ & $18$ & $10$ & $4$ & $1$ & $0$  
     \\
      \hline
   \end{tabular}
 \vspace{1mm}
   \caption{Values of $\mathfrak{a}_m$ and $\mathfrak{b}_m$.}	
   \label{table:a b}
\end{center}
\end{table}
\vspace{-0.8cm}

\begin{lem}
\label{lem:dim changing in image}
We have
$C_{n,m}=\bigoplus_{(i,j)\in \II} C^{i,j}_{n,m}$ 
and the dimension of $C^{i,j}_{n,m}$ is given by  
\begin{equation}
   \label{eq:dim chang}
       \dim C^{i,j}_{n,m}= \begin{cases} 
       n+2, &\text{if $m = 1$,}
       \\
       5n+9, &\text{if $m =2$,}
       \\
       14n+24, &\text{if $m =3$,}
       \\
       28n+46, &\text{if $m =4$,}
       \\
       43n+68, &\text{if $m =5$,}
        \\
       53n+81 , &\text{if $m =6$,}
        \\
       53n+78, &\text{if $m =7$,}
        \\
        43n+61, &\text{if $m =8$,}
        \\
       28n+38, &\text{if $m =9$,}
        \\
       14n+18, &\text{if $m =10$,}
        \\
       5n+6, &\text{if $m =11$,}
        \\
       n+1, &\text{if $m =12$,}
   \end{cases}
   \end{equation}
for all $(i,j)\in \II $ and $n\in \NN$. 
Else $\dim C^{i,j}_{n,m} = 0$. 
\end{lem}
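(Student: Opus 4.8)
The plan is to treat the two assertions separately. For the direct sum $C_{n,m}=\bigoplus_{(i,j)\in\II}C_{n,m}^{i,j}$, I would read off from \eqref{eq:differential 4} that, for $(i,j)\in\II$ and the unique $(k,l)\in\I$ with $\#\{i,j,k,l\}=4$, the differential sends $z^{i,j}_{n+1-r}z^{k,l}_{r}|1$ to a combination of $z^{i,j}_{n-r}z^{k,l}_{r}$ and $z^{i,j}_{n+1-r}z^{k,l}_{r-1}$, both of which again lie in $\C_n^{i,j}$. Hence $d_{n+1}$ maps $\Bbbk\C_{n+1}^{i,j}\otimes A$ into $\Bbbk\C_{n}^{i,j}\otimes A$, so that $C_{n,m}^{i,j}\subseteq\Bbbk\C_n^{i,j}\otimes A_m$. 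Since the three sets $\C_n^{1,2},\C_n^{1,3},\C_n^{2,3}$ are pairwise disjoint subsets of the basis $\B_n^{!*}$, the subspaces $\Bbbk\C_n^{i,j}\otimes A_m$ are independent, and the decomposition (together with the definitional equality $C_{n,m}=\sum_{(i,j)}C_{n,m}^{i,j}$) follows at once. As the construction and the orders defining the bases $W^{i,j}$ are symmetric under permuting the three pairs $\{(1,2),(3,4)\}$, $\{(1,3),(2,4)\}$, $\{(2,3),(1,4)\}$, and since $\mathfrak{a}_m,\mathfrak{b}_m$ are independent of $(i,j)$, it suffices to compute $\dim C_{n,m}^{1,2}$.

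For the dimension I would first make the generators explicit. Writing $w_s=z^{1,2}_{n-s}z^{3,4}_{s}$ for $s\in\llbracket0,n\rrbracket$, equation \eqref{eq:differential 4} gives, for $r\in\llbracket0,n+1\rrbracket$ and $x\in A_{m-1}$,
\[
d_{n+1}\big(z^{1,2}_{n+1-r}z^{3,4}_{r}\,|\,x\big)=(-1)^r\,w_r\,|\,x_{1,2}x\;+\;w_{r-1}\,|\,x_{3,4}x,
\]
with the convention $w_{-1}=w_{n+1}=0$. Thus $C_{n,m}^{1,2}$ is the image of the linear staircase map $\Phi\colon A_{m-1}^{\oplus(n+2)}\to\bigoplus_{s=0}^{n}\Bbbk\,w_s\otimes A_m$ built from these generators, so $\dim C_{n,m}^{1,2}=(n+2)\dim A_{m-1}-\dim\Ker\Phi$. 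After a diagonal rescaling of the coordinates by signs, $\Ker\Phi$ is identified with
\[
K=\big\{(y_0,\dots,y_{n+1})\in A_{m-1}^{\oplus(n+2)}\;\big|\;x_{1,2}\,y_s=x_{3,4}\,y_{s+1}\text{ for }s\in\llbracket0,n\rrbracket\big\}.
\]

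The computation of $\dim K$ rests on one structural fact. The quadratic relations $x_{1,2}^2$, $x_{3,4}^2$ and $x_{1,2}x_{3,4}-x_{3,4}x_{1,2}$ show that the subalgebra $\Lambda$ of $A$ generated by $x_{1,2}$ and $x_{3,4}$ is isomorphic to $\Bbbk[a,b]/(a^2,b^2)$, with basis $\{1,a,b,ab\}$ in internal degrees $0,1,1,2$. I would prove that $A$ is free as a left $\Lambda$-module, with homogeneous basis $\tilde E^{1,2}=\bigcup_m\tilde E^{1,2}_m$; equivalently that the multiplication map $\Lambda\otimes\Bbbk\tilde E^{1,2}\to A$ is bijective (injectivity or surjectivity suffices, as both sides have dimension $4\cdot 144=576$). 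Granting this, the graded $\Lambda$-rank of $A$ in degree $d$ is $\mathfrak{b}_d$, and a count of standard words gives the companion identity $\mathfrak{a}_m=\mathfrak{b}_m+\mathfrak{b}_{m-1}$. Expanding $y_s=\alpha_s+\beta_s a+\gamma_s b+\delta_s ab$ with $\alpha_s,\beta_s,\gamma_s,\delta_s$ in the free-generator spaces of internal degrees $m-1,m-2,m-2,m-3$ and using $a^2=b^2=0$, $ab=ba$, the equations $x_{1,2}y_s=x_{3,4}y_{s+1}$ reduce to $\alpha_s=0$ for all $s$ and $\gamma_s=\beta_{s+1}$ for $s\in\llbracket0,n\rrbracket$. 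Counting free parameters yields $\dim K=(n+3)\mathfrak{b}_{m-2}+(n+2)\mathfrak{b}_{m-3}$, whence
\[
\dim C_{n,m}^{1,2}=(n+2)\mathfrak{b}_{m-1}+(n+1)\mathfrak{b}_{m-2}=(n+1)\mathfrak{a}_{m-1}+\mathfrak{a}_m-\mathfrak{b}_m.
\]
A finite comparison with Table \ref{table:a b} matches the piecewise formula \eqref{eq:dim chang} for each $m\in\llbracket1,12\rrbracket$; for $m=0$ or $m\geqslant13$ the factor $A_{m-1}$ or $A_m$ vanishes, giving $\dim C_{n,m}^{i,j}=0$.

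The main obstacle is the freeness of $A$ over $\Lambda$ --- equivalently, the leading-term statements that $\dim(x_{1,2}A_{m-1})=\mathfrak{a}_{m-1}$ and that $x_{1,2}A_{m-1}+x_{3,4}A_{m-1}$ is exactly the span of the standard words whose first letter is $x_{1,2}$ or $x_{3,4}$. I would establish it from the standard-word structure of $A$ for the monomial order underlying $W^{1,2}$: using $x_{1,2}^2=x_{3,4}^2=0$ and $x_{1,2}x_{3,4}=x_{3,4}x_{1,2}$ one checks that every standard word factors uniquely as $\lambda u$ with $\lambda\in\{1,x_{1,2},x_{3,4},x_{1,2}x_{3,4}\}$ and $u\in\tilde E^{1,2}$, the uniqueness and exhaustiveness being precisely the content of the Gröbner/standard-word computation recorded in Appendix \ref{subsec:W}. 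Once this factorization is in place, the remainder is the bookkeeping described above.
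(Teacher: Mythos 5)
Your proposal is correct, and it reaches the paper's formula by a genuinely different route. The paper's proof is a direct basis exhibition: using the injectivity of left multiplication by $x_{i,j}$ on $\Bbbk E^{i,j}_{m-1}$ and by $x_{k,l}$ on $\Bbbk\tilde{E}^{i,j}_{m-1}$, it shows that the generators $d_{n+1}(z^{i,j}_{n+1-r}z^{k,l}_{r}|x)$ for $r\in\llbracket 0,n\rrbracket$, $x\in E^{i,j}_{m-1}$, together with $z^{k,l}_{n}|x_{k,l}y$ for $y\in\tilde{E}^{i,j}_{m-1}$, already form a basis of $C^{i,j}_{n,m}$, whence $\dim C^{i,j}_{n,m}=(n+1)\mathfrak{a}_{m-1}+\mathfrak{b}_{m-1}$. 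You instead run rank--nullity on the presenting map $\Phi$ and compute its kernel via the freeness of $A$ as a left module over $\Lambda\cong\Bbbk[a,b]/(a^2,b^2)$ with basis $\tilde{E}^{1,2}$, arriving at $(n+2)\mathfrak{b}_{m-1}+(n+1)\mathfrak{b}_{m-2}$; this agrees with the paper's count precisely by the identity $\mathfrak{a}_{m-1}=\mathfrak{b}_{m-1}+\mathfrak{b}_{m-2}$ that you isolate. Both arguments rest on the same combinatorial fact---the unique factorization of standard words as $\lambda u$ with $\lambda\in\{1,x_{i,j},x_{k,l},x_{i,j}x_{k,l}\}$ and $u\in\tilde{E}^{i,j}$, which is indeed visible from the printed Gr\"obner basis, since the only leading monomials beginning with $x_{1,2}$ or $x_{3,4}$ are $x_{1,2}^2$, $x_{3,4}^2$ and $x_{3,4}x_{1,2}$---but they package it differently: your freeness formulation makes the spanning step automatic (the paper's proof leaves implicit why the generators $d_{n+1}(z|x)$ with $x$ beginning in $x_{i,j}$ or $x_{k,l}$ reduce to the selected ones), at the cost of proving the module-theoretic statement, while the paper's route is shorter once the two injectivity claims are granted. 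Two small repairs: you write $y_s=\alpha_s+\beta_s a+\gamma_s b+\delta_s ab$ with the $\Lambda$-coefficients on the right, whereas the freeness you invoke (and the reduction you then carry out, which is correct) concerns the left $\Lambda$-module structure, so it should read $y_s=\alpha_s+a\beta_s+b\gamma_s+ab\delta_s$; and the reduction to $(i,j)=(1,2)$ is best justified by the $S_4$-action on $\FK(4)$ permuting indices, which carries $\C^{1,2}_{\bullet}$ to $\C^{1,3}_{\bullet}$ and $\C^{2,3}_{\bullet}$ and intertwines the Koszul differentials; your appeal to the symmetric choice of orders is the right idea but deserves this one-line justification (the paper sidesteps the issue by arguing uniformly in $(i,j)$).
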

\begin{proof}
Given $(i,j)\in \II$, fix $(k,l)\in \I$ such that $\# \{i,j,k,l \}=4$. 
Then, the maps $\Bbbk E^{i,j}_{m-1} \rightarrow A_m$ and $\Bbbk \tilde{E}^{i,j}_{m-1} \rightarrow A_m$ given by left multiplication by 
$x_{i,j}$ and by left multiplication by $x_{k,l}$, respectively, are injective for $m\in \llbracket 1,12\rrbracket$. 
Hence, using \eqref{eq:differential 4}, we see that the set formed by the elements $ (-1)^r z^{i,j}_{n-r}z^{k,l}_{r}|x_{i,j}x+z^{i,j}_{n-r+1}z^{k,l}_{r-1}|x_{k,l}x$, for $x\in E^{i,j}_{m-1}$ and $r\in \llbracket 0, n \rrbracket $, 
together with the elements $z^{k,l}_n|x_{k,l}y$ for $y\in \tilde{E}^{i,j}_{m-1}$ 
gives a basis of $ C^{i,j}_{n,m}$. Then, $\dim C^{i,j}_{n,m}=\mathfrak{a}_{m-1}(n+1)+\mathfrak{b}_{m-1}$, which together with Table \ref{table:a b} proves the claim. 
\end{proof}

\begin{lem}
\label{lem:dim non changing in image}
We have $\dim U_{n,m}=\dim U_{n+2,m}$ and $\dim (U_{n,m}\cap C_{n,m})=\dim (U_{n+2,m} \cap C_{n+2,m}) $ for $n\geqslant 5$ and $m\in \llbracket 1,12 \rrbracket$.
\end{lem}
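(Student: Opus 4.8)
The plan is to compare the two differentials through an explicit shift, exploiting that the structure constants in the description of $d_{\bullet}$ given in \eqref{eq:differential 4} and \eqref{eq:dn} depend on $n$ only through $\chi_{n}$ and $\chi_{n+1}$, hence are invariant under $n \mapsto n+2$. Concretely, for fixed $m$ write $\phi_{n} = d_{n+1}|_{\Bbbk\U_{n+1} \otimes A_{m-1}}$, so that $U_{n,m} = \Img(\phi_{n})$, and note that the domain $\Bbbk\U_{n+1}\otimes A_{m-1}$ has dimension $24 \cdot \dim(A_{m-1})$ independently of $n$, via the bijection $\U_{n+1} \to \U_{n+3}$ that increases by $2$ the exponent of the leading generator ($z^{1,2}$, $z^{1,3}$ or $z^{2,3}$) of each of the $24$ elements. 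Under this identification of domains the claim $\dim U_{n,m} = \dim U_{n+2,m}$ becomes the equality of the ranks of $\phi_{n}$ and $\phi_{n+2}$.

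First I would introduce the injective linear map $\Sigma : \Bbbk\B_{n}^{!*} \to \Bbbk\B_{n+2}^{!*}$ acting on $\U_{n}$ by the analogous leading-exponent bijection onto $\U_{n+2}$, and on each family $\C_{n}^{i,j} = \{ z^{i,j}_{n-r}z^{k,l}_{r} \}$ by $z^{i,j}_{n-r}z^{k,l}_{r} \mapsto z^{i,j}_{n+2-r}z^{k,l}_{r}$; its image omits exactly the elements of $\C_{n+2}$ of $z^{k,l}$-exponent $n+1$ or $n+2$. Extending $\Sigma$ to the Koszul terms by $\Sigma \otimes \id_{A}$ and using \eqref{eq:differential 4}, \eqref{eq:dn} together with the identities of Fact \ref{fact:products-quadratic-dual-fk4}, one checks term by term that $(\Sigma \otimes \id_{A}) \circ \phi_{n}$ and $\phi_{n+2}$ agree except on the telescoping sums $\sum_{s} z^{i,j}_{\bullet}z^{k,l}_{\bullet}$, whose length jumps from $\lfloor n/2 \rfloor$ to $\lfloor n/2 \rfloor + 1$. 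Writing $\phi_{n+2} = (\Sigma \otimes \id_{A}) \circ \phi_{n} + E$, the correction $E$ collects exactly the additional boundary summands. Since $\Sigma \otimes \id_{A}$ is injective, the desired rank equality reduces to showing $\Ker(\phi_{n}) \subseteq \Ker(E)$, that is, that every linear dependence among the generators of $U_{n,m}$ also annihilates the boundary terms.

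The main obstacle is precisely the control of $E$. I expect to resolve it by analysing the boundary summands explicitly: for $n$ odd they already lie in $\Img(\Sigma)$ and coincide with shifts of $\C_{n}^{i,j}$-generators, whereas for $n$ even they occupy the genuinely new directions $z^{i,j}_{1}z^{k,l}_{n+1}$ outside $\Img(\Sigma)$, and in both cases one must verify, using the product identities of Fact \ref{fact:products-quadratic-dual-fk4} and the relations $x_{i,j}^{2}=0$ in $A$, that $E$ factors through $\phi_{n}$, so that no relation is lost and no effectively new direction is added. This parity dichotomy is what produces the period $2$ in the statement. Finally, the same map $\Sigma$ sends $C_{n,m}$ into $C_{n+2,m}$ compatibly, since it preserves each family $\C^{i,j}$; restricting the whole comparison to the $\C$-generators and invoking the exact values of $\dim C_{n,m}^{i,j}$ from Lemma \ref{lem:dim changing in image} then yields $\dim(U_{n,m}\cap C_{n,m}) = \dim(U_{n+2,m}\cap C_{n+2,m})$ by the identical boundary analysis. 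I would anchor the argument on the base cases $n = 5, 6$, computed directly with the GAP routines of Appendix \ref{sec:cpx}.
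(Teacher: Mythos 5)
Your starting point is the same phenomenon the paper exploits: the coefficients in \eqref{eq:differential 4} and \eqref{eq:dn} depend on $n$ only through $\chi_n,\chi_{n+1}$, hence are invariant under $n\mapsto n+2$, and the only obstruction is that the telescoping sums gain one term. But your way of handling that obstruction has two genuine gaps. First, your linear-algebra reduction is one-sided: from $\phi_{n+2}=(\Sigma\otimes\id_A)\circ\phi_n+E$ and $\Ker(\phi_n)\subseteq\Ker(E)$ you only get $\Ker(\phi_n)\subseteq\Ker(\phi_{n+2})$, hence $\dim U_{n+2,m}\leqslant\dim U_{n,m}$; equality would additionally require the reverse inclusion, or injectivity of the induced map on $U_{n,m}$, which your argument never produces. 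Second, the step that is supposed to control $E$ (``$E$ factors through $\phi_n$'') is precisely the hard point and is left as an expectation; it is not clear it is even true, since part of the extra boundary terms, such as $z^{1,2}_1z^{3,4}_{n+1}\otimes A_m$, lie in directions outside $\Img(\Sigma\otimes\id_A)$. (Also, the extra term of an odd-indexed sum $u$ versus an even-indexed sum $v$ behaves oppositely with respect to $\Img(\Sigma)$, so the dichotomy is not governed by the parity of $n$ alone, as you assert.)

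The paper's proof removes the error term altogether rather than fighting it. It introduces $u^{i,j}_n=\sum_{r\ \mathrm{odd}}z^{i,j}_{n-r}z^{k,l}_r$ and $v^{i,j}_n=\sum_{r\ \mathrm{even}}z^{i,j}_{n-r}z^{k,l}_r$, observes from \eqref{eq:dn} that $U_{n,m}\subseteq\Bbbk\Q_n\otimes A_m$ where $\Q_n=\U_n\cup\{z^{i,j}_n\mid (i,j)\in\I\}\cup\{u^{i,j}_n,v^{i,j}_n\mid (i,j)\in\II\}$, and defines the shift $\mathfrak{f}_n:\Bbbk\Q_n\to\Bbbk\Q_{n+2}$ so that $u^{i,j}_n\mapsto u^{i,j}_{n+2}$ and $v^{i,j}_n\mapsto v^{i,j}_{n+2}$ as \emph{atomic} basis vectors, not term by term; this is exactly where your $\Sigma$ differs and where your $E$ comes from. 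Since $n$ and $n+2$ have the same parity, $\mathfrak{g}_n=\mathfrak{f}_n\otimes\id_A$ carries the spanning set $\{d_{n+1}(z|x)\}$ of $U_{n,m}$ exactly onto that of $U_{n+2,m}$, and being an isomorphism it yields both inequalities at once. The intersection statement is handled in the same spirit: the paper shows $U_{n,m}\cap C_{n,m}=U_{n,m}\cap F_{n,m}$ with $F_{n,m}=(\Bbbk\Q_n\otimes A_m)\cap C_{n,m}$, computes explicit spanning sets of $F_{n,m}$ separately for $n$ even and odd via the coefficient comparison \eqref{eq:coefficients}, and checks $\mathfrak{g}_n(F_{n,m})=F_{n+2,m}$ --- a concrete computation that your sketch replaces by an appeal to ``the identical boundary analysis''. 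Note finally that no GAP base case is needed for this lemma: the period-two identification is structural and works for each $n\geqslant 5$ directly; the numerical anchors enter only later, in the proof of Proposition \ref{prop:dim Bn5}.
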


\begin{proof}
For $n\geqslant 5$, set 
\begin{equation*}
   \label{eq:elements u v}
   \begin{split}
      u^{i,j}_n=\underset{\text{\begin{tiny}$\begin{matrix} r\in \llbracket 1, n-1\rrbracket, \\ r \; \text{odd} \end{matrix}$\end{tiny}}}{\sum} z^{i,j}_{n-r}z^{k,l}_{r} 
      \hskip 2mm 
      \text{ and }  
      \hskip 2mm      v^{i,j}_n=\underset{\text{\begin{tiny}$\begin{matrix} r\in \llbracket 1, n-1\rrbracket, \\ r \; \text{even} \end{matrix}$\end{tiny}}}{\sum} z^{i,j}_{n-r}z^{k,l}_{r} ,
   \end{split}
\end{equation*}
for $(i,j)\in \II$, $(k,l)\in \I$, $\# \{i,j,k,l \}=4$, and 
\begin{equation*}
   \label{eq:non change}
   \begin{split}
      \Q_{n} & =\U_n 
      \cup 
      \big\{ z^{i,j}_n\mid (i,j)\in \I\big\}  
      \cup 
      \big\{ u^{i,j}_n,  
      v^{i,j}_n \mid (i,j) \in \II \big\} \subseteq (A^!_{-n})^*.
   \end{split}
\end{equation*}
Let $\U^{i,j}_n$ be the subset of $\U_n$ consisting of elements whose first element is $z^{i,j}$ for $(i,j)\in \II $. 
There is an isomorphism $\mathfrak{f}_n:\Bbbk \Q_{n}\to \Bbbk \Q_{n+2}$ of vector spaces defined by $\mathfrak{f}_n(z)=z^{i,j}_2z$ for $z\in \U^{i,j}_n$ and $(i,j)\in \II $, 
$\mathfrak{f}_n(u^{i,j}_n)=u^{i,j}_{n+2}$, 
$\mathfrak{f}_n(v^{i,j}_n)=v^{i,j}_{n+2}$, 
and 
$\mathfrak{f}_n(z^{i,j}_n)=z^{i,j}_{n+2}$ for $(i,j)\in\I$. 
Then, the map $\mathfrak{g}_n =\mathfrak{f}_n  \otimes \id_{A}:\Bbbk \Q_{n}\otimes A \to \Bbbk \Q_{n+2}\otimes A$ is a linear isomorphism. 
By \eqref{eq:dn}, $U_{n,m}\subseteq \Bbbk \Q_n\otimes A_m$ and $\mathfrak{g}_n(U_{n,m})=U_{n+2,m}$, giving an isomorphism $U_{n,m}\cong U_{n+2,m}$ of vector spaces for $n\geqslant 5$ and $m\in \llbracket 1,12 \rrbracket$. 
This proves the first part of the lemma. 

Set $F_{n,m}=(\Bbbk \Q_n\otimes A_m ) \cap C_{n,m}$ and define $L^{i,j}_{n,m}= \Bbbk \{z^{i,j}_n,z^{k,l}_n, u_n^{i,j}, v_n^{i,j} \} \otimes A_m $ as the subspace of $\Bbbk \C^{i,j}_{n}\otimes A_m$, where $(i,j)\in \II$, $(k,l)\in \I$ with $\# \{i,j,k,l \}=4$.
It is clear that
$F_{n,m}=\oplus_{(i,j)\in \II} (L^{i,j}_{n,m}\cap C^{i,j}_{n,m})$.
Fix $(i,j)\in \II$, $(k,l)\in \I$ with $\# \{i,j,k,l \}=4$.
Let $\xi^{i,j}\in C^{i,j}_{n,m}$. Then $\xi^{i,j}$ is of the form 
\begin{equation}
\label{eq: inter C}
\begin{split}
   \xi^{i,j}=
   \underset{\text{\begin{tiny}$\begin{matrix} r\in \llbracket 0, n \rrbracket, \\ x\in E_{m-1}^{i,j} \end{matrix}$\end{tiny}}}{\sum}
   \lambda_{r,x}
   \big\{
   (-1)^r z^{i,j}_{n-r}z^{k,l}_{r}|x_{i,j}x+z^{i,j}_{n-r+1}z^{k,l}_{r-1}|x_{k,l}x
   \big\}
   + 
   \sum_{y\in \tilde{E}^{i,j}_{m-1}}
   \mu_{y}
   z^{k,l}_n|x_{k,l}y
\end{split}
\end{equation}
for $\lambda_{r,x},\mu_y \in \Bbbk$. 
If $\xi^{i,j} \in L^{i,j}_{n,m}$, then $\xi^{i,j}$ is of the form 
\begin{equation}
   \label{eq: inter L}
   \begin{split}
      \xi^{i,j}= 
      \sum_{w\in W^{i,j}_m} 
      (\alpha_{w} z_n^{i,j}|w
      +
      \beta_w z_n^{k,l}|w
      +
      \gamma_w u_n^{i,j}|w
      +
      \eta_w v_n^{i,j}|w)
   \end{split}
\end{equation}
for $\alpha_w, \beta_w, \gamma_w, \eta_w \in \Bbbk$. 
Comparing the coefficients in \eqref{eq: inter C} and \eqref{eq: inter L}, we obtain 
\begin{equation}
\label{eq:coefficients}
\begin{split}
\alpha_{x_{i,j}x}& =\lambda_{0,x}, 
\hskip 2mm 
\alpha_{x_{k,l}y} =\lambda_{1,y},
\hskip 2mm 
\beta_{x_{i,j}x} =(-1)^n\lambda_{n,x},
\hskip 2mm
\beta_{x_{k,l}y} =\mu_y,
\\
\gamma_{x_{i,j}x}& =-\lambda_{p,x} 
\text{ for $p\in \llbracket 1,n-1 \rrbracket $ with $p$ odd}, 
\\
\gamma_{x_{k,l}y}& =\lambda_{q,y} 
\text{ for $q\in \llbracket 2,n \rrbracket $ with $q$ even}, 
\\
\eta_{x_{i,j}x}& =\lambda_{q,x}  
\text{ for $q\in \llbracket 2,n-1 \rrbracket $ with $q$ even}, 
\\
\eta_{x_{k,l}y}& =\lambda_{p,y} 
\text{ for $p\in \llbracket 3 , n \rrbracket $ with $p$ odd},  
\end{split}
\end{equation}
where $x\in E^{i,j}_{m-1}$ and $y\in \tilde{E}^{i,j}_{m-1}$. 
Hence, if $n$ is even, the space $L^{i,j}_{n,m}\cap C^{i,j}_{n,m}$ is spanned by 
$z^{i,j}_n|x_{i,j}x$, 
$(v^{i,j}_n|x_{k,l}-u^{i,j}_n|x_{i,j}+z^{i,j}_n|x_{k,l})x$ for $x\in E^{i,j}_{m-1}$, 
$z^{k,l}_n|x_{k,l}y$, 
$(u^{i,j}_n|x_{k,l}+v^{i,j}_n|x_{i,j}+ z^{k,l}_n|x_{i,j})y$ for $y\in \tilde{E}^{i,j}_{m-1}$, 
$v^{i,j}_n|x_{i,j}w$  
and 
$z^{k,l}_n|x_{i,j}w$ for $w\in E^{i,j}_{m-1}\setminus \tilde{E}^{i,j}_{m-1}$. 
If $n$ is odd, the space $L^{i,j}_{n,m}\cap C^{i,j}_{n,m}$ is spanned by 
$z^{i,j}_n|x_{i,j}x$, 
$(u^{i,j}_n|x_{k,l}+v^{i,j}_n|x_{i,j})x$ for $x\in E^{i,j}_{m-1}$, 
$z^{k,l}_n|x_{k,l}y$, 
$(v^{i,j}_n|x_{k,l}-u^{i,j}_n|x_{i,j}+z^{i,j}_n|x_{k,l}-z^{k,l}_n|x_{i,j})y$ for $y\in \tilde{E}^{i,j}_{m-1}$, 
$u^{i,j}_n|x_{i,j}w$ 
and 
$z^{k,l}_n|x_{i,j}w$ for $w\in E^{i,j}_{m-1}\setminus \tilde{E}^{i,j}_{m-1}$.
We finally note that $U_{n,m}\cap C_{n,m}= U_{n,m}\cap F_{n,m}$ and $\mathfrak{g}_n(F_{n,m})=F_{n+2,m}$. 
Hence, $U_{n,m}\cap C_{n,m}\cong U_{n+2,m}\cap C_{n+2,m}$ as vector spaces. 
This proves the second part of the lemma. 
\end{proof}

\begin{proof}[Proof of Proposition \ref{prop:dim Bn5}]
   By Table \ref{table:Bnm}, we obtain that \eqref{eq:dim Bn5} holds for $(n,m)\in \llbracket 5,6\rrbracket \times \llbracket 0,12\rrbracket$. 
   On the other hand, by Lemma \ref{lem:dim non changing in image}, we get that $\dim B_{n+2,m}-\dim B_{n,m}=\dim C_{n+2,m}-\dim C_{n,m}$ for $n\geqslant 5$ and $m\in \llbracket 1,12 \rrbracket$. 
   The statement then follows. 
\end{proof}

Using GAP, we can easily compute the dimension of $U_{n,m}$ for $n\geqslant 3$ and $m\in \llbracket 1, 12 \rrbracket$, which is given in Table \ref{table:Unm}. 
\begin{table}[H]
   \begin{center}
      \resizebox{\textwidth}{9.2mm}{
      \begin{tabular}{|c|cccccccccccc|}
      \hline
       \diagbox[width=32mm,height=4.5mm]{$n$}{$m$}   & $1$  & $2$ & $3$ & $4$ & $5$ & $6$ & $7$ & $8$ & $9$ & $10$ & $11$ & $12$ 
     \\
     \hline
     $3$ &  $23$ & $138$ & $422$ & $896$ & $1428$ & $1800$ & $1815$ & $1468$ & $947$ & $466$ & $162$ & $30$ 
     \\
     $n\geqslant 4$ with $n$ even  &  $24$ & $136$ & $408$ & $850$ & $1344$ & $1690$ & $1716$ & $1406$ & $924$ & $466$ & $168$ & $34$ 
     \\
     $n\geqslant 5$ with $n$ odd & $24$ & $144$ & $434$ & $912$ & $1452$ & $1836$ & $1872$ & $1536$ & $1008$ & $504$ & $180$ & $36$  
     \\
      \hline
   \end{tabular}
   }
   \vspace{1mm}
   \caption{Dimension of $U_{n,m}$.}	
   \label{table:Unm}
\end{center}
\end{table}
\vspace{-0.8cm}

\begin{rk}
\label{rk:exact of subcomplex kCA}
Lemma \ref{lem:dim changing in image} tells us that the subcomplex of $\operatorname{K}_{\bullet}$ formed by the submodules $\Bbbk \C_n\otimes A $ for $n \in \NN$ is exact for $n\geqslant 2$. 
\end{rk}

\subsubsection{\texorpdfstring{The $A$-module structure of the homology groups}{The A-module structure of the homology groups}}

\begin{lem}
\label{lemma:homology-m0}
We have the following isomorphisms 
\begin{equation}
\label{eq:HK k}
\operatorname{H}_{n}(\Bbbk) \cong
\begin{cases} 
   M^{1}(-8), & \text{if $n = 4$,}
   \\
   \Bbbk(-16), & \text{if $n = 5$, }
   \\
   0, & \text{if $n \in \NN \setminus \llbracket 3,5  \rrbracket $,}
\end{cases}
\end{equation}
of graded $A$-modules, 
as well as the non-split short exact sequence 
\begin{equation}
      \label{eq:sp2}
   \begin{split}
   0 \to M^2(-6)\oplus \Bbbk(-6) \to \operatorname{H}_3(\Bbbk) \to \Bbbk(-8) \to 0
   \end{split} 
\end{equation}
of graded $A$-modules. 
\end{lem}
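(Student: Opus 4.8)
The plan is to combine the dimension information already at hand with explicit constructions of module morphisms out of the quadratic modules $M^{1}$, $M^{2}$ and $\Bbbk$. By Corollary \ref{cor:H} we already know that $\operatorname{H}_{n}(\Bbbk)$ vanishes for $n\in\NN\setminus\{3,4,5\}$, which settles the last line of \eqref{eq:HK k}, and that the graded dimensions of $\operatorname{H}_{3}(\Bbbk)$, $\operatorname{H}_{4}(\Bbbk)$ and $\operatorname{H}_{5}(\Bbbk)$ are those recorded in Table \ref{table:Hnm0345}. Recalling that $H_{n,m}$ sits in internal degree $n+m$, these dimensions coincide degree by degree (using Fact \ref{fact:hilb-ser-mi}) with the Hilbert series of $M^{1}(-8)$, of $\Bbbk(-16)$, and of $M^{2}(-6)\oplus\Bbbk(-6)\oplus\Bbbk(-8)$, respectively; so the remaining content of the lemma is entirely about the $A$-module structure. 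Throughout I will use that, by \eqref{eq:rest-qd}, a morphism of right $A$-modules out of a quadratic module is determined by the images of its degree-zero generators and is well defined as soon as those images satisfy the defining relations.

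The case $n=5$ is immediate: $\operatorname{H}_{5}(\Bbbk)$ is one-dimensional and concentrated in internal degree $16$, and any graded module supported in a single internal degree over the connected graded algebra $A$ has $A_{>0}$ acting as zero for degree reasons, whence $\operatorname{H}_{5}(\Bbbk)\cong\Bbbk(-16)$. For $n=4$, I would first use GAP to produce two cycles $c_{1},c_{2}\in D_{4,4}$ whose classes form a basis of $H_{4,4}$, the lowest nonzero homogeneous component of $\operatorname{H}_{4}(\Bbbk)$, and then define $\phi\colon M^{1}(-8)\to\operatorname{H}_{4}(\Bbbk)$ on generators by $a_{1}\mapsto[c_{1}]$ and $a_{2}\mapsto[c_{2}]$. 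Well-definedness amounts to checking that the six relations \eqref{eq: relations of H4} are sent to boundaries, that is, that $[c_{1}x_{1,2}+c_{2}x_{1,2}]$, $[c_{1}x_{1,3}]$, $[c_{2}x_{2,3}]$, $[c_{2}x_{1,4}]$, $[c_{1}x_{2,4}]$ and $[c_{1}x_{3,4}+c_{2}x_{3,4}]$ all vanish in $\operatorname{H}_{4}(\Bbbk)$ (the action being the right regular action on the tensor factor $A$ of $K_{4}$), a finite verification carried out with GAP. Since $\phi$ hits a basis of the lowest component $H_{4,4}$ and $\operatorname{H}_{4}(\Bbbk)$ is generated in internal degree $8$ (which I would confirm by checking $(H_{4})_{m+1}=(H_{4})_{m}\cdot A_{1}$ in the relevant degrees), the graded Nakayama lemma makes $\phi$ surjective; as source and target have equal finite Hilbert series, $\phi$ is an isomorphism.

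For $n=3$ I would proceed analogously: choose cycles $c_{0},c_{1},\dots,c_{7}\in D_{3,3}$ whose classes form a basis of $H_{3,3}$, with $c_{0}$ chosen so that $[c_{0}]\cdot V=0$, then define $\theta\colon\Bbbk(-6)\to\operatorname{H}_{3}(\Bbbk)$ by sending the generator to $[c_{0}]$ and $\psi\colon M^{2}(-6)\to\operatorname{H}_{3}(\Bbbk)$ by sending $h_{i}$ to $[c_{i}]$. Well-definedness reduces to the identity $[c_{0}]\cdot V=0$ and to checking with GAP that the $24$ relations \eqref{eq:rels M2} are sent to boundaries. The combined morphism $\Phi\colon M^{2}(-6)\oplus\Bbbk(-6)\to\operatorname{H}_{3}(\Bbbk)$, restricting to $\psi$ and $\theta$ on the two summands, is then shown to be injective by a degree-by-degree rank computation in GAP; comparing with Table \ref{table:Hnm0345} and Fact \ref{fact:hilb-ser-mi}, its cokernel is one-dimensional and concentrated in internal degree $8$, hence isomorphic to $\Bbbk(-8)$, which yields the short exact sequence \eqref{eq:sp2}.

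The main obstacle is the non-splitness, since this cannot be detected by dimensions. I would establish it by showing that $\operatorname{H}_{3}(\Bbbk)$ has no minimal generator in internal degree $8$, that is, by verifying with GAP that $(H_{3})_{8}\subseteq\operatorname{H}_{3}(\Bbbk)\cdot A_{>0}$, equivalently $(H_{3})_{8}=(H_{3})_{7}\cdot A_{1}$ (using that $A$ is generated in degree $1$). By contrast, the module $M^{2}(-6)\oplus\Bbbk(-6)\oplus\Bbbk(-8)$ visibly does have a minimal generator in internal degree $8$, coming from the summand $\Bbbk(-8)$, the other two summands being generated in degree $6$. Hence $\operatorname{H}_{3}(\Bbbk)$ is not isomorphic to that direct sum, and therefore the sequence \eqref{eq:sp2} cannot split.
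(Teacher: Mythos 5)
Your treatment of the vanishing cases, of $n=5$, of $n=4$, and of the construction of the short exact sequence is correct and essentially the paper's own route: find cycle representatives generating the homology, check the quadratic relations hold modulo boundaries via GAP, and conclude by comparing Hilbert series. (The paper packages the $n=3$ case through an auxiliary quadratic module $M^{4}$ with eight generators and then proves $M^{4}\cong M^{2}\oplus\Bbbk$, whereas you build the two maps out of $M^{2}(-6)$ and $\Bbbk(-6)$ directly; this is a cosmetic difference.)

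However, your non-splitness argument has a genuine gap: the invariant you propose cannot work, and the fact you intend to verify is false. Since $-\otimes_{A}\Bbbk$ is right exact, any graded module $H$ admitting a surjection $H\twoheadrightarrow\Bbbk(-8)$ satisfies that $(H\otimes_{A}\Bbbk)_{8}\twoheadrightarrow\Bbbk\neq 0$, i.e.\ $H$ necessarily has a minimal generator in internal degree $8$, \emph{whether or not} the extension splits. Concretely, in the case at hand $(H_{3})_{7}=H_{3,4}$ has dimension $18$ and coincides with the degree-$7$ part of the submodule $M^{2}(-6)\oplus\Bbbk(-6)$, so $(H_{3})_{7}\cdot A_{1}$ lands inside the image of $M^{2}_{2}$, of dimension $32$, while $(H_{3})_{8}=H_{3,5}$ has dimension $33$ by Table \ref{table:Hnm0345}. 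Hence your proposed GAP check $(H_{3})_{8}=(H_{3})_{7}\cdot A_{1}$ comes out false, $\operatorname{H}_{3}(\Bbbk)$ \emph{does} have a minimal generator in degree $8$, and generator degrees cannot distinguish $\operatorname{H}_{3}(\Bbbk)$ from $M^{2}(-6)\oplus\Bbbk(-6)\oplus\Bbbk(-8)$.

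The correct obstruction, which is what the paper computes, is finer: a splitting $s\colon\Bbbk(-8)\to\operatorname{H}_{3}(\Bbbk)$ would produce an element $m\in H_{3,5}$ with $p(m)=\mathsf{e}_{1}$ and $m\cdot A_{>0}=0$, i.e.\ the degree-$8$ generator would have to span a \emph{trivial submodule}. Writing $m=\sum_{i=1}^{32}\lambda_{i}\mathsf{c}_{i}+\mathsf{c}_{33}$ in a basis of $H_{3,5}$ adapted to $p$, one must rule out that $\mathsf{c}_{33}x_{1,2}$ lies in the span of the $\mathsf{c}_{i}x_{1,2}$, $i\in\llbracket 1,32\rrbracket$, modulo boundaries; the paper verifies exactly this by a rank computation in GAP on chosen cycle representatives. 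Your argument would need to be replaced by (or repaired into) this kind of annihilator computation; as written, it proves nothing about splitness.
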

\begin{proof}
The isomorphism in \eqref{eq:HK k} for $n \in \NN \setminus \llbracket 3,5  \rrbracket$ 
follows from Corollary \ref{cor:H}. 
Similarly, the isomorphism in \eqref{eq:HK k} for $n = 5$ follows immediately from Table \ref{table:Hnm0345}. 
Recall that we write $H_{n}$ instead of $\operatorname{H}_{n}(\Bbbk)$ for $n \in \NN$ to simplify the notation. 

Let us prove the isomorphism in \eqref{eq:HK k} for $n = 4$.
The following GAP code shows that the dimension vector of the submodule of $H_4$ generated by two basis elements $a'_1,a'_2$ of $H_{4,4}$ is $(2,6,11,12,11,6,2)$.
So, Table \ref{table:Hnm0345} tells us that $H_4$ is generated by $a'_1,a'_2$ as an $A$-module.
\parskip 1ex
\parindent 0in
\begin{tcolorbox}[breakable,colback=white,width=\textwidth ,center,arc=0mm,size=fbox]
\begin{footnotesize}
\begin{Verbatim}[samepage=false]
Imm:=Im(0,4,4);;
RankMat(Imm);
# 1146 
gene:=geneMH(0,4,4);;
Append(Imm,gene);
RankMat(Imm);
# 1148
Uh:=UU(gene,4);; Vh:=VV(gene,4);; Wh:=WW(gene,4);;
for r in [5..10] do 
   hxr:=HXR(0,Uh,Vh,Wh,4,4,r-4); 
   Im4r:=Im(0,4,r);
   Append(Im4r, hxr);
   Print(r, " ", RankMat(Im4r)-RankMat(Im(0,4,r)), "\n");
od;
# 5 6
# 6 11
# 7 12
# 8 11
# 9 6
# 10 2
\end{Verbatim}
\end{footnotesize}
\end{tcolorbox}

On the other hand, it is direct to check that the generators $a'_1,a'_2$ of $H_4$ satisfy the quadratic relations \eqref{eq: relations of H4} defining $M^1$. 
Indeed, the following code shows that the dimension of the subspace generated by $B_{4,5}$ together with the elements of the form \eqref{eq: relations of H4} with $a'_i$ instead of $a_i$ coincides with the dimension of $B_{4,5}$.
\parskip 1ex
\parindent 0in
\begin{tcolorbox}[breakable,colback=white,width=\textwidth ,center,arc=0mm,size=fbox]
\begin{footnotesize}
\begin{Verbatim}[samepage=false]
gene:=geneMH(0,4,4);;
Uh:=UU(gene,4);; Vh:=VV(gene,4);; Wh:=WW(gene,4);;
hx:=HXR(0,Uh,Vh,Wh,4,4,1);;
cc:=0*[1..6];;
cc[1]:=hx[1]+hx[7];; cc[2]:=hx[2];; cc[3]:=hx[5];; cc[4]:=hx[6]+hx[12];;
cc[5]:=hx[9];; cc[6]:=hx[10];;
Imm:=Im(0,4,5);;
RankMat(Imm);
# 1752 
Append(Imm,cc);
RankMat(Imm);
# 1752 
\end{Verbatim}
\end{footnotesize}
\end{tcolorbox}

Hence, there is a surjective morphism $M^1(-8)\to H_4$ of graded $A$-modules. 
Since the dimension vector of $M^1$ is $(2,6,11,12,11,6,2)$ by Fact \ref{fact:hilb-ser-mi}, we have $H_4\cong M^1(-8)$ as graded $A$-modules, as claimed.

Let us now prove the existence of the short exact sequence \eqref{eq:sp2}.  
The following GAP code shows that the dimension vector of the submodule of $H_3$ generated by the basis elements $c'_i, i\in \llbracket 1,8 \rrbracket$ of $H_{3,3}$ is $(8, 18, 32, 42, 40, 30, 16, 6, 1)$.
\parskip 1ex
\parindent 0in
\begin{tcolorbox}[breakable,colback=white,width=\textwidth ,center,arc=0mm,size=fbox]
\begin{footnotesize}
\begin{Verbatim}[samepage=false]
Imm:=Im(0,3,3);;
RankMat(Imm);
# 515
gene:=geneMH(0,3,3);;
Append(Imm,gene);
RankMat(Imm);
# 523
Uh:=UU(gene,3);; Vh:=VV(gene,3);; Wh:=WW(gene,3);;
for r in [4..11] do 
   hxr:=HXR(0,Uh,Vh,Wh,3,3,r-3);
   Im3r:=Im(0,3,r);
   Append(Im3r, hxr);
   Print(r, " ", RankMat(Im3r)-RankMat(Im(0,3,r)), "\n");
od;
# 4 18
# 5 32
# 6 42
# 7 40
# 8 30
# 9 16
# 10 6
# 11 1
\end{Verbatim}
\end{footnotesize}
\end{tcolorbox}
Let $M^{4}$ be the quadratic module generated by the set $\{ c_i \mid i\in \llbracket 1,8 \rrbracket \}$ of eight homogeneous elements
of degree zero, subject to the following $30$ relations 
\begin{equation}
\label{eq:rels M4}
\begin{split}
      & 
      c_1x_{1,2}, c_1x_{1,3}, c_1x_{2,3}, 
      c_2x_{1,2}, c_2x_{1,4}, c_2x_{2,4},
      c_3x_{1,3}, c_3x_{1,4}, c_3x_{3,4}, 
      c_4x_{2,3}, c_4x_{2,4}, c_4x_{3,4}, 
      \\
      & 
      c_5x_{1,3}-c_1x_{2,4}+c_3x_{2,4}, 
      c_5x_{2,4}+c_2x_{1,3}-c_4x_{1,3}, 
      c_6x_{2,3}+c_1x_{1,4}-c_4x_{1,4}, 
      \\
      &
      c_6x_{1,4}-c_2x_{2,3}+c_3x_{2,3}, 
      c_7x_{1,2}+c_1x_{3,4}+c_2x_{3,4}, 
      c_7x_{3,4}+c_3x_{1,2}+c_4x_{1,2}, 
      c_5x_{1,2}+c_6x_{3,4}, 
      \\
      &
      c_5x_{3,4}-c_6x_{1,2}, 
      c_6x_{1,3}-c_7x_{2,4}, 
      c_6x_{2,4}+c_7x_{1,3},
      c_5x_{1,4}+c_7x_{2,3}, 
      c_5x_{2,3}-c_7x_{1,4}, 
      \\
      &
      c_8x_{1,2}, 
      c_8x_{1,3}, 
      c_8x_{2,3}, 
      c_8x_{1,4}, 
      c_8x_{2,4}, 
      c_8x_{3,4}.
\end{split}
\end{equation}
Using GAP we get that the dimension vector of $M^{4}$ is $(8, 18, 32, 42, 40, 30, 16, 6, 1)$. 
It is direct to check that the elements $c'_i, i\in \llbracket 1,8 \rrbracket$ of $H_{3}$ satisfy the quadratic relations \eqref{eq:rels M4}. 
Indeed, the following code shows that the dimension of the subspace generated by $B_{3,4}$ together with the elements of the form \eqref{eq:rels M4} with $c'_i$ instead of $c_i$ coincides with the dimension of $B_{3,4}$.

\parskip 1ex
\parindent 0in
\begin{tcolorbox}[breakable,colback=white,width=\textwidth ,center,arc=0mm,size=fbox]
\begin{footnotesize}
\begin{Verbatim}[samepage=false]
gene:=geneMH(0,3,3);;
Uh:=UU(gene,3);; Vh:=VV(gene,3);; Wh:=WW(gene,3);;
hx:=HXR(0,Uh,Vh,Wh,3,3,1);;
cc:=0*[1..30];;
cc[1]:=hx[1];; cc[2]:=hx[2];; cc[3]:=hx[3];; cc[4]:=hx[7];; cc[5]:=hx[10];; 
cc[6]:=hx[11];; cc[7]:=hx[14];; cc[8]:=hx[16];; cc[9]:=hx[18];; cc[10]:=hx[21];;
cc[11]:=hx[23];; cc[12]:=hx[24];; cc[13]:=hx[5]-hx[17]-hx[26];; 
cc[14]:=hx[8]-hx[20]+hx[29];; cc[15]:=hx[30]-hx[31];; cc[16]:=hx[4]-hx[22]+hx[33];; 
cc[17]:=hx[9]-hx[15]-hx[34];; cc[18]:=hx[25]+hx[36];; cc[19]:=hx[6]+hx[12]+hx[37];;
cc[20]:=hx[35]+hx[38];; cc[21]:=hx[28]+hx[39];; cc[22]:=hx[27]-hx[40];;
cc[23]:=hx[32]-hx[41];; cc[24]:=hx[13]+hx[19]+hx[42];; cc[25]:=hx[43];; 
cc[26]:=hx[44];; cc[27]:=hx[45];; cc[28]:=hx[46];; cc[29]:=hx[47];; cc[30]:=hx[48];; 
Imm:=Im(0,3,4);;
RankMat(Imm);
# 1020  
Append(Imm,cc);
RankMat(Imm);
# 1020  
\end{Verbatim}
\end{footnotesize}
\end{tcolorbox}
Hence, there is a morphism $M^4(-6) \rightarrow H_3$ of graded $A$-modules whose image is the submodule of $H_{3}$ generated by $c'_i, i\in \llbracket 1,8 \rrbracket$. 
Since the dimension vectors of $M^4$ and the submodule of $H_3$ generated by $c'_i, i\in \llbracket 1,8 \rrbracket$ are the same, the previous morphism is injective. 
Moreover, the submodule of $M^4$ generated by $c_i,i\in \llbracket 1,7 \rrbracket$ is isomorphic to $M^2$ via the map given by $c_i\mapsto h_i$ for $i\in \llbracket 1,7 \rrbracket$, and the submodule of $M^4$ generated by $c_8$ is isomorphic to the trivial $A$-module $\Bbbk$. 
It is direct to check that these submodules have trivial intersection, by degree reasons. 
By comparing the Hilbert series of $M^4$, $M^{2}$ and $\Bbbk$ we obtain the isomorphism 
$M^4\cong M^2\oplus \Bbbk$ of graded $A$-modules. 
In consequence, there is an injective morphism $M^2(-6)\oplus \Bbbk (-6)\to H_3$.
By a direct dimension and grading argument using Table \ref{table:Hnm0345}, its cokernel is exactly $\Bbbk (-8)$. 

Finally, we prove that the short exact sequence \eqref{eq:sp2} is non-split. 
Let $\mathsf{c}_i$ for $i\in \llbracket 1,33\rrbracket$ be the basis elements of space $H_{3,5}$ and $p:\operatorname{H}_3(\Bbbk)\to  \Bbbk(-8)$ the surjection in \eqref{eq:sp2}, satisfying that $p(\mathsf{c}_i)=0$ for $i\in\llbracket 1,32\rrbracket$, 
and $p(\mathsf{c}_{33})=\mathsf{e}_1$, where $\mathsf{e}_1$ is the identity element of $\Bbbk(-8)$. 
The short exact sequence \eqref{eq:sp2} is split if and only if there exists a morphism $s:  \Bbbk(-8) \to \operatorname{H}_3(\Bbbk)$ of graded $A$-modules such that the composition $ps$ is the identity map.  
Assume that there exists such a map $s$. 
Let $m=s(\mathsf{e}_1)\in H_{3,5}$. 
Then $m$ is of the form $\sum_{i=1}^{32}\lambda_i\mathsf{c}_i+\mathsf{c}_{33}$ for $\lambda_i\in \Bbbk$, and $m.x=s(\mathsf{e}_1).x=s(\mathsf{e}_1.x)=s(0)=0$ for all $x\in A_{+}$. 
In particular, $\sum_{i=1}^{32}\lambda_i\mathsf{c}_ix_{1,2}+\mathsf{c}_{33}x_{1,2}=0$ for some $\lambda_i\in \Bbbk$, \textit{i.e.} $\mathsf{c}_{33}x_{1,2}$ is a linear combination of $\mathsf{c}_ix_{1,2}$ for $i\in \llbracket 1,32\rrbracket$. 
Using GAP, we choose suitable representative elements $\mathsf{c}'_i\in D_{3,5}$ of $\mathsf{c}_i$ for $i\in \llbracket 1,33\rrbracket$, and get that the dimension of the space spanned by $\mathsf{c}'_ix_{1,2}$ for $i\in \llbracket 1,33\rrbracket$ and elements in $B_{3,6}$, is strictly larger than the dimension of the space spanned by $\mathsf{c}'_ix_{1,2}$ for $i\in \llbracket 1,32\rrbracket$ and elements in $B_{3,6}$, as the following code shows. 
\parskip 1ex
\parindent 0in
\begin{tcolorbox}[breakable,colback=white,width=\textwidth ,center,arc=0mm,size=fbox]
\begin{footnotesize}
\begin{Verbatim}[samepage=false]
gene:=geneMH(0,3,3);;
Uh:=UU(gene,3);; Vh:=VV(gene,3);; Wh:=WW(gene,3);;
hx:=HXR(0,Uh,Vh,Wh,3,3,2);;
hxx:=0*[1..33];;
hxx[1]:=hx[14];; hxx[2]:=hx[15];; hxx[3]:=hx[16];; hxx[4]:=hx[17];; hxx[5]:=hx[18];;
hxx[6]:=hx[19];; hxx[7]:=hx[25];; hxx[8]:=hx[26];; hxx[9]:=hx[27];; hxx[10]:=hx[29];;
hxx[11]:=hx[31];; hxx[12]:=hx[32];; hxx[13]:=hx[39];; hxx[14]:=hx[40];; 
hxx[15]:=hx[41];; hxx[16]:=hx[42];; hxx[17]:=hx[50];; hxx[18]:=hx[51];;
hxx[19]:=hx[58];; hxx[20]:=hx[59];; hxx[21]:=hx[60];; hxx[22]:=hx[61];;
hxx[23]:=hx[65];; hxx[24]:=hx[67];; hxx[25]:=hx[77];; hxx[26]:=hx[78];;
hxx[27]:=hx[79];; hxx[28]:=hx[80];; hxx[29]:=hx[88];; hxx[30]:=hx[89];;
hxx[31]:=hx[91];; hxx[32]:=hx[93];; hxx[33]:=Ker(0,3,5)[79];; 
Imm:=Im(0,3,5);;
RankMat(Imm);
# 1550
Append(Imm, hxx);
RankMat(Imm);
# 1583 
gene:=hxx;;
Uh:=UU(gene,5);; Vh:=VV(gene,5);; Wh:=WW(gene,5);;
cc:=HXR(0,Uh,Vh,Wh,3,5,1);;
cc12:=0*[1..32];;
for i in [1..32] do
    cc12[i]:=cc[6*i-5];
od;
Imm:=Im(0,3,6);;
RankMat(Imm);
# 1890
Append(Imm,cc12);
RankMat(Imm);
# 1910
Append(Imm,[cc[6*33-5]]);
RankMat(Imm);
# 1911
\end{Verbatim}
\end{footnotesize}
\end{tcolorbox}
This shows that $\mathsf{c}_{33}x_{1,2}\neq 0$, and it is not a linear combination of $\mathsf{c}_ix_{1,2}$ for $i\in \llbracket 1,32\rrbracket$, which is a contradiction. 
So, \eqref{eq:sp2} is non-split. 
\end{proof}

\subsection{\texorpdfstring{The homology of the Koszul complex of $M^{i}$ for $i \in \{ 1,2,3\}$}{The homology of the Koszul complex of Mi for i equal to 1,2,3}}
\label{subsec:cpx of H4bis}

For a quadratic $A$-module $M$, 
we write the quadratic dual module $M^{!_{m}}$ simply by $M^!$.
Let $(\operatorname{K}_{\bullet}(M),d_{\bullet}(M))$ be the Koszul complex of $M$. 
Let $\operatorname{K}_{n,m}(M)=(M^!_{-n})^*\otimes A_{m}$, 
$d_{n,m}(M)=d_n(M)|_{\operatorname{K}_{n,m}(M)}:\operatorname{K}_{n,m}(M)\to \operatorname{K}_{n-1,m+1}(M)$,
$B^{M}_{n,m}=\Img ( d_{n+1,m-1}(M) )$, 
$D^{M}_{n,m}=\Ker ( d_{n,m}(M) )$
and 
$\operatorname{H}_{n,m}(M)= D^{M}_{n,m}/ B^{M}_{n,m}$ for $n\in\NN_0$ and $m\in \llbracket 0, 12 \rrbracket $. 
Let $\operatorname{H}_{n}(M)=\oplus_{m\in \llbracket 0, 12 \rrbracket} \operatorname{H}_{n,m}(M) $
for $n\in\NN_0$. 
Using GAP, we can also compute the dimension of the $\operatorname{H}_{n,m}(M^i)$ for $n$ less than some arbitrary positive integer, $m\in\llbracket 1,12\rrbracket$ and $i\in\llbracket 1,3\rrbracket$. 

\subsubsection{\texorpdfstring{Homology of the Koszul complex of $M^1$}{Homology of the Koszul complex of M1}}
\label{subsubsec:HnM1}

In this subsubsection, we compute $\operatorname{H}_n(M^1)$ for all $n\in \NN_0$. 

\paragraph{The dimensions of the homology groups}
\phantom{x}
\\
Recall that $M^{1}= (W \otimes A)/(I)$, where $W$ is the $2$-dimensional vector space spanned by $a_1,a_2$, and $I$ is the subspace of $W\otimes V$ spanned by \eqref{eq: relations of H4}.
The quadratic dual $(M^{1})^!=\oplus_{n\in \NN_0} (M^{1})^{!}_{-n}=(U \otimes A^! )/ (J)$ of $M^{1}$ is an $A^!$-module, where $U$ is the $2$-dimensional vector space spanned by $b_1,b_2$ (for $\{ b_1,b_2\}$ the dual basis to $\{ a_1,a_2 \}$), and $J$ is the subspace of $U\otimes V^*$ spanned by 
\begin{equation}
   \label{eq: relations of dual H4}
   \begin{split}
\{ b_1y_{1,2}-b_2y_{1,2}, b_2y_{1,3}, b_1y_{2,3}, b_1y_{1,4}, b_2y_{2,4}, b_1y_{3,4}-b_2y_{3,4}   \}. 
   \end{split}
\end{equation}

\begin{lem}
\label{lem:productss}
Recall that $\B^!=\cup_{n\in\NN_0}\B^!_{n}$ is the basis of $A^!$. 
Let $u,v\in \B^!$ and 
\begin{equation}
\label{eq:Yij}
Y_{1,2}=\{ \pm y_{1,2}^{r_1}y_{3,4}^{r_2} |r_1,r_2\in \NN_0 \},
Y_{1,3}=\{ \pm y_{1,3}^{r_1}y_{2,4}^{r_2} |r_1,r_2\in \NN_0 \},
Y_{2,3}=\{ \pm y_{2,3}^{r_1}y_{1,4}^{r_2} |r_1,r_2\in \NN_0 \}.
\end{equation}
If $uv \in Y_{i,j}$ for $(i,j)\in \II$, then $u,v\in Y_{i,j}$. 
\end{lem}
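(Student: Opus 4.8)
The plan is to exhibit an auxiliary algebra that ``sees'' only the three pure families and to realize it as a quotient of $A^{!}$ through which multiplication is forced to respect pureness. Throughout, for $(i,j)\in\II$ write $(k,l)\in\I$ for the unique pair with $\#\{i,j,k,l\}=4$, so that the elements of $Y_{i,j}\cap\B^{!}$ are precisely the basis vectors $y_{i,j}^{a}y_{k,l}^{b}$; note that $\{(1,2),(3,4)\}$, $\{(1,3),(2,4)\}$ and $\{(2,3),(1,4)\}$ are exactly the three partitions of $\{1,2,3,4\}$ into two pairs. First I would dispose of the degenerate cases $\deg u=0$ or $\deg v=0$, in which one factor is a scalar multiple of $1\in Y_{i,j}$ and the claim is immediate, and then assume that both $u$ and $v$ have positive degree.

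The first step is to identify, for each $(i,j)\in\II$, the subalgebra $B_{i,j}=\langle y_{i,j},y_{k,l}\rangle\subseteq A^{!}$ generated by a complementary pair. Using the identities $y_{i,j}^{n-r}y_{k,l}^{r}y_{i,j}=(-1)^{r}y_{i,j}^{n-r+1}y_{k,l}^{r}$ and $y_{i,j}^{n-r}y_{k,l}^{r}y_{k,l}=y_{i,j}^{n-r}y_{k,l}^{r+1}$ of \eqref{eq:product ijkl}, an immediate induction on length shows that every word in $y_{i,j},y_{k,l}$ reduces to a sign times a single basis vector $y_{i,j}^{a}y_{k,l}^{b}$; hence $B_{i,j}=\Span(Y_{i,j}\cap\B^{!})$. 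In particular, since the $y_{i,j}^{a}y_{k,l}^{b}$ are distinct basis vectors, a standard word all of whose letters lie in a single complementary pair is automatically pure of the corresponding type.

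The key construction is the ``one-point gluing'' $\bar{A}$ of the three augmented algebras $B_{1,2},B_{1,3},B_{2,3}$: as a vector space $\bar{A}=\Bbbk\oplus B_{1,2}^{+}\oplus B_{1,3}^{+}\oplus B_{2,3}^{+}$, where $B_{i,j}^{+}$ is the augmentation ideal, with multiplication that agrees with $B_{i,j}$ inside each block and is zero across distinct blocks. This is an associative augmented algebra, and I would define $\Phi\colon A^{!}\to\bar{A}$ on generators by sending $y_{i,j}$ to its image in the block determined by its complementary pair. The point is that each of the nineteen relations spanning $R^{\perp}$ is either a \emph{mixing} relation, both of whose monomials are products of two generators lying in different blocks (so both map to $0$), or one of the three \emph{within-block} relations $y_{1,2}y_{3,4}+y_{3,4}y_{1,2}$, $y_{1,3}y_{2,4}+y_{2,4}y_{1,3}$, $y_{2,3}y_{1,4}+y_{1,4}y_{2,3}$, which holds inside the corresponding $B_{i,j}$ because it already holds in $A^{!}$. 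Hence $\Phi$ is a well-defined homomorphism of algebras. The main obstacle is exactly this bookkeeping: verifying that the classification of the generators into the three complementary-pair blocks really splits all of $R^{\perp}$ into mixing and within-block relations, so that $\Phi$ descends to $A^{!}$.

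Finally I would extract the two properties of $\Phi$ that prove the lemma. On one hand $\Phi$ sends each pure basis vector $y_{i,j}^{a}y_{k,l}^{b}$ to the corresponding nonzero element of $B_{i,j}^{+}$, and distinct pure basis vectors to distinct elements of $\bar{A}$, so $\Phi$ is injective on the span of all pure basis vectors. On the other hand $\Phi$ annihilates every non-pure basis vector $w$: writing $w$ as its standard word in the generators, not all of its letters can lie in a single complementary pair (otherwise $w$ would be pure by the previous step), so grouping the leading maximal run in one block and multiplying by the first generator from a different block kills the product. Applying $\Phi$ to $uv=\pm y_{i,j}^{r_{1}}y_{k,l}^{r_{2}}$ now yields $\Phi(u)\Phi(v)=\Phi(uv)\neq 0$, which forces $\Phi(u)\neq 0$ and $\Phi(v)\neq 0$, so $u$ and $v$ are both pure; and since a nonzero product in $\bar{A}$ occurs only within a single block, the blocks of $u$ and of $v$ coincide with the block of $uv$, namely $B_{i,j}$. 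Therefore $u,v\in Y_{i,j}$, as claimed.
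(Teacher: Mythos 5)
Your proof is correct, and it takes a genuinely different route from the paper's. The paper proves the lemma by induction on the degree of $v$: writing $v=v'y$ with $y$ a generator, it uses that $uv'=\pm c$ for a single basis element $c$, and then checks, by inspecting the full multiplication tables (Tables \ref{table:yy}--\ref{table:yn456}) together with \eqref{eq:product ijkl} and \eqref{eq:y12y34y13}, that a product $cy\in Y_{i,j}$ of a basis element with a generator forces $c,y\in Y_{i,j}$; the induction hypothesis then finishes. Your argument replaces this table inspection by a structural observation: the three complementary pairs split the generators into blocks in such a way that each of the nineteen relations spanning $R^{\perp}$ either anticommutes a pair inside one block or has both of its monomials mixing two distinct blocks (this is the crux, and it does check out against the list of relations), so the projection $\Phi$ onto the one-point gluing $\bar{A}=\Bbbk\oplus B_{1,2}^{+}\oplus B_{1,3}^{+}\oplus B_{2,3}^{+}$ is a well-defined algebra homomorphism; since every non-pure standard word contains two adjacent letters from different blocks, $\Phi$ kills the non-pure basis vectors and is nonzero on the pure ones, and the lemma follows from $\Phi(u)\Phi(v)=\Phi(uv)\neq 0$ together with the vanishing of cross-block products. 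What your approach buys: it needs only the defining relations of $A^{!}$ and the standard-word basis of Fact \ref{fact:basis-quadratic-dual-fk4}, not the product tables of Fact \ref{fact:products-quadratic-dual-fk4}, and it never needs the (table-dependent) fact that a product of two basis elements is $\pm$ a basis element, which the paper's proof invokes in the step ``$uv'=\pm c$''; it would also transfer to any quadratic algebra whose relations split into blocks this way. What the paper's approach buys: the tables are computed anyway for the description of the Koszul differential, so once they are available the induction is very short. One small repair to your write-up: the reduction of an arbitrary word in $y_{i,j},y_{k,l}$ to $\pm y_{i,j}^{a}y_{k,l}^{b}$ is cleaner to justify directly from the anticommutation relation $y_{k,l}y_{i,j}=-y_{i,j}y_{k,l}$ (one of your three within-block relations) rather than from \eqref{eq:product ijkl}, whose stated index range $r\in\llbracket 1,n-1\rrbracket$ does not literally cover the boundary exponents.
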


\begin{proof}
We will prove the lemma by induction on the degree of $v$. 
Let $u\in \B^!_{m}$ and $v\in \B^!_{n}$ for $m,n\in \NN_0$. 
Obviously, the lemma holds for $n=0$ and $m\in\NN_0$. 
Assume that $v=v'y$ for $y\in \{y_{s,t}|(s,t)\in \I \}$ and $v'\in \B^!_{n-1}$.
Note that $uv'=\pm c$, where $c\in \B^!_{m+n-1}$. 
By Tables \ref{table:yy} - \ref{table:yn456} together with \eqref{eq:product ijkl} and \eqref{eq:y12y34y13}, $cy\in Y_{i,j}$ implies that $c,y\in Y_{i,j}$. 
Then, by induction hypothesis we get that $u, v'\in Y_{i,j}$. 
In consequence, $v=v'y\in Y_{i,j}$, as was to be shown.
\end{proof}

\begin{lem}
Set $T_n=\{ b_1y_{1,2}^ky_{3,4}^{n-k}, b_1y_{1,3}^ky_{2,4}^{n-k}, b_2y_{2,3}^ky_{1,4}^{n-k}  \mid k\in \llbracket 0, n \rrbracket \} \subseteq (M^{1})^!_{-n}$ for $n \in \NN_{0}$. 
Note that $T_{n}$ has cardinal $3(n+1)$ for $n \in \NN$, and cardinal $2$ for $n = 0$, since $T_0= \{ b_1,b_2 \}$. 
Then, $T_{n}$ is a basis of the space $(M^{1})^!_{-n}$ for $n\in \NN_{0}$.  
\end{lem}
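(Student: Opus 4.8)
The plan is to prove the two halves of the statement separately: first that $T_n$ spans $(M^1)^!_{-n}$, and then that it is linearly independent, whence it is a basis. I would note at the outset that $\# T_n = 3(n+1)$ for $n\in\NN$ coincides with $\#\C^!_n$, and that the monomials appearing in $T_n$ are precisely the elements of $\C^!_n$ from \eqref{eq:basis c}, distributed among the generators $b_1,b_2$ according to the three families $Y_{1,2},Y_{1,3},Y_{2,3}$ of \eqref{eq:Yij}. The base cases $n=0$ (where $(M^1)^!_0 = U = \Span\{b_1,b_2\}$) and $n=1$ (where the six elements of $T_1$ are read off directly from the relations \eqref{eq: relations of dual H4}) are immediate.

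For spanning I would argue by induction on $n$, using that $(M^1)^!_{-n} = (M^1)^!_{-(n-1)}\cdot V^*$, so that by the inductive hypothesis it suffices to check that $\tau\cdot y_{p,q}\in\Span(T_n)$ for every $\tau\in T_{n-1}$ and every generator $y_{p,q}$ with $(p,q)\in\I$. The three elementary observations that drive every such reduction are consequences of the relations \eqref{eq: relations of dual H4}: $b_1$ annihilates every monomial of $Y_{2,3}$ of positive degree (whose standard form begins with $y_{2,3}$ or $y_{1,4}$), $b_2$ annihilates every monomial of $Y_{1,3}$ of positive degree (begins with $y_{1,3}$ or $y_{2,4}$), while $b_1$ and $b_2$ agree on every monomial of $Y_{1,2}$ (begins with $y_{1,2}$ or $y_{3,4}$). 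Thus a product $\tau\cdot y_{p,q}$ staying in the family of $\tau$ is again, up to sign, an element of $T_n$ by \eqref{eq:product ijkl}, and the cross-family products are rewritten by the explicit identities of Fact \ref{fact:products-quadratic-dual-fk4} — in particular the block \eqref{eq:y12y34y13} — into linear combinations of the standard words $\B^!_n$. The crux is then to show that every $\U^!_n$-word so produced dies once multiplied by $b_1$ or by $b_2$: one refactors it through the same product identities until an annihilating generator $y_{2,3},y_{1,4}$ (for $b_1$) or $y_{1,3},y_{2,4}$ (for $b_2$) is brought to the front, as in $b_1 y_{1,2}y_{1,3}=b_2 y_{1,2}y_{1,3}=b_2 y_{1,3}y_{2,3}=0$. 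After all reductions only the $\C^!_n$-monomials survive, giving exactly $\Span(T_n)$.

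For linear independence, I would suppose a linear combination of the elements of $T_n$ vanishes in $(M^1)^!_{-n}$ and lift it to $U\otimes A^!$, obtaining a member of the submodule $(J A^!)_{-n}$ whose $b_1$-component is supported on $Y_{1,2}\cup Y_{1,3}$-monomials and whose $b_2$-component is supported on $Y_{2,3}$-monomials. Analysing the $b_1$- and $b_2$-components of a general element of $J A^!$ and invoking the factorization Lemma \ref{lem:productss} — which forbids a product from lying in a family $Y_{i,j}$ unless both factors already lie in $Y_{i,j}$ — forces all coefficients to vanish. Alternatively, since spanning already gives $\dim (M^1)^!_{-n}\leqslant 3(n+1)$, it suffices to match this against the dimension $3(n+1)$ obtained directly from a Gröbner basis computation of the module relations \eqref{eq: relations of dual H4}. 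Either route shows that $T_n$ is a basis.

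The main obstacle is the spanning step, and specifically the verification that every $\U^!_n$-word is annihilated by both $b_1$ and $b_2$. While each individual word is handled by exposing an annihilating generator through the product identities, carrying this out uniformly in $n$ and across all words of $\U^!_n$ requires careful bookkeeping with the numerous formulas of Fact \ref{fact:products-quadratic-dual-fk4}, and it is here that one must be most vigilant not to overlook a surviving term.
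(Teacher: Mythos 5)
Your proposal is correct and follows essentially the same route as the paper: spanning is obtained by using the relations \eqref{eq: relations of dual H4} together with the product identities of Fact \ref{fact:products-quadratic-dual-fk4} to show that $b_1,b_2$ annihilate (or identify) everything outside the three families of $\C^!_n$-monomials, and linear independence is proved by lifting a vanishing combination to $U\otimes A^!$, splitting into its $b_1$- and $b_2$-components, and invoking the factorization Lemma \ref{lem:productss}. The only cosmetic difference is organizational: you set up the spanning step as an induction on the degree $n$, whereas the paper verifies the closed-form vanishing identities $b_jy_{1,2}^{m}y_{1,3}=b_jy_{1,2}^{m}y_{2,3}=\dots=0$ uniformly in $m$ in a single pass.
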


\begin{proof}  
Note that the space $(M^{1})^!_{-n}$ is spanned by $\{ b_1y,b_2y\mid y\in \B^!_{n} \}$ for $n\in \NN_0$. 
It is easy to check that  
\begin{align*}
b_jy_{1,2}^{m}y_{1,3}&= \chi_{m}b_1y_{2,3}^{m}y_{1,3}-\chi_{m+1} b_1 y_{2,3}^{m}y_{1,2}=0, 
\\
b_jy_{1,2}^{m}y_{2,3}&= \chi_{m}b_2y_{1,3}^{m}y_{2,3}-\chi_{m+1} b_2y_{1,3}^{m}y_{1,2}=0, 
\\
b_jy_{1,2}^{m}y_{1,4}&= \chi_{m}b_2y_{2,4}^{m}y_{1,4}-\chi_{m+1} b_2y_{2,4}^{m}y_{1,2}=0, 
\\
b_jy_{1,2}^{m}y_{2,4}&= \chi_{m}b_1y_{1,4}^{m}y_{2,4}-\chi_{m+1} b_1 y_{1,4}^{m}y_{1,2}=0, 
\\
b_1y_{1,3}^{m}y_{1,4}&= \chi_m b_1 y_{3,4}^m y_{1,4}+\chi_{m+1}b_1y_{1,4}^my_{3,4} 
= \chi_m b_2 y_{3,4}^m y_{1,4}+\chi_{m+1}b_1y_{1,4}^my_{3,4} 
\\
& = \chi_m b_2 y_{1,3}^m y_{1,4}+\chi_{m+1}b_1y_{1,4}^my_{3,4} 
=0, 
\\
b_1y_{1,3}^{m}y_{3,4}&= \chi_m b_1 y_{1,4}^m y_{3,4}-\chi_{m+1}b_1y_{1,4}^my_{1,3}=0,
\\
b_2y_{2,3}^{m}y_{2,4}&= \chi_m b_2 y_{3,4}^m y_{2,4}+\chi_{m+1}b_2y_{2,4}^my_{3,4} 
= \chi_m b_1 y_{3,4}^m y_{2,4}+\chi_{m+1}b_2y_{2,4}^my_{3,4} 
\\
& = \chi_m b_1 y_{2,3}^m y_{2,4}+\chi_{m+1}b_2y_{2,4}^my_{3,4} =0, 
\\
b_2y_{2,3}^{m}y_{3,4}&= \chi_m b_2 y_{2,4}^m y_{3,4}-\chi_{m+1}b_2y_{2,4}^my_{2,3}=0, 
\end{align*}
for $j\in \llbracket 1,2 \rrbracket$ and $m\in \NN$.
Together with \eqref{eq: relations of dual H4}, we get that the space $(M^{1})^!_{-n}$ is spanned by $T_n$ for $n\in \NN_{0}$. 

It is clear that $T_{0}$ is linearly independent. 
Next, we prove that the elements in $T_n$ are linearly independent for $n\in\NN$.
Suppose that 
\begin{equation}
   \label{eq:linearly independent 1}
\begin{split}
   \sum_{k\in \llbracket 0,n \rrbracket } \alpha_k b_1y_{1,2}^ky_{3,4}^{n-k}
   + \sum_{k\in \llbracket 0,n \rrbracket } \beta_k b_1y_{1,3}^ky_{2,4}^{n-k} 
   + \sum_{k\in \llbracket 0,n \rrbracket } \gamma_k b_2y_{2,3}^ky_{1,4}^{n-k} 
   = 0 
\end{split}
\end{equation}
in $(M^{1})^!_{-n}$, 
where $\alpha_k,\beta_k,\gamma_k\in \Bbbk$ for $k\in \llbracket 0,n \rrbracket$. 
Then 
\begin{equation}
\label{eq:linearly independent 2}
\begin{split}
& \phantom{= \;} \sum_{k\in \llbracket 0,n \rrbracket } \alpha_k b_1y_{1,2}^ky_{3,4}^{n-k}
+ \sum_{k\in \llbracket 0,n \rrbracket } \beta_k b_1y_{1,3}^ky_{2,4}^{n-k} 
+ \sum_{k\in \llbracket 0,n \rrbracket } \gamma_k b_2y_{2,3}^ky_{1,4}^{n-k} 
\\
& = \sum_{u\in \B^!_{n-1}} \lambda_{1,u} ( b_1y_{1,2}-b_2y_{1,2}) u
+ \sum_{u\in \B^!_{n-1}} \lambda_{2,u} b_2y_{1,3} u  
+ \sum_{u\in \B^!_{n-1}} \lambda_{3,u} b_1y_{2,3} u  
\\
& \phantom{= \;} 
+ \sum_{u\in \B^!_{n-1}} \lambda_{4,u} b_1y_{1,4} u 
+ \sum_{u\in \B^!_{n-1}} \lambda_{5,u} b_2y_{2,4} u 
+ \sum_{u\in \B^!_{n-1}} \lambda_{6,u} (b_1y_{3,4}-b_2y_{3,4} ) u 
\in U \otimes A^!, 
\end{split}
\end{equation}
where $\lambda_{i,u}\in \Bbbk$ for $i\in \llbracket 1,6\rrbracket$ and $u\in \B^!_{n-1}$. 
So, 
\begin{equation}
   \label{eq:linearly independent 3}
\begin{split}
\sum_{k\in \llbracket 0,n \rrbracket } \gamma_k b_2y_{2,3}^ky_{1,4}^{n-k} & = 
-\sum_{u\in \B^!_{n-1}} \lambda_{1,u} b_2y_{1,2} u
+ \sum_{u\in \B^!_{n-1}} \lambda_{2,u} b_2y_{1,3} u  
+ \sum_{u\in \B^!_{n-1}} \lambda_{5,u} b_2y_{2,4} u 
\\
& \phantom{= \; }
- \sum_{u\in \B^!_{n-1}} \lambda_{6,u} b_2y_{3,4}  u 
\in \Bbbk \{b_2 \} \otimes A^! \cong A^!
\end{split}
\end{equation}
and 
\begin{equation}
   \label{eq:linearly independent 4}
\begin{split}
& \phantom{= \;} 
\sum_{k\in \llbracket 0,n \rrbracket } \alpha_k b_1y_{1,2}^ky_{3,4}^{n-k}
+ \sum_{k\in \llbracket 0,n \rrbracket } \beta_k b_1y_{1,3}^ky_{2,4}^{n-k} 
\\  
& 
=
\sum_{u\in \B^!_{n-1}} \lambda_{1,u}  b_1y_{1,2} u 
+ \sum_{u\in \B^!_{n-1}} \lambda_{3,u} b_1y_{2,3} u  
+ \sum_{u\in \B^!_{n-1}} \lambda_{4,u} b_1y_{1,4} u 
+ \sum_{u\in \B^!_{n-1}} \lambda_{6,u} b_1y_{3,4} u 
\\
&
\in \Bbbk\{ b_1 \}\otimes A^! \cong A^!.
\end{split}
\end{equation}
Lemma \ref{lem:productss} and \eqref{eq:linearly independent 3} imply that 
\begin{equation}
   \label{eq:linearly independent 5}
\begin{split}
\sum_{k\in \llbracket 0,n \rrbracket } \gamma_k y_{2,3}^ky_{1,4}^{n-k}  = 
\sum_{u\in \B^!_{n-1}\cap Y_{1,2}} \lambda_{1,u}  y_{1,2} u 
+ \sum_{u\in \B^!_{n-1}\cap Y_{1,2}} \lambda_{6,u} y_{3,4} u = 0
\end{split}
\end{equation}
in $A^!$, whereas  
Lemma \ref{lem:productss} and \eqref{eq:linearly independent 4} imply that
\begin{equation}
   \label{eq:linearly independent 6}
\begin{split}
\sum_{k\in \llbracket 0,n \rrbracket } \alpha_k y_{1,2}^ky_{3,4}^{n-k} = 
\sum_{u\in \B^!_{n-1}\cap Y_{1,2}} \lambda_{1,u} y_{1,2} u 
+ \sum_{u\in \B^!_{n-1}\cap Y_{1,2}} \lambda_{6,u} y_{3,4}  u ,
\hspace*{3mm}
\sum_{k\in \llbracket 0,n \rrbracket } \beta_k y_{1,3}^ky_{2,4}^{n-k} =0
\end{split}
\end{equation}
in $A^!$.
Hence, $\alpha_k=\beta_k=\gamma_k=0$ for $k\in \llbracket 0,n \rrbracket$. 
The lemma is thus proved. 
\end{proof}


Given $n \in \NN$, we will denote by $T_{n}^{*} = \{ x^{*} \mid x \in T_{n} \}$ the dual basis of $T_{n}$. 
Note that the differential $d_{1}(M^{1}): \operatorname{K}_1(M^{1})\to \operatorname{K}_{0}(M^{1})$ is given by 
\begin{equation}
\begin{split}
   (b_1 y_{1,2})^*|1 & \mapsto b_1^*|x_{1,2}+b_2^*|x_{1,2}, 
   (b_1 y_{1,3})^*|1 \mapsto b_1^*|x_{1,3},
   (b_1 y_{2,4})^*|1 \mapsto b_1^*|x_{2,4},
   \\
   (b_1 y_{3,4})^*|1 & \mapsto b_1^*|x_{3,4}+b_2^*|x_{3,4}, 
   (b_2 y_{2,3})^*|1 \mapsto b_2^*|x_{2,3},
   (b_2 y_{1,4})^*|1 \mapsto b_2^*|x_{1,4},
\end{split}
\end{equation}
where $b_sy_{i,j}\in T_1$ and $(b_sy_{i,j})^* \in T_{1}^{*}$ is the dual element of $b_sy_{i,j}$.
The differential $d_n(M^{1}):\operatorname{K}_n(M^{1})\to \operatorname{K}_{n-1}(M^{1})$ for $n\geqslant 2$ is given by 
\begin{equation}
\label{eq:differential H4}
\begin{split}
(b_sy_{i,j}^{n-r}y_{k,l}^{r})^* |1 \mapsto (-1)^r (b_s y_{i,j}^{n-1-r}y_{k,l}^{r})^*|x_{i,j}+(b_sy_{i,j}^{n-r}y_{k,l}^{r-1})^*|x_{k,l}, 
\end{split}
\end{equation}
where $s\in\llbracket 1,2 \rrbracket $, 
$r\in \llbracket 0,n\rrbracket $, 
$(i,j)\in \II$, $(k,l)\in \I$ with $\# \{i,j,k,l \}=4$, 
$b_sy_{i,j}^{n-r}y_{k,l}^{r}\in T_n $ 
and $x^*\in ((M^{1})^!_{-n})^* \in T_{n}^{*}$ is the dual element of $x\in T_{n} \subseteq (M^{1})^!_{-n}$.  

\begin{prop}
\label{prop:dim H M1}
We have $\dim \operatorname{H}_{n}(M^{1})=0$ for integers $n \geqslant 2$. 
\end{prop}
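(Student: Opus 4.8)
The plan is to exploit the explicit shape of the differential in \eqref{eq:differential H4}, which shows that in homological degrees $\geqslant 1$ the Koszul complex of $M^1$ splits as a direct sum of three subcomplexes. Indeed, for $n\geqslant 2$ the formula \eqref{eq:differential H4} sends each basis vector $(b_s y_{i,j}^{n-r}y_{k,l}^{r})^*$ to a combination of vectors attached to the same commuting pair among $(1,2)/(3,4)$, $(1,3)/(2,4)$, $(2,3)/(1,4)$. Writing $C^{(i,j)}_{\bullet}$ for the subcomplex of $\K^{\operatorname{mod}}_{\bullet}(M^1)$ spanned over $A$ by the vectors indexed by $(i,j)\in\II$, we thus get a decomposition $\K^{\operatorname{mod}}_{n}(M^1)=\bigoplus_{(i,j)\in\II}C^{(i,j)}_{n}$ for $n\geqslant 1$ that is respected by $d_n$ for $n\geqslant 2$. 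Since $\operatorname{H}_n(M^1)$ with $n\geqslant 2$ only involves terms and differentials in homological degrees $\geqslant 1$, this yields $\operatorname{H}_n(M^1)=\bigoplus_{(i,j)\in\II}\operatorname{H}_n(C^{(i,j)}_{\bullet})$ for $n\geqslant 2$. The splitting breaks down at $d_1$, which mixes $b_1^*$ and $b_2^*$; this is exactly why the statement is restricted to $n\geqslant 2$, the case $n=1$ being covered by Fact \ref{fact:k-mod}.

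Next I would identify each $C^{(i,j)}_{\bullet}$ with a familiar complex. Fix $(i,j)\in\II$ and the complementary pair $(k,l)$ with $\#\{i,j,k,l\}=4$. In $A$ one has $x_{i,j}^2=x_{k,l}^2=0$ and $x_{i,j}x_{k,l}=x_{k,l}x_{i,j}$, so $A$ is a left module over $B=\Bbbk[X,Y]/(X^2,Y^2)$ via $X\mapsto x_{i,j}$, $Y\mapsto x_{k,l}$. Comparing \eqref{eq:differential H4} with the Koszul differential of the Koszul algebra $B$, one checks that $C^{(i,j)}_{\bullet}\cong K_{\bullet}(B)\otimes_B A$, whose homology is $\Tor^{B}_{\bullet}(\Bbbk,A)$. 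Equivalently, and more convenient for the computation, $C^{(i,j)}_{\bullet}$ is the total complex of the first-quadrant bicomplex whose $(p,q)$-spot is $A$ and whose two anticommuting differentials are the left multiplications $L_{x_{i,j}}$ and $L_{x_{k,l}}$ (the sign $(-1)^r$ in \eqref{eq:differential H4} being precisely what makes them anticommute). After fixing an internal (Adams) degree one obtains a finite first-quadrant bicomplex, so the acyclic assembly lemma reduces the vanishing of $\operatorname{H}_n(C^{(i,j)}_{\bullet})$ for $n\geqslant 1$ to the exactness of its rows, i.e.\ to the exactness of the two-periodic complex
\[
\cdots \xrightarrow{\;L_{x_{i,j}}\;} A \xrightarrow{\;L_{x_{i,j}}\;} A \xrightarrow{\;L_{x_{i,j}}\;} A ,
\]
which amounts to $\Ker(L_{x_{i,j}})=x_{i,j}A$, that is, to $A$ being free as a left $\Bbbk[x_{i,j}]/(x_{i,j}^2)$-module.

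The last step, and the crux of the argument, is to prove this freeness for one generator in each commuting pair; by symmetry it suffices to treat $x_{1,2},x_{1,3},x_{2,3}$. For $x_{1,2}$ the Gröbner basis $G_A$ settles it cleanly: since $x_{1,2}$ is the smallest generator, inspection of $G_A$ shows that $x_{1,2}^2$ is its only element whose leading monomial begins with $x_{1,2}$. Splitting the basis $\B$ as $\B^{(1)}\sqcup\B^{(2)}$ according to whether a standard word begins with $x_{1,2}$ or not, this implies that $x_{1,2}w$ is again a standard word (lying in $\B^{(1)}$) for every $w\in\B^{(2)}$, that $w\mapsto x_{1,2}w$ is a bijection $\B^{(2)}\to\B^{(1)}$, and that $x_{1,2}w=x_{1,2}^2w'=0$ for $w=x_{1,2}w'\in\B^{(1)}$. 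Hence $L_{x_{1,2}}$ annihilates $\Span(\B^{(1)})$ and maps $\Span(\B^{(2)})$ isomorphically onto $\Span(\B^{(1)})$, so $\Ker(L_{x_{1,2}})=\Img(L_{x_{1,2}})=x_{1,2}A$, as required. For $x_{1,3}$ and $x_{2,3}$ the identical argument applies after recomputing the Gröbner basis with respect to the orders underlying $W^{1,3}$ and $W^{2,3}$, in which these generators are smallest (which is where the three orders introduced earlier enter); alternatively, one transports the case of $x_{1,2}$ along the generator-permuting automorphisms of $\FK(4)$, under which the six generators form a single orbit.

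I expect the main obstacle to be precisely this freeness statement, namely verifying for each of the three relevant generators that no leading monomial other than its square begins with it. For $x_{1,2}$ this is immediate from the displayed $G_A$, but for $x_{1,3}$ and $x_{2,3}$ it requires either the analogous Gröbner bases (a finite GAP computation) or a careful justification of the generator-permuting symmetries of $\FK(4)$. A secondary, but harmless, point is the convergence of the bicomplex spectral sequence, which causes no trouble because, once the internal degree is fixed, the bicomplex is supported in finitely many bidegrees.
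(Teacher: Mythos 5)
Your reduction via the acyclic assembly lemma is where the argument breaks, and this is a genuine gap. The rows of your first-quadrant bicomplex are the \emph{bounded} complexes $\cdots \rightarrow A \xrightarrow{L_{x_{i,j}}} A \xrightarrow{L_{x_{i,j}}} A \rightarrow 0$ ending at $p=0$, and their homology at the spot $p=0$ is $A/x_{i,j}A \neq 0$; so the rows are never exact in the sense required by the acyclic assembly lemma, and the lemma does not apply. What exactness of the rows at the interior spots actually yields (running the spectral sequence of the bicomplex, horizontal homology first) is an isomorphism $\operatorname{H}_{n}(\mathrm{Tot}) \cong \operatorname{H}_{n}\big(A/x_{i,j}A,\, \bar{L}_{x_{k,l}}\big)$ for all $n$, so you are still left with proving that the induced complex on $A/x_{i,j}A$ is exact in positive degrees. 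Equivalently, in your $\Tor$ formulation, you need $\Tor^{B}_{n}(\Bbbk,A)=0$ for $n\geqslant 2$, which (since $B=\Bbbk[X,Y]/(X^{2},Y^{2})$ is a finite-dimensional local Frobenius algebra) is equivalent to $A$ being free over the whole four-dimensional algebra $B$; freeness over $\Bbbk[x_{i,j}]/(x_{i,j}^{2})$ and over $\Bbbk[x_{k,l}]/(x_{k,l}^{2})$ separately is strictly weaker. Concretely, let $M=\Bbbk e_{1}\oplus \Bbbk e_{2}$ with $X$ and $Y$ both acting by $e_{2}\mapsto e_{1}$, $e_{1}\mapsto 0$: then $M$ is free of rank one over $\Bbbk[X]/(X^{2})$ and over $\Bbbk[Y]/(Y^{2})$, all rows and columns of the associated bicomplex are exact at interior spots, yet $\Tor^{B}_{n}(\Bbbk,M)\cong \Bbbk$ for every $n\geqslant 0$. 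Hence the property your Gr\"obner-basis step establishes, namely $\Ker(L_{x_{1,2}})=x_{1,2}A$, does not suffice to conclude the vanishing of $\operatorname{H}_{n}(C^{(i,j)}_{\bullet})$, not even for $n\geqslant 2$.

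The gap is repairable with the same tool, provided you use the two generators of a commuting pair \emph{simultaneously}: in the Gr\"obner basis underlying $W^{1,2}$ (Appendix \ref{subsec:W}) the only leading monomials beginning with $x_{1,2}$ or $x_{3,4}$ are $x_{1,2}^{2}$, $x_{3,4}x_{1,2}$ and $x_{3,4}^{2}$, whence every standard word is, uniquely, of one of the forms $w$, $x_{1,2}w$, $x_{3,4}w$, $x_{1,2}x_{3,4}w$ with $w$ a standard word beginning with neither letter; together with $x_{3,4}x_{1,2}=x_{1,2}x_{3,4}$ in $A$, this exhibits $A$ as a free $B$-module, which is what actually kills $\Tor^{B}_{n}(\Bbbk,A)$ for $n\geqslant 1$ (and similarly for the other two pairs, via $W^{1,3}$, $W^{2,3}$ or a symmetry argument). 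Note that this joint use of both multiplications is precisely the input of the paper's own proof, which is otherwise formulated differently from yours: the paper identifies $\K^{\operatorname{mod}}_{\bullet}(M^{1})$ in positive homological degrees with the subcomplex $\Bbbk\C_{\bullet}\otimes A$ of the Koszul complex of the trivial module, and deduces exactness in degrees $\geqslant 2$ from the dimension count of Lemma \ref{lem:dim changing in image} and Remark \ref{rk:exact of subcomplex kCA}, whose proof rests on the injectivity of $L_{x_{i,j}}$ on $\Bbbk E^{i,j}_{m-1}$ \emph{and} of $L_{x_{k,l}}$ on $\Bbbk\tilde{E}^{i,j}_{m-1}$, i.e.\ exactly the two-generator information missing from your argument.
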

\begin{proof}
It is clear that there is an isomorphism $((M^{1})^!_{-n})^*\otimes A \to \Bbbk \C_{n} \otimes A$ of chain complex of graded $A$-modules given by 
$ (b_sy_{i,j}^{n-r}y_{k,l}^{r})^* |x \mapsto  z^{i,j}_{n-r}z^{k,l}_{r} |x $, 
where $x\in A$ and $n\in \NN$. 
So, $\dim B^{M^{1}}_{n,m}=\dim C_{n,m}$ for 
$m\in \llbracket 0, 12 \rrbracket $ and $n\in \NN$, where $\dim C_{n,m}$ is given by Lemma \ref{lem:dim changing in image}. 
The result now follows from the fact that the Koszul complex $\K_{\bullet}(M^{1})$ is isomorphic to the complex $\Bbbk \C_{\bullet} \otimes A$ for $\bullet \in\NN$ and Remark \ref{rk:exact of subcomplex kCA}.
\end{proof}

\begin{cor}
\label{cor:dim BMAnm H4}
The dimension of $B^{M^{1}}_{n,m}$ for $n\in\NN_0 $ and $m\in\llbracket 0,12 \rrbracket$ is given by 
\begin{table}[H]
   \begin{center}
     \resizebox{\textwidth}{5.5mm}{
      \begin{tabular}{|c|ccccccccccccc|}
      \hline
       \diagbox[width=14mm,height=5mm]{$n$ }{$m$}  & $0$ & $1$  & $2$ & $3$ & $4$ & $5$ & $6$ & $7$ & $8$ & $9$ & $10$ & $11$ & $12$
     \\
     \hline
     $n=0$ & $0$ & $6$ & $27$ & $72$ & $131$ & $186$ & $210$ & $192$ & $142$ & $84$ & $38$ & $12$  & $2 $
     \\
     $n\in \NN $ & $0$ & $3n+6$ & $15n+27$ & $42n+72$ & $84n+138$ & $129n+204$ & $159n+243$ & $159n+234$ & $129n+183$ & $84n+114$ & $42n+54$ & $15n+18$  & $3n+3$
     \\
      \hline
   \end{tabular}
  }
  \vspace{1mm}
   \caption{Dimension of $B^{M^{1}}_{n,m}$.}	
   \label{table:dim BMAnm H4}
\end{center}
\end{table}
\vspace{-0.8cm}
\end{cor}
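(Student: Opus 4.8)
The plan is to split the computation into the two cases $n \in \NN$ and $n = 0$, since the structural isomorphism used in the proof of Proposition \ref{prop:dim H M1} is only available in positive homological degree.

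First, for $n \in \NN$, I would recall that the proof of Proposition \ref{prop:dim H M1} produces an isomorphism of complexes identifying $\operatorname{K}_{\bullet}(M^{1})$ in each positive degree with the subcomplex $\Bbbk \C_{\bullet} \otimes A$ of $K_{\bullet}$, under which $B^{M^{1}}_{n,m}$ corresponds to $C_{n,m}$ for all $m \in \llbracket 0,12 \rrbracket$. By Lemma \ref{lem:dim changing in image} we have $C_{n,m} = \bigoplus_{(i,j) \in \II} C^{i,j}_{n,m}$, and the dimension $\dim C^{i,j}_{n,m}$ recorded in \eqref{eq:dim chang} does not depend on $(i,j) \in \II$. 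Since $\# \II = 3$, this yields $\dim B^{M^{1}}_{n,m} = 3 \dim C^{i,j}_{n,m}$, and substituting the values of \eqref{eq:dim chang} produces exactly the second row of the table, with the entry $\dim B^{M^{1}}_{n,0} = 0$ coming from $C_{n,0} = 0$.

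For the remaining case $n = 0$, I would invoke Fact \ref{fact:k-mod}, which gives $B^{M^{1}}_{0,\bullet} = \Img(d_{1}(M^{1})) = \Ker(\rho_{V_{M^{1}} \otimes A})$ for the canonical surjection $\rho_{V_{M^{1}} \otimes A} : \operatorname{K}_{0}(M^{1}) \rightarrow M^{1}$. Hence there is a short exact sequence $0 \to B^{M^{1}}_{0,\bullet} \to \operatorname{K}_{0}(M^{1}) \to M^{1} \to 0$ of graded $A$-modules. Taking dimensions in internal degree $m$ and using $\dim \operatorname{K}_{0,m}(M^{1}) = \dim(V_{M^{1}}) \cdot \dim A_{m} = 2 \dim A_{m}$, I obtain $\dim B^{M^{1}}_{0,m} = 2 \dim A_{m} - \dim M^{1}_{m}$. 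Combining the Hilbert series of $A$ with that of $M^{1}$ from Fact \ref{fact:hilb-ser-mi} then reproduces the first row of the table.

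I do not expect any genuine obstacle here: both the identification $\dim B^{M^{1}}_{n,m} = \dim C_{n,m}$ in positive degree and the relevant Hilbert series have already been established, so the only remaining task is the bookkeeping, namely multiplying the entries of \eqref{eq:dim chang} by $3$ and forming $2 \dim A_{m} - \dim M^{1}_{m}$ in each internal degree. The single point requiring a little care is to remember that the isomorphism with $\Bbbk \C_{\bullet} \otimes A$ fails in homological degree $0$, which is precisely why the $n = 0$ row must instead be deduced from the short exact sequence above.
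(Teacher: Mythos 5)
Your proposal is correct and takes essentially the same route as the paper: for $n\in\NN$ both arguments identify $B^{M^{1}}_{n,m}$ with $C_{n,m}$ through the chain isomorphism $\operatorname{K}_{\bullet}(M^{1})\cong \Bbbk\C_{\bullet}\otimes A$ in positive homological degrees from the proof of Proposition \ref{prop:dim H M1} and then read off the values from Lemma \ref{lem:dim changing in image} (with the factor $3$ coming from the three summands $C^{i,j}_{n,m}$), while for $n=0$ both reduce to $\dim B^{M^{1}}_{0,m}=2\dim A_{m}-\dim M^{1}_{m}$ via the augmentation and Fact \ref{fact:k-mod}. The only, harmless, divergence is in the $n=0$ row: the paper expresses the correction term as $\dim H_{4,m+4}$, which equals $\dim M^{1}_{m}$ by the isomorphism $\operatorname{H}_{4}(\Bbbk)\cong M^{1}(-8)$ of Lemma \ref{lemma:homology-m0}, whereas you take $\dim M^{1}_{m}$ directly from the Hilbert series in Fact \ref{fact:hilb-ser-mi}.
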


\begin{proof}
The last row of Table \ref{table:dim BMAnm H4} follows from Lemma \ref{lem:dim changing in image}, since $\dim B^{M^{1}}_{n,m}=\dim C_{n,m}$ for $n\in \NN$ and $m \in \llbracket 0, 12 \rrbracket$, 
as explained in the proof of Proposition \ref{prop:dim H M1}. 
For the remaining case, note that $\dim B^{M^{1}}_{0,m}=\dim D^{M^{1}}_{0,m}= \dim (((M^{1})^!_{0})^*\otimes A_{m})- \dim H_{4,m+4}
= 2\dim A_m-\dim H_{4,m+4}$ 
for $m \in \llbracket 0, 12 \rrbracket$. 
The result now follows. 
\end{proof}

\begin{cor}
\label{cor:dim D of Koszul cpx of H4}
The dimension of $D^{M^{1}}_{1,m}$ for $m\in\llbracket 0,12 \rrbracket$ is given by 
\begin{table}[H]
   \begin{center}
      \begin{tabular}{|c|ccccccccccccc|}
      \hline
       $m$  & $0$ & $1$  & $2$ & $3$ & $4$ & $5$ & $6$ & $7$ & $8$ & $9$ & $10$ & $11$ & $12$
     \\
     \hline
     $\dim D^{M^{1}}_{1,m}$ & $0$ & $9$ & $42$ & $121$ & $240$ & $366$ & $444$ & $434$ & $342$ & $214$ & $102$ & $34$  & $6 $
     \\
      \hline
   \end{tabular}
  \vspace{1mm}
   \caption{Dimension of $D^{M^{1}}_{1,m}$.}	
   \label{table:dim D of Koszul cpx of H4}
\end{center}
\end{table}
\vspace{-0.8cm}
Hence, the dimension of $\operatorname{H}_{1,m}(M^{1})$ for $m\in\llbracket 0,12 \rrbracket$ is exactly given in Table \ref{table:dim Koszul cpx of M1}, by $\dim \operatorname{H}_{1,m}(M^{1}) = \dim D^{M^{1}}_{1,m}- \dim B^{M^{1}}_{1,m}$. 
In particular, $\dim \operatorname{H}_{1}(M^{1})=194$. 
\end{cor}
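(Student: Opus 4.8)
The plan is to deduce both dimension tables purely from the dimensions $\dim B^{M^{1}}_{n,m}$ already recorded in Corollary \ref{cor:dim BMAnm H4} and the Hilbert series of $A$, applying rank-nullity to the module Koszul differential in each internal degree $m \in \llbracket 0,12 \rrbracket$ separately. No new computation beyond bookkeeping is needed.

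First I would compute $\dim \operatorname{K}_{1,m}(M^{1})$. Since $\operatorname{K}_{1,m}(M^{1}) = ((M^{1})^!_{-1})^{*} \otimes A_{m}$ and $T_{1}$ is a basis of $(M^{1})^!_{-1}$ of cardinality $6$, we have $\dim \operatorname{K}_{1,m}(M^{1}) = 6 \dim A_{m}$, with the values $\dim A_{m} = 1,6,19,42,71,96,106,96,71,42,19,6,1$ read off from the Hilbert series $[2]^2[3]^2[4]^2$ of $A$. Next I would note that, by the defining relation $B^{M}_{n,m} = \Img(d_{n+1,m-1}(M))$ specialized to $n=0$, the image of $d_{1,m}(M^{1}) : \operatorname{K}_{1,m}(M^{1}) \to \operatorname{K}_{0,m+1}(M^{1})$ is exactly $B^{M^{1}}_{0,m+1}$. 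Hence rank-nullity gives
\[
\dim D^{M^{1}}_{1,m} = \dim \operatorname{K}_{1,m}(M^{1}) - \dim B^{M^{1}}_{0,m+1} = 6 \dim A_{m} - \dim B^{M^{1}}_{0,m+1}.
\]
Substituting the $n=0$ row of Table \ref{table:dim BMAnm H4} (and using $\dim B^{M^{1}}_{0,13} = 0$, since $A$ is concentrated in internal degrees $\llbracket 0,12 \rrbracket$) produces precisely the entries $0,9,42,121,240,366,444,434,342,214,102,34,6$ of Table \ref{table:dim D of Koszul cpx of H4}.

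Finally, for the homology dimensions I would use the identity $\dim \operatorname{H}_{1,m}(M^{1}) = \dim D^{M^{1}}_{1,m} - \dim B^{M^{1}}_{1,m}$, taking $\dim D^{M^{1}}_{1,m}$ from the table just obtained and $\dim B^{M^{1}}_{1,m}$ from the $n=1$ instance of the $n \in \NN$ row of Table \ref{table:dim BMAnm H4}. This yields the values $0,0,0,7,18,33,42,41,30,16,6,1,0$, whose sum over $m$ is $194$, giving $\dim \operatorname{H}_{1}(M^{1}) = 194$. Since every step reduces to arithmetic with tabulated integers, I expect no genuine obstacle; the only care required is the degree-shift bookkeeping, namely that $d_{1,m}(M^{1})$ lands in internal degree $m+1$, so one must subtract $\dim B^{M^{1}}_{0,m+1}$ rather than $\dim B^{M^{1}}_{0,m}$, and that the boundary degrees $m=0$ and $m=12$ use the vanishing of $A$ outside $\llbracket 0,12 \rrbracket$.
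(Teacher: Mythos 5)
Your proposal is correct and is essentially the paper's own proof: the paper likewise obtains $\dim D^{M^{1}}_{1,m} = \dim\big(((M^{1})^{!}_{-1})^{*}\otimes A_{m}\big) - \dim B^{M^{1}}_{0,m+1} = 6\dim A_{m} - \dim B^{M^{1}}_{0,m+1}$ and then invokes Corollary \ref{cor:dim BMAnm H4}, which is exactly your rank--nullity argument with the degree shift handled as you describe. The only difference is that you spell out the arithmetic for the homology dimensions and the sum $194$, which the paper leaves implicit.
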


\begin{proof}
The result follows directly from $\dim D^{M^{1}}_{1,m}= \dim (((M^{1})^!_{-1})^*\otimes A_{m})- \dim B^{M^{1}}_{0,m+1} 
= 6\dim A_m-\dim B^{M^{1}}_{0,m+1}$ for $m \in \llbracket 0, 12 \rrbracket$, together with Corollary \ref{cor:dim BMAnm H4}.
\end{proof}

We can also use GAP to get the dimension of the homology $\operatorname{H}_n(M^1)$ of the Koszul complex of $M^1$ for $n$ less than some positive integer. 
In particular, using Appendix \ref{sec:cpx} and the following routine in GAP 
\parskip 1ex
\parindent 0in
\begin{tcolorbox}[breakable,colback=white,width=\textwidth ,center,arc=0mm,size=fbox]
\begin{footnotesize}
\begin{Verbatim}[samepage=false]
for j in [0..8] do
    for i in [1..12] do
        Print(j, " ", i, " ", RankMat(FF(1,j,i)), "\n");
    od;
od;
\end{Verbatim}
\end{footnotesize}
\end{tcolorbox}
we obtain the dimension of $B^{M^1}_{n,m}$ for $n\in \llbracket 0,8\rrbracket$ and $m\in\llbracket 1,12\rrbracket$. 
The homology of the Koszul complex of $M^{1}$ for $n = 1$ and $m\in\llbracket 0,12 \rrbracket $ is given in Table \ref{table:dim Koszul cpx of M1}. 
The dimensions that are not listed in the following table are zeros. 
\begin{table}[H]
   \begin{center}
      \begin{tabular}{|c|ccccccccccccc|}
      \hline
       \diagbox[width=14mm,height=5mm]{$n$}{$m$}   & $0$ & $1$  & $2$ & $3$ & $4$ & $5$ & $6$ & $7$ & $8$ & $9$ & $10$ & $11$ & $12$
     \\
     \hline
     $1$  &  &  &  & $7$ & $18$ & $33$ & $42$ & $41$ & $30$ & $16$ & $6$ & $1$  & 
     \\
      \hline
   \end{tabular}
  \vspace{1mm}
   \caption{Dimension of $\operatorname{H}_{n,m}(M^1)$.}	
   \label{table:dim Koszul cpx of M1}
\end{center}
\end{table}
\vspace{-0.8cm}

\paragraph{\texorpdfstring{The $A$-module structure of the homology groups}{The A-module structure of the homology groups}}

\begin{lem}
\label{lemma:homology-m1}
We have the isomorphism
\begin{equation}
   \label{eq:HK M}
   \operatorname{H}_{n}(M^1) =
      0 
\end{equation}
of graded $A$-modules for $n\geqslant 2$, as well as
the non-split short exact sequence of graded $A$-modules of the form
\begin{equation}
   \label{eq:sp1}
   \begin{split}
   0 \to M^2(-4) \to \operatorname{H}_1(M^1) \to  \Bbbk(-6)\oplus \Bbbk(-8) \to 0. 
\end{split} 
\end{equation}
\end{lem}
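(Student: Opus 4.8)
The plan is to treat the two assertions separately, since the vanishing for $n \geqslant 2$ is already essentially available. Indeed, Proposition \ref{prop:dim H M1} gives $\dim \operatorname{H}_{n}(M^{1}) = 0$ for every integer $n \geqslant 2$, and a graded module of dimension zero is the zero module, so \eqref{eq:HK M} follows at once. Thus the entire content is the analysis of $\operatorname{H}_{1}(M^{1})$, and the natural approach is to imitate step by step the proof of Lemma \ref{lemma:homology-m0}, replacing the GAP computations used there by their analogues for the Koszul complex of $M^{1}$.

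First I would pin down the graded structure. By Corollary \ref{cor:dim D of Koszul cpx of H4} and Table \ref{table:dim Koszul cpx of M1}, the module $\operatorname{H}_{1}(M^{1})$ lives in total internal degrees $4$ through $12$ with dimension vector $(7,18,33,42,41,30,16,6,1)$, its lowest piece being the $7$-dimensional space $\operatorname{H}_{1,3}(M^{1})$ in total degree $4$. Using GAP I would fix a basis $h'_{1},\dots,h'_{7}$ of $\operatorname{H}_{1,3}(M^{1})$, compute the dimension vector of the submodule of $\operatorname{H}_{1}(M^{1})$ it generates, and check that it equals $(7,18,32,42,40,30,16,6,1)$, i.e.\ the dimension vector of $M^{2}(-4)$ by Fact \ref{fact:hilb-ser-mi}. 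Then, by the same kind of rank computation as in Lemma \ref{lemma:homology-m0}, I would verify that the seven elements $h'_{i}$ satisfy the $24$ quadratic relations \eqref{eq:rels M2} defining $M^{2}$ (with $h'_{i}$ in place of $h_{i}$), i.e.\ that the corresponding elements of $\operatorname{K}_{1,4}(M^{1})$ lie in $B^{M^{1}}_{1,4}$. Since $M^{2}$ is quadratic, this produces a morphism $M^{2}(-4) \to \operatorname{H}_{1}(M^{1})$ whose image is the submodule generated by the $h'_{i}$; as the two dimension vectors agree, it is injective, giving the left-hand arrow of \eqref{eq:sp1}.

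Second I would identify the cokernel $Q$. Comparing dimension vectors, $Q$ is concentrated in total degrees $6$ and $8$, each one-dimensional, so only the action $Q_{6}\cdot A_{2} \subseteq Q_{8}$ could prevent $Q$ from being $\Bbbk(-6)\oplus\Bbbk(-8)$ (the action $Q_{6}\cdot A_{1} \subseteq Q_{7} = 0$ vanishes automatically). I would therefore run one more GAP rank check showing that $\operatorname{H}_{1,5}(M^{1})\cdot A_{2}$ is contained in the image of $M^{2}(-4)$, so that the class of the degree-$6$ generator of $Q$ is annihilated by $A_{+}$. This gives $Q \cong \Bbbk(-6)\oplus\Bbbk(-8)$ and hence the short exact sequence \eqref{eq:sp1}.

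Finally, the non-splitting. Because $\Bbbk(-6)$ and $\Bbbk(-8)$ sit in distinct internal degrees and $A$ is generated in degree one, a splitting of \eqref{eq:sp1} amounts exactly to lifting each of the two trivial generators to an element of $\operatorname{H}_{1}(M^{1})$ annihilated by all the $x_{i,j}$; thus it suffices to obstruct one of them. Mirroring the last part of Lemma \ref{lemma:homology-m0}, I would consider a representative $m$ in $\operatorname{H}_{1,5}(M^{1})$ (resp.\ $\operatorname{H}_{1,7}(M^{1})$) mapping to the generator of $\Bbbk(-6)$ (resp.\ $\Bbbk(-8)$); the fiber over that generator is a coset of the degree-$6$ (resp.\ degree-$8$) piece of the submodule $M^{2}(-4)$. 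A splitting would force some member of this coset to be killed by every $x_{i,j}$, and I would show via a GAP rank computation that adjoining the products $m\cdot x_{i,j}$ strictly raises the rank modulo $B^{M^{1}}_{\bullet}$ for every choice of $m$ in its coset, giving the contradiction. I expect this last rank computation to be the main obstacle: choosing which of the two trivial summands to obstruct and organizing the coset computation is the only step that genuinely detects the extension class, rather than merely matching Hilbert series as in the earlier steps.
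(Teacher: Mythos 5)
Your proposal follows the paper's proof essentially step for step: the vanishing for $n\geqslant 2$ from Proposition \ref{prop:dim H M1}, the GAP computations identifying the submodule generated by $\operatorname{H}_{1,3}(M^1)$ with $M^2(-4)$ via its dimension vector and the relations \eqref{eq:rels M2}, the cokernel identification by dimensions and grading, and the non-splitting argument obstructing a lift of the degree-$6$ generator through a rank computation on products with $x_{1,2}$ modulo boundaries. The only deviation is your proposed extra GAP check that $\operatorname{H}_{1,5}(M^1)\cdot A_2$ lands in the image of $M^2(-4)$, which is superfluous: since $A$ is generated in degree one and the cokernel vanishes in total degree $7$, the action of $A_{+}$ on its degree-$6$ piece is automatically zero, which is exactly the paper's ``simple argument using dimensions and grading.''
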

\begin{proof}
The isomorphism in \eqref{eq:HK M} for $n \in \NN \setminus \{ 1 \}$ follows from Proposition \ref{prop:dim H M1}. 
It remains to show the existence of the non-split short exact sequence. 

The following GAP code shows that the dimension vector of the $A$-submodule of $\operatorname{H}_{1}(M^1)$ generated by basis elements $h'_i, i\in \llbracket 1,7 \rrbracket$ of $\operatorname{H}_{1,3}(M^{1})$ is $(7,18, 32, 42, 40, 30, 16, 6, 1)$.  
\parskip 1ex
\parindent 0in
\begin{tcolorbox}[breakable,colback=white,width=\textwidth ,center,arc=0mm,size=fbox]
\begin{footnotesize}
\begin{Verbatim}[samepage=false]
Imm:=Im(1,1,3);;
RankMat(Imm);
# 114
gene:=geneMH(1,1,3);;
Append(Imm,gene);
RankMat(Imm);
# 121
Uh:=UU(gene,3);; Vh:=VV(gene,3);; Wh:=WW(gene,3);;
for r in [4..11] do 
   hxr:=HXR(1,Uh,Vh,Wh,1,3,r-3);
   Im1r:=Im(1,1,r);
   Append(Im1r, hxr);
   Print(r, " ", RankMat(Im1r)-RankMat(Im(1,1,r)), "\n");
od;
# 4 18
# 5 32
# 6 42
# 7 40
# 8 30
# 9 16
# 10 6
# 11 1
\end{Verbatim}
\end{footnotesize}
\end{tcolorbox}
Moreover, it is direct to check that the elements $h'_i, i\in \llbracket 1,7 \rrbracket$ of $\operatorname{H}_{1}(M^1)$ satisfy the quadratic relations \eqref{eq:rels M2} defining $M^2$. 
Indeed, the following code shows that the dimension of the subspace generated by $B_{1,4}^{M^1}$ together with the elements of the form \eqref{eq:rels M2} with $h'_i$ instead of $h_i$ coincides with the dimension of $B_{1,4}^{M^1}$.
\parskip 1ex
\parindent 0in
\begin{tcolorbox}[breakable,colback=white,width=\textwidth ,center,arc=0mm,size=fbox]
\begin{footnotesize}
\begin{Verbatim}[samepage=false]
gene:=geneMH(1,1,3);;
Uh:=UU(gene,3);; Vh:=VV(gene,3);; Wh:=WW(gene,3);;
hx:=HXR(1,Uh,Vh,Wh,1,3,1);;
cc:=0*[1..24];;
cc[1]:=hx[1];; cc[2]:=hx[2];; cc[3]:=hx[3];; cc[4]:=hx[7];; cc[5]:=hx[10];; 
cc[6]:=hx[11];; cc[7]:=hx[14];; cc[8]:=hx[16];; cc[9]:=hx[18];; cc[10]:=hx[21];;
cc[11]:=hx[23];; cc[12]:=hx[24];; cc[13]:=hx[5]-hx[17]-hx[26];; 
cc[14]:=hx[8]-hx[20]+hx[29];; cc[15]:=hx[30]-hx[31];; cc[16]:=hx[4]-hx[22]+hx[33];; 
cc[17]:=hx[9]-hx[15]-hx[34];; cc[18]:=hx[25]+hx[36];; cc[19]:=hx[6]+hx[12]+hx[37];;
cc[20]:=hx[35]+hx[38];; cc[21]:=hx[28]+hx[39];; cc[22]:=hx[27]-hx[40];;
cc[23]:=hx[32]-hx[41];; cc[24]:=hx[13]+hx[19]+hx[42];; 
Imm:=Im(1,1,4);;
RankMat(Imm);
# 222
Append(Imm,cc);
RankMat(Imm);
# 222
\end{Verbatim}
\end{footnotesize}
\end{tcolorbox}
Hence, there is a surjective morphism from $M^2(-4)$ to the submodule of $\operatorname{H}_{1}(M^1)$ generated by $h'_i, i\in \llbracket 1,7 \rrbracket$, which is an isomorphism of graded $A$-modules since the dimension vector of $M^{2}$ is also $(7, 18, 32, 42, 40, 30, 16, 6, 1)$.
Namely, there is an injective morphism $M^{2}(-4)\to \operatorname{H}_{1}(M^1)$ of graded modules.
A simple argument using dimensions and grading together with Table \ref{table:dim Koszul cpx of M1} tells us that the cokernel of this injective morphism is exactly the graded $A$-module $\Bbbk(-6)\oplus \Bbbk(-8)$, as was to be shown. 

We finally show that \eqref{eq:sp1} is non-split. 
Let $\mathsf{c}_i$ for $i\in \llbracket 1,33\rrbracket$ be the basis elements of space $\operatorname{H}_{1,5}(M^1)$ and $p:\operatorname{H}_1(M^1)\to \Bbbk(-6)\oplus \Bbbk(-8)$ the surjection in \eqref{eq:sp1}, satisfying that $p(\mathsf{c}_i)=0$ for $i\in\llbracket 1,32\rrbracket$, 
and $p(\mathsf{c}_{33})=\mathsf{e}_1$, where $\mathsf{e}_1$ is the identity element of $\Bbbk(-6)$. 
The short exact sequence \eqref{eq:sp1} is split if and only if there exists a morphism $s:  \Bbbk(-6)\oplus \Bbbk(-8) \to \operatorname{H}_1(M^1)$ of graded $A$-modules such that the composition $ps$ is the identity map. 
Assume there is such a map $s$. 
Let $m=s(\mathsf{e}_1)\in \operatorname{H}_{1,5}(M^1)$. 
Then $m$ is of the form $\sum_{i=1}^{32}\lambda_i\mathsf{c}_i+\mathsf{c}_{33}$ for $\lambda_i\in \Bbbk$, and $m.x=s(\mathsf{e}_1).x=s(\mathsf{e}_1.x)=s(0)=0$ for all $x\in A_{+}$. 
In particular, $\sum_{i=1}^{32}\lambda_i\mathsf{c}_i x_{1,2}+\mathsf{c}_{33} x_{1,2}=0$ for some $\lambda_i\in \Bbbk$, \textit{i.e.} $\mathsf{c}_{33}x_{1,2}$ is a linear combination of $\mathsf{c}_ix_{1,2}$ for $i\in \llbracket 1,32\rrbracket$. 
Using GAP, we choose suitable representative elements $\mathsf{c}'_i\in D^{M^1}_{1,5}$ of $\mathsf{c}_i$ for $i\in \llbracket 1,33\rrbracket$, and get that the dimension of the space spanned by $\mathsf{c}'_ix_{1,2}$ for $i\in \llbracket 1,33\rrbracket$ and elements in $B^{M^1}_{1,6}$, is strictly larger than the dimension of the space spanned by $\mathsf{c}'_ix_{1,2}$ for $i\in \llbracket 1,32\rrbracket$ and elements in $B^{M^1}_{1,6}$, as the following code shows.  
\parskip 1ex
\parindent 0in
\begin{tcolorbox}[breakable,colback=white,width=\textwidth ,center,arc=0mm,size=fbox]
\begin{footnotesize}
\begin{Verbatim}[samepage=false]
gene:=geneMH(1,1,3);;
Uh:=UU(gene,3);; Vh:=VV(gene,3);; Wh:=WW(gene,3);;
hx:=HXR(1,Uh,Vh,Wh,1,3,2);;
hxx:=0*[1..33];;
hxx[1]:=hx[14];; hxx[2]:=hx[15];; hxx[3]:=hx[16];; hxx[4]:=hx[17];; hxx[5]:=hx[18];;
hxx[6]:=hx[19];; hxx[7]:=hx[25];; hxx[8]:=hx[26];; hxx[9]:=hx[27];; hxx[10]:=hx[29];;
hxx[11]:=hx[31];; hxx[12]:=hx[32];; hxx[13]:=hx[39];; hxx[14]:=hx[40];; 
hxx[15]:=hx[41];; hxx[16]:=hx[42];; hxx[17]:=hx[50];; hxx[18]:=hx[51];;
hxx[19]:=hx[58];; hxx[20]:=hx[59];; hxx[21]:=hx[60];; hxx[22]:=hx[61];;
hxx[23]:=hx[65];; hxx[24]:=hx[67];; hxx[25]:=hx[77];; hxx[26]:=hx[78];;
hxx[27]:=hx[79];; hxx[28]:=hx[80];; hxx[29]:=hx[88];; hxx[30]:=hx[89];;
hxx[31]:=hx[91];; hxx[32]:=hx[93];; hxx[33]:=Ker(1,1,5)[76];;
Imm:=Im(1,1,5);;
RankMat(Imm);
# 333
Append(Imm, hxx);
RankMat(Imm);
# 366 
gene:=hxx;;
Uh:=UU(gene,5);; Vh:=VV(gene,5);; Wh:=WW(gene,5);;
cc:=HXR(1,Uh,Vh,Wh,1,5,1);;
cc12:=0*[1..32];;
for i in [1..32] do
    cc12[i]:=cc[6*i-5];
od;
Imm:=Im(1,1,6);;
RankMat(Imm);
# 402
Append(Imm,cc12);
RankMat(Imm);
# 422
Append(Imm,[cc[6*33-5]]);
RankMat(Imm);
# 423 
\end{Verbatim}
\end{footnotesize}
\end{tcolorbox}
This shows that $\mathsf{c}_{33}x_{1,2}\neq 0$, and it is not a linear combination of $\mathsf{c}_ix_{1,2}$ for $i\in \llbracket 1,32\rrbracket$, which is a contradiction. 
So, \eqref{eq:sp1} is non-split. 
\end{proof}

\subsubsection{\texorpdfstring{Homology of the Koszul complex of $M^2$}{Homology of the Koszul complex of M2}}
\label{subsubsec:HnM2}

\paragraph{The dimensions of the homology groups}
\phantom{x}
\\
Recall the definition of the quadratic module $M^{2}$ given in Subsection \ref{subsec:Resolving datum fk4}. 
Let $\{ g_i \mid i \in\llbracket 1,7 \rrbracket\}$ be the dual basis to the basis $\{ h_i \mid i \in\llbracket 1,7 \rrbracket\}$ of the space of generators of $M^{2}$. 
Then, it is easy to see that the $A^!$-module $(M^2)^!$ is generated by $g_i,i\in\llbracket 1,7 \rrbracket$, subject to the following $18$ relations  
\begin{equation}
\label{eq:dual rels M2}
\begin{split}
   &
   g_1y_{3,4}-g_2y_{3,4}, 
   g_3y_{1,2}-g_4y_{1,2}, 
   g_5y_{1,2}-g_6y_{3,4}, 
   g_5y_{3,4}+g_6y_{1,2},
   g_1y_{3,4}-g_7y_{1,2}, 
   g_3y_{1,2}-g_7y_{3,4},
   \\
   &
   g_1y_{2,4}+g_3y_{2,4}, 
   g_2y_{1,3}+g_4y_{1,3}, 
   g_6y_{1,3}+g_7y_{2,4}, 
   g_6y_{2,4}-g_7y_{1,3},
   g_1y_{2,4}+g_5y_{1,3}, 
   g_2y_{1,3}-g_5y_{2,4}, 
   \\
   &
   g_1y_{1,4}+g_4y_{1,4}, 
   g_2y_{2,3}+g_3y_{2,3}, 
   g_5y_{2,3}+g_7y_{1,4}, 
   g_5y_{1,4}-g_7y_{2,3}, 
   g_1y_{1,4}-g_6y_{2,3}, 
   g_2y_{2,3}+g_6y_{1,4}.
\end{split}
\end{equation}

Using GAP we get the basis of $(M^2)^!_{-n}$ for $n\in \llbracket 0,3\rrbracket$ given in Appendix \ref{Appendix:basis m2 d}.
Let $\U^{!,M^2}_n$ be the subset of $(M^2)^!_{-n} $ consisting of the following $24$ elements
\begin{equation}
\label{eq: basis1 M2}
\begin{split}
   & 
   g_1y_{1,2}^{n-1}y_{1,3},
    g_1y_{1,2}^{n-1}y_{2,3},
    g_1y_{1,2}^{n-1}y_{1,4},
    g_1y_{1,2}^{n-1}y_{2,4}, 
    g_1y_{1,2}^{n-2}y_{1,3}^2, 
    g_1y_{1,2}^{n-2}y_{1,3}y_{1,4},
    g_1y_{1,2}^{n-2}y_{1,3}y_{2,4},
    \\
    & 
    g_1y_{1,2}^{n-2}y_{1,3}y_{3,4}, 
    g_1y_{1,2}^{n-2}y_{2,3}y_{1,4}, 
    g_1y_{1,2}^{n-2}y_{2,3}y_{2,4},
    g_1y_{1,2}^{n-2}y_{2,3}y_{3,4},
    g_1y_{1,2}^{n-2}y_{1,4}^2, 
    g_1y_{1,2}^{n-3}y_{1,3}^2y_{3,4},
    \\
    &
    g_1y_{1,2}^{n-3}y_{1,3}y_{1,4}^2, 
    g_1y_{1,2}^{n-3}y_{2,3}y_{1,4}^2,  
    g_2y_{1,2}^{n-1}y_{1,4}, 
    g_2y_{1,2}^{n-1}y_{2,4}, 
    g_2y_{1,2}^{n-2}y_{1,4}^2, 
    g_3y_{1,3}^{n-1}y_{1,4}, 
    g_3y_{1,3}^{n-1}y_{3,4}, 
    \\
    &
    g_3y_{1,3}^{n-2}y_{1,4}^2,
    g_4y_{2,3}^{n-1}y_{2,4}, 
    g_4y_{2,3}^{n-1}y_{3,4}, 
    g_4y_{2,3}^{n-2}y_{2,4}^2,
\end{split}
\end{equation}
and $\C^{!,M^2}_n$ the subset of $(M^2)^!_{-n} $  consisting of the following $3n+21$ elements
\begin{equation}
\label{eq: basis2 M2}
\begin{split}
   &
   g_1 y_{1,2}^n, 
   g_1 y_{1,2}^{n-r}y_{3,4}^{r},
   g_1 y_{3,4}^n, 
   g_2y_{1,2}^{n}, 
   g_3y_{1,2}y_{3,4}^{n-1}, 
   g_3y_{3,4}^{n}, 
   g_4y_{3,4}^{n}, 
   g_5y_{1,2}^{n}, 
   g_5y_{1,2}^{n-1}y_{3,4}, 
   \\
   &
   g_1 y_{1,3}^n,
   g_1 y_{1,3}^{n-r}y_{2,4}^{r}, 
   g_1 y_{2,4}^n ,
   g_2y_{1,3}y_{2,4}^{n-1}, 
   g_2y_{2,4}^{n}, 
   g_3y_{1,3}^{n}, 
   g_4y_{2,4}^{n}, 
   g_6y_{1,3}^{n}, 
   g_6y_{1,3}^{n-1}y_{2,4}, 
   \\
   &
   g_1 y_{2,3}^n, 
   g_1 y_{2,3}^{n-r}y_{1,4}^{r}, 
   g_1 y_{1,4}^n, 
   g_2y_{2,3}y_{1,4}^{n-1}, 
   g_2y_{1,4}^{n}, 
   g_3y_{1,4}^{n}, 
   g_4y_{2,3}^{n}, 
   g_5y_{2,3}^{n}, 
   g_5y_{2,3}^{n-1}y_{1,4}, 
\end{split}
\end{equation}
where $r\in \llbracket 1,n-1\rrbracket$ and $n\geqslant 4$.

\begin{lem}
\label{lem:basis of dual M2}
The set $T_n^{M^2}=\U^{!,M^2}_n\cup \C^{!,M^2}_n$ is a basis of $(M^2)^{!}_{-n}$ for $n\geqslant 4$. 
Moreover, $\dim (M^2)^{!}_{0} =7$, $\dim (M^2)^{!}_{-1} =24$, $\dim (M^2)^{!}_{-2} =43$ and $\dim (M^2)^{!}_{-n} =3n+45$ for $n\geqslant 3$. 
\end{lem}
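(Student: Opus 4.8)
The plan is to prove the two assertions in turn --- that for $n \geqslant 4$ the set $T_n^{M^2}$ spans $(M^2)^!_{-n}$ and that it is linearly independent --- and then to read off the dimension formula by counting, after disposing of the cases $n \in \llbracket 0,3 \rrbracket$ by the explicit GAP computation recorded in Appendix \ref{Appendix:basis m2 d}. Throughout I would exploit the close analogy with the argument already carried out for $(M^1)^!$: the subset $\C^{!,M^2}_n$ of \eqref{eq: basis2 M2} plays the role of the commuting-pair part $T_n$ of $(M^1)^!$, being built from the monomials $y_{1,2}^{a}y_{3,4}^{b}$, $y_{1,3}^{a}y_{2,4}^{b}$ and $y_{2,3}^{a}y_{1,4}^{b}$ attached to the generators $g_i$, while $\U^{!,M^2}_n$ of \eqref{eq: basis1 M2} collects the finitely many ``generic'' elements.

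To establish spanning I would argue by induction on $n$. Since $(M^2)^!$ is the right $A^!$-module generated in degree zero by $g_1, \dots, g_7$ subject to \eqref{eq:dual rels M2}, once $T_{n-1}^{M^2}$ is known to span $(M^2)^!_{-(n-1)}$ the space $(M^2)^!_{-n}$ is spanned by the right multiples $t\,y_{k,l}$ with $t \in T_{n-1}^{M^2}$ and $(k,l) \in \I$. I would then reduce each such product to a combination of the listed elements: writing $t = g_i\,u$ with $u$ a commuting-pair or generic monomial, the product $u\,y_{k,l}$ is put into normal form inside $A^!$ by the multiplication rules of Fact \ref{fact:products-quadratic-dual-fk4} (and those of Appendix \ref{sec:products FK4}), after which the module relations \eqref{eq:dual rels M2} rewrite the remaining degree-one interactions $g_i\,y_{s,t}$ among the seven generators, the reduction recursing on the leftmost letter of the surviving monomials. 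This is exactly the style of the vanishing computations displayed in the proof of the basis lemma for $(M^1)^!$, only with more generators.

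For linear independence I would imitate the decoupling argument built on Lemma \ref{lem:productss}. Suppose a linear combination of the elements of $T_n^{M^2}$ vanishes in $(M^2)^!_{-n}$; lifting it to $\Span\{g_1,\dots,g_7\} \otimes A^!$, it must equal an explicit element of the submodule generated by the relations \eqref{eq:dual rels M2}, written through the spanning family $\{g_i\,\omega\,u \mid u \in \B^!_{n-1}\}$ with $\omega$ ranging over the eighteen relations. Projecting onto each component $\Bbbk\{g_i\} \otimes A^!$ and invoking Lemma \ref{lem:productss} to confine the relevant monomials to the sets $Y_{1,2}$, $Y_{1,3}$ and $Y_{2,3}$ of \eqref{eq:Yij}, the question reduces to linear independence relations inside $A^!$ among the commuting-pair and generic monomials, which forces every coefficient to vanish. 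With spanning and independence in hand, counting gives $\#(T_n^{M^2}) = 24 + (3n+21) = 3n+45$, and comparison with the GAP basis of Appendix \ref{Appendix:basis m2 d} for $n \in \llbracket 0,3 \rrbracket$ settles the remaining dimensions; the basis claim is stated only from $n \geqslant 4$ precisely because the generic elements of \eqref{eq: basis1 M2}, such as $g_1 y_{1,2}^{n-3}y_{1,3}^{2}y_{3,4}$, first become genuinely distinct once $n - 3 \geqslant 1$.

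The main obstacle I anticipate is the spanning reduction. Unlike the two-generator module $(M^1)^!$, here the seven generators are tightly coupled by the eighteen relations \eqref{eq:dual rels M2}, so verifying that every product $t\,y_{k,l}$ collapses into exactly the twenty-four generic elements together with the commuting-pair elements demands a careful and lengthy case analysis through the product formulas of Fact \ref{fact:products-quadratic-dual-fk4}. I would organise this by treating the three families $(1,2)/(3,4)$, $(1,3)/(2,4)$ and $(2,3)/(1,4)$ symmetrically, and by isolating the interactions involving $g_5, g_6, g_7$ separately, since these generators survive only inside $\C^{!,M^2}_n$.
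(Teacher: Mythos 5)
Your overall architecture matches the paper's: GAP for the low degrees, explicit reduction identities (via Fact \ref{fact:products-quadratic-dual-fk4} and the module relations \eqref{eq:dual rels M2}) to get spanning, and a decoupling of the independence question through Lemma \ref{lem:productss} and the sets $Y_{1,2}, Y_{1,3}, Y_{2,3}$. The spanning part of your plan, organised as an induction on $n$, is sound and is essentially what the paper does by exhibiting the rewriting identities in closed form for general $n$.

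The genuine gap is in your last step of the linear-independence argument, where you assert that after projecting onto the components $\Bbbk\{g_i\}\otimes A^!$ and confining monomials to the $Y_{i,j}$, ``the question reduces to linear independence relations inside $A^!$ \dots which forces every coefficient to vanish.'' This is not automatic, and it is precisely where the real difficulty lies. The confinement only disposes of the commuting-pair coefficients $\alpha_i^{1,2},\alpha_i^{1,3},\alpha_i^{2,3}$ (and even that requires a careful coefficient chase). What remains is a coupled system of seven equations in $\Bbbk\{g_i\}\otimes A^!$ relating the $24$ generic coefficients $\alpha_i$ to the unknown relation-coefficients $\Delta^j$ (linear combinations of basis monomials of $A^!_{-(n-1)}$ outside the $Y_{i,j}$), in which the products $y_{k,l}\Delta^j$ must be expanded through \eqref{eq:product y13y12ny34r} and \eqref{eq:product y13y12n}; a priori a nontrivial combination of the eighteen relations could land in the span of $T_n^{M^2}$, and nothing formal rules this out. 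The paper resolves this by introducing parity-summed coefficient variables ($a_0^i, a_0'^i, a_1^i, a_2^i$, etc.), reducing the question to a finite linear system $E_n$ over $\Bbbk$ ($168$ equations in $600$ variables) whose \emph{form is invariant under} $n\mapsto n+2$, and then verifying with GAP at $n=8$ and $n=9$ that $E_n$ forces $\alpha_i=0$; together with a direct GAP check of the basis property for $n\in\llbracket 4,7\rrbracket$ (not only the degrees $\leqslant 3$ you invoke), this covers all $n\geqslant 4$. Without this reduction-plus-periodicity mechanism, or some substitute for it, your independence claim is unestablished for general $n$.
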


\begin{proof}  
We will prove that the set $T_n^{M^2}$ is a basis of $(M^2)^{!}_{-n}$ for $n\geqslant 4$. 
Firstly, using GAP, $T_n^{M^2}$ is a basis of $(M^2)^{!}_{-n}$ for $n\in \llbracket 4,7\rrbracket$. 
Note that the space $(M^2)^!_{-n}$ is spanned by $\{ g_iy\mid i\in \llbracket 1,7\rrbracket , y\in \B^!_{n} \}$ for $n\in \NN_0$. 
Moreover, the following identities are straightforward to verify and are left to the reader: 
\begin{align*}
g_1y_{1,2}^{n-3}y_{1,3}^2y_{1,4} & = g_1 y_{1,2}^{n-3}y_{1,4}y_{1,3}^2 
=-\chi_n g_1 y_{2,4}^{n-3}y_{1,2} y_{1,3}^2  +\chi_{n+1} g_1 y_{2,4}^{n-3} y_{1,4} y_{1,3}^2
\\
&
= \chi_n g_5 y_{1,3}y_{2,4}^{n-4}y_{1,2}y_{1,3}^2 -\chi_{n+1}g_5 y_{1,3}y_{2,4}^{n-4}y_{1,4}y_{1,3}^2 
=- g_5 y_{1,2}^3y_{2,3}y_{2,4}^{n-4} 
\\
&
=- g_6 y_{3,4} y_{1,2}^2 y_{2,3} y_{2,4}^{n-4} 
= g_6 y_{2,3}y_{1,2}^2y_{2,4}^{n-3} 
= g_1 y_{1,4} y_{1,2}^2y_{2,4}^{n-3} 
= g_1 y_{1,2}^{n-1}y_{1,4}, 
\\
g_1y_{1,2}^{n-3}y_{1,3}^2y_{2,4} & = g_1 y_{1,2}^{n-3} y_{2,4} y_{1,3}^2 
= -\chi_n g_1 y_{1,4}^{n-3}y_{1,2} y_{1,3}^2 +\chi_{n+1} g_1 y_{1,4}^{n-3}y_{2,4} y_{1,3}^2
\\
& 
= -\chi_n g_6 y_{2,3}y_{1,4}^{n-4}y_{1,2} y_{1,3}^2 +\chi_{n+1} g_6 y_{2,3}y_{1,4}^{n-4}y_{2,4} y_{1,3}^2 
\\
&
=\chi_n g_6 y_{1,2}^{3} y_{1,3}y_{1,4}^{n-4}- \chi_{n+1} g_6 y_{1,2}^{n-2}y_{1,3}y_{1,4} 
\\
&
= -\chi_n g_5 y_{3,4}y_{1,2}^{2} y_{1,3}y_{1,4}^{n-4}+ \chi_{n+1} g_5 y_{3,4}y_{1,2}^{n-3}y_{1,3}y_{1,4} 
\\
&
=\chi_n g_5 y_{1,3}y_{1,2}^{2}y_{1,4}^{n-3}- \chi_{n+1} g_5 y_{1,3}y_{1,4}y_{1,2}^{n-3}y_{1,4}
\\
&
=  -\chi_n g_1 y_{2,4}y_{1,2}^{2}y_{1,4}^{n-3}+ \chi_{n+1} g_1 y_{2,4}y_{1,4}y_{1,2}^{n-3}y_{1,4}
=g_1 y_{1,2}^{n-1}y_{2,4}, 
\\
g_1 y_{1,3}^{n-1}y_{1,4} & = -\chi_n g_1 y_{3,4}^{n-1}y_{1,3}+ \chi_{n+1} g_1 y_{3,4}^{n-1}y_{1,4} 
= -\chi_n g_7 y_{1,2}y_{3,4}^{n-2}y_{1,3}+ \chi_{n+1} g_7 y_{1,2}y_{3,4}^{n-2}y_{1,4} 
\\
&
=- \chi_n g_7 y_{1,4}^2 y_{1,2}^{n-3}y_{1,3} +\chi_{n+1}g_7 y_{2,3}y_{1,2}y_{3,4}y_{1,2}^{n-3}
\\
&
=\chi_n g_5 y_{2,3}y_{1,4} y_{1,2}^{n-3} y_{1,3} + \chi_{n+1}g_5 y_{1,4}y_{1,2}y_{3,4}y_{1,2}^{n-3}
\\
&
=-\chi_n g_5 y_{1,3}^2 y_{1,2}y_{2,4}y_{1,2}^{n-4}-\chi_{n+1}g_5 y_{1,3}y_{1,4}y_{1,2}^{n-2}
\\
&
=\chi_n g_1 y_{2,4}y_{1,3}y_{1,2}y_{2,4}y_{1,2}^{n-4} + \chi_{n+1}g_1 y_{2,4}y_{1,4}y_{1,2}^{n-2}
\\
&
=\chi_n g_1 y_{1,2}^{n-2}y_{1,3}y_{1,4}+\chi_{n+1}g_1 y_{1,2}^{n-1}y_{1,4}, 
\\
g_1 y_{1,3}^{n-1}y_{3,4} & = -\chi_n g_1 y_{3,4}y_{1,4}y_{1,3}^{n-2}+\chi_{n+1}g_1 y_{3,4}y_{1,3}^{n-1} 
= -\chi_n g_7 y_{1,2}y_{1,4}y_{1,3}^{n-2}+\chi_{n+1}g_7 y_{1,2}y_{1,3}^{n-1} 
\\
&
= \chi_n g_7 y_{2,4}y_{1,2}y_{1,3}^{n-2}+\chi_{n+1}g_7 y_{1,3}^2 y_{1,2}^{n-2} 
\\
&
= -\chi_n g_6 y_{1,3}y_{1,2}y_{1,3}^{n-2}+\chi_{n+1}g_6 y_{2,4}y_{1,3} y_{1,2}^{n-2} 
\\
&
=-\chi_n g_6 y_{2,3}^{3}y_{1,3}^{n-3}+\chi_{n+1}g_6 y_{2,3}y_{1,3}y_{1,4}y_{1,2}^{n-3}
\\
&
=-\chi_n g_1 y_{1,4}y_{2,3}^{2}y_{1,3}^{n-3}+\chi_{n+1}g_1 y_{1,4}y_{1,3}y_{1,4}y_{1,2}^{n-3}
\\
&
= \chi_n g_1 y_{1,2}^{n-2}y_{1,3}y_{3,4}+\chi_{n+1}g_1 y_{1,2}^{n-3}y_{1,3}^2y_{3,4}, 
\\
g_1 y_{2,3}^{n-1}y_{2,4} & = g_1 y_{2,4}y_{3,4}^{n-1} 
= -g_5 y_{1,3}y_{3,4}^{n-1}
=\chi_n g_5 y_{1,4}y_{1,3}^{n-1}-\chi_{n+1}g_5 y_{1,4}^2 y_{1,3}^{n-2} 
\\
&
=\chi_n g_7 y_{2,3}y_{1,3}^{n-1}-\chi_{n+1}g_7 y_{2,3}y_{1,4}y_{1,3}^{n-2} 
=-\chi_n g_7 y_{1,2}^{n-1}y_{2,3}-\chi_{n+1}g_7 y_{1,2}^{n-2}y_{2,3}y_{3,4} 
\\
& 
=-\chi_n g_1 y_{3,4}y_{1,2}^{n-2}y_{2,3}-\chi_{n+1}g_1 y_{3,4}y_{1,2}^{n-3}y_{2,3}y_{3,4}
\\
&
=\chi_n g_1 y_{1,2}^{n-2}y_{2,3}y_{2,4}+\chi_{n+1}g_1 y_{1,2}^{n-3}y_{1,3}^2y_{2,4}
=\chi_n g_1 y_{1,2}^{n-2}y_{2,3}y_{2,4}+\chi_{n+1}g_1 y_{1,2}^{n-1}y_{2,4}, 
\\
g_1 y_{2,3}^{n-1}y_{3,4} & = (-1)^{n+1}g_1 y_{3,4}y_{2,4}^{n-1} 
=(-1)^{n+1} g_7y_{1,2}y_{2,4}^{n-1} 
\\
&
=-\chi_n g_7 y_{2,4}^2 y_{1,2}y_{2,4}^{n-3}+\chi_{n+1}g_7 y_{2,4}^{n-1}y_{1,2}
\\
&
=\chi_n g_6 y_{1,3}y_{2,4}y_{1,2}y_{2,4}^{n-3}-\chi_{n+1}g_6 y_{1,3}y_{2,4}^{n-2}y_{1,2}
\\
&
=-\chi_n g_6 y_{2,3}y_{1,3}y_{1,4}y_{2,4}^{n-3}+\chi_{n+1}g_6 y_{2,3}y_{1,3}^{n-2}y_{1,4}
\\
&
=-\chi_n g_1 y_{1,4}y_{1,3}y_{1,4}y_{2,4}^{n-3}+\chi_{n+1}g_1 y_{1,4}y_{1,3}^{n-2}y_{1,4}
\\
&
=\chi_n g_1 y_{1,2}^{n-2}y_{2,3}y_{3,4}+\chi_{n+1}g_1 y_{1,3}^{n-1}y_{3,4}
=\chi_n g_1 y_{1,2}^{n-2}y_{2,3}y_{3,4}+\chi_{n+1}g_1 y_{1,2}^{n-3}y_{1,3}^2y_{3,4}, 
\\
g_2 y_{1,3}^2 & = g_5 y_{2,4} y_{1,3} =-g_5 y_{1,3}y_{2,4}=g_1 y_{2,4}^2, 
\hskip 1mm 
g_2 y_{2,3}^2  =-g_6 y_{1,4}y_{2,3}=g_6 y_{2,3}y_{1,4}=g_1 y_{1,4}^2, 
\\
g_2 y_{1,2}^{n-1}y_{1,3} & = -\chi_n g_2 y_{2,3}^3 y_{1,2}^{n-3}+\chi_{n+1}g_2 y_{2,3}^2 y_{1,3}y_{1,2}^{n-3} 
\\
&
=-\chi_n g_1 y_{1,4}^2 y_{2,3}y_{1,2}^{n-3}+\chi_{n+1}g_1 y_{1,4}^2 y_{1,3}y_{1,2}^{n-3}
=g_1 y_{1,2}^{n-3}y_{1,3}y_{1,4}^2, 
\\
g_2 y_{1,2}^{n-1}y_{2,3} & = -\chi_n g_2 y_{1,3}^3 y_{1,2}^{n-3}+\chi_{n+1}g_2 y_{1,3}^2 y_{2,3}y_{1,2}^{n-3} 
\\
&
=-\chi_n g_1 y_{2,4}^2y_{1,3}y_{1,2}^{n-3}+\chi_{n+1}g_1 y_{2,4}^2 y_{2,3}y_{1,2}^{n-3} 
=g_1 y_{1,2}^{n-3}y_{2,3}y_{1,4}^2, 
\\
g_3 y_{1,2}^2 & = g_7 y_{3,4}y_{1,2}=-g_7 y_{1,2}y_{3,4}=-g_1 y_{3,4}^2, 
\\
g_5 y_{3,4}^2 & = -g_6 y_{1,2}y_{3,4}=g_6 y_{3,4}y_{1,2}=g_5 y_{1,2}^2 , 
\hskip 1mm 
g_5 y_{1,4}^2  = g_7 y_{2,3}y_{1,4}=-g_7 y_{1,4}y_{2,3}=g_5 y_{2,3}^2, 
\\
g_5 y_{1,2}^{n-1}y_{1,3} & = \chi_n g_5 y_{1,3}y_{2,3}y_{1,2}^{n-2}+\chi_{n+1}g_5 y_{1,3}y_{1,2}^{n-1} 
=-\chi_n g_1 y_{2,4}y_{2,3}y_{1,2}^{n-2}-\chi_{n+1}g_1 y_{2,4}y_{1,2}^{n-1}
\\
&
=\chi_n g_1y_{1,2}^{n-2}y_{2,3}y_{3,4}-\chi_{n+1}g_1 y_{1,2}^{n-1}y_{2,4}, 
\\
g_5 y_{1,2}^{n-1}y_{2,3} & = -\chi_n g_5 y_{1,3}y_{1,2}^{n-1} +\chi_{n+1}g_5 y_{1,3}^2 y_{2,3}y_{1,2}^{n-3} 
\\
&
=\chi_n g_1 y_{2,4}y_{1,2}^{n-1}-\chi_{n+1}g_1 y_{2,4}y_{1,3}y_{2,3}y_{1,2}^{n-3}
\\
& 
=-\chi_ng_1y_{1,2}^{n-1}y_{1,4}-\chi_{n+1}g_1 y_{1,2}^{n-2}y_{1,3}y_{3,4}, 
\\
g_5 y_{1,2}^{n-1}y_{1,4} & = \chi_n g_5 y_{3,4}^2 y_{1,2}^{n-3}y_{1,4}
=\chi_n g_5 y_{1,3}^2 y_{1,4}y_{2,4}y_{1,2}^{n-4}+\chi_{n+1}g_5 y_{1,3}^2y_{1,4}y_{1,2}^{n-3}
\\
&
=-\chi_n g_1y_{2,4} y_{1,3} y_{1,4}y_{2,4}y_{1,2}^{n-4}-\chi_{n+1}g_1y_{2,4} y_{1,3}y_{1,4}y_{1,2}^{n-3}
\\
&
=\chi_n g_1 y_{1,2}^{n-3}y_{2,3}y_{1,4}^2+\chi_{n+1}g_1 y_{1,2}^{n-2}y_{2,3}y_{2,4}, 
\\
g_5 y_{1,2}^{n-1}y_{2,4} & = \chi_n g_5 y_{3,4}^2 y_{1,2}^{n-3}y_{2,4} 
=-\chi_n g_5 y_{1,3}^2 y_{1,4}y_{1,2}^{n-3}-\chi_{n+1}g_5 y_{1,3}^2 y_{1,4}y_{2,4}y_{1,2}^{n-4}
\\
& 
=\chi_n g_1y_{2,4} y_{1,3}y_{1,4}y_{1,2}^{n-3}+\chi_{n+1}g_1y_{2,4} y_{1,3}y_{1,4}y_{2,4}y_{1,2}^{n-4}
\\
&
=-\chi_n g_1 y_{1,2}^{n-2}y_{1,3}y_{1,4}+\chi_{n+1}g_1 y_{1,2}^{n-3}y_{1,3}y_{1,4}^2, 
\\
g_5 y_{2,3}^{n-1}y_{2,4} & = g_5 y_{1,4}^2 y_{2,3}^{n-3}y_{2,4} 
=\chi_n g_5 y_{1,3}^2 y_{2,3}y_{2,4}y_{2,3}^{n-4}-\chi_{n+1}g_5 y_{1,3}^2 y_{2,3}y_{3,4}y_{2,3}^{n-4}
\\
& 
=-\chi_n g_1y_{2,4} y_{1,3} y_{2,3}y_{2,4}y_{2,3}^{n-4}+\chi_{n+1}g_1y_{2,4} y_{1,3} y_{2,3}y_{3,4}y_{2,3}^{n-4}
\\
&
=\chi_n g_1 y_{1,2}^{n-2}y_{1,3}y_{3,4}+\chi_{n+1}g_1 y_{1,2}^{n-3}y_{1,3}y_{1,4}^2, 
\\
g_5 y_{2,3}^{n-1}y_{3,4} & = g_5 y_{1,4}^2 y_{2,3}^{n-3}y_{3,4} 
= \chi_n g_5 y_{1,3}^2 y_{2,3}y_{3,4}y_{2,3}^{n-4}+\chi_{n+1}g_5 y_{1,3}^2 y_{3,4}y_{2,3}^{n-3}
\\
&
=-\chi_n g_1y_{2,4} y_{1,3} y_{2,3}y_{3,4}y_{2,3}^{n-4}-\chi_{n+1}g_1y_{2,4} y_{1,3} y_{3,4}y_{2,3}^{n-3}
\\
&
=-\chi_n g_1 y_{1,2}^{n-3}y_{1,3}y_{1,4}^2+\chi_{n+1}g_1 y_{1,2}^{n-2}y_{1,3}y_{2,4},
\\
g_6 y_{2,4}^2 & = g_7 y_{1,2}y_{2,4}=-g_7 y_{2,4}y_{1,3}=g_6 y_{1,3}^2, 
\\
g_6 y_{1,3}^{n-1}y_{1,4} & = -\chi_n g_6 y_{3,4}y_{1,3}^{n-1}+\chi_{n+1}g_6 y_{1,4}y_{1,3}^{n-1} 
=-\chi_n g_5 y_{1,2}y_{1,3}^{n-1}-\chi_{n+1}g_2 y_{2,3}y_{1,3}^{n-1}
\\
&
=-\chi_n g_1 y_{1,2}^{n-2}y_{2,3}y_{3,4}-\chi_{n+1}g_1 y_{1,2}^{n-3}y_{2,3}y_{1,4}^2, 
\\
g_6 y_{1,3}^{n-1}y_{3,4} & = -\chi_n g_6 y_{1,4}y_{1,3}^{n-1}+\chi_{n+1}g_6 y_{3,4}y_{1,3}^{n-1} 
= \chi_n g_2 y_{2,3}y_{1,3}^{n-1}+\chi_{n+1}g_5 y_{1,2}y_{1,3}^{n-1}
\\
&
=-\chi_n g_1 y_{1,2}^{n-3}y_{2,3}y_{1,4}^2+\chi_{n+1}g_1 y_{1,2}^{n-2}y_{2,3}y_{1,4},
\end{align*}
for $n\geqslant 5$. 
Using the previous identities together with 
\eqref{eq:dual rels M2} we see that the space $(M^2)^!_{-n}$ is spanned by $T^{M^2}_n$ for $n\geqslant  8$.

We will next prove that the elements in $T_n^{M^2}$ for $n\geqslant 8$ are linearly independent. 
Suppose that we have the identity
\begin{equation}
\label{eq:lin-ind-tnm2}
\begin{split}
\sum_{i\in\llbracket 1,24 \rrbracket} \alpha_i t_i 
+ \sum_{i\in \llbracket 1,n+7 \rrbracket} \alpha_i^{1,2} t_i^{1,2} 
+ \sum_{i\in \llbracket 1,n+7 \rrbracket} \alpha_i^{1,3} t_i^{1,3} 
+ \sum_{i\in \llbracket 1,n+7 \rrbracket} \alpha_i^{2,3} t_i^{2,3} 
= \underset{\text{\begin{tiny}$\begin{matrix} i\in \llbracket 1, 18 \rrbracket, \\ u\in \B^!_{n-1} \end{matrix}$\end{tiny}}}{\sum} \lambda^i_u r_i u, 
\end{split}
\end{equation}
in $\Bbbk \{ g_i|i\in\llbracket 1,7\rrbracket\} \otimes A^!$, where $t_i$ is the $i$-th element in \eqref{eq: basis1 M2} for $i\in \llbracket 1,24\rrbracket$, 
$t^{1,2}_i$ is the $i$-th element in the first line of \eqref{eq: basis2 M2}, 
$t^{1,3}_i$ is the $i$-th element in the second line of \eqref{eq: basis2 M2},
and 
$t^{2,3}_i$ is the $i$-th element in the last line of \eqref{eq: basis2 M2} for $i\in\llbracket 1, n+7\rrbracket$, 
$r_i$ is the $i$-th element in \eqref{eq:dual rels M2}, 
and $\alpha_i,\alpha_i^{1,2}, \alpha_i^{1,3}, \alpha_i^{2,3}, \lambda^i_{u} \in \Bbbk$. 
We need to prove that the coefficients $\alpha_{i}$ vanish for all $i \in \llbracket 1, 24 \rrbracket$, as well as that $\alpha_{i}^{1,2}$, 
$\alpha_{i}^{1,3}$ and $\alpha_{i}^{2,3}$ vanish for all $i \in \llbracket 1, n+7 \rrbracket$ . 
By Lemma \ref{lem:productss}, \eqref{eq:lin-ind-tnm2} implies that 
\begin{equation}
\label{eq:12}
\begin{split}
\sum_{i\in \llbracket 1,n+7 \rrbracket} \alpha_i^{1,2} t_i^{1,2} 
= 
\underset{\text{\begin{tiny}$\begin{matrix} i\in \llbracket 1, 6 \rrbracket, \\ u\in \B^!_{n-1}\cap Y_{1,2} \end{matrix}$\end{tiny}}}{\sum}
\lambda^i_u r_i u, 
\end{split}
\end{equation}
\begin{equation}
\label{eq:13}
\begin{split}
\sum_{i\in \llbracket 1,n+7 \rrbracket} \alpha_i^{1,3} t_i^{1,3} 
= 
\underset{\text{\begin{tiny}$\begin{matrix} i\in \llbracket 7, 12 \rrbracket, \\ u\in \B^!_{n-1}\cap Y_{1,3} \end{matrix}$\end{tiny}}}{\sum} \lambda^i_u r_i u,
\end{split}
\end{equation}
\begin{equation}
\label{eq:23}
\begin{split}
\sum_{i\in \llbracket 1,n+7 \rrbracket} \alpha_i^{2,3} t_i^{2,3} 
= 
\underset{\text{\begin{tiny}$\begin{matrix} i\in \llbracket 13, 18 \rrbracket, \\ u\in \B^!_{n-1}\cap Y_{2,3} \end{matrix}$\end{tiny}}}{\sum}\lambda^i_u r_i u,
\end{split}
\end{equation}
and 
\begin{equation}
\label{eq:4}
\begin{split}
\sum_{i\in\llbracket 1,24 \rrbracket} \alpha_i t_i  
=
\underset{\text{\begin{tiny}$\begin{matrix} i\in \llbracket 1, 6 \rrbracket, \\ u\in \B^!_{n-1}\setminus Y_{1,2} \end{matrix}$\end{tiny}}}{\sum} \lambda^i_u r_i u +\underset{\text{\begin{tiny}$\begin{matrix} i\in \llbracket 7, 12 \rrbracket, \\ u\in \B^!_{n-1}\setminus Y_{1,3} \end{matrix}$\end{tiny}}}{\sum} \lambda^i_u r_i u +\underset{\text{\begin{tiny}$\begin{matrix} i\in \llbracket 13, 18 \rrbracket, \\ u\in \B^!_{n-1}\setminus Y_{2,3} \end{matrix}$\end{tiny}}}{\sum}\lambda^i_u r_i u,
\end{split}
\end{equation}
in $\Bbbk \{ g_i|i\in\llbracket 1,7\rrbracket\} \otimes A^!$. 
By \eqref{eq:12}, we get $\alpha_i^{1,2}=0$ for $i\in \llbracket 1,n+7 \rrbracket$. 
Indeed, since there is no $g_2 y_{1,2}^n$, $g_3 y_{3,4}^n$, $g_4 y_{3,4}^n$ on the right side of \eqref{eq:12}, we get that $\alpha^{1,2}_{n+2}=\alpha^{1,2}_{n+4}=\alpha^{1,2}_{n+5}=0$. 
Furthermore, as there is no $g_4y_{1,2}^{n-r}y_{3,4}^r$ for $n-r\in \NN$ on the left side of \eqref{eq:12}, we see that $\lambda^2_u=0$ for $u\in \B^!_{n-1}\cap Y_{1,2}$. 
Moreover, since there is no $g_7 y_{3,4}^n$ on the left side of \eqref{eq:12}, we obtain that $\lambda^6_{y_{3,4}^{n-1}}=0$.
This implies that $\alpha^{1,2}_{n+3}=0$ and $\lambda^6_u=0$ for $u\in \B^!_{n-1}\cap Y_{1,2}$. 
Finally, since there is no $g_2u$ and $g_7u$ for $u\in\B^!_n$ on the left side of \eqref{eq:12}, we have that $\lambda^1_u=\lambda^5_u=0$ for $u\in \B^!_{n-1}\cap Y_{1,2}$. 
In consequence, we get $\alpha^{1,2}_i=0$ for $i\in \llbracket 1, n+1\rrbracket$. 
Now, we have that 
\[   
\alpha^{1,2}_{n+6}g_5y_{1,2}^n+\alpha^{1,2}_{n+7}g_5 y_{1,2}^{n-1}y_{3,4}=\sum_{r\in \llbracket 0,n-1\rrbracket}\mathcalboondox{a}_r(g_5y_{1,2}-g_6y_{3,4})u+\sum_{r\in \llbracket 0,n-1\rrbracket}\mathcalboondox{b}_r(g_5y_{3,4}+g_6y_{1,2})u
\] 
in $\Bbbk \{ g_i|i\in\llbracket 1,7\rrbracket\} \otimes A^!$, 
where $\mathcalboondox{a}_r=\lambda^3_{y_{1,2}^{n-1-r}y_{3,4}^r}$ and $\mathcalboondox{b}_r=\lambda^4_{y_{1,2}^{n-1-r}y_{3,4}^r}$ for $r\in\llbracket 0, n-1\rrbracket$. 
Hence, 
\begin{equation}
\begin{split}
&\alpha^{1,2}_{n+6}g_5y_{1,2}^n+\alpha^{1,2}_{n+7}g_5 y_{1,2}^{n-1}y_{3,4} 
\\
&
=   
\mathcalboondox{a}_0 g_5 y_{1,2}^n + \sum_{r\in \llbracket 1, n-1\rrbracket} \bigg(\mathcalboondox{a}_r+\big((-1)^{r}\chi_n+(-1)^{r-1}\chi_{n+1}\big)\mathcalboondox{b}_{r-1} \bigg) g_5y_{1,2}^{n-r}y_{3,4}^r+\mathcalboondox{b}_{n-1} g_5 y_{3,4}^n 
\\
& \phantom{= \;}
+ \mathcalboondox{b}_0 g_6 y_{1,2}^n + \sum_{r\in \llbracket 1, n-1\rrbracket} \bigg(\mathcalboondox{b}_r+\big( (-1)^{r-1}\chi_n +(-1)^{r}\chi_{n+1} \big)\mathcalboondox{a}_{r-1} \bigg) g_6y_{1,2}^{n-r}y_{3,4}^r-\mathcalboondox{a}_{n-1} g_6 y_{3,4}^n 
\end{split}
\end{equation}
in $\Bbbk \{ g_i|i\in\llbracket 1,7\rrbracket\} \otimes A^!$. 
Comparing the coefficients, it is easy to see that $\alpha^{1,2}_{n+6}=\alpha^{1,2}_{n+7}=0$ and $\mathcalboondox{a}_r=\mathcalboondox{b}_r=0$ for $r\in \llbracket 0,n-1\rrbracket$. 
Similarly, \eqref{eq:13} implies $\alpha_i^{1,3}=0$ for $i\in \llbracket 1,n+7 \rrbracket$, and \eqref{eq:23} implies $\alpha_i^{2,3}=0$ for $i\in \llbracket 1,n+7 \rrbracket$. 
By regarding the coefficients of $g_i$ in \eqref{eq:4} for $i\in \llbracket 1,7 \rrbracket$, we get that \eqref{eq:4} is tantamount to
\begin{equation}
   \label{eq:5}
\begin{split}
g_1\big(y_{3,4}\Delta^1+y_{3,4}\Delta^5+y_{2,4}\Delta^7+y_{2,4}\Delta^{11}+y_{1,4}\Delta^{13}+y_{1,4}\Delta^{17} \big)& =\sum_{i\in\llbracket 1,15\rrbracket}\alpha_it_i , 
\\
g_2\big(-y_{3,4}\Delta^1+y_{1,3}\Delta^{8}+y_{1,3}\Delta^{12}+y_{2,3}\Delta^{14}+y_{2,3}\Delta^{18} \big) & = \sum_{i\in\llbracket 16,18\rrbracket}\alpha_it_i , 
\\
g_3\big( y_{1,2}\Delta^2+y_{1,2}\Delta^{6}+y_{2,4}\Delta^{7}+y_{2,3}\Delta^{14}  \big) & = \sum_{i\in\llbracket 19,21\rrbracket}\alpha_it_i , 
\\
g_4\big( -y_{1,2}\Delta^{2}+y_{1,3}\Delta^{8}+y_{1,4}\Delta^{13} \big) & = \sum_{i\in\llbracket 22,24\rrbracket}\alpha_it_i , 
\\
g_5\big( y_{1,2}\Delta^{3}+y_{3,4}\Delta^{4}+y_{1,3}\Delta^{11}-y_{2,4}\Delta^{12}+y_{2,3}\Delta^{15}+y_{1,4}\Delta^{16} \big) & = 0, 
\\
g_6\big( -y_{3,4}\Delta^{3}+y_{1,2}\Delta^{4}+y_{1,3}\Delta^{9}+y_{2,4}\Delta^{10}-y_{2,3}\Delta^{11}+y_{1,4}\Delta^{12} \big) & = 0, 
\\
g_7\big( -y_{1,2}\Delta^{5}-y_{3,4}\Delta^{6}+y_{2,4}\Delta^{9}-y_{1,3}\Delta^{10}+y_{1,4}\Delta^{15}-y_{2,3}\Delta^{16} \big) & = 0 , 
\end{split}
\end{equation}
in $\Bbbk \{g_i \}\otimes A^!$ for $i\in \llbracket 1,7\rrbracket$ respectively, 
where $\Delta^j=\sum_{u\in \B^!_{n-1}\setminus Y_{1,2}}\lambda^j_u u $ for $j\in \llbracket 1,6\rrbracket $, 
$\Delta^j=\sum_{u\in \B^!_{n-1}\setminus Y_{1,3}}\lambda^j_u u $ for $j\in \llbracket 7,12\rrbracket $,  
$\Delta^j=\sum_{u\in \B^!_{n-1}\setminus Y_{2,3}}\lambda^j_u u $ for $j\in \llbracket 13,18\rrbracket $ 
and $Y_{i,j}$ is defined in \eqref{eq:Yij}.
In consequence, we see that the elements in $T_n^{M^2}$ are linearly independent if and only if equation \eqref{eq:5} implies that $\alpha_i=0$ for $i\in\llbracket 1, 24 \rrbracket$.

Let 
\begin{align*}
a^i_0 & =\lambda^i_{y_{1,2}^{n-1}}, 
a'^i_{0}=\lambda^i_{y_{3,4}^{n-1}},
a^{i}_1=\underset{\text{\begin{tiny}$\begin{matrix} r\in \llbracket 1, n-2 \rrbracket, \\ r \; odd \end{matrix}$\end{tiny}}}{\sum}
\lambda^i_{y_{1,2}^{n-1-r}y_{3,4}^{r}}, 
a^{i}_2=\underset{\text{\begin{tiny}$\begin{matrix} r\in \llbracket 1, n-2 \rrbracket, \\ r \; even \end{matrix}$\end{tiny}}}{\sum}
\lambda^i_{y_{1,2}^{n-1-r}y_{3,4}^{r}} ,
\\
b^j_0 & =\lambda^j_{y_{1,3}^{n-1}}, 
b'^j_{0}=\lambda^j_{y_{2,4}^{n-1}},
b^{j}_1=\underset{\text{\begin{tiny}$\begin{matrix} r\in \llbracket 1, n-2 \rrbracket, \\ r \; odd \end{matrix}$\end{tiny}}}{\sum}
\lambda^j_{y_{1,3}^{n-1-r}y_{2,4}^{r}}, 
b^{j}_2=\underset{\text{\begin{tiny}$\begin{matrix} r\in \llbracket 1, n-2 \rrbracket, \\ r \; even \end{matrix}$\end{tiny}}}{\sum}
\lambda^j_{y_{1,3}^{n-1-r}y_{2,4}^{r}} ,
\\
c^k_0 & =\lambda^k_{y_{2,3}^{n-1}}, 
c'^k_{0}=\lambda^k_{y_{1,4}^{n-1}},
c^{k}_1=\underset{\text{\begin{tiny}$\begin{matrix} r\in \llbracket 1, n-2 \rrbracket, \\ r \; odd \end{matrix}$\end{tiny}}}{\sum}
\lambda^k_{y_{2,3}^{n-1-r}y_{1,4}^{r}}, 
c^{k}_2=\underset{\text{\begin{tiny}$\begin{matrix} r\in \llbracket 1, n-2 \rrbracket, \\ r \; even \end{matrix}$\end{tiny}}}{\sum}
\lambda^k_{y_{2,3}^{n-1-r}y_{1,4}^{r}},
\end{align*}
for $i\in \llbracket 7,18 \rrbracket $, 
$j\in \llbracket 1,6 \rrbracket \cup \llbracket 13,18\rrbracket$
and 
$k\in \llbracket 1,12 \rrbracket $.
From \eqref{eq:5} as well as the products \eqref{eq:product y13y12ny34r} and \eqref{eq:product y13y12n} in $A^!$, we get a system $E_n$ of linear equations in the field $\Bbbk$, which contains $24\times 7 = 168$ linear equations and $24+24\times 18+ 4\times 12 \times 3 = 600$ variables $\alpha_i$, $\lambda^j_u$ for $u\in \U^!_{n-1}$, $a^i_0$, $a'^i_0$, $a^i_1$, $a^i_2$, $b^j_0$, $b'^j_0$, $b^j_1$, $b^j_2$, $c^k_0$, $c'^k_0$, $c^k_1$, $c^k_2$. 
Moreover, the linear independence of $T_n^{M^2}$ (or, equivalently, the fact that \eqref{eq:5} implies that $\alpha_i=0$ for $i\in \llbracket 1, 24\rrbracket$) is equivalent to the fact that the linear system $E_n$ implies that $\alpha_i=0$ for $i\in \llbracket 1, 24\rrbracket$.
Note that $E_n$ has the same form when $n$ increases by $2$. 
Using GAP, the elements in $T^{M^2}_{n}$ are linearly independent for $n\in \{ 8,9 \}$, so the lemma holds for all integers $n\geqslant 8$. 
\end{proof}

Let $\U^{M^2}_n$ be the dual basis to $\U^{!,M^2}_n$, $\C^{M^2}_n$ the dual basis to $\C^{!,M^2}_n$, 
and $\C^{M^2}_n= \cup _{(i,j)\in \II}\C_n^{i,j,M^2}$, where  
$\C_n^{i,j,M^2}$ is the subset of $\C^{M^2}_n$ consisting of elements of the form $(g_s y^{n-\bullet}_{i,j}y^{\bullet}_{k,l})^{*} $ for $(i,j)\in \II$, $(k,l)\in \I$ with $\# \{i,j,k,l \}=4$. 
Given $n, m \in \NN$, let $C^{M^2}_{n,m}$ be the subspace of $\Bbbk \C^{M^2}_n \otimes A_m$ spanned by 
\[ \{ d_{n+1}(M^2)(z|x)\mid z\in \C^{M^2}_{n+1}, x\in A_{m-1} \} ,\] 
$C_{n,m}^{i,j,M^2}$ the subspace of $C^{M^2}_{n,m}$ spanned by 
\[ \{ d_{n+1}(M^2)(z|x) \mid z\in \C_{n+1}^{i,j,M^2}, x\in A_{m-1} \} \]  
for $(i,j)\in \II$,  
and $U^{M^2}_{n,m}$ the subspace of $B^{M^2}_{n,m}$ spanned by 
\[ \{ d_{n+1}(M^2)(z|x)\mid z\in \U^{M^2}_{n+1}, x\in A_{m-1} \}. \] 
Using the actions listed in Appendix \ref{sec:products M2}, 
it is direct but lengthy to check that the differential in the subcomplex $\Bbbk \C^{1,2,M^2}_{n+1}\otimes A$ of the Koszul complex is given by 
\begin{align*}
(g_1y_{1,2}^{n+1})^*|1 & \mapsto (g_1 y_{1,2}^n)^*|x_{1,2}, 
\\
(g_1 y_{1,2}^{n}y_{3,4})^*|1 & \mapsto -(g_1 y_{1,2}^{n-1}y_{3,4})^*|x_{1,2}+(g_1y_{1,2}^n)^*|x_{3,4}+(g_2 y_{1,2}^n)^*|x_{3,4}, 
\\
(g_1 y_{1,2}^{n+1-r}y_{3,4}^{r})^* |1& \mapsto
(-1)^r(g_1 y_{1,2}^{n-r}y_{3,4}^r)^*|x_{1,2}+(g_1 y_{1,2}^{n+1-r}y_{3,4}^{r-1})^*|x_{3,4}
\text{ for $r\in \llbracket 2, n-1 \rrbracket$}, 
\\
(g_1 y_{1,2}y_{3,4}^n)^*|1 & \mapsto (-1)^n (g_1 y_{3,4}^n)^*|x_{1,2}+(g_1 y_{1,2}y_{3,4}^{n-1})^*|x_{3,4}, 
\\
(g_1 y_{3,4}^{n+1})^*|1 & \mapsto (g_1 y_{3,4}^n)^*|x_{3,4}+(-1)^n (g_3 y_{1,2}y_{3,4}^{n-1})^*|x_{1,2}, 
\\
(g_2 y_{1,2}^{n+1})^*|1 & \mapsto (g_2 y_{1,2}^n)^*|x_{1,2}, 
\\ 
(g_3 y_{1,2}y_{3,4}^n)^*|1 & \mapsto (-1)^n(g_3 y_{3,4}^n)^*|x_{1,2}+(-1)^n (g_4 y_{3,4}^n)^*|x_{1,2}+(g_3 y_{1,2}y_{3,4}^{n-1})^*|x_{3,4}, 
\\
(g_3 y_{3,4}^{n+1})^*|1 &\mapsto  (g_3 y_{3,4}^n)^*|x_{3,4}, 
\\
(g_4 y_{3,4}^{n+1})^*|1 & \mapsto (g_4 y_{3,4}^n)^*|x_{3,4}, \\
(g_5 y_{1,2}^{n+1})^*|1 & \mapsto (g_5 y_{1,2}^n)^*|x_{1,2}+(g_5 y_{1,2}^{n-1}y_{3,4})^*|x_{3,4},
\\
(g_5 y_{1,2}^n y_{3,4})^*|1 & \mapsto -(g_5 y_{1,2}^{n-1}y_{3,4})^*|x_{1,2}+(g_5 y_{1,2}^n)^*|x_{3,4},
\end{align*}
for $n\geqslant 4$.
Similarly, the differential in $\Bbbk \C^{1,3,M^2}_{n+1}\otimes A$ is given by \begin{align*}
(g_1y_{1,3}^{n+1})^*|1 & \mapsto (g_1 y_{1,3}^n)^*|x_{1,3}, 
\\
(g_1 y_{1,3}^{n}y_{2,4})^*|1 & \mapsto -(g_1 y_{1,3}^{n-1}y_{2,4})^*|x_{1,3}+(g_1y_{1,3}^n)^*|x_{2,4} -(g_3 y_{1,3}^n)^* |x_{2,4}, 
\\
(g_1 y_{1,3}^{n+1-r}y_{2,4}^{r})^* |1& \mapsto
(-1)^r(g_1 y_{1,3}^{n-r}y_{2,4}^r)^*|x_{1,3}+(g_1 y_{1,3}^{n+1-r}y_{2,4}^{r-1})^*|x_{2,4}
\text{ for $r\in \llbracket 2, n-1 \rrbracket$}, 
\\
(g_1 y_{1,3}y_{2,4}^n)^*|1 & \mapsto (-1)^n (g_1 y_{2,4}^n)^*|x_{1,3}+(g_1 y_{1,3}y_{2,4}^{n-1})^*|x_{2,4}, 
\\
(g_1 y_{2,4}^{n+1})^*|1 & \mapsto (g_1 y_{2,4}^n)^*|x_{2,4} - (-1)^{n}(g_2 y_{1,3}y_{2,4}^{n-1})^*|x_{1,3}, 
\\
(g_2y_{1,3}y_{2,4}^{n})^*|1 & \mapsto (-1)^n(g_2y_{2,4}^n)^*|x_{1,3} -(-1)^n (g_4y_{2,4}^n)^*|x_{1,3} + (g_2y_{1,3}y_{2,4}^{n-1})^*|x_{2,4}, 
\\
(g_2 y_{2,4}^{n+1})^*|1 & \mapsto (g_2 y_{2,4}^n)^*|x_{2,4}, 
\\
(g_3 y_{1,3}^{n+1})^* |1 & \mapsto (g_3 y_{1,3}^n)^*|x_{1,3}, 
\\
(g_4 y_{2,4}^{n+1})^* |1 & \mapsto (g_4 y_{2,4}^n)^*|x_{2,4}, 
\\
(g_6 y_{1,3}^{n+1})^* |1 & \mapsto (g_6 y_{1,3}^n)^*|x_{1,3} +(g_6 y_{1,3}^{n-1}y_{2,4})^*|x_{2,4}, 
\\
(g_6 y_{1,3}^{n}y_{2,4})^*|1 & \mapsto -(g_6 y_{1,3}^{n-1}y_{2,4})^*|x_{1,3} + (g_6y_{1,3}^n )|x_{2,4},
\end{align*}
for $n\geqslant 4$, 
whereas the differential in $\Bbbk \C^{2,3,M^2}_{n+1}\otimes A$ is given by 
\begin{align*}
(g_1y_{2,3}^{n+1})^*|1 & \mapsto (g_1 y_{2,3}^n)^*|x_{2,3}, 
\\
(g_1 y_{2,3}^{n}y_{1,4})^*|1 & \mapsto -(g_1 y_{2,3}^{n-1}y_{1,4})^*|x_{2,3}+(g_1y_{2,3}^n)^*|x_{1,4} -(g_4 y_{2,3}^n)^*|x_{1,4}, 
\\
(g_1 y_{2,3}^{n+1-r}y_{1,4}^{r})^* |1& \mapsto
(-1)^r(g_1 y_{2,3}^{n-r}y_{1,4}^r)^*|x_{2,3}+(g_1 y_{2,3}^{n+1-r}y_{1,4}^{r-1})^*|x_{1,4}
\text{ for $r\in \llbracket 2, n-1 \rrbracket$}, 
\\
(g_1 y_{2,3}y_{1,4}^n)^*|1 & \mapsto (-1)^n (g_1 y_{1,4}^n)^*|x_{2,3}+(g_1 y_{2,3}y_{1,4}^{n-1})^*|x_{1,4}, 
\\
(g_1 y_{1,4}^{n+1})^*|1 & \mapsto (g_1 y_{1,4}^n)^*|x_{1,4} -(-1)^n (g_2 y_{2,3}y_{1,4}^{n-1})^*|x_{2,3}, 
\\
(g_2 y_{2,3}y_{1,4}^n)^*|1 & \mapsto (-1)^n(g_2y_{1,4}^n)^*|x_{2,3}-(-1)^n (g_3y_{1,4}^n)^*|x_{2,3} +(g_2 y_{2,3}y_{1,4}^{n-1})^*|x_{1,4}, 
\\
(g_2 y_{1,4}^{n+1})^*|1 & \mapsto (g_2 y_{1,4}^{n})^*|x_{1,4},
\\
(g_3 y_{1,4}^{n+1})^*|1 & \mapsto (g_3 y_{1,4}^n)^*|x_{1,4},
\\
(g_4 y_{2,3}^{n+1})^*|1 & \mapsto (g_4 y_{2,3}^n)^*|x_{2,3} , 
\\
(g_5 y_{2,3}^{n+1})^*|1 & \mapsto (g_5 y_{2,3}^n)^*|x_{2,3} +(g_5 y_{2,3}^{n-1}y_{1,4})^*|x_{1,4} , 
\\
(g_5 y_{2,3}^n y_{1,4})^*|1 & \mapsto -(g_5 y_{2,3}^{n-1}y_{1,4})^*|x_{2,3} +(g_5 y_{2,3}^n)^*|x_{1,4},
\end{align*}
for $n\geqslant 4$. 

Recall that the sets $W_m^{i,j}$, $E_m^{i,j}$ and $\tilde{E}_m^{i,j}$ for $(i,j)\in \II$ are defined in the paragraph before Table \ref{table:a b}. 
For $(i,j)\in \II$, $(k,l)\in \I$ with $\# \{i,j,k,l \}=4$, 
let $\hat{E}^{i,j}_m$ be the subset of $W^{i,j}_m$ containing elements whose first element is $x_{i,j}$ and second element is not $x_{k,l}$. 
Let $E'^{i,j}_m$ be the subset of $W^{i,j}_m$ containing elements whose first element is $x_{i,j}$ and the second element is $x_{k,l}$.
The left multiplication of $x_{k,l}$ from $\Bbbk\hat{E}^{i,j}_{m-1}$ to $\Bbbk E'^{i,j}_m $ is isomorphic.
It is easy to check that $\# (\hat{E}^{i,j}_m \cup \tilde{E}^{i,j}_m)=\mathfrak{a}_m $, where $\mathfrak{a}_m$ is given in Table \ref{table:a b}. 
A basis of $C^{1,2,M^2}_{n,m}$ is given by 
$d_{n+1}(M^2)( t_i|x)$ for $i\in \llbracket 1,n+1 \rrbracket \cup\{ n+3, n+7,n+8 \} $ and $x\in E^{1,2}_{m-1}$, 
$d_{n+1}(M^2)( t_i|x)$ for $i\in \llbracket n+4,n+6 \rrbracket $ and $x\in \hat{E}^{1,2}_{m-1}\cup \tilde{E}^{1,2}_{m-1}$, 
$d_{n+1}(M^2)( (g_1 y_{3,4}^{n+1})^*|x)$ for $x\in \tilde{E}^{1,2}_{m-1}$, 
where $t_i\in \C^{1,2,M^2}_{n+1}$ is the $i$-th element in the following sequence 
\begin{equation}
\label{eq:squ1}
\begin{split}
& (g_1y_{1,2}^{n+1})^*, 
(g_1 y_{1,2}^{n+1-r}y_{3,4}^{r})^* 
\text{ for $r\in \llbracket 1, n \rrbracket$}, 
(g_1 y_{3,4}^{n+1})^*, 
(g_2 y_{1,2}^{n+1})^*, 
(g_3 y_{1,2}y_{3,4}^n)^*, 
(g_3 y_{3,4}^{n+1})^*, 
\\
& (g_4 y_{3,4}^{n+1})^*, 
(g_5 y_{1,2}^{n+1})^*,
(g_5 y_{1,2}^n y_{3,4})^*.
\end{split}
\end{equation}
A basis of $C^{1,3,M^2}_{n,m}$ is given by 
$d_{n+1}(M^2)( t_i|x)$ for $i\in \llbracket 1,n+1 \rrbracket \cup \{n+5, n+7,n+8 \}$ and $x\in E^{1,3}_{m-1}$, 
$d_{n+1}(M^2)( t_i|x)$ for $i\in \{n+3, n+4,n+6 \} $ and $x\in \hat{E}^{1,3}_{m-1}\cup \tilde{E}^{1,3}_{m-1}$, 
$d_{n+1}(M^2)( (g_1 y_{2,4}^{n+1})^*|x)$ for $x\in \tilde{E}^{1,3}_{m-1}$, 
where $t_i\in \C^{1,3,M^2}_{n+1}$ is the $i$-th element in the following sequence 
\begin{equation}
\label{eq:squ2}
\begin{split}
& (g_1y_{1,3}^{n+1})^*, 
(g_1 y_{1,3}^{n+1-r}y_{2,4}^{r})^*
\text{ for $r\in \llbracket 1, n \rrbracket$}, 
(g_1 y_{2,4}^{n+1})^*, 
(g_2y_{1,3}y_{2,4}^{n})^*, 
(g_2 y_{2,4}^{n+1})^*, 
(g_3 y_{1,3}^{n+1})^*, 
\\
&
(g_4 y_{2,4}^{n+1})^*, 
(g_6 y_{1,3}^{n+1})^*, 
(g_6 y_{1,3}^{n}y_{2,4})^*.
\end{split}
\end{equation}
A basis of $C^{2,3,M^2}_{n,m}$ is given by 
$d_{n+1}(M^2)( t_i|x)$ for $i\in \llbracket 1,n+1 \rrbracket \cup \llbracket n+6,n+8 \rrbracket $ and $x\in E^{2,3}_{m-1}$, 
$d_{n+1}(M^2)( t_i|x)$ for $i\in \llbracket n+3,n+5 \rrbracket $ and $x\in \hat{E}^{2,3}_{m-1}\cup \tilde{E}^{2,3}_{m-1}$, 
$d_{n+1}(M^2)( (g_1 y_{1,4}^{n+1})^*|x)$ for $x\in \tilde{E}^{2,3}_{m-1}$, 
where $t_i\in\C^{2,3,M^2}_{n+1}$ is the $i$-th element in the following sequence
\begin{equation}
\label{eq:squ3}
\begin{split}
& (g_1y_{2,3}^{n+1})^*, 
(g_1 y_{2,3}^{n+1-r}y_{1,4}^{r})^* 
\text{ for $r\in \llbracket 1, n \rrbracket$}, 
(g_1 y_{2,3}y_{1,4}^n)^*, 
(g_1 y_{1,4}^{n+1})^*, 
(g_2 y_{1,4}^{n+1})^*,
(g_3 y_{1,4}^{n+1})^*,
\\
&
(g_4 y_{2,3}^{n+1})^*, 
(g_5 y_{2,3}^{n+1})^*, 
(g_5 y_{2,3}^n y_{1,4})^*.
\end{split}
\end{equation}
So, $\dim C^{i,j,M^2}_{n,m}=\mathfrak{a}_{m-1}(n+7)+\mathfrak{b}_{m-1}$, where $\mathfrak{a}_m$ and $\mathfrak{b}_m$ are given in Table \ref{table:a b}.

\begin{lem}
\label{lem: dim Cnm M2}
We have $C^{M^2}_{n,m}=\bigoplus_{(i,j)\in \II} C^{i,j,M^2}_{n,m}$ 
and the dimension of $C^{i,j,M^2}_{n,m}$ is given by  
\begin{equation}
   \label{eq:dim chang M2}
       \dim C^{i,j,M^2}_{n,m}= \begin{cases} 
       n+8, &\text{if $m = 1$,}
       \\
       5n+39, &\text{if $m =2$,}
       \\
       14n+108, &\text{if $m =3$,}
       \\
       28n+214, &\text{if $m =4$,}
       \\
       43n+326, &\text{if $m =5$,}
        \\
       53n+399 , &\text{if $m =6$,}
        \\
       53n+396, &\text{if $m =7$,}
        \\
        43n+319, &\text{if $m =8$,}
        \\
       28n+206, &\text{if $m =9$,}
        \\
       14n+102, &\text{if $m =10$,}
        \\
       5n+36, &\text{if $m =11$,}
        \\
       n+7, &\text{if $m =12$,}
   \end{cases}
   \end{equation}
for all $(i,j)\in \II $ and $n\geqslant 4$. 
Moreover, if $(i,j)\in \II $, $n\geqslant 4$ and $m \geqslant 13$, $\dim C^{i,j,M^2}_{n,m}= 0$. 
\end{lem}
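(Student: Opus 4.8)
The plan is to mirror the proof of Lemma \ref{lem:dim changing in image}, using the explicit description of the differential of the Koszul complex of $M^2$ on each of the three subcomplexes $\Bbbk \C^{i,j,M^2}_{\bullet}\otimes A$ obtained just before the statement. I would begin with the direct sum decomposition. The set $\C^{M^2}_n=\cup_{(i,j)\in\II}\C^{i,j,M^2}_n$ is in fact a disjoint union, since the three families listed in the three lines of \eqref{eq: basis2 M2} consist of pairwise distinct dual monomials $(g_s y_{\cdot,\cdot}^{a}y_{\cdot,\cdot}^{b})^*$; consequently $\Bbbk\C^{M^2}_n\otimes A_m=\bigoplus_{(i,j)\in\II}\Bbbk\C^{i,j,M^2}_n\otimes A_m$. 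Reading off the displayed differentials, every term of $d_{n+1}(M^2)(z|x)$ with $z\in\C^{i,j,M^2}_{n+1}$ again involves only dual basis vectors lying in $\C^{i,j,M^2}_n$, so that $C^{i,j,M^2}_{n,m}\subseteq \Bbbk\C^{i,j,M^2}_n\otimes A_m$. Hence the three subspaces sit in complementary summands and their sum is automatically direct; and since each generator $z\in\C^{M^2}_{n+1}$ belongs to exactly one block, this sum is all of $C^{M^2}_{n,m}$, giving $C^{M^2}_{n,m}=\bigoplus_{(i,j)\in\II}C^{i,j,M^2}_{n,m}$.

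Next, for each fixed $(i,j)\in\II$ with associated $(k,l)\in\I$, $\#\{i,j,k,l\}=4$, I would confirm that the generating set of $C^{i,j,M^2}_{n,m}$ exhibited before the lemma is a basis. Spanning is immediate from the differential formulas, so the point is linear independence. As in Lemma \ref{lem:dim changing in image}, this rests on the fact that left multiplication by $x_{i,j}$ is injective on $\Bbbk E^{i,j}_{m-1}$, left multiplication by $x_{k,l}$ is injective on $\Bbbk\tilde{E}^{i,j}_{m-1}$, and left multiplication by $x_{k,l}$ is an isomorphism $\Bbbk\hat{E}^{i,j}_{m-1}\to\Bbbk {E'}^{i,j}_m$; combined with the triangular shape of the differential, in which each of the $n+8$ generators of $\C^{i,j,M^2}_{n+1}$ contributes a leading term not cancelled by the others, these injectivities force independence. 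Counting the listed elements --- namely $n+4$ generators paired with $E^{i,j}_{m-1}$, three generators paired with $\hat{E}^{i,j}_{m-1}\cup\tilde{E}^{i,j}_{m-1}$, and one generator paired with $\tilde{E}^{i,j}_{m-1}$ --- and using $\#E^{i,j}_{m-1}=\#(\hat{E}^{i,j}_{m-1}\cup\tilde{E}^{i,j}_{m-1})=\mathfrak{a}_{m-1}$ together with $\#\tilde{E}^{i,j}_{m-1}=\mathfrak{b}_{m-1}$ from Table \ref{table:a b} yields $\dim C^{i,j,M^2}_{n,m}=(n+7)\mathfrak{a}_{m-1}+\mathfrak{b}_{m-1}$.

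It then remains to read off the stated piecewise formula by substituting the values of $\mathfrak{a}_{m-1}$ and $\mathfrak{b}_{m-1}$ from Table \ref{table:a b} into $(n+7)\mathfrak{a}_{m-1}+\mathfrak{b}_{m-1}$ for each $m\in\llbracket 1,12\rrbracket$ (for instance, $m=5$ gives $(n+7)\cdot 43+25=43n+326$), while for $m\geqslant 13$ one has $A_m=0$ and hence $C^{i,j,M^2}_{n,m}\subseteq\Bbbk\C^{M^2}_n\otimes A_m=0$. I expect the only genuine obstacle to be the linear-independence step in the second paragraph: although the differential is completely explicit, one must track how the generators $g_1,\dots,g_6$ mix within each block so that no nontrivial relation among the listed elements can arise, which is precisely where the injectivity of the left-multiplication maps and the careful distinction between the word-subsets $E^{i,j}$, $\tilde{E}^{i,j}$, $\hat{E}^{i,j}$ and ${E'}^{i,j}$ are indispensable.
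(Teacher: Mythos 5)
Your proposal is correct and follows essentially the same route as the paper, whose proof of this lemma is precisely the discussion immediately preceding its statement: the explicit block differentials on $\Bbbk\C^{i,j,M^2}_{\bullet}\otimes A$ (which never leave their block, giving the direct sum), the exhibited bases indexed by $E^{i,j}_{m-1}$, $\hat{E}^{i,j}_{m-1}\cup\tilde{E}^{i,j}_{m-1}$ and $\tilde{E}^{i,j}_{m-1}$, and the resulting count $\dim C^{i,j,M^2}_{n,m}=(n+7)\mathfrak{a}_{m-1}+\mathfrak{b}_{m-1}$, which upon substituting Table \ref{table:a b} yields \eqref{eq:dim chang M2}. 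Your linear-independence argument via injectivity of the left-multiplication maps is exactly the mechanism underlying the paper's basis assertions (as in Lemma \ref{lem:dim changing in image}), so no genuinely different ideas are involved.
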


\begin{lem}
\label{lem: dim Unm M2}
We have $\dim U^{M^2}_{n,m}=\dim U^{M^2}_{n+2,m}$ and $\dim (U^{M^2}_{n,m}\cap C^{M^2}_{n,m})=\dim (U^{M^2}_{n+2,m} \cap C^{M^2}_{n+2,m}) $ for $n\geqslant 4$ and $m\in \llbracket 1,12 \rrbracket$.
\end{lem}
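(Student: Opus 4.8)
The plan is to imitate the proof of Lemma \ref{lem:dim non changing in image}, replacing the role played there by the changing family $\{z^{i,j}_{n-r}z^{k,l}_r\}$ by the three $g_1$-families $(g_1 y_{1,2}^{n-r}y_{3,4}^r)^*$, $(g_1 y_{1,3}^{n-r}y_{2,4}^r)^*$ and $(g_1 y_{2,3}^{n-r}y_{1,4}^r)^*$ that span the bulk of $\C^{M^2}_n$, while keeping track of the finitely many extra boundary generators built from $g_2,\dots,g_6$. Concretely, for $(i,j)\in \II$ and $(k,l)\in\I$ with $\#\{i,j,k,l\}=4$, I would set
\[
\tilde u^{i,j}_n = \sum_{\substack{r \in \llbracket 1, n-1 \rrbracket \\ r \text{ odd}}} (g_1 y_{i,j}^{n-r}y_{k,l}^r)^*, \qquad \tilde v^{i,j}_n = \sum_{\substack{r \in \llbracket 1, n-1 \rrbracket \\ r \text{ even}}} (g_1 y_{i,j}^{n-r}y_{k,l}^r)^*,
\]
and define $\Q^{M^2}_n$ to consist of $\U^{M^2}_n$ together with the extremal pure powers $(g_1 y_{i,j}^n)^*$, $(g_1 y_{k,l}^n)^*$, the boundary elements of $\C^{M^2}_n$ involving $g_2,\dots,g_6$ listed in \eqref{eq: basis2 M2}, and the elements $\tilde u^{i,j}_n, \tilde v^{i,j}_n$ for $(i,j)\in\II$.

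First I would construct a linear isomorphism $\mathfrak f^{M^2}_n \colon \Bbbk\Q^{M^2}_n \to \Bbbk\Q^{M^2}_{n+2}$ that raises the exponent of the dominant generator by $2$: on $\U^{M^2}_n$ it is the evident bijection onto $\U^{M^2}_{n+2}$ dictated by \eqref{eq: basis1 M2}, it sends $\tilde u^{i,j}_n \mapsto \tilde u^{i,j}_{n+2}$ and $\tilde v^{i,j}_n \mapsto \tilde v^{i,j}_{n+2}$, and it maps each extremal or boundary element of $\C^{M^2}_n$ to its degree-$(n+2)$ counterpart. Then $\mathfrak g^{M^2}_n = \mathfrak f^{M^2}_n \otimes \id_A$ is a linear isomorphism, and the first assertion amounts to showing $U^{M^2}_{n,m} \subseteq \Bbbk\Q^{M^2}_n \otimes A_m$ and $\mathfrak g^{M^2}_n(U^{M^2}_{n,m}) = U^{M^2}_{n+2,m}$. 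This requires the explicit form of the Koszul differential $d_{n+1}(M^2)$ on the generators in $\U^{M^2}_{n+1}$, which I would compute from the actions of $A^!$ on $(M^2)^!$ recorded in Appendix \ref{sec:products M2} together with Fact \ref{fact:products-quadratic-dual-fk4}, exactly as \eqref{eq:dn} was obtained in the trivial case. The parity-dependent coefficients $\chi_n,\chi_{n+1}$ appearing there are precisely what force the interior sums of each $g_1$-family to collapse into $\tilde u^{i,j}_n,\tilde v^{i,j}_n$, so that the image lands in $\Bbbk\Q^{M^2}_n\otimes A$; the invariance of these formulas under $n\mapsto n+2$ should then give the intertwining $\mathfrak g^{M^2}_n \circ d_{n+1}(M^2) = d_{n+3}(M^2) \circ \mathfrak g^{M^2}_n$ on the relevant summands.

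For the second assertion I would follow the same bookkeeping as in Lemma \ref{lem:dim non changing in image}: set $F^{M^2}_{n,m} = (\Bbbk\Q^{M^2}_n \otimes A_m) \cap C^{M^2}_{n,m}$ and, using Lemma \ref{lem: dim Cnm M2}, split it as $\bigoplus_{(i,j)\in\II}(L^{i,j,M^2}_{n,m} \cap C^{i,j,M^2}_{n,m})$, where $L^{i,j,M^2}_{n,m}$ is spanned over $A_m$ by the type-$(i,j)$ elements of $\Q^{M^2}_n$. Writing a general element of $C^{i,j,M^2}_{n,m}$ in the basis coming from the explicit differentials listed before Lemma \ref{lem: dim Cnm M2} and imposing membership in $L^{i,j,M^2}_{n,m}$, I would read off, as in \eqref{eq:coefficients}, an explicit spanning set of $L^{i,j,M^2}_{n,m} \cap C^{i,j,M^2}_{n,m}$ whose shape is independent of $n$ up to the parity of $n$. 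Since $U^{M^2}_{n,m} \cap C^{M^2}_{n,m} = U^{M^2}_{n,m} \cap F^{M^2}_{n,m}$ and $\mathfrak g^{M^2}_n(F^{M^2}_{n,m}) = F^{M^2}_{n+2,m}$, the isomorphism $U^{M^2}_{n,m} \cap C^{M^2}_{n,m} \cong U^{M^2}_{n+2,m} \cap C^{M^2}_{n+2,m}$ follows.

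The main obstacle is the first step. Unlike the trivial-module case, where the full differential \eqref{eq:dn} is already available, here I must first derive the analogous closed form for $d_\bullet(M^2)$ on the unbounded generators $\U^{M^2}$, and verify both that the extra boundary terms built from $g_2,\dots,g_6$ are themselves carried into $\Bbbk\Q^{M^2}_\bullet \otimes A$ and that they stabilize under $n \mapsto n+2$. This is a lengthy but mechanical computation with the products of Appendix \ref{sec:products M2}; once the differential is in hand, the parity argument and the intertwining with $\mathfrak g^{M^2}_n$ proceed verbatim as in Lemma \ref{lem:dim non changing in image}.
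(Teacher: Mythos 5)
Your proposal is correct and follows essentially the same route as the paper's proof: the paper also forms the odd/even sums $u^{i,j}_n,v^{i,j}_n$ of the elements $(g_1y_{i,j}^{n-r}y_{k,l}^r)^*$, replaces the interior of each $g_1$-family in $\C^{i,j,M^2}_n$ by these two sums to build a stable set $\Q_n\supseteq \U^{M^2}_n$, transports everything by $\mathfrak f_n\otimes\id_A$, and then proves the second assertion by intersecting with $C^{i,j,M^2}_{n,m}$ and exhibiting parity-dependent spanning sets of $F^{i,j}_{n,m}=(\Bbbk\Q^{i,j}_n\otimes A_m)\cap C^{i,j,M^2}_{n,m}$ exactly as you outline. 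The only cosmetic difference is that the paper leaves the verification that the differential on $\U^{M^2}_{n+1}$ lands in $\Bbbk\Q_n\otimes A$ (and stabilizes under $n\mapsto n+2$) to the reader, whereas you flag it explicitly as the main computational step.
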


\begin{proof}
Let 
\[ 
u_n^{i,j}=\underset{\text{\begin{tiny}$\begin{matrix} r\in \llbracket 1, n-1\rrbracket, \\ r \; odd \end{matrix}$\end{tiny}}}{\sum} (g_1y_{i,j}^{n-r}y_{k,l}^r)^* ,  
\hskip 2mm 
v^{i,j}_n=\underset{\text{\begin{tiny}$\begin{matrix} r\in \llbracket 1, n-1\rrbracket, \\ r \; even \end{matrix}$\end{tiny}}}{\sum} (g_1y_{i,j}^{n-r}y_{k,l}^r)^*, 
\]
for $(i,j)\in\II$, $(k,l)\in \I$ with $\#\{i,j,k,l \}=4$.
Let 
\[  
\Q^{i,j}_n= \big(\C^{i,j,M^2}_n \setminus \{ t_{r}^{i,j,n} |r\in \llbracket 2,n\rrbracket \} \big)
\cup \{ u_n^{i,j}, v_n^{i,j} \}
\]
for $(i,j)\in \I_1$, where $t^{i,j,n}_r=(g_1 y_{i,j}^{n-r+1}y_{k,l}^{r-1})^*\in \C^{i,j,M^2}_n$ for $r\in \llbracket 2,n\rrbracket$.  
Let 
\[ 
\Q_n=\U^{M^2}_n \cup 
(\cup_{(i,j)\in \II}\Q^{i,j}_n )
. \]
It is clear that there is an isomorphism $\mathfrak{f}_n:\Bbbk \Q_n \to \Bbbk \Q_{n+2}$ of vector spaces. 
Consider thus  the linear isomorphism $\mathfrak{g}_n =\mathfrak{f}_n\otimes \id_{A}:\Bbbk \Q_n \otimes A\to \Bbbk \Q_{n+2}\otimes A$. 
Then $U^{M^2}_{n,m}\subseteq \Bbbk\Q_n\otimes A_m $ and $\mathfrak{g}_n(U^{M^2}_{n,m})=U^{M^2}_{n+2,m}$ for $n\geqslant 4$. 
Hence, $U^{M^2}_{n,m}\cong U^{M^2}_{n+2,m}$ as vector spaces for $n\geqslant 4$.

Let $F^{i,j}_{n,m}=(\Bbbk \Q^{i,j}_n \otimes A_m)\cap C_{n,m}^{i,j,M^2}$ for $(i,j)\in \II$.
Then 
\[     U^{M^2}_{n,m}\cap C^{M^2}_{n,m}= U^{M^2}_{n,m} \cap \big(\Bbbk \Q_n \otimes A_m\big)\cap C^{M^2}_{n,m}= U^{M^2}_{n,m}\cap \bigg(\bigoplus_{(i,j)\in \I_1} F^{i,j}_{n,m}\bigg).     \]
To prove that $\dim (U^{M^2}_{n,m}\cap C^{M^2}_{n,m})=\dim (U^{M^2}_{n+2,m} \cap C^{M^2}_{n+2,m}) $, 
it is sufficient to show that $\mathfrak{g}_n(F^{i,j}_{n,m})=F^{i,j}_{n+2,m}$ for $(i,j)\in \II$.
This follows directly from the next simple facts, whose proof is left to the reader. 
If $n$ is even, $F^{i,j}_{n,m}$ is spanned by the elements 
\begin{enumerate}[label = (E.\arabic*)]
    \item $(g_1 y_{i,j}^n)^*|x_{i,j}x$, $(v_n^{i,j}|x_{k,l}-u_n^{i,j}|x_{i,j}+(g_1y_{i,j}^n)^*|x_{k,l}+\xi^{i,j}|x_{k,l})x $ for $x\in E^{i,j}_{m-1}$, 
    \item $((g_1 y_{k,l}^n)^*|x_{k,l}+\eta^{i,j}|x_{i,j})y $, 
$(v^{i,j}_n|x_{i,j}+u^{i,j}_n|x_{k,l}+(g_1 y_{k,l}^n)^*|x_{i,j})y$ for $y\in \tilde{E}^{i,j}_{m-1}$,
    \item $v^{i,j}_n|x_{i,j}w $,
$(g_1 y_{k,l}^n)^*|x_{i,j}w$ for $w\in E^{i,j}_{m-1}\setminus \tilde{E}^{i,j}_{m-1}$, 
\item $d_{n+1}(M^2)(t^{i,j}_r|x)$ for $(r,x)\in \Delta^{i,j}$, 
\end{enumerate}
whereas, if $n$ is odd, $F^{i,j}_{n,m}$ is spanned by the elements
\begin{enumerate}[label = (O.\arabic*)]
    \item $(g_1y_{i,j}^n)^*|x_{i,j}x$, 
$(u^{i,j}_n|x_{k,l}+v^{i,j}_n|x_{i,j})x$ for $x\in E^{i,j}_{m-1}$, 
\item $((g_1y_{k,l}^n)^*|x_{k,l}+\eta^{i,j}|x_{i,j} )y$, 
$(v^{i,j}_n|x_{k,l}-u^{i,j}_n|x_{i,j}+(g_1y_{i,j}^n)^*|x_{k,l}+\xi^{i,j}|x_{k,l}-(g_1y_{k,l}^n)^*|x_{i,j})y$ for $y\in \tilde{E}^{i,j}_{m-1}$, 
\item $u^{i,j}_n|x_{i,j}w$, 
$(g_1y_{k,l}^n)^*|x_{i,j}w$ for $w\in E^{i,j}_{m-1}\setminus \tilde{E}^{i,j}_{m-1}$, 
\item $d_{n+1}(M^2)(t^{i,j}_r|x)$ for $(r,x)\in \Delta^{i,j}$.
\end{enumerate}
Here, 
$t^{1,2}_r$ (resp., $t^{1,3}_r$, $t^{2,3}_r$) is the $r$-th element in \eqref{eq:squ1} (resp., \eqref{eq:squ2}, \eqref{eq:squ3}),
and 
$\xi^{1,2}=(g_2y_{1,2}^n)^*$, 
$\xi^{1,3}=-(g_3y_{1,3}^n)^*$, 
$\xi^{2,3}=-(g_4y_{2,3}^n)^*$, 
$\eta^{1,2}=(-1)^n(g_3y_{1,2}y_{3,4}^{n-1})^*$, 
$\eta^{1,3}=(-1)^{n+1}(g_2y_{1,3}y_{2,4}^{n-1})^*$,  
$\eta^{2,3}=(-1)^{n+1}(g_2y_{2,3}y_{1,4}^{n-1})^*$,
$\Delta^{1,2}= (\{n+3,n+7,n+8 \} \times E^{1,2}_{m-1}) \cup ( \llbracket n+4,n+6 \rrbracket \times ( \hat{E}^{1,2}_{m-1} \cup \tilde{E}^{1,2}_{m-1}) )$,   
$\Delta^{1,3}= (\{n+5,n+7,n+8 \} \times E^{1,3}_{m-1}) \cup ( \{ n+3, n+4,n+6 \} \times ( \hat{E}^{1,3}_{m-1} \cup \tilde{E}^{1,3}_{m-1}) )$, and 
$\Delta^{2,3}= (\llbracket n+6,n+8 \rrbracket \times E^{2,3}_{m-1}) \cup ( \llbracket n+3,n+5 \rrbracket \times ( \hat{E}^{2,3}_{m-1} \cup \tilde{E}^{2,3}_{m-1}) )$.
\end{proof}

Recall that $B^{M^2}_{n,m}$ (resp., $D^{M^2}_{n,m}$) is the image (resp., kernel) concentrated in homological degree $n$ and internal degree $m+n$ of the Koszul complex of $M^2$.

\begin{prop}
   \label{prop:dim Bn M2}
The dimension of $B_{n,m}^{M^2}$ is given by 
\begin{equation}
      \label{eq:dim Bn M2}
          \dim B^{M^2}_{n,m}= \begin{cases} 
          0, &\text{if $m=0$,}
          \\
          3n+48, &\text{if $m = 1$,}
          \\
          15n+237, &\text{if $m =2$,}
          \\
          42n+660, &\text{if $m =3$,}
          \\
          84n+1314, &\text{if $m =4$,}
          \\
          129n+2010, &\text{if $m =5$,}
           \\
          159n+2469 , &\text{if $m =6$,}
           \\
          159n+2460, &\text{if $m =7$,}
           \\
           129n+1989, &\text{if $m =8$,}
           \\
          84n+1290, &\text{if $m =9$,}
           \\
          42n+642, &\text{if $m =10$,}
           \\
          15n+228, &\text{if $m =11$,}
           \\
          3n+45, &\text{if $m =12$,}
      \end{cases}
      \end{equation}
for $n\geqslant 3$. 
\end{prop}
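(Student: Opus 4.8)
The plan is to follow \emph{verbatim} the strategy used for the trivial module in the proof of Proposition \ref{prop:dim Bn5}, now applied to $M^2$. The key observation is that, since $T_n^{M^2} = \U^{!,M^2}_n \cup \C^{!,M^2}_n$ is a basis of $(M^2)^!_{-n}$ by Lemma \ref{lem:basis of dual M2}, its dual basis is $\U^{M^2}_n \cup \C^{M^2}_n$. Consequently the image $B^{M^2}_{n,m} = \Img\big(d_{n+1,m-1}(M^2)\big)$ is spanned by the elements $d_{n+1}(M^2)(z|x)$ with $z \in \U^{M^2}_{n+1} \cup \C^{M^2}_{n+1}$ and $x \in A_{m-1}$, so that $B^{M^2}_{n,m} = U^{M^2}_{n,m} + C^{M^2}_{n,m}$. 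By inclusion--exclusion this gives
\[
\dim B^{M^2}_{n,m} = \dim U^{M^2}_{n,m} + \dim C^{M^2}_{n,m} - \dim\!\big(U^{M^2}_{n,m} \cap C^{M^2}_{n,m}\big),
\]
which is the identity around which the whole argument is organized. The case $m=0$ is immediate, since $B^{M^2}_{n,0}=0$.

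First I would settle the base cases. Using the GAP routines of Appendix \ref{sec:cpx} I would compute $\dim B^{M^2}_{n,m}$ directly for $n \in \{3,4,5\}$ and all $m \in \llbracket 0,12\rrbracket$, and verify that the output matches \eqref{eq:dim Bn M2}. Three base cases are needed because the recurrence below raises $n$ by two and is only available for $n \geqslant 4$: the value at $n=4$ seeds the even indices $n\geqslant 6$, the value at $n=5$ seeds the odd indices $n\geqslant 7$, while $n=3$ must be checked on its own (the basis $T_n^{M^2}$ and the explicit differentials both change shape for small $n$, so $n=3$ is not linked to $n=5$ by the recurrence).

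Next comes the inductive step. By Lemma \ref{lem: dim Unm M2}, both $\dim U^{M^2}_{n,m}$ and $\dim(U^{M^2}_{n,m} \cap C^{M^2}_{n,m})$ are invariant under $n \mapsto n+2$ for $n \geqslant 4$ and $m \in \llbracket 1,12\rrbracket$. Subtracting the displayed identity for $n$ from the one for $n+2$, the two $U$-terms cancel and I obtain
\[
\dim B^{M^2}_{n+2,m} - \dim B^{M^2}_{n,m} = \dim C^{M^2}_{n+2,m} - \dim C^{M^2}_{n,m}, \qquad n \geqslant 4,\ m \in \llbracket 1,12\rrbracket.
\]
Since $C^{M^2}_{n,m} = \bigoplus_{(i,j)\in\II} C^{i,j,M^2}_{n,m}$ with three summands of equal dimension, Lemma \ref{lem: dim Cnm M2} gives $\dim C^{M^2}_{n,m}$ as an explicit linear function of $n$; its increment over $n\mapsto n+2$ equals exactly twice the coefficient of $n$ in \eqref{eq:dim Bn M2} for each $m$ (for instance $6$ when $m=1$, $30$ when $m=2$, and so on). A routine induction on $n$, split by parity, then propagates the base values to all $n \geqslant 3$, yielding \eqref{eq:dim Bn M2}.

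Because the genuinely delicate work is already encapsulated in Lemmas \ref{lem: dim Cnm M2} and \ref{lem: dim Unm M2}, the only real obstacle at this stage is arithmetic bookkeeping: one must confirm that the GAP-computed base values at $n=3,4,5$ coincide with \eqref{eq:dim Bn M2}, and that the linear increments read off from Lemma \ref{lem: dim Cnm M2} agree with the coefficients of $n$ in the claimed formula for every $m \in \llbracket 1,12\rrbracket$. I anticipate no difficulty beyond this verification and the care required to treat $n=3$ as a standalone base case.
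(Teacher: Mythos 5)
Your proposal is correct and takes essentially the same approach as the paper: GAP computations supply the base cases $n\in\llbracket 3,5\rrbracket$, and the recurrence $\dim B^{M^2}_{n+2,m}-\dim B^{M^2}_{n,m}=\dim C^{M^2}_{n+2,m}-\dim C^{M^2}_{n,m}$ (valid for $n\geqslant 4$), obtained from Lemma \ref{lem: dim Unm M2} together with the explicit linear formulas of Lemma \ref{lem: dim Cnm M2}, propagates them to all $n\geqslant 3$. The only difference is that you make explicit the inclusion--exclusion identity $\dim B=\dim U+\dim C-\dim(U\cap C)$, the parity-split induction, and the arithmetic check of the increments, all of which the paper leaves implicit.
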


\begin{proof}
By Lemma \ref{lem: dim Unm M2}, we have $\dim B^{M^2}_{n+2,m}-\dim B^{M^2}_{n,m}=\dim C^{M^2}_{n+2,m}-\dim C^{M^2}_{n,m}$ for $n\geqslant 4$ and $m\in \llbracket 1,12 \rrbracket$.  
Using GAP we get the value of $\dim B^{M^2}_{n,m}$ for $n\in \llbracket 3,5\rrbracket$ and $m\in \llbracket 1,12 \rrbracket$. 
\end{proof}

\begin{cor}
   \label{cor:M2}
We have $\operatorname{H}_n(M^2)=0$ for $n\geqslant 4$. 
\end{cor}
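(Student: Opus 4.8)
The plan is to establish the vanishing of $\operatorname{H}_n(M^2)$ for $n \geqslant 4$ by a pure dimension count, leveraging the fact that all of the substantial enumeration has already been carried out in Lemma \ref{lem:basis of dual M2} and Proposition \ref{prop:dim Bn M2}. Since $\operatorname{H}_n(M^2) = \oplus_{m} \operatorname{H}_{n,m}(M^2)$ with $\operatorname{H}_{n,m}(M^2) = D^{M^2}_{n,m}/B^{M^2}_{n,m}$, it suffices to check that $\dim D^{M^2}_{n,m} = \dim B^{M^2}_{n,m}$ for every $n \geqslant 4$ and every $m \in \llbracket 0,12 \rrbracket$.

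First I would rewrite $\dim D^{M^2}_{n,m}$ in terms of already known quantities. The differential $d_{n,m}(M^2) \colon \operatorname{K}_{n,m}(M^2) \to \operatorname{K}_{n-1,m+1}(M^2)$ has kernel $D^{M^2}_{n,m}$ and image $B^{M^2}_{n-1,m+1}$, so the rank-nullity theorem gives
\[
\dim D^{M^2}_{n,m} = \dim \operatorname{K}_{n,m}(M^2) - \dim B^{M^2}_{n-1,m+1} = \dim (M^2)^!_{-n}\cdot \dim A_m - \dim B^{M^2}_{n-1,m+1}.
\]
By Lemma \ref{lem:basis of dual M2} we have $\dim (M^2)^!_{-n} = 3n+45$ for all $n \geqslant 3$, while the values of $\dim A_m$ are read off from the Hilbert series of $A$. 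Thus the desired vanishing $\operatorname{H}_{n,m}(M^2) = 0$ is equivalent to the numerical identity
\[
(3n+45)\,\dim A_m = \dim B^{M^2}_{n-1,m+1} + \dim B^{M^2}_{n,m}.
\]

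For $n \geqslant 4$ we have $n - 1 \geqslant 3$, so both terms on the right-hand side are supplied by Proposition \ref{prop:dim Bn M2} (with the convention that $B^{M^2}_{n-1,m+1} = 0$ when $m = 12$, as $A_{13} = 0$, and $\dim B^{M^2}_{n,0} = 0$). For each fixed $m$ the quantity $\dim B^{M^2}_{n,m}$ is an affine function of $n$, as is $(3n+45)\dim A_m$, so the identity above is an equality of affine functions of $n$, which I would verify case by case over the thirteen values $m \in \llbracket 0,12 \rrbracket$. For instance, for $m = 1$ the left side is $(3n+45)\cdot 6 = 18n + 270$ and the right side is $\bigl(15(n-1)+237\bigr) + (3n+48) = 18n+270$, and the remaining degrees are entirely analogous.

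There is essentially no serious obstacle here: the corollary is a bookkeeping consequence of the structural results already established, and the only point requiring any care is the correct treatment of the boundary internal degrees $m = 0$ and $m = 12$, where one must invoke the degeneration of the relevant $B^{M^2}$ terms rather than the generic formula of Proposition \ref{prop:dim Bn M2}.
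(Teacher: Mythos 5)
Your proposal is correct and follows essentially the same route as the paper: the paper's proof is exactly the rank–nullity identity $\dim D^{M^2}_{n,m}=(3n+45)\dim A_m -\dim B^{M^2}_{n-1,m+1}$ (via Lemma \ref{lem:basis of dual M2}) combined with $\dim \operatorname{H}_{n,m}(M^2)=\dim D^{M^2}_{n,m}- \dim B^{M^2}_{n,m}$ and the affine formulas of Proposition \ref{prop:dim Bn M2}. Your explicit degree-by-degree verification and treatment of the boundary degrees $m=0$ and $m=12$ simply spells out what the paper leaves implicit.
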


\begin{proof}
The result follows from $\dim D^{M^2}_{n,m}=(3n+45)\dim A_m -\dim B^{M^2}_{n-1,m+1}$ for $n\geqslant 4$ and $m\in \llbracket 0,12\rrbracket$, and $\dim \operatorname{H}_{n,m}(M^2)=\dim D^{M^2}_{n,m}- \dim B^{M^2}_{n,m}$. 
\end{proof}

By Appendix \ref{sec:cpx} and the following code 
\parskip 1ex
\parindent 0in
\begin{tcolorbox}[breakable,colback=white,width=\textwidth ,center,arc=0mm,size=fbox]
\begin{footnotesize}
\begin{Verbatim}[samepage=false]
for j in [0..8] do
    for i in [1..12] do
        Print(j, " ", i, " ", RankMat(FF(2,j,i)), "\n");
    od;
od;
\end{Verbatim}
\end{footnotesize}
\end{tcolorbox}
we obtain the dimension of $B^{M^2}_{n,m}$ for $n\in \llbracket 0,8\rrbracket$ and $m\in\llbracket 1,12\rrbracket$. 
Then the dimension of $\operatorname{H}_{n,m}(M^2)$ for $n\in \llbracket 1,8 \rrbracket$ and $m\in\llbracket 0,12 \rrbracket $ is given by Table \ref{table:H M2}. 
The dimensions that are not listed in the following table are zeros. 

\begin{table}[H]
   \begin{center}
      \begin{tabular}{|c|ccccccccccccc|}
      \hline
       \diagbox[width=14mm,height=5mm]{$n$}{$m$}  & $0$ & $1$  & $2$ & $3$ & $4$ & $5$ & $6$ & $7$ & $8$ & $9$ & $10$ & $11$ & $12$ 
     \\
     \hline
     $1$ &  &  &  & $1$ &  & $1$ &  &  &  &  &  &  &  
     \\
     $2$ &  &  &  &  & $2$ & $6$ & $11$ & $12$ & $11$ & $6$ & $2$ &  &  
     \\
     $3$ &  &  &  & $8$ & $24$ & $48$ & $72$ & $80$ & $72$ & $48$ & $24$ & $8$ &   
     \\
      \hline
   \end{tabular}
 \vspace{1mm}
   \caption{Dimension of $\operatorname{H}_{n,m}(M^2)$.}	
   \label{table:H M2}
\end{center}
\end{table}
\vspace{-0.8cm}

\paragraph{\texorpdfstring{The $A$-module structure of the homology groups}{The A-module structure of the homology groups}}

\begin{lem}
\label{lemma:homology-m2}
We have the following isomorphisms of graded $A$-modules 
\begin{equation}
   \label{eq:HK M2}
\operatorname{H}_{n}(M^2) \cong
\begin{cases} 
   \Bbbk(-4) \oplus \Bbbk(-6), & \text{if $n = 1$,}
      \\
   M^1(-6), & \text{if $n = 2$,}
      \\
   M^3(-6) , & \text{if $n = 3$,}
   \\
   0, & \text{if $n\geqslant 4$.}
\end{cases}
\end{equation}
\end{lem}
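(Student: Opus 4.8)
The plan is to split the statement into the four ranges of $n$ and, in each, to follow the template already set in Lemmas \ref{lemma:homology-m0} and \ref{lemma:homology-m1}. The vanishing $\operatorname{H}_n(M^2)=0$ for $n\geqslant 4$ is exactly Corollary \ref{cor:M2}, so nothing further is needed there. The three remaining cases all rest on the graded dimensions recorded in Table \ref{table:H M2}; recalling that $\operatorname{H}_{n,m}(M^2)$ contributes to total internal degree $n+m$, these dimension vectors are precisely those of $\Bbbk(-4)\oplus\Bbbk(-6)$, of $M^1(-6)$ and of $M^3(-6)$, using the dimension vectors $(2,6,11,12,11,6,2)$ of $M^1$ and $(8,24,48,72,80,72,48,24,8)$ of $M^3$ from Fact \ref{fact:hilb-ser-mi}. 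The task is therefore to promote these numerical coincidences to genuine isomorphisms of graded $A$-modules.

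The case $n=1$ requires no further computation. By Table \ref{table:H M2} the module $\operatorname{H}_1(M^2)$ is one-dimensional in total internal degrees $4$ and $6$ and zero in every other degree, in particular in degree $5$. Since $A$ is generated in internal degree $1$, the degree-$4$ class and the degree-$6$ class are each annihilated by $A_1$, hence by $A_{+}$; thus both homogeneous pieces are $A$-submodules, and as they are separated by the vanishing degree $5$ no nontrivial extension between them is possible, so they span a direct sum. This gives $\operatorname{H}_1(M^2)\cong \Bbbk(-4)\oplus\Bbbk(-6)$.

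For $n=2$ and $n=3$ I would run the computer-assisted argument used before. In the case $n=2$, I would first take in GAP two basis classes of the lowest piece $\operatorname{H}_{2,4}(M^2)$ and check that the submodule they generate has dimension vector $(2,6,11,12,11,6,2)$, i.e. all of $\operatorname{H}_2(M^2)$ by Table \ref{table:H M2}; I would then verify that these two classes satisfy the six defining relations \eqref{eq: relations of H4} of $M^1$, by checking that adjoining the corresponding relation images to $B^{M^2}_{2,5}$ leaves its dimension unchanged. This produces a surjection $M^1(-6)\to \operatorname{H}_2(M^2)$ of graded $A$-modules, which is an isomorphism because source and target have equal Hilbert series. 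The case $n=3$ is entirely parallel: one generates $\operatorname{H}_3(M^2)$ by eight classes of $\operatorname{H}_{3,3}(M^2)$, confirms in GAP that the generated submodule fills out the dimension vector $(8,24,48,72,80,72,48,24,8)$, verifies the $24$ relations \eqref{eq:relation in M3} of $M^3$ against $B^{M^2}_{3,4}$, and concludes $\operatorname{H}_3(M^2)\cong M^3(-6)$ by comparing Hilbert series.

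The main obstacle is the computational verification in the $n=3$ case, namely choosing the correct eight generators together with explicit GAP representatives realizing the twelve relations of $M^3$ coupling the classes $e_1,\dots,e_4$ (the genuinely non-diagonal ones) and confirming that they hold in the homology. In contrast to Lemmas \ref{lemma:homology-m0} and \ref{lemma:homology-m1}, however, there is no splitting to decide here: each $\operatorname{H}_n(M^2)$ turns out to be isomorphic to a single member of the family $\{ M^1, M^3 \}$ up to shift, or to a direct sum of shifted copies of $\Bbbk$, so once the surjections are built the conclusion follows immediately from the Hilbert series comparison.
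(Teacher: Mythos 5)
Your proposal is correct and follows essentially the same route as the paper: vanishing for $n\geqslant 4$ via Corollary \ref{cor:M2}, a dimension-and-grading argument for $n=1$, and for $n=2,3$ GAP verifications that classes generating $\operatorname{H}_{n}(M^2)$ (two in $\operatorname{H}_{2,4}(M^2)$, eight in $\operatorname{H}_{3,3}(M^2)$) satisfy the defining relations \eqref{eq: relations of H4} of $M^1$ (resp. \eqref{eq:relation in M3} of $M^3$), yielding surjections $M^1(-6)\to\operatorname{H}_2(M^2)$ and $M^3(-6)\to\operatorname{H}_3(M^2)$ that are isomorphisms by comparison of Hilbert series. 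The only difference is presentational: you spell out the $n=1$ case, which the paper dismisses as ``a simple argument using dimension and grading,'' while your deferred GAP checks for $n=2,3$ are exactly the computations the paper carries out explicitly.
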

\begin{proof}
The isomorphisms in \eqref{eq:HK M2} for all integers $n \geqslant 4$ follow immediately from Corollary \ref{cor:M2}. 
A simple argument using dimension and grading together with Table 
\ref{table:H M2} gives the isomoprhism in \eqref{eq:HK M2} for $n = 1$.

We prove that the space $\operatorname{H}_{2}(M^2)$ is a quadratic module, which is isomorphic to $M^1(-6)$. 
The following GAP code shows that the dimension vector of the submodule of $\operatorname{H}_{2}(M^2)$ generated by two basis elements $a''_1,a''_2$ of $\operatorname{H}_{2,4}(M^2)$ is $(2,6,11,12,11,6,2)$.
So, $\operatorname{H}_{2}(M^2)$ is generated by the two elements as an $A$-module.
\parskip 1ex
\parindent 0in
\begin{tcolorbox}[breakable,colback=white,width=\textwidth ,center,arc=0mm,size=fbox]
\begin{footnotesize}
\begin{Verbatim}[samepage=false]
Imm:=Im(2,2,4);;
RankMat(Imm);
# 1474
gene:=geneMH(2,2,4);;
Append(Imm,gene);
RankMat(Imm);
# 1476
Uh:=UU(gene,4);; Vh:=VV(gene,4);; Wh:=WW(gene,4);;
for r in [5..10] do 
   hxr:=HXR(2,Uh,Vh,Wh,2,4,r-4);
   Im2r:=Im(2,2,r);
   Append(Im2r, hxr);
   Print(r, " ", RankMat(Im2r)-RankMat(Im(2,2,r)), "\n");
od;
# 5 6
# 6 11
# 7 12
# 8 11
# 9 6
# 10 2
\end{Verbatim}
\end{footnotesize}
\end{tcolorbox}
Furthermore, it is direct to check that the generators $a''_1,a''_2$ of $\operatorname{H}_{2}(M^2)$ satisfy the quadratic relations \eqref{eq: relations of H4} defining $M^1$. 
Indeed, the following code shows that the dimension of the subspace generated by $B_{2,5}^{M^2}$ together with the elements of the form \eqref{eq: relations of H4} with $a''_i$ instead of $a_i$ coincides with the dimension of $B_{2,5}^{M^2}$.

\parskip 1ex
\parindent 0in
\begin{tcolorbox}[breakable,colback=white,width=\textwidth ,center,arc=0mm,size=fbox]
\begin{footnotesize}
\begin{Verbatim}[samepage=false]
gene:=geneMH(2,2,4);;
Uh:=UU(gene,4);; Vh:=VV(gene,4);; Wh:=WW(gene,4);;
hx:=HXR(2,Uh,Vh,Wh,2,4,1);;
cc:=0*[1..6];;
cc[1]:=hx[1]+hx[7];; cc[2]:=hx[2];; cc[3]:=hx[5];; cc[4]:=hx[6]+hx[12];;
cc[5]:=hx[9];; cc[6]:=hx[10];;
Imm:=Im(2,2,5);;
RankMat(Imm);
# 2244
Append(Imm,cc);
RankMat(Imm);
# 2244
\end{Verbatim}
\end{footnotesize}
\end{tcolorbox}
Hence, there is a surjective morphism $M^1(-6)\to \operatorname{H}_{2}(M^2)$ of graded $A$-modules. 
Since the dimension vector of $M^1$ is $(2,6,11,12,11,6,2)$, we have $\operatorname{H}_{2}(M^2)\cong M^1(-6)$ as graded $A$-modules, as claimed.

Next, we prove that the space $\operatorname{H}_{3}(M^2)$ is also a quadratic module, which is isomorphic to $M^3(-6)$. 
The following code shows that the dimension vector of the submodule of $\operatorname{H}_{3}(M^2)$ generated by basis elements $e'_i,i\in \llbracket 1,8\rrbracket $ of $\operatorname{H}_{3,3}(M^2)$ is $(8,24,48,72,80,72,48,24,8)$.
So, $\operatorname{H}_{3}(M^2)$ is generated by the eight elements $e'_i,i\in \llbracket 1,8\rrbracket $ as an $A$-module.
\parskip 1ex
\parindent 0in
\begin{tcolorbox}[breakable,colback=white,width=\textwidth ,center,arc=0mm,size=fbox]
\begin{footnotesize}
\begin{Verbatim}[samepage=false]
Imm:=Im(2,3,3);;
RankMat(Imm);
# 786
gene:=geneMH(2,3,3);;
Append(Imm,gene);
RankMat(Imm);
# 794
Uh:=UU(gene,3);; Vh:=VV(gene,3);; Wh:=WW(gene,3);;
for r in [4..11] do 
   hxr:=HXR(2,Uh,Vh,Wh,3,3,r-3);
   Im3r:=Im(2,3,r);
   Append(Im3r, hxr);
   Print(r, " ", RankMat(Im3r)-RankMat(Im(2,3,r)), "\n");
od;
# 4 24
# 5 48
# 6 72
# 7 80
# 8 72
# 9 48
# 10 24
# 11 8
\end{Verbatim}
\end{footnotesize}
\end{tcolorbox}
Moreover, it is direct to check that the generators $e'_i, i\in \llbracket 1,8 \rrbracket$ of $\operatorname{H}_{3}(M^2)$ satisfy the quadratic relations \eqref{eq:relation in M3}. 
Indeed, the following code shows that the dimension of the subspace generated by $B_{3,4}^{M^2}$ together with the elements of the form \eqref{eq:relation in M3} with $e'_i$ instead of $e_i$ coincides with the dimension of $B_{3,4}^{M^2}$.

\parskip 1ex
\parindent 0in
\begin{tcolorbox}[breakable,colback=white,width=\textwidth ,center,arc=0mm,size=fbox]
\begin{footnotesize}
\begin{Verbatim}[samepage=false]
gene:=geneMH(2,3,3);;
Uh:=UU(gene,3);; Vh:=VV(gene,3);; Wh:=WW(gene,3);;
hx:=HXR(2,Uh,Vh,Wh,3,3,1);;
cc:=0*[1..24];;
cc[1]:=hx[1]+hx[12];; cc[2]:=hx[6]-hx[7];; cc[3]:=hx[13]-hx[24];; 
cc[4]:=hx[18]+hx[19];; cc[5]:=hx[20]+hx[11];; cc[6]:=hx[23]-hx[8];;
cc[7]:=hx[14]+hx[5];; cc[8]:=hx[17]-hx[2];; cc[9]:=hx[3]-hx[22];;
cc[10]:=hx[4]+hx[21];; cc[11]:=hx[15]-hx[10];; cc[12]:=hx[16]+hx[9];;
cc[13]:=hx[25];; cc[14]:=hx[26];; cc[15]:=hx[27];; cc[16]:=hx[31];; cc[17]:=hx[34];; 
cc[18]:=hx[35];; cc[19]:=hx[38];; cc[20]:=hx[40];; cc[21]:=hx[42];; cc[22]:=hx[45];; 
cc[23]:=hx[47];; cc[24]:=hx[48];; 
Imm:=Im(2,3,4);;
RankMat(Imm);
# 1566
Append(Imm,cc);
RankMat(Imm);
# 1566
\end{Verbatim}
\end{footnotesize}
\end{tcolorbox}
Hence, there is a surjective morphism $M^3(-6)\to \operatorname{H}_{3}(M^2)$ of graded $A$-modules. 
Since the dimension vector of $M^3$ is $(8,24,48,72,80,72,48,24,8)$, we have $\operatorname{H}_{3}(M^2)\cong M^3(-6)$ as graded $A$-modules, as claimed.
\end{proof}

\subsubsection{\texorpdfstring{Homology of the Koszul complex of $M^3$}{Homology of the Koszul complex of M3}}
\label{subsubsec:HnM3}

\paragraph{The dimensions of the homology groups}
\phantom{x}
\\
Note first that $M^3 \cong N\oplus (\oplus _{k\in \llbracket 1,4 \rrbracket}S_k )$ as graded $A$-modules, where $N$ is the submodule of $M^3$ generated by $e_i,i\in\llbracket 1,4\rrbracket$, and $S_k$ is the submodule generated by $e_{k+4}$ for $k\in \llbracket 1,4 \rrbracket$.
Let $\{ f_i\mid  i\in \llbracket 1,8\rrbracket \}$ be the dual basis to $\{ e_i\mid i\in \llbracket 1,8\rrbracket \}$. 
It is easy to see that the $A^!$-module $(M^3)^!$ is generated by $f_i$ for $i\in \llbracket 1,8\rrbracket$, subject to the following $24$ relations 
\begin{equation}
\label{eq: dual relations in M3}
\begin{split}
   & 
   f_1y_{1,2}-f_2y_{3,4},
   f_1y_{3,4}+f_2y_{1,2},
   f_3y_{1,2}+f_4y_{3,4},
   f_3y_{3,4}-f_4y_{1,2},
   f_4y_{1,3}-f_2y_{2,4},
   f_4y_{2,4}+f_2y_{1,3},
   \\
   &
   f_3y_{1,3}-f_1y_{2,4},
   f_3y_{2,4}+f_1y_{1,3},
   f_1y_{2,3}+f_4y_{1,4},
   f_1y_{1,4}-f_4y_{2,3},
   f_3y_{2,3}+f_2y_{1,4},
   f_3y_{1,4}-f_2y_{2,3},
   \\
   &
   f_5y_{1,4}, f_5y_{2,4}, f_5y_{3,4},
   f_6y_{1,3}, f_6y_{2,3}, f_6y_{3,4},
   f_7y_{1,2}, 
   f_7y_{2,3},
   f_7y_{2,4},
   f_8y_{1,2}, f_8y_{1,3}, f_8y_{1,4}.
\end{split}
\end{equation}

Using GAP, a basis of $(M^3)^!_{-1}$ is given by the $24$ elements 
\begin{align*}
&
f_1y_{1,2}, 
f_1y_{1,3},
f_1y_{2,3},
f_1y_{1,4},
f_1y_{2,4},
f_1y_{3,4}, 
f_2 y_{1,3}, 
f_2 y_{2,3}, 
f_2 y_{1,4}, 
f_2 y_{2,4}, 
f_3 y_{1,2}, 
f_3 y_{3,4}, 
f_5 y_{1,2}, 
f_5 y_{1,3}, 
\\
&
f_5 y_{2,3}, 
f_6 y_{1,2},
f_6 y_{1,4}, 
f_6 y_{2,4},
f_7 y_{1,3},
f_7 y_{1,4},
f_7 y_{3,4}, 
f_8 y_{2,3},
f_8 y_{2,4},
f_8 y_{3,4}, 
\end{align*}
and a basis of $(M^3)^!_{-2}$ is given by the $40$ elements 
\begin{align*}
& 
f_1 y_{1,2}^2, 
f_1y_{1,2}y_{1,3},
f_1y_{1,2}y_{2,3},
f_1 y_{1,2}y_{1,4}, 
f_1y_{1,2}y_{2,4}, 
f_1 y_{1,2}y_{3,4}, 
f_1 y_{1,3}^2, 
f_1 y_{1,3}y_{1,4}, 
f_1y_{1,3}y_{2,4}, 
\\
&
f_1 y_{1,3}y_{3,4}, 
f_1 y_{2,3}^2, 
f_1 y_{2,3}y_{1,4}, 
f_1 y_{2,3}y_{2,4}, 
f_1 y_{2,3}y_{3,4}, 
f_2 y_{1,3}^2, 
f_2 y_{1,3}y_{2,4}, 
f_2 y_{2,3}^2, 
f_2 y_{2,3}y_{1,4},  
f_3 y_{1,2}^2, 
\\
&
f_3 y_{1,2} y_{3,4}, 
f_5 y_{1,2}^2,
f_5 y_{1,2}y_{1,3}, 
f_5 y_{1,2}y_{2,3}, 
f_5 y_{1,3}^2, 
f_5 y_{2,3}^2, 
f_6 y_{1,2}^2, 
f_6 y_{1,2}y_{1,4}, 
f_6 y_{1,2}y_{2,4},
f_6 y_{1,4}^2, 
f_6 y_{2,4}^2, 
\\
&
f_7 y_{1,3}^2, 
f_7 y_{1,3}y_{1,4}, 
f_7 y_{1,3}y_{3,4}, 
f_7 y_{1,4}^2, 
f_7 y_{3,4}^2, 
f_8 y_{2,3}^2, 
f_8 y_{2,3}y_{2,4}, 
f_8 y_{2,3}y_{3,4},
f_8 y_{2,4}^2,
f_8 y_{3,4}^2. 
\end{align*}
\begin{rk}
\label{rk:S1234}
Let $k\in \llbracket 1, 4\rrbracket$ and $f_{k+4}y_{i_1,j_1}\dots y_{i_s,j_s}$ be a monomial in $(S_k)^!$, where $s\in \NN$ and $(i_1,j_1),\dots, (i_s,j_s)\in \J$.
If $5-k\in \{i_1,j_1,\dots, i_s,j_s \}$, then $f_{k+4}y_{i_1,j_1}\dots y_{i_s,j_s}=0\in (S_k)^!$.
\end{rk}

\begin{rk}
\label{rk:rea}
Let $k\in \llbracket 1, 4\rrbracket$, $y_{i_1,j_1}\dots y_{i_s,j_s}=y_{i'_1,j'_1}\dots y_{i'_s,j'_s}$ in $A^!$ for $(i_1,j_1),\dots, (i_s,j_s)\in \J$, $(i'_1,j'_1),\dots, (i'_s,j'_s)\in \J$ and $s\in \NN$.
If $k\in \{i_1,j_1,\dots, i_s,j_s \}$, then $k\in \{i'_1,j'_1,\dots, i'_s,j'_s \}$. 
\end{rk}

\begin{lem}
\label{lem:basis dual S1234}
Let 
\begin{equation}
\begin{split}
T^{S_1}_n & = \{ f_5y_{1,2}^n, 
   f_5y_{1,2}^{n-1}y_{1,3},
   f_5y_{1,2}^{n-1}y_{2,3},
   f_5y_{1,2}^{n-2}y_{1,3}^2,
   f_5y_{1,3}^{n},
   f_5y_{2,3}^{n}
   \}, 
\\
T^{S_2}_n & = \{ f_6y_{1,2}^n, 
   f_6y_{1,2}^{n-1}y_{1,4},
   f_6y_{1,2}^{n-1}y_{2,4},
   f_6y_{1,2}^{n-2}y_{1,4}^2,
   f_6y_{1,4}^{n},
   f_6y_{2,4}^{n}
   \}, 
\\
T^{S_3}_n & = \{ f_7y_{1,3}^n, 
   f_7y_{1,3}^{n-1}y_{1,4},
   f_7y_{1,3}^{n-1}y_{3,4},
   f_7y_{1,3}^{n-2}y_{1,4}^2,
   f_7y_{1,4}^{n},
   f_7y_{3,4}^{n}
   \}, 
\\
T^{S_4}_n & = \{ f_8y_{2,3}^n, 
   f_8y_{2,3}^{n-1}y_{2,4},
   f_8y_{2,3}^{n-1}y_{3,4},
   f_8y_{2,3}^{n-2}y_{2,4}^2,
   f_8y_{2,4}^{n},
   f_8y_{3,4}^{n}
   \},
\end{split}
\end{equation}
for $n\geqslant 3$. 
Then $T_n^{S_k} $
is a basis of $(S_k)_{-n}^!$ for $k\in \llbracket 1,4 \rrbracket$ and $n\geqslant 3$. 
Note that $T^{S_k}_n$ has cardinal $6$ for $k \in \llbracket 1, 4 \rrbracket$ and $n\geqslant 3$.
\end{lem}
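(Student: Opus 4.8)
The plan is to treat the four cases $k \in \llbracket 1,4 \rrbracket$ uniformly. For each $k$ the module $S_k$ is the cyclic quadratic module generated by $e_{k+4}$ in degree zero, so its dual $(S_k)^!$ is the cyclic right $A^!$-module generated by $f_{k+4}$ subject to the three relations in \eqref{eq: dual relations in M3} that annihilate $f_{k+4}$, namely $f_{k+4}\,y = 0$ for the three generators $y$ whose index pair contains $5-k$ (for instance $y_{1,4},y_{2,4},y_{3,4}$ when $k=1$). I will first prove that $T_n^{S_k}$ spans $(S_k)^!_{-n}$ and then that its elements are linearly independent; the count $\#\,T_n^{S_k}=6$ then drops out of the spanning step.

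For the spanning step, since $(S_k)^!$ is generated by $f_{k+4}$ and $\B_n^!$ is a basis of $A^!_{-n}$ by Fact \ref{fact:basis-quadratic-dual-fk4}, the space $(S_k)^!_{-n}$ is spanned by $\{f_{k+4}\,b \mid b \in \B_n^!\}$. By Remark \ref{rk:S1234}, $f_{k+4}\,b = 0$ whenever the standard monomial $b$ involves the index $5-k$, so only those $b$ whose index set avoids $5-k$ can contribute. Reading off the explicit description of $\B_n^!$ in Fact \ref{fact:basis-quadratic-dual-fk4}, that is the decomposition $\B_n^!=\U_n^!\cup\C_n^!$ for $n\geqslant 5$ together with the tabulated bases $\B_3^!$ and $\B_4^!$, I will check that for every $n\geqslant 3$ exactly six standard monomials avoid $5-k$ and that they are precisely the six monomials listed in $T_n^{S_k}$. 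This yields $(S_k)^!_{-n}=\Span\bigl(T_n^{S_k}\bigr)$.

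For linear independence I will exploit that the defining relations $R^{\bot}$ of $A^!$, and hence all reductions via the Gröbner basis $G_B$ of \eqref{eq:grobner-basis-a-dual}, are homogeneous with respect to the set of indices occurring in a monomial: each such relation is a linear combination of monomials sharing the same index subset of $\{1,2,3,4\}$ (this is the mechanism behind Remark \ref{rk:rea}). Consequently $A^!_{-n}$ decomposes as $V^{(k)} \oplus V_\circ^{(k)}$, where $V^{(k)}$ (respectively $V_\circ^{(k)}$) is spanned by the basis monomials whose index set contains (respectively avoids) $5-k$. Now $(S_k)^!_{-n}$ is the quotient of $A^!_{-n}$ by the span of the relation elements $y\,u$, with $y$ one of the three generators containing $5-k$ and $u \in A^!_{-(n-1)}$; each such $y\,u$ contains the index $5-k$ and hence, after reduction, lands in $V^{(k)}$. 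Since the six monomials lifting $T_n^{S_k}$ form a basis of $V_\circ^{(k)}$ and $V_\circ^{(k)} \cap V^{(k)} = 0$, their images in the quotient stay linearly independent. Combined with the spanning step, this shows $T_n^{S_k}$ is a basis of $(S_k)^!_{-n}$.

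The main obstacle is precisely the index-set homogeneity used in the last paragraph: I must ensure that reducing a product containing the index $5-k$ to standard form never produces a basis monomial that avoids $5-k$, so that the relation subspace is genuinely confined to $V^{(k)}$. This is the passage from Remark \ref{rk:rea}, stated for equalities of monomials, to arbitrary reductions, and it follows from inspecting that every relation in $R^{\bot}$ (equivalently in $G_B$) couples only monomials with identical index sets. The only remaining routine point is the bookkeeping confirming that, for each $n\geqslant 3$ and each $k$, the six index-avoiding standard monomials coincide verbatim with the entries of $T_n^{S_k}$.
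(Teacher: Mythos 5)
Your proposal is correct and follows essentially the same route as the paper's proof: spanning comes from Remark \ref{rk:S1234} (every standard monomial whose support contains $5-k$ dies in $(S_k)^!$, leaving exactly the six monomials of $T_n^{S_k}$), and linear independence comes from the fact that the relation submodule, expanded in the basis $\B^!_n$, only involves monomials whose support contains $5-k$, which the paper extracts from Remark \ref{rk:rea}. The only difference is cosmetic: where the paper simply invokes Remark \ref{rk:rea}, you make explicit the underlying mechanism --- that every element of $R^{\bot}$ (and of $G_B$) is homogeneous for the support grading by subsets of $\llbracket 1,4\rrbracket$, so reductions cannot create monomials avoiding $5-k$ --- which is a legitimate and slightly more careful rendering of the same argument.
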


\begin{proof}
The space $(S_k)^!_{-n}$ is spanned by $\{ f_{k+4}y \mid y\in \B^!_{n} \}$. 
By Remark \ref{rk:S1234}, 
the space $(S_k)^!_{-n}$ is spanned by $T^{S_k}_n$ for $n\geqslant 3$.
By Remark \ref{rk:rea}, it is easy to see that the elements of $T^{S_k}_n$ are linearly independent. 
Indeed, let $\sum_{i\in\llbracket 1,6\rrbracket}\alpha_i q_i= \sum_{u\in \B^!_{n-1}}\lambda^1_u f_5 y_{1,4}u
+ \sum_{u\in \B^!_{n-1}}\lambda^2_u f_5 y_{2,4}u
+\sum_{u\in \B^!_{n-1}}\lambda^3_u f_5 y_{3,4}u
$ 
in $\Bbbk\{f_5\}\otimes A^!$,
where $\alpha_i,\lambda^j_u\in \Bbbk$, and $q_i$ is the $i$-th element of $T_n^{S_1}$. 
By Remark \ref{rk:rea},
the right side of the equation is a linear combination of elements of form $f_5 y_{i_1,j_1}\dots y_{i_n,j_n}\in f_5 \B^!_{n}$ for $4\in \{i_1,j_1,\dots, i_n,j_n \}$. 
This implies $\alpha_i=0$ for $i\in \llbracket 1,6\rrbracket$. 
Hence, $T_n^{S_1}$ are linearly independent. 
The other cases are similar. 
\end{proof}

\begin{lem}
   \label{lem:basis dual NNN}
The set $T_n^{N}$ consisting of the following $24$ elements
\begin{equation}
\label{eq:basis dual NNN}
\begin{split}
& f_1y_{1,2}^n, 
f_1y_{1,2}^{n-1}y_{3,4},
f_1y_{1,3}^n,
f_1y_{1,3}^{n-1}y_{2,4},
f_1y_{2,3}^n, 
f_1y_{2,3}^{n-1}y_{1,4},
f_2y_{1,3}^n,
f_2y_{1,3}^{n-1}y_{2,4},
f_2y_{2,3}^n,
\\
&
f_2y_{2,3}^{n-1}y_{1,4},
f_3y_{1,2}^n,
f_3y_{1,2}^{n-1}y_{3,4},
f_1y_{1,2}^{n-1}y_{1,3}, 
f_1y_{1,2}^{n-1}y_{2,3},
f_1y_{1,2}^{n-1}y_{1,4},
f_1y_{1,2}^{n-1}y_{2,4},
\\
&
f_1y_{1,2}^{n-2}y_{1,3}^2,
f_1y_{1,2}^{n-2}y_{1,3}y_{1,4},
f_1y_{1,2}^{n-2}y_{1,3}y_{2,4},
f_1y_{1,2}^{n-2}y_{1,3}y_{3,4},
f_1y_{1,2}^{n-2}y_{2,3}y_{1,4},
\\
&
f_1y_{1,2}^{n-2}y_{2,3}y_{2,4},
f_1y_{1,2}^{n-2}y_{2,3}y_{3,4},
f_1y_{1,2}^{n-3}y_{1,3}^2y_{3,4}
\end{split}
\end{equation}
is a basis of $N^!_{-n}$ for $n\geqslant 3$. 
\end{lem}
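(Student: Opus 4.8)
The plan is to follow the same two-step template used for the bases of $(M^{1})^{!}$ and $(M^{2})^{!}$ (see the proof of Lemma \ref{lem:basis of dual M2}). First I would record that, since $M^{3} \cong N \oplus (\oplus_{k \in \llbracket 1,4\rrbracket} S_{k})$ as graded $A$-modules and the quadratic dual of a direct sum of quadratic modules decomposes accordingly, the module $N^{!}$ is the quadratic dual $A^{!}$-module generated by $f_{1}, f_{2}, f_{3}, f_{4}$ subject only to the first twelve relations listed in \eqref{eq: dual relations in M3}. Thus it suffices to prove that the $24$ elements displayed in \eqref{eq:basis dual NNN} span $N^{!}_{-n}$ and are linearly independent for every $n \geqslant 3$.

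For the spanning part, I would start from the obvious generating set $\{ f_{i} y \mid i \in \llbracket 1,4\rrbracket, \ y \in \B^{!}_{n} \}$ of $N^{!}_{-n}$ and reduce every such product to a combination of the elements of $T_{n}^{N}$. The relations \eqref{eq: dual relations in M3} let me trade products of $f_{3}$ and $f_{4}$ for products of $f_{1}$ and $f_{2}$ (retaining $f_{3}$ only for the two monomials $f_{3} y_{1,2}^{n}$ and $f_{3} y_{1,2}^{n-1} y_{3,4}$), while the product formulas collected in Fact \ref{fact:products-quadratic-dual-fk4}, in particular \eqref{eq:y12y34y13}, \eqref{eq:product y13y12ny34r} and \eqref{eq:product y13y12n}, let me normalise the resulting $A^{!}$-monomials. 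Concretely I would establish a finite list of reduction identities, one for each ``excess'' monomial $f_{i} y$, exactly as was done for $T_{n}^{M^{2}}$; these identities are straightforward but tedious, so I would verify them directly in the small degrees with GAP and then propagate them to all $n$ by induction on the degree, writing $y = y' y_{s,t}$ and applying the one-step reductions to $f_{i} y'$.

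For linear independence, I would suppose a relation of the form ``a linear combination of $T_{n}^{N}$ equals a combination of the defining relations applied on the right''. Applying Lemma \ref{lem:productss} I would split each $f_{1}$- and $f_{2}$-component according to whether its $A^{!}$-factor lies in $Y_{1,2}$, $Y_{1,3}$ or $Y_{2,3}$ as in \eqref{eq:Yij}, which decouples the single identity into independent pieces for the three families. Grouping the coefficients as in the proof of Lemma \ref{lem:basis of dual M2} and feeding in the explicit products \eqref{eq:product y13y12ny34r} and \eqref{eq:product y13y12n}, the vanishing of all coefficients becomes a linear system $E_{n}$ over $\Bbbk$ whose shape is unchanged when $n$ is replaced by $n+2$. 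I would then conclude the general case from a GAP verification that $T_{n}^{N}$ is a basis for a couple of consecutive values of $n$ covering both parities, together with this parity-periodicity.

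The main obstacle is not any single step but the bookkeeping: ensuring that the reduction identities in the spanning part form a complete and consistent system, and checking that the linear-independence system $E_{n}$ is genuinely periodic in $n$ modulo $2$, so that a finite machine computation suffices. The cross-relations tying $f_{3}, f_{4}$ back to $f_{1}, f_{2}$ and the interaction of the three families $Y_{1,2}, Y_{1,3}, Y_{2,3}$ are where sign and indexing errors are most likely to arise; beyond that, the argument is a direct transcription of the $(M^{2})^{!}$ computation.
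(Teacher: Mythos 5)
Your proposal is correct and follows essentially the same route as the paper's proof: spanning of $N^{!}_{-n}$ by reduction identities derived from the relations and the product formulas of Fact \ref{fact:products-quadratic-dual-fk4}, linear independence via the $Y_{i,j}$-splitting of Lemma \ref{lem:productss} leading to grouped coefficients and a linear system $E_{n}$ of the same form as $E_{n+2}$, with GAP checks at small $n$ and at two consecutive parities. The only minor divergence is that the paper derives the spanning reductions explicitly for all $n$ from the product tables rather than by your ``GAP in low degree plus induction on degree'' scheme, but this is a difference of bookkeeping, not of idea.
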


\begin{proof}
Firstly, using GAP, $T_n^{N}$ is a basis of $N^!_{-n}$ for $n\in \llbracket 3,5\rrbracket$.
Note that the space $N^!_{-n}$ is spanned by $\{ f_iy\mid i\in \llbracket 1,4\rrbracket , y\in \B^!_{n} \}$ for $n\in \NN_0$. 
By the dual relations, it is easy to see that $N^!_{-n}$ is spanned by 
\begin{equation}
\begin{split}
& f_1y_{1,2}^n,
f_1y_{1,2}^{n-1}y_{3,4},
f_1y_{1,3}^n,
f_1y_{1,3}^{n-1}y_{2,4},
f_1y_{2,3}^n,
f_1y_{2,3}^{n-1}y_{1,4},
f_2y_{1,3}^n,
f_2y_{1,3}^{n-1}y_{2,4},
f_2y_{2,3}^n,
f_2y_{2,3}^{n-1}y_{1,4},
\\
& 
f_3y_{1,2}^n,
f_3y_{1,2}^{n-1}y_{3,4},
f_iy,
\end{split}
\end{equation}
for $i\in \llbracket 1,3\rrbracket$, $y\in \U^!_{n}$ and $n\geqslant 2$.
Note that $y_{i,j}^2$ is central in $A^!$ and 
$f_{s}y_{i,j}^2=f_{s}y_{k,l}^2$ for $s\in\llbracket 1,4\rrbracket$ and $(i,j),(k,l)\in \I$ with $\# \{i,j,k,l \}=4$.
For $n \geqslant 5$ and $i\in \llbracket 1,3\rrbracket$, 
\begin{align*}
f_iy_{1,2}^{n-2}y_{1,4}^2 & = f_iy_{1,2}^{n-2}y_{2,3}^2
= f_i y_{1,2}^{n-2}y_{1,3}^2 , 
\hskip 1mm
f_iy_{1,2}^{n-3}y_{1,3}^2 y_{1,4}  = f_iy_{1,2}^{n-3}y_{2,4}^2 y_{1,4}
=f_iy_{1,2}^{n-1}y_{1,4} , 
\\
f_iy_{1,2}^{n-3}y_{1,3}^2 y_{2,4} & = f_iy_{1,2}^{n-3}y_{2,4}^3 
=f_i y_{1,2}^{n-1}y_{2,4}, 
\hskip 1mm
f_iy_{1,2}^{n-3}y_{1,3} y_{1,4}^2 = f_iy_{1,2}^{n-3}y_{1,3}y_{2,3}^2 
= f_iy_{1,2}^{n-1}y_{1,3}, 
\\
f_iy_{1,2}^{n-3}y_{2,3} y_{1,4}^2 & = f_iy_{1,2}^{n-3}y_{2,3}^3 
= f_i y_{1,2}^{n-1}y_{2,3} , 
\hskip 1mm
f_iy_{1,2}^{n-4}y_{1,3}^2 y_{1,4}^2  = f_iy_{1,2}^{n-4}y_{1,3}^2 y_{2,3}^2 
=  f_iy_{1,2}^{n-2}y_{1,3}^2 , 
\\
f_iy_{1,3}^{n-1}y_{1,4} & = \chi_n f_iy_{1,3}y_{2,4}^{n-2}y_{1,4}+\chi_{n+1}f_iy_{2,4}^{n-1}y_{1,4} 
= \chi_n f_i y_{1,3} y_{1,2}^{n-2}y_{1,4}+\chi_{n+1}f_iy_{1,2}^{n-1}y_{1,4} 
\\
& 
=\chi_n f_iy_{1,2}^{n-2}y_{1,3}y_{1,4}+\chi_{n+1}f_iy_{1,2}^{n-1}y_{1,4},
\\
f_iy_{1,3}^{n-1}y_{3,4} & = f_iy_{1,3}^{n-3}y_{2,4}^2y_{3,4} 
= f_iy_{1,3}^{n-3}y_{2,3}^2y_{3,4} 
= \chi_n f_i y_{1,2}^{n-3}y_{1,3}y_{2,3}y_{3,4}+\chi_{n+1}f_iy_{1,2}^{n-3}y_{2,3}^2y_{3,4} 
\\
&
=\chi_n f_i y_{1,2}^{n-2}y_{1,3}y_{3,4}+\chi_{n+1}f_iy_{1,2}^{n-3}y_{1,3}^2y_{3,4} , 
\\
f_iy_{2,3}^{n-1}y_{2,4} & = \chi_n f_iy_{2,3}y_{1,4}^{n-2}y_{2,4}+\chi_{n+1}f_iy_{1,4}^{n-1}y_{2,4} 
= \chi_n f_iy_{2,3}y_{1,2}^{n-2}y_{2,4} +\chi_{n+1}f_iy_{1,2}^{n-1}y_{2,4} 
\\
&
= \chi_n f_i y_{1,2}^{n-2}y_{2,3}y_{2,4}+\chi_{n+1}f_iy_{1,2}^{n-1}y_{2,4}, 
\\
f_iy_{2,3}^{n-1}y_{3,4} & = f_iy_{2,3}^{n-3}y_{1,4}^2y_{3,4}=f_iy_{2,3}^{n-3}y_{1,3}^2y_{3,4} 
= - \chi_n f_i y_{1,2}^{n-3}y_{2,3}y_{1,3}y_{3,4}+\chi_{n+1} f_i y_{1,2}^{n-3}y_{1,3}^2y_{3,4}
\\
& 
= \chi_n f_i y_{1,2}^{n-2}y_{2,3}y_{3,4}+\chi_{n+1} f_i y_{1,2}^{n-3}y_{1,3}^2y_{3,4}.
\end{align*}
Moreover, by the dual relation $f_2y_{1,2}=-f_1y_{3,4}$, and 
\begin{align*}
f_3y_{1,2}^{n-1}y_{1,3} 
& = \chi_n f_3y_{1,2}y_{1,3}y_{1,2}^{n-2}+\chi_{n+1}f_3y_{1,3}y_{1,2}^{n-1} 
= -\chi_n f_3y_{2,3}y_{1,2}y_{1,2}^{n-2}+\chi_{n+1}f_3y_{1,3}y_{1,2}^{n-1} 
\\
&
=\chi_n f_2 y_{1,4}y_{1,2}^{n-1}+\chi_{n+1}f_1y_{2,4}y_{1,2}^{n-1}
, 
\\
f_3y_{1,2}^{n-1}y_{2,3} 
& = \chi_n f_3 y_{1,2}y_{2,3}y_{1,2}^{n-2}+\chi_{n+1}f_3y_{2,3}y_{1,2}^{n-1}
=-\chi_n f_3y_{2,3}y_{1,3}y_{1,2}^{n-2}+\chi_{n+1}f_3y_{2,3}y_{1,2}^{n-1} 
\\
& 
=\chi_n f_2y_{1,4}y_{1,3}y_{1,2}^{n-2}-\chi_{n+1}f_2y_{1,4}y_{1,2}^{n-1}, 
\\
f_3y_{1,2}^{n-1}y_{1,4} 
& = \chi_n f_3y_{1,2}y_{1,4}y_{1,2}^{n-2}+\chi_{n+1}f_3y_{1,4}y_{1,2}^{n-1}
= -\chi_n f_3 y_{2,4}y_{1,2}y_{1,2}^{n-2}+\chi_{n+1}f_3y_{1,4}y_{1,2}^{n-1}
\\
& = \chi_n f_1y_{1,3}y_{1,2}^{n-1}+\chi_{n+1}f_2y_{2,3}y_{1,2}^{n-1}, 
\\
f_3y_{1,2}^{n-1}y_{2,4}
& =  \chi_n f_3y_{1,2}y_{2,4}y_{1,2}^{n-2}+\chi_{n+1}f_3y_{2,4}y_{1,2}^{n-1} 
= -\chi_n f_3y_{1,4}y_{1,2}y_{1,2}^{n-2}+\chi_{n+1}f_3y_{2,4}y_{1,2}^{n-1} 
\\
& 
=-\chi_n f_2y_{2,3}y_{1,2}y_{1,2}^{n-2}-\chi_{n+1}f_1y_{1,3}y_{1,2}^{n-1} ,
\end{align*}
for $n\geqslant 3$, 
the space $N^!_{-n}$ is spanned by $T^N_n$ for $n\geqslant 5$. 

Next, we prove that the elements in $T_n^N$ for $n\geqslant 6$ are linearly independent. 
Suppose that we have the identity 
\begin{equation}
\label{eq:1}
\begin{split}
\sum_{i\in \llbracket 1,24\rrbracket}\alpha_{i}t_i
& = 
\sum_{u\in \B^!_{n-1}} \lambda^1_{u} (f_1y_{1,2}-f_2y_{3,4}) u
+\sum_{u\in \B^!_{n-1}} \lambda^2_{u} (f_1y_{3,4}+f_2y_{1,2} ) u
\\
& \phantom{= \;}
+\sum_{u\in \B^!_{n-1}} \lambda^3_{u} 
(f_3y_{1,2}+f_4y_{3,4})u
+\sum_{u\in \B^!_{n-1}} \lambda^4_{u} 
(f_3y_{3,4}-f_4y_{1,2})u
\\
& \phantom{= \;} 
+\sum_{u\in \B^!_{n-1}} \lambda^5_{u} 
(f_4y_{1,3}-f_2y_{2,4})u
+\sum_{u\in \B^!_{n-1}} \lambda^6_{u} 
(f_4y_{2,4}+f_2y_{1,3} )u 
\\
& \phantom{= \;} 
+\sum_{u\in \B^!_{n-1}} \lambda^7_{u} 
(f_3y_{1,3}-f_1y_{2,4})u
+\sum_{u\in \B^!_{n-1}} \lambda^8_{u} 
(f_3y_{2,4}+f_1y_{1,3})u
\\
& \phantom{= \;} 
+\sum_{u\in \B^!_{n-1}} \lambda^9_{u} 
(f_1y_{2,3}+f_4y_{1,4})u
+\sum_{u\in \B^!_{n-1}} \lambda^{10}_{u} 
(f_1y_{1,4}-f_4y_{2,3})u
\\
& \phantom{= \;} 
+\sum_{u\in \B^!_{n-1}} \lambda^{11}_{u} 
(f_3y_{2,3}+f_2y_{1,4})u
+\sum_{u\in \B^!_{n-1}} \lambda^{12}_{u} 
(f_3y_{1,4}-f_2y_{2,3})u, 
\end{split}
\end{equation}
in $\Bbbk\{f_1,f_2,f_3,f_4 \} \otimes A^!$, where $\alpha_i, \lambda^{j}_{u} \in \Bbbk$, and $t_i$ is the $i$-th element in \eqref{eq:basis dual NNN}. 
We need to show that $\alpha_i = 0$ for all $i \in \llbracket 1, 24 \rrbracket$. 
By inspecting the coefficients of the term $f_sy_{i,j}^{n-r}y_{k,l}^r$ for $\# \{i,j,k,l \}=4$, it is easy to see that $\alpha_i=0$ for $i\in \llbracket 1, 12\rrbracket$.
Then \eqref{eq:1} is equivalent to 
\begin{equation}
\label{eq:2}
\begin{split}
f_1\big( y_{1,2} \Delta^1+y_{3,4}\Delta^2+y_{1,3}\Delta^8-y_{2,4}\Delta^7 +y_{2,3}\Delta^9+y_{1,4}\Delta^{10} \big) & =\sum_{i\in\llbracket 13,24 \rrbracket} \alpha_i t_i, 
   \\
   f_2\big( y_{1,2} \Delta^2-y_{3,4}\Delta^1+y_{1,3}\Delta^6-y_{2,4}\Delta^5 -y_{2,3}\Delta^{12}+y_{1,4}\Delta^{11} \big) & = 0, 
   \\
   f_3\big( y_{1,2} \Delta^3+y_{3,4}\Delta^4+y_{1,3}\Delta^7+y_{2,4}\Delta^8 +y_{2,3}\Delta^{11}+y_{1,4}\Delta^{12} \big) & = 0,
   \\
   f_4\big( -y_{1,2} \Delta^4+y_{3,4}\Delta^3+y_{1,3}\Delta_5+y_{2,4}\Delta^6 -y_{2,3}\Delta^{10}+y_{1,4}\Delta^{9} \big) & = 0,
\end{split}
\end{equation}
in $\Bbbk\{f_i\}\otimes A^!$ for $i\in \llbracket 1,4\rrbracket $ respectively,  
where $\Delta^j=\sum_{u\in \B^!_{n-1}\setminus Y_{1,2}}\lambda^j_u u $ for $j\in \llbracket 1,4\rrbracket $, 
$\Delta^j=\sum_{u\in \B^!_{n-1}\setminus Y_{1,3}}\lambda^j_u u $ for $j\in \llbracket 5,8\rrbracket $, 
$\Delta^j=\sum_{u\in \B^!_{n-1}\setminus Y_{2,3}}\lambda^j_u u $ for $j\in \llbracket 9,12\rrbracket $ 
and $Y_{i,j}$ is defined in \eqref{eq:Yij}. 
In particular, we see that the elements in $T_n^N$ are linearly independent if and only if 
equation \eqref{eq:2} implies that $\alpha_i=0$ for all $i\in \llbracket 13, 24\rrbracket$.

Let 
\begin{small}
\begin{align*}
a^j_0 & =\lambda^j_{y_{1,2}^{n-1}}, 
a'^j_{0}=\lambda^j_{y_{3,4}^{n-1}},
a^{j}_1=\underset{\text{\begin{tiny}$\begin{matrix} r\in \llbracket 1, n-2 \rrbracket, \\ r \; odd \end{matrix}$\end{tiny}}}{\sum}
\lambda^j_{y_{1,2}^{n-1-r}y_{3,4}^{r}}, 
a^{j}_2=\underset{\text{\begin{tiny}$\begin{matrix} r\in \llbracket 1, n-2 \rrbracket, \\ r \; even \end{matrix}$\end{tiny}}}{\sum}
\lambda^j_{y_{1,2}^{n-1-r}y_{3,4}^{r}} \text{ for $j\in \llbracket 5,12 \rrbracket $},
\\
b^j_0 & =\lambda^j_{y_{1,3}^{n-1}}, 
b'^j_{0}=\lambda^j_{y_{2,4}^{n-1}},
b^{j}_1=\underset{\text{\begin{tiny}$\begin{matrix} r\in \llbracket 1, n-2 \rrbracket, \\ r \; odd \end{matrix}$\end{tiny}}}{\sum}
\lambda^j_{y_{1,3}^{n-1-r}y_{2,4}^{r}}, 
b^{j}_2=\underset{\text{\begin{tiny}$\begin{matrix} r\in \llbracket 1, n-2 \rrbracket, \\ r \; even \end{matrix}$\end{tiny}}}{\sum}
\lambda^j_{y_{1,3}^{n-1-r}y_{2,4}^{r}} \text{ for $j\in \llbracket 1,4 \rrbracket \cup \llbracket 9,12\rrbracket $},
\\
c^j_0 & =\lambda^j_{y_{2,3}^{n-1}}, 
c'^j_{0}=\lambda^j_{y_{1,4}^{n-1}},
c^{j}_1=\underset{\text{\begin{tiny}$\begin{matrix} r\in \llbracket 1, n-2 \rrbracket, \\ r \; odd \end{matrix}$\end{tiny}}}{\sum}
\lambda^j_{y_{2,3}^{n-1-r}y_{1,4}^{r}}, 
c^{j}_2=\underset{\text{\begin{tiny}$\begin{matrix} r\in \llbracket 1, n-2 \rrbracket, \\ r \; even \end{matrix}$\end{tiny}}}{\sum}
\lambda^j_{y_{2,3}^{n-1-r}y_{1,4}^{r}} \text{ for $j\in \llbracket 1,8 \rrbracket $}.
\end{align*}
\end{small}
\hskip -1mm Using \eqref{eq:2} together with the products \eqref{eq:product y13y12ny34r} and \eqref{eq:product y13y12n},  in $A^!$, we get a system of linear equations $E_n$, which contains $24\times 4 = 96$ linear equations and $12+24\times 12+ 4\times 8\times 3 = 396$ variables $\alpha_i$, $\lambda^j_u$ for $u\in \U^!_{n-1}$, $a^j_0$, $a'^j_0$, $a^j_1$, $a^j_2$, $b^j_0$, $b'^j_0$, $b^j_1$, $b^j_2$, $c^j_0$, $c'^j_0$, $c^j_1$, $c^j_2$. 
Hence, the linear independence of $T_{n}^{N}$ (or, equivalently, the fact that equation \eqref{eq:2} implies that $\alpha_i=0$ for all $i\in \llbracket 13, 24\rrbracket$) is tantamount to the fact that the linear system $E_n$ implies that $\alpha_i=0$ for all $i\in \llbracket 13, 24\rrbracket$.
Furthermore, it is easy to see that $E_n$ has the same form as $E_{n+2}$. 
We then use GAP to check that the elements in $T^N_{n}$ are linearly independent for $n\in \llbracket 6, 7 \rrbracket$, and conclude that 
the lemma holds for all integers $n\geqslant 6$. 
\end{proof}

\begin{cor}
\label{cor:M3}
We have $\operatorname{H}_n(M^3)=0$ for $n\in \NN \setminus \{ 3 \}$. 
\end{cor}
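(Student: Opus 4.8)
The plan is to reduce the computation to the two summands of the decomposition $M^{3} \cong N \oplus \bigoplus_{k=1}^{4} S_{k}$ of quadratic $A$-modules recalled at the beginning of this subsubsection, where the relations \eqref{eq:relation in M3} split as the twelve relations involving only $e_{1},\dots,e_{4}$ and the four disjoint triples of relations involving each $e_{k+4}$. Since the assignment $M \mapsto \K_{\bullet}(M)$ is additive on direct sums of quadratic modules (the quadratic dual, and hence the Koszul complex, respects direct sums), we get $\operatorname{H}_{n}(M^{3}) \cong \operatorname{H}_{n}(N) \oplus \bigoplus_{k=1}^{4}\operatorname{H}_{n}(S_{k})$ for every $n \in \NN_{0}$. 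As a direct sum vanishes exactly when each summand does, it suffices to show that $\operatorname{H}_{n}(N) = 0$ and $\operatorname{H}_{n}(S_{k}) = 0$ for all $k \in \llbracket 1,4\rrbracket$ and all $n \in \NN \setminus \{3\}$. Furthermore, the natural action of the symmetric group on $\llbracket 1,4\rrbracket$ by algebra automorphisms of $\FK(4)$ permutes the summands $S_{1},\dots,S_{4}$ up to isomorphism (the module $S_{k}$ being cut out by the three transpositions that avoid the index $5-k$), so their Koszul complexes are isomorphic up to relabeling and it is enough to treat $S_{1}$.

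For each of $S_{1}$ and $N$ I would determine the homology by a dimension count, following the scheme used for $M^{2}$ in the previous subsubsection. Write $B^{S_{1}}_{n,m}, D^{S_{1}}_{n,m}$ and $B^{N}_{n,m}, D^{N}_{n,m}$ for the image and the kernel of the corresponding Koszul differential in homological degree $n$ and internal degree $m+n$. The two identities $\dim D_{n,m} = \dim M^{!}_{-n}\cdot \dim A_{m} - \dim B_{n-1,m+1}$ and $\dim \operatorname{H}_{n,m} = \dim D_{n,m} - \dim B_{n,m}$ reduce everything to computing $\dim B^{S_{1}}_{n,m}$ and $\dim B^{N}_{n,m}$, because the dimensions $\dim (S_{1})^{!}_{-n}$ and $\dim N^{!}_{-n}$ are already furnished by Lemmas \ref{lem:basis dual S1234} and \ref{lem:basis dual NNN}, being equal to $6$ and $24$ for $n \geqslant 3$.

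To obtain closed formulas for $\dim B^{S_{1}}_{n,m}$ and $\dim B^{N}_{n,m}$ for large $n$, I would split each dual basis into a ``changing'' part and a ``non-changing'' part and replay Lemmas \ref{lem: dim Cnm M2} and \ref{lem: dim Unm M2}: the dimension of the image of the changing part is read off from the combinatorics of the standard-word bases $W^{i,j}_{m}$, while the centrality of the squares $y_{i,j}^{2}$ in $A^{!}$ (already exploited in the proof of Lemma \ref{lem:basis dual NNN}) yields, through multiplication by suitable such squares, an explicit period-two linear isomorphism $\Bbbk \Q_{n}\otimes A \cong \Bbbk \Q_{n+2}\otimes A$ carrying the relevant images onto one another. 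This gives $\dim B_{n+2,m} - \dim B_{n,m} = \dim C_{n+2,m} - \dim C_{n,m}$ for $n$ past a small bound, hence an affine formula in $n$ for $\dim B_{n,m}$; the remaining low values are computed with the GAP code of Appendix \ref{sec:cpx}. Substituting these into the two identities above produces $\dim \operatorname{H}_{n,m}(S_{1}) = \dim \operatorname{H}_{n,m}(N) = 0$ for all $n \geqslant 4$, while the same GAP routines give $\operatorname{H}_{1} = \operatorname{H}_{2} = 0$ for both summands directly.

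I expect the main obstacle to be, exactly as in the proofs of Lemmas \ref{lem:dim non changing in image} and \ref{lem: dim Unm M2}, the bookkeeping of the non-changing images and of the intersections $U_{n,m}\cap C_{n,m}$: one must write down explicit spanning sets of $F_{n,m} = (\Bbbk \Q_{n}\otimes A_{m}) \cap C_{n,m}$ separately in the even and odd cases and verify that the period-two isomorphism matches them, which is where the verification becomes long. Once the dimension formulas for $\dim B^{N}_{n,m}$ and $\dim B^{S_{1}}_{n,m}$ are established, combining $\operatorname{H}_{n}(N) = 0$ and $\operatorname{H}_{n}(S_{k}) = 0$ for $n \in \NN \setminus \{3\}$ via the direct-sum decomposition yields $\operatorname{H}_{n}(M^{3}) = 0$ for all $n \in \NN \setminus \{3\}$, as claimed.
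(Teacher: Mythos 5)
Your proposal is correct, and its skeleton (split $M^{3}\cong N\oplus\bigoplus_{k=1}^{4}S_{k}$, exploit a period-two phenomenon for $n\geqslant 4$, check the low homological degrees with GAP) matches the paper's proof; the difference lies in how the period-two phenomenon is used. The paper observes that, by Tables \ref{table:yn123} and \ref{table:yn456} and the reductions in the proof of Lemma \ref{lem:basis dual NNN}, the Koszul differential of $N$ and of each $S_{k}$ has literally the same form when $n\geqslant 4$ is increased by $2$ — this is possible because, unlike $(M^{2})^{!}$, the duals $N^{!}_{-n}$ and $(S_{k})^{!}_{-n}$ have \emph{constant} dimension ($24$ and $6$) for $n\geqslant 3$ — so one gets $\operatorname{H}_{n+2}(M^{3})\cong\operatorname{H}_{n}(M^{3})$ directly and concludes by induction from the GAP computation in degrees $\llbracket 1,5\rrbracket\setminus\{3\}$. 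You instead propose to re-run the heavier dimension-count machinery of Lemmas \ref{lem: dim Cnm M2} and \ref{lem: dim Unm M2} (changing/non-changing parts, intersections $U_{n,m}\cap C_{n,m}$, formulas for $\dim B_{n,m}$, then $\dim\operatorname{H}_{n,m}=\dim D_{n,m}-\dim B_{n,m}$ as in Corollaries \ref{cor:H} and \ref{cor:M2}). This works, but note that it degenerates: for $N$ and $S_{k}$ there is no growing ``changing part'' at all (no families $y_{i,j}^{n-r}y_{k,l}^{r}$ indexed by $r\in\llbracket 0,n\rrbracket$ survive in the dual bases of Lemmas \ref{lem:basis dual S1234} and \ref{lem:basis dual NNN}), so $C$ is empty, $\dim B_{n,m}$ is eventually \emph{constant} in $n$ rather than affine, and your whole argument collapses to exactly the periodicity of the $U$-part — i.e.\ to the paper's argument. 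Your two additional structural remarks are sound and are nice simplifications the paper leaves implicit: the Koszul complex does respect the direct sum (since $(R_{M}\oplus R_{N})^{\perp}=R_{M}^{\perp}\oplus R_{N}^{\perp}$), and the $S_{4}$-symmetry of $\FK(4)$ permutes the $S_{k}$, so for a vanishing statement it suffices to treat $S_{1}$.
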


\begin{proof}
By Tables \ref{table:yn123} and \ref{table:yn456}, and the reductions in the proof of Lemma \ref{lem:basis dual NNN}, 
the differential at homological degree $n$ in the Koszul complex $N$ or $S_k$ has the same form when $n\geqslant 4$ increases by $2$. 
Then $\operatorname{H}_{n+2}(M^3)=\operatorname{H}_{n}(M^3)$ for $n\geqslant 4$. 
Using GAP, $\operatorname{H}_n(M^3)=0$ for $n\in \llbracket 1,5 \rrbracket  \setminus \{ 3 \}$. 
By induction on $n$, $\operatorname{H}_n(M^3)=0$ for $n\in \NN \setminus \{ 3 \}$. 
\end{proof}

By Appendix \ref{sec:cpx} and 
the code 
\parskip 1ex
\parindent 0in
\begin{tcolorbox}[breakable,colback=white,width=\textwidth ,center,arc=0mm,size=fbox]
\begin{footnotesize}
\begin{Verbatim}[samepage=false]
for j in [0..8] do
    for i in [1..12] do
        Print(j, " ", i, " ", RankMat(FF(3,j,i)), "\n");
    od;
od;
\end{Verbatim}
\end{footnotesize}
\end{tcolorbox}
we obtain the dimension of $B^{M^3}_{n,m}$ for $n\in \llbracket 0,8\rrbracket$ and $m\in\llbracket 1,12\rrbracket$. 
Then the dimension of $\operatorname{H}_{n,m}(M^3)$ is given by Table \ref{table:HHH}
for $n\in \llbracket 1,8 \rrbracket   $ and $m\in\llbracket 0,12 \rrbracket $.
The dimensions that are not listed in the following table are zeros. 
\begin{table}[H]
   \begin{center}
      \begin{tabular}{|c|ccccccccccccc|}
      \hline
       \diagbox[width=14mm,height=5mm]{$n$}{$m$}  & $0$ & $1$  & $2$ & $3$ & $4$ & $5$ & $6$ & $7$ & $8$ & $9$ & $10$ & $11$ & $12$ 
     \\
     \hline
     $3$ &  &  &  & $8$ & $24$ & $48$ & $72$ & $80$ & $72$ & $48$ & $24$ & $8$ &   
     \\
      \hline
   \end{tabular}
 \vspace{1mm}
   \caption{Dimension of $\operatorname{H}_{n,m}(M^3)$.}	
   \label{table:HHH}
\end{center}
\end{table}
\vspace{-0.8cm}

\paragraph{\texorpdfstring{The $A$-module structure of the homology groups}{The A-module structure of the homology groups}}

\begin{lem}
\label{lemma:homology-m3}
We have the following isomorphisms of graded $A$-modules 
\begin{equation}
   \label{eq:HK M3}
\operatorname{H}_{n}(M^3) \cong
\begin{cases} 
   M^3(-6) , & \text{if $n = 3$,}
   \\
   0, & \text{if $n \in \NN \setminus \{ 3 \}$.}
\end{cases}
\end{equation}
\end{lem}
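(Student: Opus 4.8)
The vanishing part of the statement costs nothing: for $n \in \NN \setminus \{3\}$ the isomorphism $\operatorname{H}_n(M^3) \cong 0$ is exactly the content of Corollary \ref{cor:M3}, whose proof already exploited the splitting $M^3 \cong N \oplus \bigoplus_{k} S_k$ together with the periodicity of the differential and the bases of $N^!$ and $(S_k)^!$ supplied by Lemmas \ref{lem:basis dual NNN} and \ref{lem:basis dual S1234}. The entire burden of the lemma is therefore the identification $\operatorname{H}_3(M^3) \cong M^3(-6)$, and the plan is to run the scheme already used for $\operatorname{H}_4(\Bbbk)$, $\operatorname{H}_1(M^1)$ and $\operatorname{H}_3(M^2)$ in Lemmas \ref{lemma:homology-m0}, \ref{lemma:homology-m1} and \ref{lemma:homology-m2}, but in its cleanest form. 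Indeed, Table \ref{table:HHH} shows that $\operatorname{H}_3(M^3)$ is concentrated in a single homological degree with dimension vector $(8,24,48,72,80,72,48,24,8)$, which coincides with that of $M^3$ recorded in Fact \ref{fact:hilb-ser-mi}; this signals that $\operatorname{H}_3(M^3)$ is itself a quadratic module isomorphic to a shift of $M^3$, with no nonsplit extension to disentangle (in contrast to \eqref{eq:sp2} and \eqref{eq:sp1}).

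Concretely, I would first exhibit eight homogeneous elements $e'_1,\dots,e'_8$ of $\operatorname{H}_{3,3}(M^3)$, which sits in total internal degree $3+3=6$, matching the degree of the generators of $M^3(-6)$. Using GAP I would compute the dimension vector of the $A$-submodule of $\operatorname{H}_3(M^3)$ generated by $e'_1,\dots,e'_8$ and check that it equals $(8,24,48,72,80,72,48,24,8)$; by Table \ref{table:HHH} this forces the submodule to be all of $\operatorname{H}_3(M^3)$, so the homology is generated in degree $6$ by these eight elements. Next, again with GAP, I would verify that the $e'_i$ satisfy the $24$ defining relations \eqref{eq:relation in M3} of $M^3$; as in the previous lemmas this reduces to checking that adjoining to $B^{M^3}_{3,4}$ the combinations obtained from \eqref{eq:relation in M3} by replacing each $e_i$ with $e'_i$ does not raise the rank, i.e. that these combinations already lie in $B^{M^3}_{3,4}$.

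These two verifications together produce a surjective morphism $M^3(-6) \to \operatorname{H}_3(M^3)$ of graded $A$-modules: the relations being satisfied guarantees that the assignment $e_i \mapsto e'_i$ extends to a well-defined morphism out of $M^3(-6)$ by the universal property of the quadratic module (cf. the isomorphism \eqref{eq:rest-qd}), and the generation statement makes it surjective. Finally, since the dimension vector of $M^3$ from Fact \ref{fact:hilb-ser-mi} equals that of $\operatorname{H}_3(M^3)$ from Table \ref{table:HHH}, this surjection is a degreewise isomorphism, yielding $\operatorname{H}_3(M^3) \cong M^3(-6)$ and completing the proof.

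The only genuinely delicate point, and hence the main obstacle, is the combinatorial and computational bookkeeping: choosing explicit cycle representatives for the classes $e'_i$ in $D^{M^3}_{3,3}$ and arranging them so that the relations \eqref{eq:relation in M3} are matched on the nose, exactly as was carried out for $M^2$ in the proof of Lemma \ref{lemma:homology-m2}. Because the periodicity and vanishing were already secured in Corollary \ref{cor:M3}, no further structural input is needed here, and the argument is a purely finite GAP computation in internal degrees up to $11$, with neither an extension problem to resolve nor a nonsplitting to establish.
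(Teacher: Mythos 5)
Your proposal is correct and follows essentially the same route as the paper's proof: the vanishing for $n \neq 3$ is quoted from Corollary \ref{cor:M3}, and for $n=3$ one uses GAP to check that eight classes in $\operatorname{H}_{3,3}(M^3)$ generate a submodule with dimension vector $(8,24,48,72,80,72,48,24,8)$ and satisfy the relations \eqref{eq:relation in M3}, producing a surjection $M^3(-6) \to \operatorname{H}_3(M^3)$ that is an isomorphism by comparison of dimension vectors. Your observation that no extension problem arises here, in contrast to Lemmas \ref{lemma:homology-m0} and \ref{lemma:homology-m1}, matches the structure of the paper's argument exactly.
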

\begin{proof}
The isomorphism in \eqref{eq:HK M3} for $n \in \NN \setminus \{ 3 \}$ immediately follows from Corollary \ref{cor:M3}.
We prove that the space $\operatorname{H}_{3}(M^3)$ is isomorphic to $M^{3}(-6)$. 
The following GAP code shows that the dimension vector of the submodule of $\operatorname{H}_{3}(M^3)$ generated by basis elements $e''_i, i\in \llbracket 1,8 \rrbracket$ of $\operatorname{H}_{3,3}(M^3)$ is $(8,24,48,72,80,72,48,24,8)$.
So, $\operatorname{H}_{3}(M^3)$ is generated by the eight elements as an $A$-module.
\parskip 1ex
\parindent 0in
\begin{tcolorbox}[breakable,colback=white,width=\textwidth ,center,arc=0mm,size=fbox]
\begin{footnotesize}
\begin{Verbatim}[samepage=false]
Imm:=Im(3,3,3);;
RankMat(Imm);
# 672
gene:=geneMH(3,3,3);;
Append(Imm,gene);
RankMat(Imm);
# 680
Uh:=UU(gene,3);; Vh:=VV(gene,3);; Wh:=WW(gene,3);;
for r in [4..11] do 
   hxr:=HXR(3,Uh,Vh,Wh,3,3,r-3);
   Im3r:=Im(3,3,r);
   Append(Im3r, hxr);
   Print(r, " ", RankMat(Im3r)-RankMat(Im(3,3,r)), "\n");
od;
# 4 24
# 5 48
# 6 72
# 7 80
# 8 72
# 9 48
# 10 24
# 11 8
\end{Verbatim}
\end{footnotesize}
\end{tcolorbox}
Furthermore, it is direct to check that the generators $e''_i, i\in \llbracket 1,8 \rrbracket$ of $\operatorname{H}_{3}(M^3)$ satisfy the quadratic relations \eqref{eq:relation in M3}. 
Indeed, the following code shows that the dimension of the subspace generated by $B_{3,4}^{M^3}$ together with the elements of the form \eqref{eq:relation in M3} with $e''_i$ instead of $e_i$ coincides with the dimension of $B_{3,4}^{M^3}$.
\parskip 1ex
\parindent 0in
\begin{tcolorbox}[breakable,colback=white,width=\textwidth ,center,arc=0mm,size=fbox]
\begin{footnotesize}
\begin{Verbatim}[samepage=false]
gene:=geneMH(3,3,3);;
Uh:=UU(gene,3);; Vh:=VV(gene,3);; Wh:=WW(gene,3);;
hx:=HXR(3,Uh,Vh,Wh,3,3,1);;
cc:=0*[1..24];;
cc[1]:=hx[1]+hx[12];; cc[2]:=hx[6]-hx[7];; cc[3]:=hx[13]-hx[24];; 
cc[4]:=hx[18]+hx[19];; cc[5]:=hx[20]+hx[11];; cc[6]:=hx[23]-hx[8];;
cc[7]:=hx[14]+hx[5];; cc[8]:=hx[17]-hx[2];; cc[9]:=hx[3]-hx[22];;
cc[10]:=hx[4]+hx[21];; cc[11]:=hx[15]-hx[10];; cc[12]:=hx[16]+hx[9];;
cc[13]:=hx[25];; cc[14]:=hx[26];; cc[15]:=hx[27];; cc[16]:=hx[31];; cc[17]:=hx[34];; 
cc[18]:=hx[35];; cc[19]:=hx[38];; cc[20]:=hx[40];; cc[21]:=hx[42];; cc[22]:=hx[45];; 
cc[23]:=hx[47];; cc[24]:=hx[48];; 
Imm:=Im(3,3,4);;
RankMat(Imm);
# 1344
Append(Imm,cc);
RankMat(Imm);
# 1344
\end{Verbatim}
\end{footnotesize}
\end{tcolorbox}
Hence, we see that there is a surjective morphism $M^3(-6)\to \operatorname{H}_{3}(M^3)$ of graded $A$-modules. 
Since the dimension vector of $M^3$ is $(8,24,48,72,80,72,48,24,8)$, we have $\operatorname{H}_{3}(M^3)\cong M^3(-6)$ as graded $A$-modules, as claimed.
\end{proof}

\subsection{\texorpdfstring{Proof of Theorem \ref{thm:rd fk4}}{Proof of Theorem 3.5}}

\begin{proof}[Proof of Theorem \ref{thm:rd fk4}] 
\label{proof:rd fk4}
The result is a direct consequence of Lemmas \ref{lemma:homology-m0}, \ref{lemma:homology-m1}, 
\ref{lemma:homology-m2} and \ref{lemma:homology-m3}. 
\end{proof}

\appendix

\section{\texorpdfstring{Some computations}{Some computations}}
\label{sec:appendix}

In this Appendix, we list some computations about the Fomin-Kirillov algebra $\FK(4)$ of index $4$. 
As before, we will usually denote $\FK(4)$ simply by $A$. 

\subsection{\texorpdfstring{A basis of $\FK(4)$}{A basis of FK(4)}}
\label{subsec:W}

We present here the GAP code as well the result to compute the basis $W^{1,2}$ (consisting of standard words) of $A$ 
under the order $x_{1,2} \prec x_{3,4} \prec x_{1,3} \prec x_{2,3} \prec x_{1,4} \prec x_{2,4}$. 

\parskip 1ex
\parindent 0in
\begin{tcolorbox}[breakable,colback=white,width=\textwidth ,center,arc=0mm,size=fbox]
\begin{footnotesize}
\begin{Verbatim}[samepage=false]
LoadPackage("GBNP");
A:=FreeAssociativeAlgebraWithOne(Rationals,"x12","x34","x13","x23","x14","x24");;
x12:=A.x12;; x13:=A.x13;; x23:=A.x23;; x14:=A.x14;; x24:=A.x24;; x34:=A.x34;; 
oA:=One(A);;
relationsA:=[x12^2, x13^2, x23^2, x14^2, x24^2, x34^2, x12*x23-x23*x13-x13*x12, 
 x23*x12-x12*x13-x13*x23, x12*x24-x24*x14-x14*x12, x24*x12-x12*x14-x14*x24, 
 x13*x34-x34*x14-x14*x13, x34*x13-x13*x14-x14*x34, x23*x34-x34*x24-x24*x23, 
 x34*x23-x23*x24-x24*x34, x12*x34-x34*x12, x13*x24-x24*x13, x14*x23-x23*x14];;
relsANP:=GP2NPList(relationsA);;
GA:=Grobner(relsANP);;
GBNP.ConfigPrint(A);
PrintNPList(GA);

 x12^2
 x34x12 - x12x34
 x34^2
 x13^2
 x23x12 - x13x23 - x12x13
 x23x13 + x13x12 - x12x23
 x23^2
 x14x34 + x13x14 - x34x13
 x14x13 - x13x34 + x34x14
 x14x23 - x23x14
 x14^2
 x24x12 - x14x24 - x12x14
 x24x34 + x23x24 - x34x23
 x24x13 - x13x24
 x24x23 - x23x34 + x34x24
 x24x14 + x14x12 - x12x24
 x24^2
 x13x12x13 + x12x13x12
 x13x34x13 - x34x13x34
 x23x34x23 - x34x23x34
 x14x12x34 + x13x14x12 - x34x13x12
 x14x12x13 - x23x14x12 + x13x34x23 - x34x23x14
 x14x12x23 + x23x34x14 + x34x14x12 - x12x23x34
 x14x12x14 + x12x14x12
 x23x34x13x12 - x13x34x23x34 + x34x23x34x13 - x12x13x34x23
 x23x34x13x34 + x13x12x34x13 - x12x23x34x13
 x23x34x13x23 + x13x23x34x13 - x34x13x23x34
 x13x12x34x13x12 + x34x13x12x34x13
 x13x12x34x13x34 + x12x13x12x34x13

W:=BaseQA(GA,6,0);;
PrintNPList(W); 
\end{Verbatim}
\end{footnotesize}
\end{tcolorbox}
The basis $W^{1,2}$ is given by the following $576$ elements


%


\subsection{\texorpdfstring{Products in $\FK(4)$}{Products in FK(4)}}
\label{sec:products FK4}

It is easy to check the products in $A^!$, listed in Table \ref{table:yy}-\ref{table:yyyyy}, by using GAP or by computing them directly, and to check the products listed in \ref{table:yn123}-\ref{table:y'yn odd} by induction on integers $n\geqslant 5$. 
In Tables \ref{table:yy}-\ref{table:yyyyy}, \ref{table:yn123} and \ref{table:yn456}, the entry appearing in the row indexed by $y$ and the column indexed by $y'$ is the product $yy'$. 
In Tables \ref{table:y'yn even} and \ref{table:y'yn odd}, the entry appearing in the column indexed by $y'$ and the row indexed by $y$ is the product $y'y$. 
To reduce space, in Table \ref{table:yyyy}, we write the product $yy'$ by $\pm m$, where $m\in \llbracket 55 ,92\rrbracket$ is the integer appearing in the first column of Table \ref{table:yyyyy}, and indicating the element in the second column of Table \ref{table:yyyyy} that is in the same row as it. 
In Table \ref{table:yyyyy}, we write the product $yy'$ by $\pm m$, where $m\in \llbracket 93,134\rrbracket$ is the integer appearing in the first column of Table \ref{table:y5}, and indicating the element in the second column of Table \ref{table:y5} that is in the same row as it. 
In Table \ref{table:y'yn even} and \ref{table:y'yn odd}, we write the product $yy'$ by $\pm m$, where $m\in \llbracket 1,24\rrbracket$ is the integer appearing in the first column of Table \ref{table:y'yn even} (or \ref{table:y'yn odd}), and indicating the element $\mathsf{a}^{n+1}_m$, where 
\begin{align*}
\mathsf{a}^n_{1} & =y_{1,2}^{n-1}y_{1,3}, & 
\mathsf{a}^n_{2}&=y_{1,2}^{n-2}y_{1,3}^2, &
\mathsf{a}^n_{3}&=y_{1,2}^{n-1}y_{2,3}, &
\mathsf{a}^n_{4}&=y_{1,2}^{n-1}y_{1,4}, 
\\
\mathsf{a}^n_{5}&=y_{1,2}^{n-2}y_{1,3}y_{1,4}, &
\mathsf{a}^n_{6} & =y_{1,2}^{n-3}y_{1,3}^2y_{1,4},&
\mathsf{a}^n_{7}&=y_{1,3}^{n-1}y_{1,4},&
\mathsf{a}^n_{8}&=y_{1,2}^{n-2}y_{2,3}y_{1,4}, 
\\
\mathsf{a}^n_{9}&=y_{1,2}^{n-2}y_{1,4}^2, &
\mathsf{a}^n_{10}&=y_{1,2}^{n-3}y_{1,3}y_{1,4}^2, &
\mathsf{a}^n_{11} & =y_{1,2}^{n-4}y_{1,3}^2y_{1,4}^2, &
\mathsf{a}^n_{12}&=y_{1,3}^{n-2}y_{1,4}^2, 
\\
\mathsf{a}^n_{13}&=y_{1,2}^{n-3}y_{2,3}y_{1,4}^2, &
\mathsf{a}^n_{14}&=y_{1,2}^{n-1}y_{2,4}, &
\mathsf{a}^n_{15}&=y_{1,2}^{n-2}y_{1,3}y_{2,4}, &
\mathsf{a}^n_{16} & =y_{1,2}^{n-3}y_{1,3}^2y_{2,4}, 
\\
\mathsf{a}^n_{17}&=y_{1,2}^{n-2}y_{2,3}y_{2,4}, &
\mathsf{a}^n_{18}&=y_{2,3}^{n-1}y_{2,4}, &
\mathsf{a}^n_{19}&=y_{2,3}^{n-2}y_{2,4}^2, &
\mathsf{a}^n_{20}&=y_{1,2}^{n-2}y_{1,3}y_{3,4}, 
\\
\mathsf{a}^n_{21} & =y_{1,2}^{n-3}y_{1,3}^2y_{3,4},& 
\mathsf{a}^n_{22}&=y_{1,3}^{n-1}y_{3,4}, &
\mathsf{a}^n_{23}&=y_{1,2}^{n-2}y_{2,3}y_{3,4}, &
\mathsf{a}^n_{24}&=y_{2,3}^{n-1}y_{3,4}, 
\end{align*}
for $n\geqslant 5$ and $m\in\llbracket 1,24\rrbracket$. 

\begin{table}[H]
\begin{center}

         \caption{Products $y'y$ for $n\geqslant 5$ odd.}	
         \label{table:y'yn odd}
      \end{center}
      \end{table}
      \vspace{-0.8cm}

\subsection{\texorpdfstring{A basis of $M^1$}{A basis of M1}}
\label{sec:Basis of M1}

We present here the GAP code for computing a basis of the quadratic module $M^1$, defined at the beginning of Subsection \ref{subsec:Resolving datum fk4}. 
The code was provided by J.W. Knopper. 

\parskip 1ex
\parindent 0in
\begin{tcolorbox}[breakable,colback=white,width=\textwidth ,center,arc=0mm,size=fbox]
\begin{footnotesize}
\begin{Verbatim}[samepage=false]
# Knopper's code 
LoadPackage("GBNP");
A:=FreeAssociativeAlgebraWithOne(Rationals,"x12","x13","x23","x14","x24","x34");;
x12:=A.x12;; x13:=A.x13;; x23:=A.x23;; x14:=A.x14;; x24:=A.x24;; x34:=A.x34;; 
oA:=One(A);;
relationsA:=[x12^2, x13^2, x23^2, x14^2, x24^2, x34^2, x12*x23-x23*x13-x13*x12, 
 x23*x12-x12*x13-x13*x23, x12*x24-x24*x14-x14*x12, x24*x12-x12*x14-x14*x24, 
 x13*x34-x34*x14-x14*x13, x34*x13-x13*x14-x14*x34, x23*x34-x34*x24-x24*x23, 
 x34*x23-x23*x24-x24*x34, x12*x34-x34*x12, x13*x24-x24*x13, x14*x23-x23*x14];;
relsANP:=GP2NPList(relationsA);;
GBNP.ConfigPrint(A);
GA:=Grobner(relsANP);;

MD:=A^2;;
ab:=GeneratorsOfLeftModule(MD);
c2:=ab[1]; c1:=ab[2]; 
modrels:=[c1*x13, c1*x24, c2*x23, c2*x14, c1*x12+c2*x12, c1*x34+c2*x34];;
modrelsNP:=GP2NPList(modrels);
PrintNPList(modrelsNP);
      
GBNP.CheckHom:=function(G,wtv)
    local i,j,k,l,mon,h1,h2,ans;
    mon:=LMonsNP(G);
    ans:=GBNP.WeightedDegreeList(mon,wtv);
    for i in [1..Length(G)] do
        h1:=ans[i];
        l:=Length(G[i][1]);
        for j in [2..l] do
            mon:=G[i][1][j];
            h2:=0;
            for k in [1..Length(mon)] do
                if mon[k]>0 then
                # Don't count module generators, which have a negative index. 
                # Only count two-sided generators with index 1 or more.
                       h2:=h2+wtv[mon[k]];
                fi;
            od;
            if h2<>h1 then return(false); fi;
        od;
    od;
    Info(InfoGBNP,1,"Input is homogeneous");
    return(ans);
end;
      
GBNP.WeightedDegreeMon:=function(mon,lst)
    local i,ans;
    ans:=0;
    for i in mon do
        # Don't count module generators, which have a negative index. 
        # Only count two-sided generators with index 1 or more.
        if i>0 then
            ans:=ans+lst[i];
        fi;
    od;
    return(ans);
end;;
      
SetInfoLevel(InfoGBNP,1);
SetInfoLevel(InfoGBNPTime,1);
combinedrelsNP:=Concatenation(GA,modrelsNP);
GAT:=SGrobnerTrunc(combinedrelsNP, 9, [1,1,1,1,1,1]);
PrintNPList(GAT);
      
splitGAT:=function(GAT)
    local p, ts, rel, lm; 
    # p: list of module or prefix relations, ts: list of two-sided relations, 
    # rel: current relation, lm: leading monomial of current relation rel.
    p:=[];
    ts:=[];
    for rel in GAT do
        # get leading monomial.
        lm := rel[1,1];
        if Length(lm)>1 and lm[1]<0 then
            # module relations start with a negative generator.
            # if 1 is part of the GB then it does not have a generator, 
            # furthermore it is two-sided.
            Add(p, rel);
        else
            Add(ts, rel);
        fi;
    od;
    return rec(p:=p, ts:=ts);
end;;
      
split:=splitGAT(GAT);
GBR:=rec(p:=split.p, pg:=2, ts:=split.ts);
BQM:=BaseQM(GBR,6,2,0);;
PrintNPList(BQM);

[ 0, 1 ]
[ 1 , 0]
[ 0, x12 ]
[ 0, x23 ]
[ 0, x14 ]
[ 0, x34 ]
[ x13 , 0]
[ x24 , 0]
[ 0, x12x13 ]
[ 0, x12x23 ]
[ 0, x12x14 ]
[ 0, x12x24 ]
[ 0, x12x34 ]
[ 0, x23x14 ]
[ 0, x23x24 ]
[ 0, x23x34 ]
[ 0, x14x13 ]
[ 0, x14x34 ]
[ x13x24 , 0]
[ 0, x12x13x14 ]
[ 0, x12x13x24 ]
[ 0, x12x13x34 ]
[ 0, x12x23x14 ]
[ 0, x12x23x24 ]
[ 0, x12x23x34 ]
[ 0, x12x14x13 ]
[ 0, x12x14x34 ]
[ 0, x12x24x23 ]
[ 0, x12x24x34 ]
[ 0, x23x14x12 ]
[ 0, x23x14x34 ]
[ 0, x12x13x14x12 ]
[ 0, x12x13x14x13 ]
[ 0, x12x13x14x24 ]
[ 0, x12x13x14x34 ]
[ 0, x12x13x24x23 ]
[ 0, x12x13x24x34 ]
[ 0, x12x23x14x13 ]
[ 0, x12x23x14x24 ]
[ 0, x12x23x24x34 ]
[ 0, x12x14x13x34 ]
[ 0, x12x24x23x34 ]
[ 0, x12x13x14x12x23 ]
[ 0, x12x13x14x12x24 ]
[ 0, x12x13x14x12x34 ]
[ 0, x12x13x14x13x34 ]
[ 0, x12x13x14x24x23 ]
[ 0, x12x13x24x23x34 ]
[ 0, x12x13x14x12x23x34 ]
[ 0, x12x13x14x12x24x23 ]
\end{Verbatim}
\end{footnotesize}
\end{tcolorbox}

\subsection{\texorpdfstring{Koszul complex of $M^i$ for $i \in \{ 0, 1, 2, 3 \}$}{Koszul complex of Mi for i = 0,1,2,3}}
\label{sec:cpx}

We present here the GAP code for computing the differential of the Koszul complex of the quadratic modules $M^0=\Bbbk$ and $M^i$ for $i\in\llbracket 1,3\rrbracket$ defined in Subsection \ref{subsec:Resolving datum fk4}. 
We also present a basis of $\operatorname{H}_{n,m}(M^i)$ for some pairs $(n,m)$. 
In the following code, the matrix \begin{footnotesize}FF(i,n,m)\end{footnotesize} represents the linear map $d_{n+1,m-1}(M^i):\operatorname{K}_{n+1,m-1}(M^i)\to \operatorname{K}_{n,m}(M^i)$, 
\begin{footnotesize}Im(i,n,m)\end{footnotesize} is a basis of the space $B_{n,m}^{M^i}$
and \begin{footnotesize}Ker(i,n,m)\end{footnotesize} is a basis of the space $D_{n,m}^{M^i}$. 
Moreover, \begin{footnotesize}geneMH(i,n,m)\end{footnotesize} are some elements in $D_{n,m}^{M^i}$, and we can show that it represents a basis of $\operatorname{H}_{n,m}(M^i)$ since the dimension of the space spanned by $B_{n,m}^{M^i}$ and \begin{footnotesize}geneMH(i,n,m)\end{footnotesize} coincides with the dimension of $D_{n,m}^{M^i}$. 

\parskip 1ex
\parindent 0in
\begin{tcolorbox}[breakable,colback=white,width=\textwidth ,center,arc=0mm,size=fbox]
\begin{footnotesize}
\begin{Verbatim}[samepage=false]
LoadPackage("GBNP");
A:=FreeAssociativeAlgebraWithOne(Rationals,"x12","x13","x23","x14","x24","x34");;
x12:=A.x12;; x13:=A.x13;; x23:=A.x23;; x14:=A.x14;; x24:=A.x24;; x34:=A.x34;; 
oA:=One(A);;
relationsA:=[x12^2, x13^2, x23^2, x14^2, x24^2, x34^2, x12*x23-x23*x13-x13*x12, 
 x23*x12-x12*x13-x13*x23, x12*x24-x24*x14-x14*x12, x24*x12-x12*x14-x14*x24, 
 x13*x34-x34*x14-x14*x13, x34*x13-x13*x14-x14*x34, x23*x34-x34*x24-x24*x23, 
 x34*x23-x23*x24-x24*x34, x12*x34-x34*x12, x13*x24-x24*x13, x14*x23-x23*x14];;
# A/relationsA is the Fomin-Kirillov algbera on 4 generators.
relsANP:=GP2NPList(relationsA);;
GBNP.ConfigPrint(A);
GA:=Grobner(relsANP);; # GA is a Gröbner basis of the ideal in A.
# PrintNPList(GA);
C:=BaseQA(GA,6,0);;  # C is the set of standard words with respect to GA.
# PrintNPList(C);

f:=function(n)
    if n=0 then return 1;
    elif n=1 then return 6;
    elif n=2 then return 19;
    elif n=3 then return 42;
    elif n=4 then return 71;
    elif n=5 then return 96;
    elif n=6 then return 106;
    elif n=7 then return 96;
    elif n=8 then return 71;
    elif n=9 then return 42;
    elif n=10 then return 19;
    elif n=11 then return 6;
    elif n=12 then return 1;
    fi;
end;
# f(n) is the dimension of $A_n$.

g:=function(n)
    if n=-1 then return 0;
    elif n=0 then return f(0);
    elif n=1 then return Sum(List([0..1], s->f(s)));
    elif n=2 then return Sum(List([0..2], s->f(s)));
    elif n=3 then return Sum(List([0..3], s->f(s)));
    elif n=4 then return Sum(List([0..4], s->f(s)));
    elif n=5 then return Sum(List([0..5], s->f(s)));
    elif n=6 then return Sum(List([0..6], s->f(s)));
    elif n=7 then return Sum(List([0..7], s->f(s)));
    elif n=8 then return Sum(List([0..8], s->f(s)));
    elif n=9 then return Sum(List([0..9], s->f(s)));
    elif n=10 then return Sum(List([0..10], s->f(s)));
    elif n=11 then return Sum(List([0..11], s->f(s)));
    elif n=12 then return Sum(List([0..12], s->f(s)));
    fi;
end;
# g(n)-g(n-1)=f(n).

B:=FreeAssociativeAlgebraWithOne(Rationals,"y12","y13","y23","y14","y24","y34");;
y12:=B.y12;; y13:=B.y13;; y23:=B.y23;; y14:=B.y14;; y24:=B.y24;; y34:=B.y34;; 
oB:=One(B);;
relationsB:=[y12*y23+y23*y13, y13*y23+y23*y12, y12*y23+y13*y12, y12*y13+y23*y12, 
 y12*y24+y24*y14, y14*y24+y24*y12, y12*y24+y14*y12, y12*y14+y24*y12, y13*y34+y34*y14, 
 y14*y34+y34*y13, y13*y34+y14*y13, y13*y14+y34*y13, y23*y34+y34*y24, y24*y34+y34*y23, 
 y23*y34+y24*y23, y23*y24+y34*y23, y12*y34+y34*y12, y13*y24+y24*y13, 
 y23*y14+y14*y23];;
# B/relationsB is the quadratic dual of the Fomin-Kirillov algebra on 4 generators.
relsBNP:=GP2NPList(relationsB);;
wtv:= [1,1,1,1,1,1];;
GBNP.ConfigPrint(B);;
GB:=Grobner(relsBNP);; # GB is a Gröbner basis of the ideal in B.
# PrintNPList(GB);
D:= BaseQATrunc(GB,12,wtv);; 
for degpart in D do for mon in degpart do PrintNP([[mon],[1]]); od; od;
DT:=[]; 
for degpart in D do for mon in degpart do Append(DT,[[[mon],[1]]]); od; od;

S:=B^8;;
ab:=GeneratorsOfLeftModule(S);;
g8:=ab[1];; g7:=ab[2];; g6:=ab[3];; g5:=ab[4];; 
g4:=ab[5];; g3:=ab[6];; g2:=ab[7];; g1:=ab[8];;
modrels:=[g1*y12-g2*y34, g1*y34+g2*y12, g3*y12+g4*y34, g3*y34-g4*y12, g4*y13-g2*y24, 
 g4*y24+g2*y13, g3*y13-g1*y24, g3*y24+g1*y13, g1*y23+g4*y14, g1*y14-g4*y23, 
 g3*y23+g2*y14, g3*y14-g2*y23, g5*y14, g5*y24, g5*y34, g6*y13, g6*y23, g6*y34, g7*y12, 
 g7*y23, g7*y24, g8*y12, g8*y13, g8*y14 ];;
modrelsNP:=GP2NPList(modrels);;
# PrintNPList(modrelsNP);;
   
GBNP.CheckHom:=function(G,wtv)
    local i,j,k,l,mon,h1,h2,ans;
    mon:=LMonsNP(G);
    ans:=GBNP.WeightedDegreeList(mon,wtv);
    for i in [1..Length(G)] do
        h1:=ans[i];
        l:=Length(G[i][1]);
        for j in [2..l] do
            mon:=G[i][1][j];
            h2:=0;
            for k in [1..Length(mon)] do
                if mon[k]>0 then
                    h2:=h2+wtv[mon[k]];
                fi;
            od;
            if h2<>h1 then return(false); fi;
        od;
    od;
    Info(InfoGBNP,1,"Input is homogeneous");
    return(ans);
end;
   
GBNP.WeightedDegreeMon:=function(mon,lst)
    local i,ans;
    ans:=0;
    for i in mon do
        if i>0 then
            ans:=ans+lst[i];
        fi;
    od;
    return(ans);
end;;
   
SetInfoLevel(InfoGBNP,1);;
SetInfoLevel(InfoGBNPTime,1);;
combinedrelsNP:=Concatenation(GB,modrelsNP);;
GBT:=SGrobnerTrunc(combinedrelsNP, 15, [1,1,1,1,1,1]);;
# PrintNPList(GBT);

splitGBT:=function(GBT)
    local p, ts, rel, lm; 
    p:=[];
    ts:=[];
    for rel in GBT do
        lm := rel[1,1];
        if Length(lm)>1 and lm[1]<0 then
            Add(p, rel);
        else
            Add(ts, rel);
        fi;
    od;
    return rec(p:=p, ts:=ts);
end;;
   
split:=splitGBT(GBT);;
GBRM3:=rec(p:=split.p, pg:=8, ts:=split.ts);;
BQMM3:=BaseQM(GBRM3,6,8,500);;
# PrintNPList(BQMM3);

S:=B^7;;
ab:=GeneratorsOfLeftModule(S);;
g7:=ab[1];; g6:=ab[2];; g5:=ab[3];; g4:=ab[4];; g3:=ab[5];; g2:=ab[6];; g1:=ab[7];;  
modrels:=[g1*y14+g4*y14, g1*y24+g3*y24, g1*y34-g2*y34, g2*y13+g4*y13, g2*y23+g3*y23, 
 g3*y12-g4*y12, g5*y12-g6*y34, g1*y24+g5*y13, g5*y23+g7*y14, g5*y14-g7*y23, 
 g2*y13-g5*y24, g5*y34+g6*y12, g6*y13+g7*y24, g1*y14-g6*y23, g2*y23+g6*y14, 
 g6*y24-g7*y13, g1*y34-g7*y12, g3*y12-g7*y34];;
modrelsNP:=GP2NPList(modrels);;
# PrintNPList(modrelsNP);
   
SetInfoLevel(InfoGBNP,1);;
SetInfoLevel(InfoGBNPTime,1);;
combinedrelsNP:=Concatenation(GB,modrelsNP);;
GBT:=SGrobnerTrunc(combinedrelsNP, 15, [1,1,1,1,1,1]);;
# PrintNPList(GBT);
   
split:=splitGBT(GBT);;
GBRM2:=rec(p:=split.p, pg:=7, ts:=split.ts);;
BQMM2:=BaseQM(GBRM2,6,7,500);;
# PrintNPList(BQMM2);

MD:=B^2;;
ab:=GeneratorsOfLeftModule(MD);;
g2:=ab[1];; g1:=ab[2];;
modrels:=[g1*y12-g2*y12, g2*y13, g1*y23, g1*y14, g2*y24, g1*y34-g2*y34];;
modrelsNP:=GP2NPList(modrels);;
# PrintNPList(modrelsNP);

SetInfoLevel(InfoGBNP,1);;
SetInfoLevel(InfoGBNPTime,1);;
combinedrelsNP:=Concatenation(GB,modrelsNP);;
GBT:=SGrobnerTrunc(combinedrelsNP, 15, [1,1,1,1,1,1]);;
# PrintNPList(GBT);

split:=splitGBT(GBT);;
GBRM1:=rec(p:=split.p, pg:=2, ts:=split.ts);;
BQMM1:=BaseQM(GBRM1,6,2,400);;
# PrintNPList(BQMM1);

ff:=function(i,n)
    if i=0 and n=-1 then return 0;
    elif i=0 and n=0 then return 1;
    elif i=0 and n=1 then return 7;
    elif i=0 and n=2 then return 24;
    elif i=0 and n=3 then return 54;
    elif i=0 and n=4 then return 92;
    elif i=0 and n=5 then return 134;
    elif i=0 and n=6 then return 179;
    elif i=0 and n=7 then return 227;
    elif i=0 and n=8 then return 278;
    elif i=0 and n=9 then return 332;
    elif i=0 and n=10 then return 389;
    elif i=0 and n=11 then return 449;
    elif i=0 and n=12 then return 512;
    elif i=0 and n=13 then return 578; 

    elif i=1 and n=-1 then return 0;
    elif i=1 and n=0 then return 2;
    elif i=1 and n=1 then return 8;
    elif i=1 and n=2 then return 17;
    elif i=1 and n=3 then return 29;
    elif i=1 and n=4 then return 44;
    elif i=1 and n=5 then return 62;
    elif i=1 and n=6 then return 83;
    elif i=1 and n=7 then return 107;
    elif i=1 and n=8 then return 134;
    elif i=1 and n=9 then return 164;
    elif i=1 and n=10 then return 197;
    elif i=1 and n=11 then return 233;
    elif i=1 and n=12 then return 272;
    elif i=1 and n=13 then return 314;
	
    elif i=2 and n=-1 then return 0;
    elif i=2 and n=0 then return 7;
    elif i=2 and n=1 then return 31;
    elif i=2 and n=2 then return 74;
    elif i=2 and n=3 then return 128;
    elif i=2 and n=4 then return 185;
    elif i=2 and n=5 then return 245;
    elif i=2 and n=6 then return 308;
    elif i=2 and n=7 then return 374;
    elif i=2 and n=8 then return 443;
    elif i=2 and n=9 then return 515;
		
    elif i=3 and n=-1 then return 0;
    elif i=3 and n=0 then return 8;
    elif i=3 and n=1 then return 32;
    elif i=3 and n=2 then return 72;
    elif i=3 and n=3 then return 120;
    elif i=3 and n=4 then return 168;
    elif i=3 and n=5 then return 216;
    elif i=3 and n=6 then return 264;
    elif i=3 and n=7 then return 312;
    elif i=3 and n=8 then return 360;
    elif i=3 and n=9 then return 408;
    elif i=3 and n=10 then return 456;
    elif i=3 and n=11 then return 504;
    fi;
end;

FFM0:=function(j,i)
    local F,RDF,H,L,RFA,DFA,rra,dda,s,LAs,k,t;
    RDF:=List([ff(0,j-1)+1..ff(0,j+1)], p -> DT[p]);
    H:=List([1..6], s -> TransposedMat(MatrixQA(s,RDF,GB)));;
    L:=List([1..6], s -> List([ff(0,j)-ff(0,j-1)+1..ff(0,j+1)-ff(0,j-1)], 
     q -> List([1..ff(0,j)-ff(0,j-1)], p -> H[s][q][p])));;
    RFA:=List([g(i-1)+1..g(i)], p -> C[p]);
    DFA:=List([g(i-2)+1..g(i-1)], p -> C[p]);
    rra:=Length(RFA);
    dda:=Length(DFA);
    F:=[];
    for s in [1..6] do
        LAs:=0*[1..dda];
        for k in [1..dda] do 
            LAs[k]:=0*[1..rra];
            for t in [1..Length(MulQA(C[s+1], DFA[k], GA)[1])] do 
                LAs[k][Position(RFA,[[MulQA(C[s+1], DFA[k], GA)[1][t]],[1]])]:=
                 MulQA(C[s+1], DFA[k], GA)[2][t];
            od;
        od;
        F:=F+KroneckerProduct(L[s],LAs);
    od;
    return F;
end;;

FFM1:=function(j,i)
    local FF,RF,DF,rr,dd,RFA,DFA,rra,dda,s,LLs,LAs,k,t;
    RF:=List([ff(1,j-1)+1..ff(1,j)], p -> BQMM1[p]);
    DF:=List([ff(1,j)+1..ff(1,j+1)], p -> BQMM1[p]);
    rr:=Length(RF);
    dd:=Length(DF);
    RFA:=List([g(i-1)+1..g(i)], p -> C[p]);
    DFA:=List([g(i-2)+1..g(i-1)], p -> C[p]);
    rra:=Length(RFA);
    dda:=Length(DFA);
    FF:=[];
    for s in [1..6] do 
        LLs:=0*[1..rr];
        LAs:=0*[1..dda];
        for k in [1..rr] do
            LLs[k]:=0*[1..dd];
            for t in [1..Length(MulQM(RF[k], DT[s+1], GBRM1)[1])] do 
                LLs[k][Position(DF,[[MulQM(RF[k], DT[s+1], GBRM1)[1][t]],[1]])]:=
                 MulQM(RF[k], DT[s+1], GBRM1)[2][t];
            od;
        od;
        for k in [1..dda] do 
            LAs[k]:=0*[1..rra];
            for t in [1..Length(MulQA(C[s+1], DFA[k], GA)[1])] do 
                LAs[k][Position(RFA,[[MulQA(C[s+1], DFA[k], GA)[1][t]],[1]])]:=
                 MulQA(C[s+1], DFA[k], GA)[2][t];
            od;
        od;
        FF:=FF+KroneckerProduct(TransposedMat(LLs),LAs);
    od;
    return FF;
end;

FFM2:=function(j,i)
    local FF,RF,DF,rr,dd,RFA,DFA,rra,dda,s,LLs,LAs,k,t;
    RF:=List([ff(2,j-1)+1..ff(2,j)], p -> BQMM2[p]);
    DF:=List([ff(2,j)+1..ff(2,j+1)], p -> BQMM2[p]);
    rr:=Length(RF);
    dd:=Length(DF);
    RFA:=List([g(i-1)+1..g(i)], p -> C[p]);
    DFA:=List([g(i-2)+1..g(i-1)], p -> C[p]);
    rra:=Length(RFA);
    dda:=Length(DFA);
    FF:=[];
    for s in [1..6] do 
        LLs:=0*[1..rr];
        LAs:=0*[1..dda];
        for k in [1..rr] do
            LLs[k]:=0*[1..dd];
            for t in [1..Length(MulQM(RF[k], DT[s+1], GBRM2)[1])] do 
                LLs[k][Position(DF,[[MulQM(RF[k], DT[s+1], GBRM2)[1][t]],[1]])]:=
                 MulQM(RF[k], DT[s+1], GBRM2)[2][t];
            od;
        od;
        for k in [1..dda] do 
            LAs[k]:=0*[1..rra];
            for t in [1..Length(MulQA(C[s+1], DFA[k], GA)[1])] do 
                LAs[k][Position(RFA,[[MulQA(C[s+1], DFA[k], GA)[1][t]],[1]])]:=
                 MulQA(C[s+1], DFA[k], GA)[2][t];
            od;
        od;
        FF:=FF+KroneckerProduct(TransposedMat(LLs),LAs);
    od;
    return FF;
end;

FFM3:=function(j,i)
    local FF,RF,DF,rr,dd,RFA,DFA,rra,dda,s,LLs,LAs,k,t;
    RF:=List([ff(3,j-1)+1..ff(3,j)], p -> BQMM3[p]);
    DF:=List([ff(3,j)+1..ff(3,j+1)], p -> BQMM3[p]);
    rr:=Length(RF);
    dd:=Length(DF);
    RFA:=List([g(i-1)+1..g(i)], p -> C[p]);
    DFA:=List([g(i-2)+1..g(i-1)], p -> C[p]);
    rra:=Length(RFA);
    dda:=Length(DFA);
    FF:=[];
    for s in [1..6] do 
        LLs:=0*[1..rr];
        LAs:=0*[1..dda];
        for k in [1..rr] do
            LLs[k]:=0*[1..dd];
            for t in [1..Length(MulQM(RF[k], DT[s+1], GBRM3)[1])] do 
                LLs[k][Position(DF,[[MulQM(RF[k], DT[s+1], GBRM3)[1][t]],[1]])]:=
                 MulQM(RF[k], DT[s+1], GBRM3)[2][t];
            od;
        od;
        for k in [1..dda] do 
            LAs[k]:=0*[1..rra];
            for t in [1..Length(MulQA(C[s+1], DFA[k], GA)[1])] do 
                LAs[k][Position(RFA,[[MulQA(C[s+1], DFA[k], GA)[1][t]],[1]])]:=
                 MulQA(C[s+1], DFA[k], GA)[2][t];
            od;
        od;
        FF:=FF+KroneckerProduct(TransposedMat(LLs),LAs);
    od;
    return FF;
end;

FF:=function(ii,j,i)
    if ii=0 then return FFM0(j,i);
    elif ii=1 then return FFM1(j,i);
    elif ii=2 then return FFM2(j,i);
    elif ii=3 then return FFM3(j,i);
    fi;
end;

Im:=function(ii,j,i)
    local Imm;
    Imm:=TriangulizedMat(BaseMatDestructive(FF(ii,j,i)));
    return Imm;
end;

Ker:=function(ii,j,i)
    local Kerr;
    Kerr:=TriangulizedNullspaceMatDestructive(FF(ii,j-1,i+1));
    return Kerr;
end;

HXR:=function(ii,Uh,Vh,Wh,n,m,r)
    local hxr,Vhxr,CC,s,t,le,yy,VP,j,i,k;
    VP:=[];;
    hxr:=0*[1..Length(Uh)*f(r)];;
    Vhxr:=0*[1..Length(Uh)*f(r)];;
    CC:=List([g(m+r-1)+1..g(m+r)], p -> C[p]);;
    for s in [1..Length(Uh)] do
        for t in [1..f(r)] do 
            le:=Length(Uh[s]);;
            yy:=C[g(r-1)+t];;
            hxr[(s-1)*f(r)+t]:=0*[1..(ff(ii,n)-ff(ii,n-1))*f(m+r)];;
            VP:=0*[1..le];;
            Vhxr[(s-1)*f(r)+t]:=0*[1..le];;
            for j in [1..le] do
                VP[j]:=[ [  ], [  ] ];
                for i in [1..Length(Vh[s][j])] do 
                    VP[j]:=AddNP(VP[j],MulQA(C[g(m-1)+Vh[s][j][i]],yy,GA),1,
                     Wh[s][j][i]);
                od;
                Vhxr[(s-1)*f(r)+t][j]:=List([1..Length(VP[j][1])], k -> 
                 Position(CC, [ [VP[j][1][k]], [ 1 ] ] ));
                for k in [1..Length(VP[j][1])] do  
                    hxr[(s-1)*f(r)+t][f(m+r)*(Uh[s][j]-1)+Vhxr[(s-1)*f(r)+t][j][k]]:=
                     VP[j][2][k];
                od;
            od;
        od;
    od;
    return hxr;
end;

UU:=function(gene,ii)
    local Rest,Uh,Vh,Wh,Post,k,aa,Quo,Res,Sig,Qu,Re,Sg,i,j;
    Rest:=function(n)
        if n mod f(ii) > 0 then return n mod f(ii);
        else return f(ii);
        fi;
    end;
    Uh:=0*[1..Length(gene)];;
    Vh:=0*[1..Length(gene)];;
    Wh:=0*[1..Length(gene)];;
    Post:=[];;
    for k in [1..Length(gene)] do 
        Uh[k]:=[]; Vh[k]:=[]; Wh[k]:=[];
        aa:=gene[k];
        Post:=[1..Length(aa)];
        SubtractSet(Post, Positions(aa,0));
        Quo:=List([1..Length(Post)], s->(Post[s]-Rest(Post[s]))/f(ii)+1);
        Res:=List([1..Length(Post)], s->Rest(Post[s]));
        Sig:=List([1..Length(Post)], s->gene[k][Post[s]]);
        Qu:=Set(Quo);
        Re:=0*[1..Length(Qu)];
        Sg:=0*[1..Length(Qu)];
        for i in [1..Length(Qu)] do 
            Re[i]:=[];
            Sg[i]:=[];
            for j in [1..Length(Positions(Quo,Qu[i]))] do 
                Re[i][j]:=Res[Position(Quo,Qu[i])+j-1];	
                Sg[i][j]:=Sig[Position(Quo,Qu[i])+j-1];
            od;
        od;
        Uh[k]:=Qu;
        Vh[k]:=Re;
        Wh[k]:=Sg;
    od;
    return Uh;
end;

VV:=function(gene,ii)
    local Rest,Uh,Vh,Wh,Post,k,aa,Quo,Res,Sig,Qu,Re,Sg,i,j;
    Rest:=function(n)
        if n mod f(ii) > 0 then return n mod f(ii);
        else return f(ii);
        fi;
    end;
    Uh:=0*[1..Length(gene)];;
    Vh:=0*[1..Length(gene)];;
    Wh:=0*[1..Length(gene)];;
    Post:=[];;
    for k in [1..Length(gene)] do 
        Uh[k]:=[]; Vh[k]:=[]; Wh[k]:=[];
        aa:=gene[k];
        Post:=[1..Length(aa)];
        SubtractSet(Post, Positions(aa,0));
        Quo:=List([1..Length(Post)], s->(Post[s]-Rest(Post[s]))/f(ii)+1);
        Res:=List([1..Length(Post)], s->Rest(Post[s]));
        Sig:=List([1..Length(Post)], s->gene[k][Post[s]]);
        Qu:=Set(Quo);
        Re:=0*[1..Length(Qu)];
        Sg:=0*[1..Length(Qu)];
        for i in [1..Length(Qu)] do 
            Re[i]:=[];
            Sg[i]:=[];
            for j in [1..Length(Positions(Quo,Qu[i]))] do 
                Re[i][j]:=Res[Position(Quo,Qu[i])+j-1];	
                Sg[i][j]:=Sig[Position(Quo,Qu[i])+j-1];
            od;
        od;
        Uh[k]:=Qu;
        Vh[k]:=Re;
        Wh[k]:=Sg;
    od;
    return Vh;
end;

WW:=function(gene,ii)
    local Rest,Uh,Vh,Wh,Post,k,aa,Quo,Res,Sig,Qu,Re,Sg,i,j;
    Rest:=function(n)
        if n mod f(ii) > 0 then return n mod f(ii);
        else return f(ii);
        fi;
    end;
    Uh:=0*[1..Length(gene)];;
    Vh:=0*[1..Length(gene)];;
    Wh:=0*[1..Length(gene)];;
    Post:=[];;
    for k in [1..Length(gene)] do 
        Uh[k]:=[]; Vh[k]:=[]; Wh[k]:=[];
        aa:=gene[k];
        Post:=[1..Length(aa)];
        SubtractSet(Post, Positions(aa,0));
        Quo:=List([1..Length(Post)], s->(Post[s]-Rest(Post[s]))/f(ii)+1);
        Res:=List([1..Length(Post)], s->Rest(Post[s]));
        Sig:=List([1..Length(Post)], s->gene[k][Post[s]]);
        Qu:=Set(Quo);
        Re:=0*[1..Length(Qu)];
        Sg:=0*[1..Length(Qu)];
        for i in [1..Length(Qu)] do 
            Re[i]:=[];
            Sg[i]:=[];
            for j in [1..Length(Positions(Quo,Qu[i]))] do 
                Re[i][j]:=Res[Position(Quo,Qu[i])+j-1];	
                Sg[i][j]:=Sig[Position(Quo,Qu[i])+j-1];
            od;
        od;
        Uh[k]:=Qu;
        Vh[k]:=Re;
        Wh[k]:=Sg;
    od;
    return Wh;
end;

geneMH:=function(i,n,m)
    if i=0 and n=3 and m=3 then return 
    [Ker(0,3,3)[99], Ker(0,3,3)[378], Ker(0,3,3)[164], -Ker(0,3,3)[467],
     Ker(0,3,3)[219], -Ker(0,3,3)[40], Ker(0,3,3)[301], 
     Ker(0,3,3)[206]-Ker(0,3,3)[99]-Ker(0,3,3)[378]+Ker(0,3,3)[164]-Ker(0,3,3)[467]];
    elif i=0 and n=3 and m=5 then return [Ker(0,3,5)[79]];
    elif i=0 and n=4 and m=4 then return 
    [Ker(0,4,4)[550], Ker(0,4,4)[450]-Ker(0,4,4)[550]];
    elif i=0 and n=5 and m=11 then return [Ker(0,5,11)[90]];
    elif i=1 and n=1 and m=3 then return 
    [Ker(1,1,3)[15], Ker(1,1,3)[27], Ker(1,1,3)[53], -Ker(1,1,3)[67],
     Ker(1,1,3)[19], -Ker(1,1,3)[16], Ker(1,1,3)[22]];
    elif i=1 and n=1 and m=5 then return [Ker(1,1,5)[76]];
    elif i=1 and n=1 and m=7 then return [Ker(1,1,7)[64]];
    elif i=2 and n=1 and m=3 then return [Ker(2,1,3)[257]];
    elif i=2 and n=1 and m=5 then return [Ker(2,1,5)[908]];
    elif i=2 and n=2 and m=4 then return 
    [Ker(2,2,4)[783]-Ker(2,2,4)[784], Ker(2,2,4)[784]];
    elif i=2 and n=3 and m=3 then return 
    [Ker(2,3,3)[36]-Ker(2,3,3)[193]-Ker(2,3,3)[470]-Ker(2,3,3)[570]-Ker(2,3,3)[658],  
     Ker(2,3,3)[200], Ker(2,3,3)[197], -Ker(2,3,3)[16],
     Ker(2,3,3)[193], Ker(2,3,3)[470], Ker(2,3,3)[570], Ker(2,3,3)[658]];
    elif i=3 and n=3 and m=3 then return 
    [Ker(3,3,3)[179], -Ker(3,3,3)[185], -Ker(3,3,3)[174], Ker(3,3,3)[176],
     Ker(3,3,3)[355], Ker(3,3,3)[452], Ker(3,3,3)[540], Ker(3,3,3)[628]];
    else return [];
    fi;
end;
\end{Verbatim}
\end{footnotesize}
\end{tcolorbox}

\subsection{\texorpdfstring{A basis of $(M^2)^!$}{A basis of quadratic dual of M2}}
\label{Appendix:basis m2 d}

We present here the GAP code to compute a basis of $(M^2)^!_{-n}$ for $n$ less than some positive integer, where the quadratic module $M^2$ is defined at the beginning of Subsection \ref{subsec:Resolving datum fk4}. 
We also list the basis of $(M^2)^!_{-n}$ for $n\in \llbracket 0,3\rrbracket$. 
 
\parskip 1ex
\parindent 0in
\begin{tcolorbox}[breakable,colback=white,width=\textwidth ,center,arc=0mm,size=fbox]
\begin{footnotesize}
\begin{Verbatim}[samepage=false]
LoadPackage("GBNP");
B:=FreeAssociativeAlgebraWithOne(Rationals,"y12","y13","y23","y14","y24","y34");;
y12:=B.y12;; y13:=B.y13;; y23:=B.y23;; y14:=B.y14;; y24:=B.y24;; y34:=B.y34;; 
oB:=One(B);;
relationsB:=[y12*y23+y23*y13, y13*y23+y23*y12, y12*y23+y13*y12, y12*y13+y23*y12,
 y12*y24+y24*y14, y14*y24+y24*y12, y12*y24+y14*y12, y12*y14+y24*y12, y13*y34+y34*y14, 
 y14*y34+y34*y13, y13*y34+y14*y13, y13*y14+y34*y13, y23*y34+y34*y24, y24*y34+y34*y23, 
 y23*y34+y24*y23, y23*y24+y34*y23, y12*y34+y34*y12, y13*y24+y24*y13, 
 y23*y14+y14*y23];;
relsBNP:=GP2NPList(relationsB);;
wtv:= [1,1,1,1,1,1];;
GB:=Grobner(relsBNP);;
GBNP.ConfigPrint(B);;
PrintNPList(GB);

D:= BaseQATrunc(GB,12,wtv);; 
for degpart in D do 
    for mon in degpart do 
	PrintNP([[mon],[1]]); 
    od; 
od;
DT:=[];
for degpart in D do 
    for mon in degpart do 
        Append(DT,[[[mon],[1]]]); 
    od; 
od;

S:=B^7;
ab:=GeneratorsOfLeftModule(S);
g7:=ab[1]; g6:=ab[2]; g5:=ab[3]; g4:=ab[4]; g3:=ab[5]; g2:=ab[6]; g1:=ab[7];  
modrels:=[g1*y14+g4*y14, g1*y24+g3*y24, g1*y34-g2*y34, g2*y13+g4*y13, g2*y23+g3*y23, 
 g3*y12-g4*y12, g5*y12-g6*y34, g1*y24+g5*y13, g5*y23+g7*y14, g5*y14-g7*y23, 
 g2*y13-g5*y24, g5*y34+g6*y12, g6*y13+g7*y24, g1*y14-g6*y23, g2*y23+g6*y14, 
 g6*y24-g7*y13, g1*y34-g7*y12, g3*y12-g7*y34];
modrelsNP:=GP2NPList(modrels);
PrintNPList(modrelsNP);

GBNP.CheckHom:=function(G,wtv)
    local i,j,k,l,mon,h1,h2,ans;
    mon:=LMonsNP(G);
    ans:=GBNP.WeightedDegreeList(mon,wtv);
    for i in [1..Length(G)] do
        h1:=ans[i];
        l:=Length(G[i][1]);
        for j in [2..l] do
            mon:=G[i][1][j];
            h2:=0;
            for k in [1..Length(mon)] do
                if mon[k]>0 then
                    h2:=h2+wtv[mon[k]];
                fi;
            od;
            if h2<>h1 then return(false); fi;
        od;
    od;
    Info(InfoGBNP,1,"Input is homogeneous");
    return(ans);
end;
   
GBNP.WeightedDegreeMon:=function(mon,lst)
    local i,ans;
    ans:=0;
    for i in mon do
        if i>0 then
            ans:=ans+lst[i];
        fi;
    od;
    return(ans);
end;;

SetInfoLevel(InfoGBNP,1);
SetInfoLevel(InfoGBNPTime,1);
combinedrelsNP:=Concatenation(GB,modrelsNP);
GBT:=SGrobnerTrunc(combinedrelsNP, 15, [1,1,1,1,1,1]);
PrintNPList(GBT);

splitGBT:=function(GBT)
    local p, ts, rel, lm; 
    p:=[];
    ts:=[];
    for rel in GBT do
        lm := rel[1,1];
        if Length(lm)>1 and lm[1]<0 then
            Add(p, rel);
        else
            Add(ts, rel);
        fi;
    od;
    return rec(p:=p, ts:=ts);
end;;
   
split:=splitGBT(GBT);
GBR:=rec(p:=split.p, pg:=7, ts:=split.ts);
BQM:=BaseQM(GBR,6,7,500);;
PrintNPList(BQM);

[ 0, 0, 0, 0, 0, 0, 1 ]
[ 0, 0, 0, 0, 0, 1 , 0]
[ 0, 0, 0, 0, 1 , 0, 0]
[ 0, 0, 0, 1 , 0, 0, 0]
[ 0, 0, 1 , 0, 0, 0, 0]
[ 0, 1 , 0, 0, 0, 0, 0]
[ 1 , 0, 0, 0, 0, 0, 0]
[ 0, 0, 0, 0, 0, 0, y12 ]
[ 0, 0, 0, 0, 0, 0, y13 ]
[ 0, 0, 0, 0, 0, 0, y23 ]
[ 0, 0, 0, 0, 0, 0, y14 ]
[ 0, 0, 0, 0, 0, 0, y24 ]
[ 0, 0, 0, 0, 0, 0, y34 ]
[ 0, 0, 0, 0, 0, y12 , 0]
[ 0, 0, 0, 0, 0, y13 , 0]
[ 0, 0, 0, 0, 0, y23 , 0]
[ 0, 0, 0, 0, 0, y14 , 0]
[ 0, 0, 0, 0, 0, y24 , 0]
[ 0, 0, 0, 0, y12 , 0, 0]
[ 0, 0, 0, 0, y13 , 0, 0]
[ 0, 0, 0, 0, y14 , 0, 0]
[ 0, 0, 0, 0, y34 , 0, 0]
[ 0, 0, 0, y23 , 0, 0, 0]
[ 0, 0, 0, y24 , 0, 0, 0]
[ 0, 0, 0, y34 , 0, 0, 0]
[ 0, 0, y12 , 0, 0, 0, 0]
[ 0, 0, y23 , 0, 0, 0, 0]
[ 0, 0, y14 , 0, 0, 0, 0]
[ 0, 0, y34 , 0, 0, 0, 0]
[ 0, y13 , 0, 0, 0, 0, 0]
[ 0, y24 , 0, 0, 0, 0, 0]
[ 0, 0, 0, 0, 0, 0, y12^2 ]
[ 0, 0, 0, 0, 0, 0, y12y13 ]
[ 0, 0, 0, 0, 0, 0, y12y23 ]
[ 0, 0, 0, 0, 0, 0, y12y14 ]
[ 0, 0, 0, 0, 0, 0, y12y24 ]
[ 0, 0, 0, 0, 0, 0, y12y34 ]
[ 0, 0, 0, 0, 0, 0, y13^2 ]
[ 0, 0, 0, 0, 0, 0, y13y14 ]
[ 0, 0, 0, 0, 0, 0, y13y24 ]
[ 0, 0, 0, 0, 0, 0, y13y34 ]
[ 0, 0, 0, 0, 0, 0, y23^2 ]
[ 0, 0, 0, 0, 0, 0, y23y14 ]
[ 0, 0, 0, 0, 0, 0, y23y24 ]
[ 0, 0, 0, 0, 0, 0, y23y34 ]
[ 0, 0, 0, 0, 0, 0, y14^2 ]
[ 0, 0, 0, 0, 0, 0, y24^2 ]
[ 0, 0, 0, 0, 0, 0, y34^2 ]
[ 0, 0, 0, 0, 0, y12^2 , 0]
[ 0, 0, 0, 0, 0, y12y13 , 0]
[ 0, 0, 0, 0, 0, y12y23 , 0]
[ 0, 0, 0, 0, 0, y12y14 , 0]
[ 0, 0, 0, 0, 0, y12y24 , 0]
[ 0, 0, 0, 0, 0, y13y24 , 0]
[ 0, 0, 0, 0, 0, y23y14 , 0]
[ 0, 0, 0, 0, 0, y14^2 , 0]
[ 0, 0, 0, 0, 0, y24^2 , 0]
[ 0, 0, 0, 0, y12y34 , 0, 0]
[ 0, 0, 0, 0, y13^2 , 0, 0]
[ 0, 0, 0, 0, y13y14 , 0, 0]
[ 0, 0, 0, 0, y13y34 , 0, 0]
[ 0, 0, 0, 0, y14^2 , 0, 0]
[ 0, 0, 0, 0, y34^2 , 0, 0]
[ 0, 0, 0, y23^2 , 0, 0, 0]
[ 0, 0, 0, y23y24 , 0, 0, 0]
[ 0, 0, 0, y23y34 , 0, 0, 0]
[ 0, 0, 0, y24^2 , 0, 0, 0]
[ 0, 0, 0, y34^2 , 0, 0, 0]
[ 0, 0, y12^2 , 0, 0, 0, 0]
[ 0, 0, y12y34 , 0, 0, 0, 0]
[ 0, 0, y23^2 , 0, 0, 0, 0]
[ 0, 0, y23y14 , 0, 0, 0, 0]
[ 0, y13^2 , 0, 0, 0, 0, 0]
[ 0, y13y24 , 0, 0, 0, 0, 0]
[ 0, 0, 0, 0, 0, 0, y12^3 ]
[ 0, 0, 0, 0, 0, 0, y12^2y13 ]
[ 0, 0, 0, 0, 0, 0, y12^2y23 ]
[ 0, 0, 0, 0, 0, 0, y12^2y14 ]
[ 0, 0, 0, 0, 0, 0, y12^2y24 ]
[ 0, 0, 0, 0, 0, 0, y12^2y34 ]
[ 0, 0, 0, 0, 0, 0, y12y13^2 ]
[ 0, 0, 0, 0, 0, 0, y12y13y14 ]
[ 0, 0, 0, 0, 0, 0, y12y13y24 ]
[ 0, 0, 0, 0, 0, 0, y12y13y34 ]
[ 0, 0, 0, 0, 0, 0, y12y23y14 ]
[ 0, 0, 0, 0, 0, 0, y12y23y24 ]
[ 0, 0, 0, 0, 0, 0, y12y23y34 ]
[ 0, 0, 0, 0, 0, 0, y12y14^2 ]
[ 0, 0, 0, 0, 0, 0, y12y34^2 ]
[ 0, 0, 0, 0, 0, 0, y13^3 ]
[ 0, 0, 0, 0, 0, 0, y13^2y24 ]
[ 0, 0, 0, 0, 0, 0, y13^2y34 ]
[ 0, 0, 0, 0, 0, 0, y13y14^2 ]
[ 0, 0, 0, 0, 0, 0, y13y24^2 ]
[ 0, 0, 0, 0, 0, 0, y23^3 ]
[ 0, 0, 0, 0, 0, 0, y23^2y14 ]
[ 0, 0, 0, 0, 0, 0, y23y14^2 ]
[ 0, 0, 0, 0, 0, 0, y23y24^2 ]
[ 0, 0, 0, 0, 0, 0, y14^3 ]
[ 0, 0, 0, 0, 0, 0, y24^3 ]
[ 0, 0, 0, 0, 0, 0, y34^3 ]
[ 0, 0, 0, 0, 0, y12^3 , 0]
[ 0, 0, 0, 0, 0, y12^2y14 , 0]
[ 0, 0, 0, 0, 0, y12^2y24 , 0]
[ 0, 0, 0, 0, 0, y12y14^2 , 0]
[ 0, 0, 0, 0, 0, y13y24^2 , 0]
[ 0, 0, 0, 0, 0, y23y14^2 , 0]
[ 0, 0, 0, 0, 0, y14^3 , 0]
[ 0, 0, 0, 0, 0, y24^3 , 0]
[ 0, 0, 0, 0, y12y34^2 , 0, 0]
[ 0, 0, 0, 0, y13^3 , 0, 0]
[ 0, 0, 0, 0, y13^2y14 , 0, 0]
[ 0, 0, 0, 0, y13^2y34 , 0, 0]
[ 0, 0, 0, 0, y13y14^2 , 0, 0]
[ 0, 0, 0, 0, y14^3 , 0, 0]
[ 0, 0, 0, 0, y34^3 , 0, 0]
[ 0, 0, 0, y23^3 , 0, 0, 0]
[ 0, 0, 0, y23^2y24 , 0, 0, 0]
[ 0, 0, 0, y23^2y34 , 0, 0, 0]
[ 0, 0, 0, y23y24^2 , 0, 0, 0]
[ 0, 0, 0, y24^3 , 0, 0, 0]
[ 0, 0, 0, y34^3 , 0, 0, 0]
[ 0, 0, y12^3 , 0, 0, 0, 0]
[ 0, 0, y12^2y34 , 0, 0, 0, 0]
[ 0, 0, y23^3 , 0, 0, 0, 0]
[ 0, 0, y23^2y14 , 0, 0, 0, 0]
[ 0, y13^3 , 0, 0, 0, 0, 0]
[ 0, y13^2y24 , 0, 0, 0, 0, 0]
\end{Verbatim}
\end{footnotesize}
\end{tcolorbox}

\subsection{\texorpdfstring{Right action of $\FK(4)^!$ on $(M^2)^!$}{Right action of FK(4)! on (M2)!}}
\label{sec:products M2}

We list below the right action of some elements of $A^!$ on $(M^2)^!$, where $M^2$ is the quadratic (right) $A$-module defined at the beginning of Subsection \ref{subsec:Resolving datum fk4}.
In Tables \ref{table:product module M2 yy' n even 123}-\ref{table:product module M2 yy' n odd 456}, the entry appearing in the row indexed by $y$ and the column indexed by $y'$ is the product $yy'$. 
To reduce space, the integer $m\in\llbracket 1,24\rrbracket$, appearing in the third to fifth columns of Tables \ref{table:product module M2 yy' n even 123}-\ref{table:product module M2 yy' n odd 456} indicates the element $\mathsf{b}^{n+1}_m$, where $\mathsf{b}^n_m$ is the $m$-th element in \eqref{eq: basis1 M2} for $n\geqslant 4$ and $m\in \llbracket 1, 24\rrbracket$. 


\begin{table}[H]
   \begin{center}
       \resizebox{0.8\textwidth}{101mm}{
   \begin{tabular}{c|c|ccc}
         \hline
         & \diagbox[width=26mm,height=5mm]{$y$}{$y'$}  & $y_{1,2}$ & $y_{1,3}$  & $y_{2,3}$ 
         \\
         \hline
         $1$ & $g_1y_{1,2}^{n-1}y_{1,3}$ & $ -2$ & $5$ & $1 $ 
         \\
         $2$ & $g_1y_{1,2}^{n-1}y_{2,3}$ & $ -1$ & $-2$ & $5$ 
         \\
         $3$ & $g_1y_{1,2}^{n-1}y_{1,4}$ & $ -4$ & $-8$ & $-9$ 
         \\
         $4$ & $g_1y_{1,2}^{n-1}y_{2,4}$ & $-3 $ & $-7$ & $-11$ 
         \\
         $5$ & $g_1y_{1,2}^{n-2}y_{1,3}^2$ & $ 5$ & $1$ & $2$ 
         \\
         $6$ & $g_1y_{1,2}^{n-2}y_{1,3}y_{1,4}$ & $10 $ & $-13$ & $-6$ 
         \\
         $7$ & $g_1y_{1,2}^{n-2}y_{1,3}y_{2,4}$ & $9 $ & $-4$ & $-8$ 
         \\
         $8$ & $g_1y_{1,2}^{n-2}y_{1,3}y_{3,4}$ & $11 $ & $-3$ & $-7$ 
         \\
         $9$ & $g_1y_{1,2}^{n-2}y_{2,3}y_{1,4}$ & $7 $ & $11$ & $-3$ 
         \\
         $10$ & $g_1y_{1,2}^{n-2}y_{2,3}y_{2,4}$ & $ 6$ & $10$ & $-13$ 
         \\
         $11$ & $g_1y_{1,2}^{n-2}y_{2,3}y_{3,4}$ & $8 $ & $9$ & $-4$ 
         \\
         $12$ & $g_1y_{1,2}^{n-2}y_{1,4}^2$ & $12 $ & $14$ & $15$ 
         \\
         $13$ & $g_1y_{1,2}^{n-3}y_{1,3}^2y_{3,4}$ & $-13 $ & $-6$ & $-10$ 
         \\
         $14$ & $g_1y_{1,2}^{n-3}y_{1,3}y_{1,4}^2$ & $-15 $ & $12$ & $14$ 
         \\
         $15$ & $g_1y_{1,2}^{n-3}y_{2,3}y_{1,4}^2$ & $ -14$ & $-15$ & $12$ 
         \\
         $16$ & $g_2y_{1,2}^{n-1}y_{1,4}$ & $-17 $ & $-8$ & $-9$ 
         \\
         $17$ & $g_2y_{1,2}^{n-1}y_{2,4}$ & $ -16$ & $-7$ & $-11$ 
         \\
         $18$ & $g_2y_{1,2}^{n-2}y_{1,4}^2$ & $18 $ & $14$ & $15$ 
         \\
         $19$ & $g_3y_{1,3}^{n-1}y_{1,4}$ & $-10 $ & $-20$ & $6$ 
         \\
         $20$ & $g_3y_{1,3}^{n-1}y_{3,4}$ & $-11 $ &  $-19$ & $7$ 
         \\
         $21$ & $g_3y_{1,3}^{n-2}y_{1,4}^2$ & $-12 $ &  $21$ & $-15$ 
         \\
         $22$ & $g_4y_{2,3}^{n-1}y_{2,4}$ & $-6 $ & $-10$ & $-23$ 
         \\
         $23$ & $g_4y_{2,3}^{n-1}y_{3,4}$ & $-8 $ & $-9$ & $-22$ 
         \\
         $24$ & $g_4y_{2,3}^{n-2}y_{2,4}^2$ & $ -12$ & $-14$ & $24$ 
         \\
         \hline
         & $ g_1 y_{1,2}^n $ & $g_1y_{1,2}^{n+1}$ & $1$ & $2$ 
         \\
         & $ g_1 y_{1,2}^{n-r}y_{3,4}^{r} $ & $(-1)^{r} g_1 y_{1,2}^{n-r+1}y_{3,4}^{r} $ & $\chi_r 14 -\chi_{r+1} 6 $ & $\chi_r 15-\chi_{r+1}10 $ 
         \\
         & $ g_1 y_{3,4}^n $ & $g_1 y_{1,2}y_{3,4}^n$ & $14$ & $15$ 
         \\
       & $g_2y_{1,2}^{n}$ & $g_2y_{1,2}^{n+1} $ & $14$ & $15$ 
         \\
       & $g_3y_{1,2}y_{3,4}^{n-1}$ & $ g_1y_{3,4}^{n+1}$ & $6$ & $10$ 
         \\
         & $g_3y_{3,4}^{n}$ & $g_3y_{1,2}y_{3,4}^n $ & $21$ & $-15$ 
         \\
          & $g_4y_{3,4}^{n}$ & $g_3 y_{1,2}y_{3,4}^n $ & $-14$ & $24$  
         \\
          & $g_5y_{1,2}^{n}$ & $g_5 y_{1,2}^{n+1} $ & $-4$ & $-8$ 
         \\
          & $g_5y_{1,2}^{n-1}y_{3,4}$ & $-g_5y_{1,2}^n y_{3,4} $ & $-11$ & $3$ 
         \\
         \hline
         & $ g_1 y_{1,3}^n $ & $5$ & $g_1y_{1,3}^{n+1}$ &  $2$ 
         \\
         & $ g_1 y_{1,3}^{n-r}y_{2,4}^{r} $ & $\chi_r 12+\chi_{r+1}9 $ & $ (-1)^{r}g_1 y_{1,3}^{n-r+1}y_{2,4}^{r} $ & $\chi_r 15-\chi_{r+1} 8 $ 
         \\
         & $ g_1 y_{2,4}^n $ & $ 12$ & $g_1y_{1,3}y_{2,4}^n$ & $15$ 
         \\
    & $g_2y_{1,3}y_{2,4}^{n-1}$ & $9 $ & $-g_1 y_{2,4}^{n+1} $ & $-8$ 
         \\
         & $g_2y_{2,4}^{n}$ & $18 $ & $g_2 y_{1,3}y_{2,4}^n $ & $15$  
         \\
    & $g_3y_{1,3}^{n}$ & $-12 $ & $g_3 y_{1,3}^{n+1} $ & $-15$
         \\
         & $g_4y_{2,4}^{n}$ & $-12 $ & $-g_2y_{1,3}y_{2,4}^n  $ & $24$ 
         \\
         & $g_6y_{1,3}^{n}$ & $-7 $ & $ g_6 y_{1,3}^{n+1}$ & $3$ 
         \\
          & $g_6y_{1,3}^{n-1}y_{2,4}$ & $-13 $ & $-g_6y_{1,3}^{n}y_{2,4} $ & $-10$ 
         \\
         \hline
         & $ g_1 y_{2,3}^n $ & $5$ & $ 1$ & $g_1 y_{2,3}^{n+1}$
         \\
         & $ g_1 y_{2,3}^{n-r}y_{1,4}^{r} $ & $ \chi_r 12+\chi_{r+1} 7$ & $\chi_r 14+\chi_{r+1} 11$ & $(-1)^rg_1 y_{2,3}^{n-r+1}y_{1,4}^r $ 
         \\
         & $ g_1 y_{1,4}^n $ & $ 12$ & $14$ & $g_1 y_{2,3}y_{1,4}^n $
         \\
    & $g_2y_{2,3}y_{1,4}^{n-1}$ & $ 7$ & $11$ & $-g_1 y_{1,4}^{n+1}$ 
         \\
         & $g_2y_{1,4}^{n}$ & $18 $ & $14$ & $g_2 y_{2,3}y_{1,4}^n$ 
         \\
         & $g_3y_{1,4}^{n}$ & $ -12$ &  $21$ & $-g_2 y_{2,3}y_{1,4}^n$
         \\
    & $g_4y_{2,3}^{n}$ & $-12 $ & $-14$ & $g_4 y_{2,3}^{n+1}$ 
         \\
         & $g_5y_{2,3}^{n}$ & $ 9$ & $-4$ & $g_5 y_{2,3}^{n+1}$ 
         \\
          & $g_5y_{2,3}^{n-1}y_{1,4}$ & $-13 $ & $-6$ & $-g_5y_{2,3}^ny_{1,4} $ 
         \\
         \hline
      \end{tabular}
        }
      \caption{Products $yy'$ for $n\geqslant 4$ even.}	
      \label{table:product module M2 yy' n even 123}
   \end{center}
   \end{table}
   \vspace{-0.8cm}

\begin{table}[H]
   \begin{center}
   \begin{tabular}{c|c|ccc}
         \hline
         & \diagbox[width=27mm,height=5mm]{$y$}{$y'$}  & $y_{1,4}$ & $y_{2,4}$ & $y_{3,4}$
         \\
         \hline
         $1$ & $g_1y_{1,2}^{n-1}y_{1,3}$ & $6 $ & $7 $ & $8 $
         \\
         $2$ & $g_1y_{1,2}^{n-1}y_{2,3}$ & $9 $ & $ 10$ & $ 11$
         \\
         $3$ & $g_1y_{1,2}^{n-1}y_{1,4}$ & $12 $ & $3 $ & $ 6$
         \\
         $4$ & $g_1y_{1,2}^{n-1}y_{2,4}$ & $ -4$ & $12 $ & $10 $
         \\
         $5$ & $g_1y_{1,2}^{n-2}y_{1,3}^2$ & $3 $ & $ 4$ & $ 13$
         \\
         $6$ & $g_1y_{1,2}^{n-2}y_{1,3}y_{1,4}$ & $ 14$ & $ -9$ & $3 $
         \\
         $7$ & $g_1y_{1,2}^{n-2}y_{1,3}y_{2,4}$ & $10 $ & $14 $ & $7 $
         \\
         $8$ & $g_1y_{1,2}^{n-2}y_{1,3}y_{3,4}$ & $ -13$ & $-8 $ & $14 $
         \\
         $9$ & $g_1y_{1,2}^{n-2}y_{2,3}y_{1,4}$ & $15 $ & $-6 $ & $-9 $
         \\
         $10$ & $g_1y_{1,2}^{n-2}y_{2,3}y_{2,4}$ & $ 7$ & $ 15$ & $4 $
         \\
         $11$ & $g_1y_{1,2}^{n-2}y_{2,3}y_{3,4}$ & $11 $ & $-13 $ & $ 15$ 
         \\
         $12$ & $g_1y_{1,2}^{n-2}y_{1,4}^2$ & $3 $ & $ 4$ & $13 $
         \\
         $13$ & $g_1y_{1,2}^{n-3}y_{1,3}^2y_{3,4}$ & $-8 $ & $-11 $ & $12 $
         \\
         $14$ & $g_1y_{1,2}^{n-3}y_{1,3}y_{1,4}^2$ & $6 $ & $7 $ & $8 $
         \\
         $15$ & $g_1y_{1,2}^{n-3}y_{2,3}y_{1,4}^2$ & $9 $ & $10 $ & $11 $
         \\
         $16$ & $g_2y_{1,2}^{n-1}y_{1,4}$ & $18 $ & $16 $ & $6 $
         \\
         $17$ & $g_2y_{1,2}^{n-1}y_{2,4}$ & $-17 $ & $ 18$ & $10 $
         \\
         $18$ & $g_2y_{1,2}^{n-2}y_{1,4}^2$ & $16 $ & $17 $ & $13 $
         \\
         $19$ & $g_3y_{1,3}^{n-1}y_{1,4}$ & $21 $ & $9 $ & $19 $
         \\
         $20$ & $g_3y_{1,3}^{n-1}y_{3,4}$ & $-20 $ &  $ 8$ & $21 $
         \\
         $21$ & $g_3y_{1,3}^{n-2}y_{1,4}^2$ & $19 $ &  $ -4$ & $20 $
         \\
         $22$ & $g_4y_{2,3}^{n-1}y_{2,4}$ & $ -7$ & $ 24$ & $22 $
         \\
         $23$ & $g_4y_{2,3}^{n-1}y_{3,4}$ & $ -11$ & $ -23$ & $24 $
         \\
         $24$ & $g_4y_{2,3}^{n-2}y_{2,4}^2$ & $ -3$ & $ 22$ & $23 $
         \\
         \hline
         & $ g_1 y_{1,2}^n $ & $3 $ & $4 $ & $ g_1y_{1,2}^n y_{3,4}$ 
         \\
         & $ g_1 y_{1,2}^{n-r}y_{3,4}^{r} $ & $\chi_r 3-\chi_{r+1} 8 $ & $\chi_r 4-\chi_{r+1} 11 $ & $ g_1 y_{1,2}^{n-r}y_{3,4}^{r+1}$
         \\
         & $ g_1 y_{3,4}^n $ & $ 3$ & $ 4$ & $g_1 y_{3,4}^{n+1} $ 
         \\
       & $g_2y_{1,2}^{n}$ & $ 16$ & $ 17$ & $g_1 y_{1,2}^n y_{3,4} $
         \\
       & $g_3y_{1,2}y_{3,4}^{n-1}$ & $ 8$ & $11 $ & $g_3 y_{1,2}y_{3,4}^{n} $
         \\
         & $g_3y_{3,4}^{n}$ & $ 19$ & $-4 $ &  $ g_3 y_{3,4}^{n+1}$
         \\
          & $g_4y_{3,4}^{n}$ & $-3 $ &  $22 $ & $g_4 y_{3,4}^{n+1} $
         \\
          & $g_5y_{1,2}^{n}$ & $10 $ & $14 $ & $g_5 y_{1,2}^{n}y_{3,4} $ 
         \\
          & $g_5y_{1,2}^{n-1}y_{3,4}$ & $-15 $ & $6 $ & $g_5 y_{1,2}^{n+1} $
         \\
         \hline
         & $ g_1 y_{1,3}^n $ & $ 3$ & $g_1 y_{1,3}^n y_{2,4} $ & $13 $
         \\
         & $ g_1 y_{1,3}^{n-r}y_{2,4}^{r} $ & $\chi_r 3+\chi_{r+1} 10 $ & $g_1 y_{1,3}^{n-r}y_{2,4}^{r+1} $ & $\chi_r 13+\chi_{r+1} 7 $
         \\
         & $ g_1 y_{2,4}^n $ & $3 $ & $g_1y_{2,4}^{n+1} $ & $13 $
         \\
    & $g_2y_{1,3}y_{2,4}^{n-1}$ & $10 $ & $g_2 y_{1,3}y_{2,4}^{n} $ & $7 $
         \\
         & $g_2y_{2,4}^{n}$ & $16 $ & $g_2 y_{2,4}^{n+1} $ & $13 $
         \\
    & $g_3y_{1,3}^{n}$ & $19 $ & $ -g_1y_{1,3}^n y_{2,4}$ & $20 $
         \\
         & $g_4y_{2,4}^{n}$ & $-3 $ & $ g_4 y_{2,4}^{n+1}$ & $ 23$
         \\
         & $g_6y_{1,3}^{n}$ & $-15 $ & $g_6 y_{1,3}^ny_{2,4} $ & $9 $
         \\
          & $g_6y_{1,3}^{n-1}y_{2,4}$ & $-8 $ & $g_6 y_{1,3}^{n+1} $ & $12 $
         \\
         \hline
         & $ g_1 y_{2,3}^n $ & $g_1 y_{2,3}^ny_{1,4} $ & $4 $ & $13 $
         \\
         & $ g_1 y_{2,3}^{n-r}y_{1,4}^{r} $ & $g_1 y_{2,3}^{n-r}y_{1,4}^{r+1} $ & $ \chi_r 4-\chi_{r+1} 6$ & $\chi_r 13-\chi_{r+1}9 $
         \\
         & $ g_1 y_{1,4}^n $ & $g_1 y_{1,4}^{n+1} $ & $ 4$ & $13 $
         \\
    & $g_2y_{2,3}y_{1,4}^{n-1}$ & $g_2 y_{2,3}y_{1,4}^n $ & $-6 $ & $-9 $
         \\
         & $g_2y_{1,4}^{n}$ & $g_2y_{1,4}^{n+1} $ & $17 $ & $ 13$
         \\
         & $g_3y_{1,4}^{n}$ & $g_3 y_{1,4}^{n+1}$ &  $ -4$ & $20 $
         \\
    & $g_4y_{2,3}^{n}$ & $ -g_1 y_{2,3}^n y_{1,4}$ & $22 $ & $23 $
         \\
         & $g_5y_{2,3}^{n}$ & $g_5 y_{2,3}^ny_{1,4} $ & $14 $ & $ 7$
         \\
          & $g_5y_{2,3}^{n-1}y_{1,4}$ & $g_5 y_{2,3}^{n+1} $ & $-11 $ & $12 $
         \\
         \hline
         \end{tabular}
      \caption{Products $yy'$ for $n\geqslant 4$ even.}	
      \label{table:product module M2 yy' n even 456}
      \end{center}
   \end{table}
   \vspace{-0.8cm}

  \begin{table}[H]
   \begin{center}
   \begin{tabular}{c|c|cccc}
         \hline
         & \diagbox[width=27mm,height=5mm]{$y$}{$y'$}  & $y_{1,2}$ & $y_{1,3}$  & $y_{2,3}$ 
         \\
         \hline
         $1$ & $g_1y_{1,2}^{n-1}y_{1,3}$ & $ -2$ & $5$ & $1 $ & 
         \\
         $2$ & $g_1y_{1,2}^{n-1}y_{2,3}$ & $-1 $ & $-2 $ & $5 $ & 
         \\
         $3$ & $g_1y_{1,2}^{n-1}y_{1,4}$ & $-4$ & $ -8$ & $-9 $ & 
         \\
         $4$ & $g_1y_{1,2}^{n-1}y_{2,4}$ & $ -3$ & $-7 $ & $-11 $ & 
         \\
         $5$ & $g_1y_{1,2}^{n-2}y_{1,3}^2$ & $ 5$ & $ 1$ & $2 $ & 
         \\
         $6$ & $g_1y_{1,2}^{n-2}y_{1,3}y_{1,4}$ & $10 $ & $-13 $ & $-6 $ & 
         \\
         $7$ & $g_1y_{1,2}^{n-2}y_{1,3}y_{2,4}$ & $9 $ & $-4 $ & $-8 $ & 
         \\
         $8$ & $g_1y_{1,2}^{n-2}y_{1,3}y_{3,4}$ & $11 $ & $-3 $ & $ -7$ & 
         \\
         $9$ & $g_1y_{1,2}^{n-2}y_{2,3}y_{1,4}$ & $7 $ & $11 $ & $ -3$ & 
         \\
         $10$ & $g_1y_{1,2}^{n-2}y_{2,3}y_{2,4}$ & $ 6$ & $ 10$ & $-13 $ & 
         \\
         $11$ & $g_1y_{1,2}^{n-2}y_{2,3}y_{3,4}$ & $8 $ & $9 $ & $ -4$ & 
         \\
         $12$ & $g_1y_{1,2}^{n-2}y_{1,4}^2$ & $ 12$ & $14 $ & $15 $ & 
         \\
         $13$ & $g_1y_{1,2}^{n-3}y_{1,3}^2y_{3,4}$ & $ -13$ & $-6 $ & $ -10$ & 
         \\
         $14$ & $g_1y_{1,2}^{n-3}y_{1,3}y_{1,4}^2$ & $-15 $ & $12 $ & $14 $ & 
         \\
         $15$ & $g_1y_{1,2}^{n-3}y_{2,3}y_{1,4}^2$ & $-14 $ & $-15 $ & $ 12$ & 
         \\
         $16$ & $g_2y_{1,2}^{n-1}y_{1,4}$ & $-17 $ & $ -8$ & $-9 $ & 
         \\
         $17$ & $g_2y_{1,2}^{n-1}y_{2,4}$ & $-16 $ & $-7 $ & $ -11$ & 
         \\
         $18$ & $g_2y_{1,2}^{n-2}y_{1,4}^2$ & $18 $ & $14 $ & $15 $ & 
         \\
         $19$ & $g_3y_{1,3}^{n-1}y_{1,4}$ & $ 4$ & $-20 $ & $9 $ & 
         \\
         $20$ & $g_3y_{1,3}^{n-1}y_{3,4}$ & $ 13$ &  $-19 $ & $10 $ & 
         \\
         $21$ & $g_3y_{1,3}^{n-2}y_{1,4}^2$ & $15 $ &  $ 21$ & $-14 $ & 
         \\
         $22$ & $g_4y_{2,3}^{n-1}y_{2,4}$ & $3 $ & $ 7$ & $ -23$ & 
         \\
         $23$ & $g_4y_{2,3}^{n-1}y_{3,4}$ & $13 $ & $6 $ & $-22 $ & 
         \\
         $24$ & $g_4y_{2,3}^{n-2}y_{2,4}^2$ & $ 14$ & $15 $ & $24 $ & 
         \\
         \hline
         & $ g_1 y_{1,2}^n $ & $g_1y_{1,2}^{n+1}$ & $1$ & $2 $ & 
         \\
         & $ g_1 y_{1,2}^{n-r}y_{3,4}^{r} $ & $(-1)^rg_1 y_{1,2}^{n-r+1}y_{3,4}^{r} $ & $\chi_r 14-\chi_{r+1} 6 $ & $\chi_r 15-\chi_{r+1}10 $ & 
         \\
         & $ g_1 y_{3,4}^n $ & $-g_1y_{1,2}y_{3,4}^n $ & $-6 $ & $ -10$ & 
         \\
       & $g_2y_{1,2}^{n}$ & $g_2 y_{1,2}^{n+1} $ & $ 14$ & $15 $ & 
         \\
       & $g_3y_{1,2}y_{3,4}^{n-1}$ & $-g_1 y_{3,4}^{n+1} $ & $ -14$ & $-15 $ & 
         \\
         & $g_3y_{3,4}^{n}$ & $-g_3 y_{1,2}y_{3,4}^n $ &  $-19 $ & $10 $ & 
         \\
          & $g_4y_{3,4}^{n}$ & $-g_3 y_{1,2}y_{3,4}^n $ & $ 6$ &  $-22 $ & 
         \\
          & $g_5y_{1,2}^{n}$ & $g_5 y_{1,2}^{n+1} $ & $11 $ & $-3 $ & 
         \\
          & $g_5y_{1,2}^{n-1}y_{3,4}$ & $-g_5 y_{1,2}^n y_{3,4} $ & $-4 $ & $ -8$ & 
         \\
         \hline
         & $ g_1 y_{1,3}^n $ & $-2$ & $g_1 y_{1,3}^{n+1} $ & $1 $ & 
         \\
         & $ g_1 y_{1,3}^{n-r}y_{2,4}^{r} $ & $-\chi_r 15-\chi_{r+1} 3 $ & $(-1)^rg_1 y_{1,3}^{n-r+1}y_{2,4}^r $ & $\chi_r 14-\chi_{r+1} 11 $ & 
         \\
         & $ g_1 y_{2,4}^n $ & $-3 $ & $-g_1 y_{1,3}y_{2,4}^n $ & $-11 $ & 
         \\
    & $g_2y_{1,3}y_{2,4}^{n-1}$ & $ -15$ & $g_1y_{2,4}^{n+1} $ & $14 $ & 
         \\
         & $g_2y_{2,4}^{n}$ & $-16 $ & $-g_2 y_{1,3}y_{2,4}^n $ & $ -11$ & 
         \\
    & $g_3y_{1,3}^{n}$ & $15 $ & $ g_3 y_{1,3}^{n+1}$ & $-14 $ & 
         \\
         & $g_4y_{2,4}^{n}$ & $3 $ & $g_2 y_{1,3}y_{2,4}^n $ & $-23 $ & 
         \\
         & $g_6y_{1,3}^{n}$ & $-8 $ & $g_6 y_{1,3}^{n+1} $ & $4 $ & 
         \\
         & $g_6y_{1,3}^{n-1}y_{2,4}$ & $10 $ & $-g_6 y_{1,3}^n y_{2,4} $ & $-6 $ & 
         \\
         \hline
         & $ g_1 y_{2,3}^n $ & $-1 $ & $ -2$ & $ g_1 y_{2,3}^{n+1} $ 
         \\
         & $ g_1 y_{2,3}^{n-r}y_{1,4}^{r} $ & $-\chi_r 14 -\chi_{r+1}4 $ & $-\chi_r 15-\chi_{r+1} 8 $ & $ (-1)^r g_1 y_{2,3}^{n-r+1}y_{1,4}^{r} $ 
         \\
         & $ g_1 y_{1,4}^n $ & $-4 $ & $-8 $ & $ -g_1y_{2,3} y_{1,4}^n $
         \\
    & $g_2y_{2,3}y_{1,4}^{n-1}$ & $-14 $ & $-15 $ & $g_1 y_{1,4}^{n+1} $ 
         \\
         & $g_2y_{1,4}^{n}$ & $-17 $ & $ -8$ & $-g_2 y_{2,3}y_{1,4}^n $
         \\
         & $g_3y_{1,4}^{n}$ & $4 $ &  $ -20$ & $ g_2y_{2,3}y_{1,4}^n$
         \\
    & $g_4y_{2,3}^{n}$ & $14 $ & $15 $ & $g_4 y_{2,3}^{n+1} $
         \\
         & $g_5y_{2,3}^{n}$ & $-11$ & $3 $ & $g_5 y_{2,3}^{n+1} $
         \\
          & $g_5y_{2,3}^{n-1}y_{1,4}$ & $6 $ & $10 $ & $-g_5 y_{2,3}^ny_{1,4} $
         \\
         \hline
      \end{tabular}
      \caption{Products $yy'$ for $n\geqslant 5$ odd.}	
      \label{table:product module M2 yy' n odd 123}
   \end{center}
   \end{table}
   \vspace{-0.8cm}

   \begin{table}[H]
   \begin{center}
   \begin{tabular}{c|c|cccccc}
         \hline
         & \diagbox[width=27mm,height=5mm]{$y$}{$y'$}  & $y_{1,4}$ & $y_{2,4}$ & $y_{3,4}$
         \\
         \hline
         $1$ & $g_1y_{1,2}^{n-1}y_{1,3}$ & $6 $ & $7 $ & $8 $ 
         \\
         $2$ & $g_1y_{1,2}^{n-1}y_{2,3}$ & $9 $ & $10 $ & $ 11$ 
         \\
         $3$ & $g_1y_{1,2}^{n-1}y_{1,4}$ & $12 $ & $3 $ & $ 6$ 
         \\
         $4$ & $g_1y_{1,2}^{n-1}y_{2,4}$ & $-4 $ & $12 $ & $10 $ 
         \\
         $5$ & $g_1y_{1,2}^{n-2}y_{1,3}^2$ & $ 3$ & $4 $ & $13 $ 
         \\
         $6$ & $g_1y_{1,2}^{n-2}y_{1,3}y_{1,4}$ & $14 $ & $ -9$ & $ 3$ 
         \\
         $7$ & $g_1y_{1,2}^{n-2}y_{1,3}y_{2,4}$ & $10 $ & $14 $ & $7 $ 
         \\
         $8$ & $g_1y_{1,2}^{n-2}y_{1,3}y_{3,4}$ & $-13 $ & $-8 $ & $ 14$ 
         \\
         $9$ & $g_1y_{1,2}^{n-2}y_{2,3}y_{1,4}$ & $15 $ & $ -6$ & $-9 $ 
         \\
         $10$ & $g_1y_{1,2}^{n-2}y_{2,3}y_{2,4}$ & $7 $ & $ 15$ & $4 $ 
         \\
         $11$ & $g_1y_{1,2}^{n-2}y_{2,3}y_{3,4}$ & $ 11$ & $-13 $ & $15 $ 
         \\
         $12$ & $g_1y_{1,2}^{n-2}y_{1,4}^2$ & $3 $ & $ 4$ & $13 $ 
         \\
         $13$ & $g_1y_{1,2}^{n-3}y_{1,3}^2y_{3,4}$ & $-8 $ & $ -11$ & $12 $ 
         \\
         $14$ & $g_1y_{1,2}^{n-3}y_{1,3}y_{1,4}^2$ & $6 $ & $ 7$ & $8 $ 
         \\
         $15$ & $g_1y_{1,2}^{n-3}y_{2,3}y_{1,4}^2$ & $9 $ & $10 $ & $11 $ 
         \\
         $16$ & $g_2y_{1,2}^{n-1}y_{1,4}$ & $18 $ & $16 $ & $6 $ 
         \\
         $17$ & $g_2y_{1,2}^{n-1}y_{2,4}$ & $-17 $ & $ 18$ & $10 $ 
         \\
         $18$ & $g_2y_{1,2}^{n-2}y_{1,4}^2$ & $ 16$ & $17 $ & $13 $ 
         \\
         $19$ & $g_3y_{1,3}^{n-1}y_{1,4}$ & $21 $ & $-3 $ & $ 19$ 
         \\
         $20$ & $g_3y_{1,3}^{n-1}y_{3,4}$ & $-20 $ &  $11 $ & $ 21$ 
         \\
         $21$ & $g_3y_{1,3}^{n-2}y_{1,4}^2$ & $19 $ &  $ -7$ & $ 20$ 
         \\
         $22$ & $g_4y_{2,3}^{n-1}y_{2,4}$ & $4 $ & $24 $ & $22 $ 
         \\
         $23$ & $g_4y_{2,3}^{n-1}y_{3,4}$ & $8 $ & $-23 $ & $24 $ 
         \\
         $24$ & $g_4y_{2,3}^{n-2}y_{2,4}^2$ & $-9 $ & $22 $ & $ 23$ 
         \\
         \hline
         & $ g_1 y_{1,2}^n $ & $3 $ & $ 4$ & $ g_1 y_{1,2}^n y_{3,4}$
         \\
         & $ g_1 y_{1,2}^{n-r}y_{3,4}^{r} $ & $\chi_r 3-\chi_{r+1}8 $ & $\chi_r 4-\chi_{r+1}11 $ & $ g_1 y_{1,2}^{n-r}y_{3,4}^{r+1} $ 
         \\
         & $ g_1 y_{3,4}^n $ & $ -8$ & $-11 $ & $ g_1 y_{3,4}^{n+1} $
         \\
       & $g_2y_{1,2}^{n}$ & $16 $ & $ 17$ & $g_1 y_{1,2}^n y_{3,4} $ 
         \\
       & $g_3y_{1,2}y_{3,4}^{n-1}$ & $-3 $ & $-4 $ & $g_3 y_{1,2}y_{3,4}^{n} $ 
         \\
         & $g_3y_{3,4}^{n}$ & $-20 $ & $11 $ &  $ g_3 y_{3,4}^{n+1}$
         \\
          & $g_4y_{3,4}^{n}$ & $8 $ & $ -23$ &  $g_4y_{3,4}^{n+1}$
         \\
          & $g_5y_{1,2}^{n}$ & $ 15$ & $-6 $ & $g_5y_{1,2}^{n}y_{3,4}$
         \\
          & $g_5y_{1,2}^{n-1}y_{3,4}$ & $10 $ & $14 $ & $g_5y_{1,2}^{n+1}$
         \\
         \hline
         & $ g_1 y_{1,3}^n $ & $6 $ & $ g_1 y_{1,3}^ny_{2,4} $ & $8 $  
         \\
         & $ g_1 y_{1,3}^{n-r}y_{2,4}^{r} $ & $ \chi_r 6-\chi_{r+1} 4$ & $ g_1 y_{1,3}^{n-r}y_{2,4}^{r+1} $ & $\chi_r 8+\chi_{r+1}10 $  
         \\
         & $ g_1 y_{2,4}^n $ & $-4 $ & $ g_1 y_{2,4}^{n+1} $ & $10 $ 
         \\
    & $g_2y_{1,3}y_{2,4}^{n-1}$ & $6 $ & $g_2y_{1,3}y_{2,4}^{n}$ & $8 $ 
         \\
         & $g_2y_{2,4}^{n}$ & $-17 $ & $g_2y_{2,4}^{n+1}$ & $10 $ 
         \\
    & $g_3y_{1,3}^{n}$ & $19 $ & $-g_1 y_{1,3}^n y_{2,4} $ & $20 $
         \\
         & $g_4y_{2,4}^{n}$ & $4 $ & $g_4 y_{2,4}^{n+1} $ & $22 $ 
         \\
         & $g_6y_{1,3}^{n}$ & $-11 $ & $ g_6 y_{1,3}^n y_{2,4}$ & $-15 $ 
         \\
          & $g_6y_{1,3}^{n-1}y_{2,4}$ & $14 $ & $g_6 y_{1,3}^{n+1} $ & $ 3$
         \\
         \hline
         & $ g_1 y_{2,3}^n $ & $ g_1 y_{2,3}^ny_{1,4} $ & $10 $ & $11$ 
         \\
         & $ g_1 y_{2,3}^{n-r}y_{1,4}^{r} $ & $ g_1 y_{2,3}^{n-r}y_{1,4}^{r+1} $ & $\chi_r 10+\chi_{r+1} 3 $ & $\chi_r 11+\chi_{r+1}6 $
         \\
         & $ g_1 y_{1,4}^n $ & $ g_1 y_{1,4}^{n+1} $ & $3 $ & $6 $
         \\
    & $g_2y_{2,3}y_{1,4}^{n-1}$ & $g_2y_{2,3}y_{1,4}^{n} $ & $10$ & $ 11$
         \\
         & $g_2y_{1,4}^{n}$ & $g_2y_{1,4}^{n+1} $ & $16 $ & $ 6$
         \\
         & $g_3y_{1,4}^{n}$ & $ g_3y_{1,4}^{n+1}$ &  $-3 $ & $19 $
         \\
    & $g_4y_{2,3}^{n}$ & $-g_1 y_{2,3}^n y_{1,4} $ & $ 22$ & $23 $
         \\
         & $g_5y_{2,3}^{n}$ & $g_5y_{2,3}^n y_{1,4} $ & $ 8$ & $ -14$
         \\
          & $g_5y_{2,3}^{n-1}y_{1,4}$ & $g_5 y_{2,3}^{n+1} $ & $15 $ & $4 $ 
         \\
         \hline
      \end{tabular}
      \caption{Products $yy'$ for $n\geqslant 5$ odd.}	
      \label{table:product module M2 yy' n odd 456}
    \end{center}
   \end{table}
   \vspace{-0.8cm}

\bibliographystyle{model1-num-names}

\begin{bibdiv}
\begin{biblist}
\addcontentsline{toc}{section}{References}

\bib{MR2630042}{article}{
	author={Andruskiewitsch, Nicol\'{a}s},
	author={Schneider, Hans-J\"{u}rgen},
	title={On the classification of finite-dimensional pointed Hopf algebras},
	journal={Ann. of Math. (2)},
	volume={171},
	date={2010},
	number={1},
	pages={375--417},
	issn={0003-486X},
	review={\MR{2630042}},
	doi={10.4007/annals.2010.171.375},
}

\bib{MR1930968}{article}{
   author={Brenner, Sheila},
   author={Butler, Michael C. R.},
   author={King, Alastair D.},
   title={Periodic algebras which are almost Koszul},
   journal={Algebr. Represent. Theory},
   volume={5},
   date={2002},
   number={4},
   pages={331--367},
   issn={1386-923X},
   review={\MR{1930968}},
   doi={10.1023/A:1020146502185},
}

\bib{MR2586982}{article}{
   author={Cassidy, Thomas},
   title={Quadratic algebras with Ext algebras generated in two degrees},
   journal={J. Pure Appl. Algebra},
   volume={214},
   date={2010},
   number={7},
   pages={1011--1016},
   issn={0022-4049},
   review={\MR{2586982}},
   doi={10.1016/j.jpaa.2009.09.008},
}

\bib{MR4248207}{article}{
   author={Conner, Andrew},
   author={Goetz, Peter},
   title={Classification, Koszulity and Artin-Schelter regularity of certain
   graded twisted tensor products},
   journal={J. Noncommut. Geom.},
   volume={15},
   date={2021},
   number={1},
   pages={41--78},
   issn={1661-6952},
   review={\MR{4248207}},
   doi={10.4171/jncg/395},
}

\bib{cohenknopper}{article}{
		author={Cohen, A.M.},
		author={Knopper, J.W.},
		title={GBNP - a GAP package},
		journal={Version 1.0.3},
		date={2016},
		eprint={https://www.gap-system.org/Packages/gbnp.html},
	}

\bib{MR1667680}{article}{
		author={Fomin, Sergey},
		author={Kirillov, Anatol N.},
		title={Quadratic algebras, Dunkl elements, and Schubert calculus},
		conference={
		   title={Advances in geometry},
		}, 
		book={
		   series={Progr. Math.},
		   volume={172},
		   publisher={Birkh\"{a}user Boston, Boston, MA},
		}, 
		date={1999},
		pages={147--182},
	   review={\MR{1667680}},
	}

  \bib{GV16}{article}{
	author={Gra\~na, Mat\'{\i}as},
	title={Nichols algebras of non-abelian group type: zoo examples},
	year={2016},
	eprint={http://mate.dm.uba.ar/~lvendram/zoo/},
    }

\bib{MR4102553}{article}{
   author={Herscovich, Estanislao},
   title={An elementary computation of the cohomology of the Fomin-Kirillov
   algebra with 3 generators},
   journal={Homology Homotopy Appl.},
   volume={22},
   date={2020},
   number={2},
   pages={367--386},
   issn={1532-0073},
   review={\MR{4102553}},
   doi={10.4310/hha.2020.v22.n2.a22},
}

	\bib{MR1772287}{article}{
		author={Kirillov, A. N.},
		title={On some quadratic algebras},
		conference={
			title={L. D. Faddeev's Seminar on Mathematical Physics},
		},
		book={
			series={Amer. Math. Soc. Transl. Ser. 2},
			volume={201},
			publisher={Amer. Math. Soc., Providence, RI},
		},
		date={2000},
		pages={91--113},
		review={\MR{1772287}},
		doi={10.1090/trans2/201/07},
	}

	\bib{MR3439199}{article}{
		author={Kirillov, Anatol N.},
		title={On some quadratic algebras I $\frac{1}{2}$: combinatorics of Dunkl
			and Gaudin elements, Schubert, Grothendieck, Fuss-Catalan, universal
			Tutte and reduced polynomials},
		journal={SIGMA Symmetry Integrability Geom. Methods Appl.},
		volume={12},
		date={2016},
		pages={Paper No. 002, 172},
		issn={1815-0659},
		review={\MR{3439199}},
		doi={10.3842/SIGMA.2016.002},
	}

\bib{MR1800714}{article}{
		author={Milinski, Alexander},
		author={Schneider, Hans-J\"{u}rgen},
		title={Pointed indecomposable Hopf algebras over Coxeter groups},
		conference={
		   title={New trends in Hopf algebra theory},
		   address={La Falda},
		   date={1999},
		}, 
		book={
		   series={Contemp. Math.},
		   volume={267},
		   publisher={Amer. Math. Soc., Providence, RI},
		},
		date={2000},
		pages={215--236},
	   review={\MR{1800714}},
	   doi={10.1090/conm/267/04272},
	}

\bib{MR2177131}{book}{
   author={Polishchuk, Alexander},
   author={Positselski, Leonid},
   title={Quadratic algebras},
   series={University Lecture Series},
   volume={37},
   publisher={American Mathematical Society, Providence, RI},
   date={2005},
   pages={xii+159},
   isbn={0-8218-3834-2},
   review={\MR{2177131}},
   doi={10.1090/ulect/037},
}

\bib{MR1269324}{book}{
   author={Weibel, Charles A.},
   title={An introduction to homological algebra},
   series={Cambridge Studies in Advanced Mathematics},
   volume={38},
   publisher={Cambridge University Press, Cambridge},
   date={1994},
   pages={xiv+450},
   isbn={0-521-43500-5},
   isbn={0-521-55987-1},
   review={\MR{1269324}},
   doi={10.1017/CBO9781139644136},
}

\end{biblist}
\end{bibdiv}

\end{document}